\documentclass[11pt]{article}
\usepackage[utf8]{inputenc}
\usepackage[margin=1in]{geometry}
\usepackage[T1]{fontenc} %
\linespread{1.09}
\usepackage{graphicx}
\usepackage{mathpazo}
\usepackage{hyperref}
\hypersetup{
  colorlinks = true,  
  urlcolor = {blueGrotto},
  linkcolor = {royalBlue},
  citecolor = {navyBlueP}
} 

\usepackage[
  backref=true,
  backend=biber,
  natbib=true,
  style=alphabetic,
  sorting=alphabeticlabel,
  sortcites=true,
  minbibnames=3,
  maxbibnames=999,
  mincitenames=1,
  maxcitenames=4,
  minalphanames=4,
  maxalphanames=10,
  doi=false, url=false, eprint=false, isbn=false, 
]{biblatex}

\DeclareSortingTemplate{alphabeticlabel}{
  \sort[final]{%
    \field{labelalpha} %
  }
  \sort{%
    \field{year}   %
  }
  \sort{%
    \field{title}  %
  }
}

\AtBeginRefsection{\GenRefcontextData{sorting=ynt}}
\AtEveryCite{\localrefcontext[sorting=ynt]}
\addbibresource{refs.bib}

\usepackage{setspace}
\usepackage[dvipsnames]{xcolor}
\definecolor{niceRed}{RGB}{190,38,38}
\definecolor{niceYellow}{HTML}{f5b400}
\definecolor{blueGrotto}{HTML}{059DC0}
\definecolor{royalBlue}{HTML}{057DCD}
\definecolor{navyBlue}{HTML}{0B579C}
\definecolor{limeGreen}{HTML}{81B622}
\definecolor{nicePurple}{HTML}{9c27b0}
\definecolor{lightRoyalBlue}{HTML}{def2ff}  
\definecolor{pastelGreen}{HTML}{4CB944}
\definecolor{gold}{HTML}{ffa300}
\definecolor{lightRoyalBlue}{HTML}{def2ff}

\usepackage{booktabs} 
\usepackage{multirow} 
\usepackage{multicol} 

\usepackage{hyperref}
\hypersetup{
  colorlinks = true, 
  urlcolor = {blueGrotto},
  linkcolor = {royalBlue},
  citecolor = {navyBlue}
}

\usepackage{booktabs} 
\usepackage{multicol} 
\usepackage{multirow} 
\usepackage{makecell} 
\usepackage{longtable}

\usepackage{subfigure}
\usepackage{graphicx}
\usepackage{float} 
\usepackage{tikz}
\usepackage{pgfplots}
\pgfplotsset{compat=1.17}
\usepgfplotslibrary{fillbetween}

\usepackage{physics} %
\usepackage{amsmath}
\usepackage{amssymb}
\usepackage{amsthm}
\usepackage{bbm}
\usepackage{blkarray}
\usepackage{mathtools}
\usepackage{nicefrac}
\usepackage{dsfont}
\usepackage{pifont} %
\usepackage{thm-restate} %
\usepackage[capitalise,noabbrev,nameinlink]{cleveref}

\usepackage{typed-checklist}
\usepackage{enumitem} 
\usepackage{tcolorbox}
\usepackage[framemethod=TikZ]{mdframed}
\mdfsetup{%
backgroundcolor=white, %
roundcorner=4pt,
linewidth=1pt}
\usepackage{changepage} %

\usepackage{algorithm2e}

\usepackage{mathrsfs} %
\usepackage{soul} %

\theoremstyle{plain} 
\newtheorem{theorem}{Theorem}[section]
\newtheorem{corollary}[theorem]{Corollary}
\newtheorem{proposition}[theorem]{Proposition}
\newtheorem{lemma}[theorem]{Lemma}

\newtheorem{claim}[theorem]{Claim}

\newtheorem{problem}{Problem}

\newtheorem{inftheorem}{Informal Theorem}

\newtheorem{definition}{Definition}
\newtheorem*{definition*}{Definition}

\theoremstyle{definition} 
\newtheorem{example}[theorem]{Example}
\newtheorem{remark}[theorem]{Remark}

\theoremstyle{remark}

\AfterEndEnvironment{definition}{\noindent\ignorespaces}
\AfterEndEnvironment{assumption}{\noindent\ignorespaces}
\AfterEndEnvironment{lemma}{\noindent\ignorespaces}
\AfterEndEnvironment{theorem}{\noindent\ignorespaces}
\AfterEndEnvironment{proposition}{\noindent\ignorespaces}
\AfterEndEnvironment{fact}{\noindent\ignorespaces}
\AfterEndEnvironment{question}{\noindent\ignorespaces}
\AfterEndEnvironment{corollary}{\noindent\ignorespaces}
\AfterEndEnvironment{model}{\noindent\ignorespaces}
\AfterEndEnvironment{remark}{\noindent\ignorespaces}
\AfterEndEnvironment{proof}{\noindent\ignorespaces}
\AfterEndEnvironment{fact}{\noindent\ignorespaces}
\AfterEndEnvironment{minftheorem}{\noindent\ignorespaces}
\AfterEndEnvironment{inftheorem}{\noindent\ignorespaces}
\AfterEndEnvironment{maintheorem}{\noindent\ignorespaces}
\AfterEndEnvironment{restatable}{\noindent\ignorespaces}
\AfterEndEnvironment{infassumption}{\noindent\ignorespaces}
\AfterEndEnvironment{conjecture}{\noindent\ignorespaces}
\AfterEndEnvironment{claim}{\noindent\ignorespaces}
\AfterEndEnvironment{example}{\noindent\ignorespaces}

\renewcommand{\cref}{\Cref}
\crefname{section}{Section}{Sections}
\crefname{theorem}{Theorem}{Theorems}
\crefname{lemma}{Lemma}{Lemmas}
\crefname{definition}{Definition}{Definitions}
\crefname{conjecture}{Conjecture}{Conjectures}
\crefname{corollary}{Corollary}{Corollaries}
\crefname{construction}{Construction}{Constructions}
\crefname{conjecture}{Conjecture}{Conjectures}
\crefname{claim}{Claim}{Claims}
\crefname{observation}{Observation}{Observations}
\crefname{proposition}{Proposition}{Propositions}
\crefname{fact}{Fact}{Facts}
\crefname{question}{Question}{Questions}
\crefname{problem}{Problem}{Problems}
\crefname{remark}{Remark}{Remarks}
\crefname{example}{Example}{Examples}
\crefname{equation}{Equation}{Equations}
\crefname{appendix}{Appendix}{Appendices}
\crefname{algorithm}{Algorithm}{Algorithms}
\crefname{model}{Model}{Models}
\crefname{figure}{Figure}{Figures}
\crefname{inftheorem}{Informal Theorem}{Informal Theorems}
\crefname{infthm}{Informal Theorem}{Informal Theorems}
\crefname{infassumption}{Informal Assumption}{Informal Assumptions}
\crefname{minftheorem}{Main Informal Theorem}{Main Informal Theorems}
\crefname{maintheorem}{Main Theorem}{Main Theorems}
\crefname{assumption}{Assumption}{Assumptions}
\crefname{case}{Case}{Cases}

\newlist{asmpenum}{enumerate}{1} %
\setlist[asmpenum]{label={\arabic*.},ref=\theassumption.{\arabic*}}
\crefname{asmpenumi}{Assumption}{Assumptions}

\crefname{condition}{Condition}{Conditions}
\crefname{infcondition}{Informal Condition}{Informal Conditions}

\newcommand{\yesnum}{\addtocounter{equation}{1}\tag{\theequation}} 
\makeatletter
\newcommand{\tagnum}[2]{%
    \refstepcounter{equation}%
    \tag{#1) \ (\theequation}%
    \protected@write \@auxout {}{%
        \string \newlabel {#2}{{\theequation}{\thepage}{}{equation.\theequation}{}}%
    }%
}
\makeatother

\newcommand{\Stackrel}[2]{\stackrel{\mathmakebox[\widthof{\ensuremath{#2}}]{#1}}{#2}}

\newcommand{\quadtext}[1]{\quad\text{#1}\quad}
\newcommand{\qquadtext}[1]{\qquad\text{#1}\qquad}
 
\newcommand{\quadand}{\quadtext{and}}
\newcommand{\qquadand}{\qquadtext{and}}

\def\abs#1{\left| #1 \right|}
\def\sabs#1{| #1 |}
\newcommand{\given}{\;\middle|\;}
\newcommand{\sinparen}[1]{(#1)}
\newcommand{\sinbrace}[1]{\{#1\}}
\newcommand{\sinsquare}[1]{[#1]}
\newcommand{\inbrace}[1]{\left\{#1\right\}}

\newcommand{\inparen}[1]{\left(#1\right)}
\newcommand{\insquare}[1]{\left[#1\right]}

\newcommand{\snorm}[1]{\ensuremath{\| #1 \|}}
\let\norm\relax
\newcommand{\norm}[1]{\ensuremath{\left\lVert #1 \right\rVert}}

\newcommand{\N}{\mathbb{N}}
\newcommand{\R}{\mathbb{R}}

\newcommand{\evE}{\ensuremath{\mathscr{E}}}

\newcommand{\E}{\operatornamewithlimits{\mathbb{E}}} 
\newcommand{\Ex}{\E}
\newcommand{\Exp}{\Ex}

\newcommand\ind{\mathds{1}}

\newcommand{\tv}[2]{\operatorname{d}_{\mathsf{TV}}\inparen{#1,#2}}

\newcommand{\zo}{\ensuremath{\inbrace{0, 1}}}

\newcommand{\sfrac}[2]{{#1/#2}} 
\newcommand{\nfrac}[2]{\nicefrac{#1}{#2}}

\newcommand{\vol}{\textrm{\rm vol}}
\newcommand{\supp}{\operatorname{supp}}

\newcommand{\iid}{{i.i.d.}}

\newcommand{\eps}{\varepsilon}
\renewcommand{\epsilon}{\varepsilon}
\makeatletter
\newcommand*{\tran}{{\mathpalette\@tran{}}}
\newcommand*{\@tran}[2]{\raisebox{\depth}{$\m@th#1\intercal$}}
\makeatother

\mathchardef\NABLA"272
\newcommand*{\Nabla}{\boldsymbol\NABLA}
\let\nabla\Nabla

\renewcommand{\hat}{\widehat}
\newcommand{\wh}[1]{\widehat{#1}}

\renewcommand{\bar}{\overline}

\newcommand{\wt}[1]{\widetilde{#1}}

\newcommand{\customcal}[1]{\euscr{#1}} %

\newcommand{\cC}{\customcal{C}}
\newcommand{\cD}{\customcal{D}}

\newcommand{\cF}{\customcal{F}}

\newcommand{\cN}{\customcal{N}}

\newcommand{\cP}{\customcal{P}} 
\newcommand{\cQ}{\customcal{Q}}

\usepackage{mdframed}

\newmdenv[
    backgroundcolor=lightgray!10, %
    roundcorner=5pt,            %
    linecolor=black,             %
    linewidth=1pt,               %
    innertopmargin=5pt,         %
    innerbottommargin=0pt,      %
    innerleftmargin=10pt,        %
    innerrightmargin=10pt,       %
    skipabove=5pt,              %
    skipbelow=0pt               %
]{curvybox}

\DeclareMathAlphabet{\mathdutchcal}{U}{dutchcal}{m}{n}
\SetMathAlphabet{\mathdutchcal}{bold}{U}{dutchcal}{b}{n}
\DeclareMathAlphabet{\mathdutchbcal}{U}{dutchcal}{b}{n}

\DeclareMathAlphabet\urwscr{U}{urwchancal}{b}{n}%
\DeclareMathAlphabet\rsfscr{U}{rsfso}{m}{n}
\DeclareMathAlphabet\euscr{U}{eus}{m}{n}
\DeclareFontEncoding{LS2}{}{}
\DeclareFontSubstitution{LS2}{stix}{m}{n}
\DeclareMathAlphabet\stixcal{LS2}{stixcal}{m} {n}

\newcommand{\eat}[1]{}

\newcommand{\hypo}[1]{\mathbbmss{#1}}

\newcommand{\hyD}{\hypo{D}}
\newcommand{\hyE}{\hypo{E}}

\newcommand{\hyH}{\hypo{H}}

\newcommand{\hyP}{\hypo{P}}

\renewcommand{\d}{{\rm d}}

\newcommand{\dataset}{\mathscr{C}}

\newcommand{\hyPou}{\hyP_{\rm OU}}
\newcommand{\hyPoverlap}{\hyP_{\rm O}} %
\newcommand{\hyPunconf}{\hyP_{\rm U}}

\newcommand{\hyDall}{\hyD_{\rm all}}

\newcommand{\hyDpoly}{\hyD_{\rm poly}}

\def \Be {\mathrm{Be}}
\def \fatShatDim {\mathrm{fat}_{\gamma}}

\let\colt\undefined %

\ifdefined\colt

\else

\fi

\renewcommand{\paragraph}[1]{\medskip \noindent\textbf{#1}~}
\newcommand{\paragraphit}[1]{\medskip \noindent\textit{#1}~}

\newcommand{\ie}{\textit{i.e.}}
\newcommand{\eg}{\textit{e.g.}} 

\usepackage{array}
\newcolumntype{L}[1]{>{\raggedright\let\newline\\\arraybackslash\hspace{0pt}}m{#1}}
\newcolumntype{C}[1]{>{\centering\let\newline\\\arraybackslash\hspace{0pt}}m{#1}}
\newcolumntype{R}[1]{>{\raggedleft\let\newline\\\arraybackslash\hspace{0pt}}m{#1}}

\title{ 
    What Makes Treatment Effects Identifiable?\\ 
    Characterizations and Estimators Beyond Unconfoundedness
}      

\author{
  \begin{tabular}{C{4.8cm}C{4.8cm}C{4.8cm}}
    {\bf Yang Cai} & {\bf Alkis Kalavasis} & {\bf Katerina Mamali}\\[1mm]
    Yale University & Yale University & Yale University\\[0mm]
    \mbox{\small\texttt{\href{mailto:yang.cai@yale.edu}{yang.cai@yale.edu}}} & 
    \mbox{\small\texttt{\href{mailto:alkis.kalavasis@yale.edu}{alkis.kalavasis@yale.edu}}} & 
    \mbox{\small\texttt{\href{mailto:katerina.mamali@yale.edu}{katerina.mamali@yale.edu}}}\\[8mm]
  \end{tabular}
    \\[6mm]
  \begin{tabular}{C{5.5cm}C{5.5cm}}
    {\bf Anay Mehrotra} & {\bf Manolis Zampetakis}\\[1mm]
    Yale University & Yale University\\[0mm]
    {\small\texttt{\href{mailto:anaymehrotra1@gmail.com}{anaymehrotra1@gmail.com}}} &
    \smash{\small\texttt{\href{mailto:manolis.zampetakis@yale.edu}{manolis.zampetakis@yale.edu}}}
  \end{tabular}
}
\date{}

\newcommand\blfootnote[1]{%
  \begingroup
  \renewcommand\thefootnote{}\footnote{#1}%
  \addtocounter{footnote}{-1}%
  \endgroup
}

\begin{document}

\maketitle

\begin{abstract}
    Most of the widely used estimators of the \emph{average treatment effect} (ATE) in causal inference rely on the assumptions of \emph{unconfoundedness} and \emph{overlap}. Unconfoundedness requires that the observed covariates account for all correlations between the outcome and treatment. Overlap requires the existence of randomness in treatment decisions for all individuals. Nevertheless, many types of studies frequently violate unconfoundedness or overlap, for instance, observational studies with deterministic treatment decisions – popularly known as Regression Discontinuity designs – violate overlap.
 
    In this paper, we initiate the study of general conditions that enable the \emph{identification} of the average treatment effect, extending beyond unconfoundedness and overlap. In particular, following the paradigm of statistical learning theory, we provide an interpretable condition that is sufficient and necessary for the identification of ATE. 
    Moreover, this condition also characterizes the identification of the \emph{average treatment effect on the treated} (ATT) and can be used to characterize other treatment effects as well. 
    To illustrate the utility of our condition, we present several well-studied scenarios where our condition is satisfied and, hence, we prove that ATE can be identified in regimes that prior works could not capture. For example, under mild assumptions on the data distributions, this holds for the models proposed by \citet{tan2006distributional} and \citet{rosenbaum2002observational}, and the Regression Discontinuity design model introduced by \citet{thistlethwaite1960regressionDiscontinuity}. For each of these scenarios, we also show that, under natural additional assumptions, ATE can be estimated from finite samples.

    We believe these findings open new avenues for bridging learning-theoretic insights and causal inference methodologies, particularly in observational studies with complex treatment mechanisms.

    \blfootnote{Accepted for presentation{, as an extended abstract,} at the 38th Conference on Learning Theory (COLT) 2025}
\end{abstract}

\newpage

\newpage

{
    \linespread{1}
    \tableofcontents
}

\newpage

\section{Introduction}\label{sec:intro}
    Understanding cause and effect is a central goal in science and decision-making. Across disciplines, we ask: What is the effect of a new drug on disease rates? How does a policy impact growth? Is technology driving economic growth? \emph{Causal inference} tackles such questions by disentangling correlation from causation. Unlike statistical learning, which predicts outcomes from data, causal inference estimates the effects of interventions that alter the data-generating process.
    
    A fundamental challenge in causal inference is that we can never observe both potential outcomes for the same individual. For example, if a patient takes a medication and recovers, we do not know whether the patient would have recovered without it. This \emph{fundamental problem of causal inference} implies that causal effects must be inferred under certain assumptions \cite{holland1986statistics}. 

    To formalize this challenge, we consider the widely-used \emph{potential outcomes model} introduced by \citet{neyman1990applications} (originally published in 1923) and later formalized by \citet{rubin1974estimating}; see also \citet*{hernan2023causal,rosenbaum2002observational,chernozhukov2024appliedcausalinferencepowered}. 
    Here, for a unit with \textit{covariates} $X \in \R^d$, $Y(1)$ and $Y(0)$ denote potential outcomes under treatment and control, respectively. 
    Since only the outcome $Y(T)$ corresponding to the assigned treatment $T$ is observed, certain assumptions are needed to estimate the \textit{average treatment effect} (ATE), defined as $\tau \coloneqq \Exp[Y(1) - Y(0)]$, where $(T, Y(1), Y(0))$ are random variables whose distribution may depend on $X$. This framework underpins many modern causal inference methods -- both practical and theoretical -- and can capture many treatment effects, apart from $\tau,$ such as the \emph{average treatment effect on the treated} (ATT), defined as $\gamma\coloneqq \Ex[Y(1)-Y(0)\mid T{=}1]$. 
    Two fundamental questions under this framework, studied since \citet{cochran1965observationalStudies, rubin1974estimating, rubin1978randomization, heckman1979SelectionBias}, are as follows:
    \begin{enumerate}[leftmargin=13pt,itemsep=0pt]
        \item[$\triangleright$] \emph{Identification}: Given infinite samples of the form $(X,T,Y(T))$, can we \emph{identify} {treatment effects}?
        \item[$\triangleright$] \emph{Estimation}: Given $n$ samples   $(X,T,Y(T))$, can we \emph{estimate} {treatment effects} {up to} error $\eps(n)$?
    \end{enumerate}
    \noindent Due to the missingness in data (explained above), even the identification problem is unsolvable without making structural assumptions on the distribution of  $(X, T, Y(T))$, which is a censored version of the (complete) data distribution $(X,T,Y(1),Y(0))$.
    The earliest and most widely used such assumptions are \textit{unconfoundedness} and \textit{overlap}.
    \begin{enumerate}[leftmargin=13pt,itemsep=0pt]
      \item[$\triangleright$] \textit{Unconfoundedness} presumes that after conditioning on the value of the covariate $X$, the treatment random variable $T$ is independent of the outcomes $Y(1)$ and $Y(0)$, \ie{}, $T \bot (Y(0), Y(1)) \mid X$.
      \item[$\triangleright$] \textit{Overlap} requires that the probability of being assigned treatment conditional on the covariate $X$, \ie{}, $\Pr[T{=}1\mid X{=}x]$, is a quantity strictly between 0 and 1 for each covariate $x$.
    \end{enumerate} 
    Unconfoundedness (a.k.a.,\ ignorability, conditional exogeneity, conditional independence, selection on observables) and overlap (a.k.a.,\ positivity and common support) are essential for unbiased estimation of the average treatment effect and are widely studied across {Statistics (\eg{}, \cite{rosenbaum2002observational,hernan2023causal,rubin1974estimating,rubin1977regressionDiscontinuity,rubin1978randomization,rosenbaum1983central}) and many other disciplines, including} 
    Medicine (\eg{}, \cite{rosenbaum1983central}), 
    Economics (\eg{}, \cite{athey2017CausalityReview,dehejia1998causal,dehejia2002propensity,abadie2006large,abadie2016matching}), 
    Political Science (\eg{}, \cite{brunell2004turnout,sekhon2004quality,ho2007matching}), 
    Sociology (\eg{}, \cite{morgan2006matching,lee2009estimation,oakes2017methods}), 
    and other fields (\eg{}, \cite{austin2008critical}).
    Despite their wide use across different disciplines, there are fundamental instances where unconfoundedness or overlap are easily violated.

Unconfoundedness is often violated in \textit{observational studies}, where treatments or exposures are not assigned by the researcher but observed in a natural setting. In a prospective cohort study, for example, individuals are followed over time to assess how exposures influence outcomes. A common violation arises when key confounders are unmeasured. For instance, in studying smoking’s impact on health, omitting socioeconomic status (SES), which affects both smoking habits and health, can bias results, as lower SES correlates with higher smoking rates and poorer health, independent of smoking.

    Overlap is violated when certain covariate values make treatment assignments nearly deterministic. In a marketing study estimating the effect of personalized advertisements on purchases, covariates like demographics, browsing history, and preferences define a high-dimensional feature space. As this space grows, many user profiles either always or never receive the ad, leading to \emph{lack of overlap} \cite{damour2021highDimensional}. Without comparable treated and untreated units, causal inference methods struggle to estimate counterfactual outcomes, yielding unreliable effect estimates.
    
    We refer the reader to \cref{sec:examples:violation} for an in-depth discussion of scenarios demonstrating the fragility of unconfoundedness and overlap.
    Further, while Randomized Controlled Trials (RCTs) can eliminate hidden factors that lead to violation of unconfoundedness or overlap, they are often very expensive and, even unethical, for treatments that can harm individuals.
    Moreover, even RCTs can violate unconfoundedness due to participant non-compliance; see \cref{sec:examples:violation:Unc}.

    \medskip 
    
    These examples lead us to the following question, which we answer. 
    \begin{mdframed}[leftmargin=2.5cm, rightmargin=2.5cm]
            \begin{center}
                \emph{Is identification and estimation of treatment effects possible\\ in any meaningful setting without unconfoundedness or overlap?}
            \end{center}
        \end{mdframed}
        This question is not new and can be traced back to at least the work of \citet{rubin1977regressionDiscontinuity}, who recognized that, without substantial overlap between treatment and control groups, identification of treatment effects necessarily requires additional prior assumptions. 
        To the best of our knowledge, the present work provides the first formal characterization of the precise assumptions required to identify treatment effects in scenarios lacking substantial overlap, unconfoundedness, or both.

    \subsection{Framework}
    \label{sec:framework}
        The main conceptual contribution of this work is a \emph{learning-theoretic} approach that enables a characterization of when identification and estimation of {treatment effects} are possible.
        Before presenting this approach, it is instructive to reconsider how {unconfoundedness and overlap} enable identification {of the simplest and most widely used treatment effect -- the average treatment effect}: 
        Given the observational study $\cD$, which is a distribution over $(X,T,Y(0),Y(1))$, unconfoundedness and overlap put a strong constraint on $\cD$:
        they require that $Y(t) \perp T~|~X{=}x$ for each $t \in \{0,1\}$ and $x \in \R^d$ and that the propensity scores $e(x) = \Pr[T {=} 1 | X{=}x]$ are bounded away from 0 and 1. 
        Under these assumptions, identification and estimation of ATE $\tau = \tau_{\cD}$ is possible given censored samples $(X,T,Y(T))$ due to the following decomposition of $\tau_\cD$ for a fixed $x \in \R^d$ (we integrate over the $x$-marginal to get $\tau_\cD$):
        \[
            \Ex_{Y(0), Y(1)}
            \insquare{Y(1)-Y(0) \mid X{=}x}
            = 
            \Ex_{Y(0), Y(1),T}\insquare{
                    \frac{Y(1)\cdot T}{e(X)}
                -   \frac{Y(0)\cdot (1-T)}{1-e(X)}
                    \given X{=}x}
            \,,        \yesnum\label{eq:decomposition:unconfoundedness}
        \]
        {where we use overlap to divide with $e(X), 1-e(X)$ and  unconfoundedness to obtain the equation $\E[Y(1)\cdot T\mid X]
            =\Ex[Y(1)\mid X]\cdot \Pr[T{=}1\mid X]
            $ and analogously for $Y(0)$.}
        Note that all the quantities appearing in the RHS are identifiable and estimable\footnote{We remark that the problem of estimating the propensity scores $e(x) = \Pr[T{=}1|X{=}x]$ is identical to the classical problem of learning probabilistic concepts \cite{kearns1994pconcept}. We refer the reader to \Cref{appendix:probconcepts} for details.} from the censored distribution $\cC_\cD$~\cite{rubin1978randomization}, which is defined over $(X,T,Y(T))$.
        
        {When} no {constraints are} put on $\cD$, identification of ATE is impossible in general \cite{imbens2015causal}.
        Without unconfoundedness, propensity scores $\Pr[T{=}1\mid X{=}x]$ are not sufficient to identify the distribution of $T$, which can also depend on the outcomes $Y(0)$ and $Y(1)$ (conditioned on $X{=}x$).
        Instead, we can decompose the expression of $\tau_\cD$ for a fixed $x \in \R^d$ as follows:
            \[
                \Ex_{\substack{Y(0),Y(1)}}\insquare{Y(1)-Y(0)\mid X{=}x}
                =
                \Ex_{\substack{Y(0), Y(1), T}}\insquare{
                        \frac{Y(1)\cdot T}{\Pr[T{=}1|X, Y(1)]}
                    -   \frac{Y(0)\cdot (1-T)}{\Pr[T{=}0|X, Y(0)]}
                        \given X{=}x
                }\,.
        \]
        If unconfoundedness holds, then we could recover \eqref{eq:decomposition:unconfoundedness} since then $T$ would not depend on $Y(1), Y(0)$ given $X.$
        {However, unlike} the previous decomposition of \Cref{eq:decomposition:unconfoundedness}, the above equation always holds and crucially utilizes the \emph{generalized propensity scores} $p_t(x,y) = \Pr[T{=}t\mid X{=}x, Y(t) {=}y]$ with $t \in \{0,1\}$.\footnote{Observe that we need some overlap condition to divide by $p_0(\cdot)$ and $p_1(\cdot)$ in the above equation. In our main results, however, we do \emph{not} follow this decomposition and will \emph{not} need such overlap conditions.}
        Unfortunately, these generalized propensity scores, in contrast to the standard propensity scores, are not always identifiable from data. %
        To understand when these are identifiable, we need to consider the \emph{joint distribution of covariates and outcomes} $\cD_{X,Y(t)}$ for each $t \in \{0,1\}$.
    
    To this end, we adopt an approach inspired by statistical learning theory \cite{valiant1984theory,vapnik1999overview,blumer1989learnability,hastie2013elements,AnthonyBartlett1999NNLearning,alon1997scale,lugosi2002pattern,massartNoise2006,vapnik2006estimation,bousquet2003introduction,bousquet2003new}.
    We introduce \emph{concept classes} for the two key quantities derived by the above discussion $p_t$ and $\cD_{X,Y(t)}$ (for each $t \in \{0,1\}$) that will  \emph{place some restrictions} on the observational study $\cD$ {towards understanding which conditions enable} identification and estimation. 
    In the remainder of the paper, we assume that all distributions are continuous and have a density. (All results also extend to discrete domains by replacing densities by probability mass functions.)
    
    We are interested in the structure of two concept classes: the class of generalized propensity scores
    $\hyP \subseteq \{p \colon \R^d \times \R \to [0,1]\}$ and the  class of {covariate-outcome} distributions $\hyD \subseteq \Delta(\R^d \times \R)$. 
    As in classical {statistical} learning theory, having fixed the concept classes, our next step is to restrict the underlying distribution $\cD$ to be \emph{realizable} with respect to the {pair of concept classes $(\hyP, \hyD)$}.
    An observational study is said to be realizable with respect to the concept class pair $(\hyP, \hyD)$ if the generalized propensity scores {$p_0(\cdot),p_1(\cdot)$} induced by $\cD$ belong to $\hyP$ and $\cD_{X,Y(t)} \in \hyD$ for {each} $t \in \{0,1\}$. This learning-theoretic framework is quite expressive.
    For instance, it can capture unconfoundedness  and overlap\footnote{ We refer to overlap as $c$-overlap: for some absolute constant $c \in (0,\nfrac{1}{2})$, $c < {p_0(x,y),p_1(x,y)} < 1-c$.}  by letting $\hyD$ be the set of all distributions over $\R^d\times \R$, {denoted by $\hyDall$}, and restricting $\hyP$ to be the following class  
            \[ 
                \phantom{.}\hyPou(c)\coloneqq \inbrace{p\colon\R^d\times \R\to [0,1]\given p(x,y)=p(x,z) \text{ and } c<p(x,y)<1-c \text{ for each $(x,y,z)$}}.
                \hspace{-2mm}
                \yesnum\label{eq:PinScenarioI}
            \]
    That is, $\cD$ satisfies unconfoundedness and $c$-overlap if and only if it is realizable with respect to the pair of classes $(\hyPou(c), \hyDall)$. 

    Before proceeding to our results, we introduce some further terminology. Given classes $(\hyP, \hyD)$, we are {particularly} interested in the generalized propensity scores in $\hyP$ and covariate-outcome distributions in $\hyD$ that induce a valid observational study. To this end, we say that {a} tuple $(p,\cP) \in \hyP \times \hyD$ is \emph{compatible} with classes $(\hyP,\hyD)$ (henceforth, just compatible) if there exists another tuple $(\wh p, \wh \cP)\in \hyP\times \hyD$ such that setting $(p_0,p_1,\cD_{X,Y(0)},\cD_{X,Y(1)})=(p,\wh{p},\cP,\wh{P})$ (or \textit{equivalently} the re-ordered {assignment} $(p_0,p_1,\cD_{X,Y(0)},\cD_{X,Y(1)})=(\wh{p},p,\wh{P},\cP)$) defines a valid observational study, \ie{}, a valid distribution over $(X,T,Y(0),Y(1))$.

    \subsection{Main Results on Identification}
    \label{sec:results}
        We say that a certain treatment effect ${\eta}_\cD$ is \emph{identifiable} from the censored distribution $\cC_\cD$ when $(\hyP, \hyD)$ satisfy {some} Condition C, if there is a mapping $f$ such that $f(\cC_\cD) = {\eta}_\cD$ for any observational study $\cD$ realizable with respect to $(\hyP, \hyD)$ that satisfy C; in other words, if $\eta_{\cD_1} \neq \eta_{\cD_2}$ then it should be $\cC_{\cD_1} \neq \cC_{\cD_2}$ (see also \cref{prob:main} for a formal definition).
        Having set the stage, we now ask our first main question: 
        \begin{mdframed}[leftmargin=1.5cm, rightmargin=1.5cm]
            \begin{center}
                \emph{Which conditions on $(\hyP, \hyD)$ characterize the identifiability of {treatment effects}?}
            \end{center}
        \end{mdframed} 
    As a first contribution, we identify a condition on the classes $(\hyP, \hyD)$ that will be crucial for the results on the identification of ATE and ATT that proceed.
     \begin{restatable}[{Identifiability} Condition]{condition}{conditionIden}
        \label{cond:iden:informal} 
        \label{cond:iden} 
        \label{infcond1} 
        The concept classes $\inparen{\hyP,\hyD}$ satisfy the Identifiability Condition if for any  tuples $(p,\cP), (q,\cQ) \in {\hyP} \times {\hyD}$ that are compatible with $({\hyP}, {\hyD})$, at least one of the following holds:
    
        \begin{enumerate}[itemsep=-1pt,leftmargin=17.5pt]
            \item \textbf{(Equivalence of Outcome Expectations)} $\E_{(x,y) \sim \cP}[y] = \E_{(x,y) \sim \cQ}[y]$
            \item \textbf{(Distinction of Covariate Marginals)} $\cP_X \neq \cQ_X$ 
            \item \textbf{(Distinction under Censoring)} $\exists (x,y)\in \supp(\cP_X)\times \R$, such that, $p(x,y) \cP(x, y) \neq q(x,y) \cQ(x,y)$.
        \end{enumerate}
    \end{restatable}
     Observe that in \cref{cond:iden} we only focus on tuples that are compatible. This is due to the fact that incompatible tuples of $\hyP \times \hyD$ cannot be part of a realization of any valid observational study and hence properties of incompatible tuples are not relevant to our characterizations.%
    
     To gain some intuition for \cref{cond:iden}, consider two observational studies $\cD_1$ and $\cD_2$ which correspond to the pairs $(p, \cP)$ and $(q,\cQ)$ respectively, where $\cP$ and $\cQ$ are distributions of $(X,Y(1)).$ 
        Assume that the true observational study $\cD$ is either $\cD_1$ or $\cD_2$.
        Given the censored distribution $\cC_\cD$, we want to identify $\E_{\cD}[Y(1)].$
       First, suppose that the tuples $(p, \cP), (q, \cQ)$ satisfy Requirement 1 in \cref{cond:iden}. Then we are done since we only care about the expected outcomes $\E_{\cD}[Y(1)] = \E_{(x,y) \sim \cP}[y] = \E_{(x,y) \sim \cQ}[y]$ which are the same under both distributions.
     Next, let us assume that Requirement 1 is violated and, hence, the expected treatment outcome is different between the null and the alternative hypothesis. In this case, if Requirement 2 is satisfied, then we can distinguish $\cP$ and $\cQ$ from $\cC_{\cD}$ (by comparing $\cP_X$ and $\cQ_X$ to the covariate marginal of $\cC_{\cD}$) and, hence, distinguish between $\cD_1$ and $\cD_2$.
    Finally, if both Requirements 1 and 2 fail but Requirement 3 holds, then $p(x,y) \cP(x,y)$ is proportional to the density of $(X,T,Y(1))$ in the \emph{censored} distribution on each point $(x,y)$.
    Using this, we can again distinguish between the null and the alternative hypothesis.
    (Notice that, {in both the second and third steps,} we can distinguish between distributions that differ on a measure-zero set {since we allow the identification algorithms to be a function of the whole density. If one does not allow this, then one needs to consider the ``almost everywhere'' analogue of \cref{cond:iden}.})

    Our first result states that \cref{cond:iden} fully characterizes the ATE {in} any observational study $\cD$ realizable with respect to $(\hyP, \hyD)$.
    
    \begin{theorem}
    [Identification of ATE]
    \label{infthm:Suff}    
    The average treatment effect $\tau_\cD$ is identifiable from the censored distribution $\cC_\cD$ for any observational study $\cD$ realizable with respect to $(\hyP, \hyD)$ if and only if 
    $(\hyP, \hyD)$ satisfy \cref{cond:iden}. 
    \end{theorem}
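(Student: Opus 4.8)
The plan is to prove both directions by reducing the identifiability of $\tau_\cD = \Ex_\cD[Y(1)] - \Ex_\cD[Y(0)]$ to the identifiability of the two marginal expectations $\Ex_\cD[Y(1)]$ and $\Ex_\cD[Y(0)]$ separately, and then to read each of these off the two ``channels'' that the censored distribution $\cC_\cD$ exposes. The starting observation is a dictionary: for a realizable study $\cD$ with generalized propensities $p_0,p_1$ and covariate--outcome laws $\cD_{X,Y(0)},\cD_{X,Y(1)}$, the slice of $\cC_\cD$ at $T{=}1$ has density exactly $p_1(x,y)\,\cD_{X,Y(1)}(x,y)$, the slice at $T{=}0$ has density $p_0(x,y)\,\cD_{X,Y(0)}(x,y)$, and the covariate marginal of $\cC_\cD$ is $\cD_X$. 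I would use this to translate equality of censored distributions directly into the three quantities appearing in Requirements 1--3 of \cref{cond:iden}.

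For the ``if'' direction, I take two realizable studies $\cD_1,\cD_2$ with $\cC_{\cD_1}=\cC_{\cD_2}$ and show $\tau_{\cD_1}=\tau_{\cD_2}$. Setting $(p,\cP):=(p_1^{(1)},\cD^{(1)}_{X,Y(1)})$ and $(q,\cQ):=(p_1^{(2)},\cD^{(2)}_{X,Y(1)})$ --- both compatible, since each arises from a valid study --- equality of the $T{=}1$ slices gives $p\,\cP=q\,\cQ$ pointwise, so Requirement 3 fails, and equality of covariate marginals gives $\cP_X=\cQ_X$, so Requirement 2 fails. \cref{cond:iden} then forces Requirement 1, i.e. $\Ex_{(x,y)\sim\cP}[y]=\Ex_{(x,y)\sim\cQ}[y]$, which is exactly $\Ex_{\cD_1}[Y(1)]=\Ex_{\cD_2}[Y(1)]$. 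Running the same argument on the $T{=}0$ slices (using the reordered assignment permitted by compatibility) yields $\Ex_{\cD_1}[Y(0)]=\Ex_{\cD_2}[Y(0)]$, and subtracting gives $\tau_{\cD_1}=\tau_{\cD_2}$.

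For the ``only if'' direction I argue the contrapositive: if \cref{cond:iden} fails, there are compatible tuples $(p,\cP),(q,\cQ)$ violating all three requirements, i.e. $\Ex_\cP[y]\neq\Ex_\cQ[y]$, $\cP_X=\cQ_X$, and $p\,\cP=q\,\cQ$ everywhere. Fix any completion $(\hat p,\hat\cP)$ of $(p,\cP)$ (which exists by compatibility), giving a valid study $\cD_1$ with treated channel $(p,\cP)$ and control channel $(\hat p,\hat\cP)$. I then build $\cD_2$ with treated channel $(q,\cQ)$ and the \emph{same} control channel $(\hat p,\hat\cP)$; it is realizable since all four components lie in $\hyP$ or $\hyD$. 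Because $p\,\cP=q\,\cQ$, the $T{=}1$ slices of $\cC_{\cD_1}$ and $\cC_{\cD_2}$ coincide; the $T{=}0$ slices coincide as the control channels are identical; and $\cP_X=\cQ_X$ makes the covariate marginals agree --- so $\cC_{\cD_1}=\cC_{\cD_2}$. Since the control channel is unchanged, $\Ex_{\cD_2}[Y(0)]=\Ex_{\cD_1}[Y(0)]$, whereas $\Ex_{\cD_2}[Y(1)]=\Ex_\cQ[y]\neq\Ex_\cP[y]=\Ex_{\cD_1}[Y(1)]$; hence $\tau_{\cD_1}\neq\tau_{\cD_2}$ and ATE is not identifiable.

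The crux --- and the step I expect to be the main obstacle --- is verifying that $\cD_2$ is a \emph{genuinely valid} observational study, i.e. that the treated channel $(q,\cQ)$ can be coupled with the fixed control channel $(\hat p,\hat\cP)$. The key idea is that $\cC_\cD$ never constrains $Y(1)$ on control units (nor $Y(0)$ on treated units), so I leave the entire $(X,Y(0),T)$ sub-joint of $\cD_1$ intact and only re-specify the latent conditional law of $Y(1)$ given $\{X{=}x,T{=}0\}$, setting it to $(1-q(x,\cdot))\,\cQ(\cdot\mid x)/(1-e(x))$ where $e(x)=\Pr_{\cD_1}[T{=}1\mid X{=}x]$; this reproduces the full $(X,Y(1))$-marginal $\cQ$ with generalized propensity exactly $q$. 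The one nontrivial check is a mass-accounting identity: that the control-unit mass at each $x$, namely $(1-e(x))\,\cD_X(x)$, equals the mass $(1-q)\,\cQ$ assigns at $x$. This is precisely where failing Requirements 2 and 3 is used --- integrating $p\,\cP=q\,\cQ$ in $y$ and dividing by the common marginal $\cP_X=\cQ_X$ shows the two treated channels induce the same propensity $e(\cdot)$ on $\supp(\cP_X)$, which makes the accounting balance and simultaneously confirms the propensity-balance constraint needed for validity. I would dispatch the degenerate cases $e(x)\in\{0,1\}$ separately, noting they carry no control (resp.\ treated) mass and hence impose no constraint.
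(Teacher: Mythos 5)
Your proposal is correct and follows essentially the same route as the paper's proof: sufficiency by observing that the censored distribution exposes exactly the covariate marginal and the products $p_t\,\cD_{X,Y(t)}$, so \cref{cond:iden} pins down $\Ex[Y(1)]$ and $\Ex[Y(0)]$ separately; necessity by fixing one channel via the compatibility witness, swapping the other channel between $(p,\cP)$ and $(q,\cQ)$, and verifying validity of the perturbed study through the same propensity-consistency identity obtained by integrating $p\,\cP=q\,\cQ$ over $y$ and using $\cP_X=\cQ_X$. The only difference is cosmetic (you swap the treated channel where the paper swaps the control channel, and you spell out the latent conditional of $Y(1)$ given $T{=}0$ slightly more explicitly), so no further comparison is needed.
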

    Interestingly, we show that \cref{cond:iden} also characterizes the identifiability of the average treatment effect on the treated (ATT), \ie{}, $\gamma_\cD\coloneqq \Ex[Y(1)-Y(0)|T{=}1]$. ({When talking about ATT, to simplify the results and exposition, we assume that $\Pr[T{=}1] > 0$}.)
    \begin{theorem}
    [Identification of ATT]
    \label{infthm1}
        The average treatment effect on the treated $\gamma_{\cD}$ is identifiable from the censored distribution $\cC_{\cD}$ for any observational study $\cD$ realizable with respect to $(\hyP, \hyD)$ (which  has $\Pr_\cD[T{=}1]>0]$) if and only if $(\hyP, \hyD)$ satisfy  \Cref{infcond1}. 
    \end{theorem}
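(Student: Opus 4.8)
The plan is to reduce the identifiability of $\gamma_\cD$ to the identifiability of the single counterfactual functional $\Ex_\cD[Y(0)]$, and then to show that the latter is characterized by \Cref{cond:iden}. The reduction rests on the identity
\[
    \gamma_\cD
    = \Ex[Y(1)\mid T{=}1] - \frac{\Ex[Y(0)] - \Ex[Y(0)\,\ind\{T{=}0\}]}{\Pr[T{=}1]}\,,
\]
obtained by writing $\Ex[Y(0)\mid T{=}1] = \Pr[T{=}1]^{-1}\big(\Ex[Y(0)] - \Ex[Y(0)\ind\{T{=}0\}]\big)$. First I would observe that every term on the right except $\Ex[Y(0)]$ is a function of the censored distribution alone: $\Ex[Y(1)\mid T{=}1]$ and $\Ex[Y(0)\ind\{T{=}0\}]$ are expectations against the $T{=}1$ and $T{=}0$ slices of $\cC_\cD$, while $\Pr[T{=}1]>0$ is a marginal of $\cC_\cD$. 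Hence $\gamma_{\cD_1}=\gamma_{\cD_2}$ for any two realizable studies sharing a censored distribution if and only if $\Ex_{\cD_1}[Y(0)]=\Ex_{\cD_2}[Y(0)]$; that is, $\gamma_\cD$ is identifiable exactly when $\Ex_\cD[Y(0)]$ is.

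For the ``if'' direction I would take two realizable studies $\cD_1,\cD_2$ with $\cC_{\cD_1}=\cC_{\cD_2}$ and let $(p,\cP)=(p_0,\cD_{X,Y(0)})$ and $(q,\cQ)$ be their respective control-side tuples, both compatible with $(\hyP,\hyD)$. Equality of the censored distributions forces $\cP_X=\cQ_X$ (both equal the observable covariate marginal) and $p(x,y)\cP(x,y)=q(x,y)\cQ(x,y)$ everywhere (these are the $T{=}0$ slices of the two censored distributions). Thus Requirements 2 and 3 of \Cref{cond:iden} both fail for this pair, so the Identifiability Condition forces Requirement 1, i.e.\ $\Ex_\cP[y]=\Ex_\cQ[y]$, which is exactly $\Ex_{\cD_1}[Y(0)]=\Ex_{\cD_2}[Y(0)]$.

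For the ``only if'' direction I would argue the contrapositive. If \Cref{cond:iden} fails there are compatible tuples $(p,\cP),(q,\cQ)$ with $\Ex_\cP[y]\neq\Ex_\cQ[y]$, $\cP_X=\cQ_X$, and $p\cP=q\cQ$ on $\supp(\cP_X)\times\R$. The key observation is that integrating $p\cP=q\cQ$ over $y$ shows the two control tuples induce the \emph{same} assignment marginal $\Pr[T{=}0\mid X{=}x]$; consequently a single treatment-side completion $(\wh p,\wh\cP)$---which exists because $(p,\cP)$ is compatible---is simultaneously a valid completion of $(q,\cQ)$. Setting $\cD_1=((p,\cP),(\wh p,\wh\cP))$ and $\cD_2=((q,\cQ),(\wh p,\wh\cP))$ then yields two realizable studies with identical treatment parts and identical $T{=}0$ slices, hence $\cC_{\cD_1}=\cC_{\cD_2}$, yet $\Ex_{\cD_1}[Y(0)]\neq\Ex_{\cD_2}[Y(0)]$; by the reduction their ATTs differ. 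This construction parallels the one behind \Cref{infthm:Suff}.

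The main obstacle I anticipate is entirely in this last construction: verifying that the shared completion $(\wh p,\wh\cP)$ produces a \emph{valid} joint distribution over $(X,T,Y(0),Y(1))$ realizable with respect to $(\hyP,\hyD)$, and in particular that it can be chosen with $\Pr[T{=}1]>0$ as the theorem requires (the degenerate case $\Pr[T{=}1]=0$ leaves $\gamma_\cD$ undefined and must be excluded). The equality of induced assignment marginals is what makes a common completion possible, so the care lies in checking that the coupling of $Y(0)$ and $Y(1)$ given $(X,T)$ can be realized within the allowed classes while keeping the treated group nonempty.
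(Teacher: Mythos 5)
Your proposal follows essentially the same route as the paper: the reduction of ATT to identifying $\Ex_\cD[Y(0)]$ via $\Ex[Y(0)\mid T{=}1]=\Pr[T{=}1]^{-1}\bigl(\Ex[Y(0)]-\Ex[Y(0)\,\ind\{T{=}0\}]\bigr)$ is exactly the paper's reduction (see the footnote in the proof of \cref{claim:sufficiency_condition1}); the sufficiency argument (the censored distribution pins down $\cP_X$ and the product $p\,\cP$, so \cref{cond:iden} forces equality of the means) matches \cref{claim:sufficiency_condition1}; and the necessity construction with a shared treatment-side completion $(\wh p,\wh\cP)$, justified by integrating $p\,\cP=q\,\cQ$ over $y$ to see that the two control tuples induce the same $\Pr[T{=}0\mid X{=}x]$, is precisely the construction in \cref{claim1}.

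The one step you flag but leave unresolved is genuinely needed to finish the theorem: the constructed pair $\cD^{(1)},\cD^{(2)}$ may have $\Pr[T{=}1]=0$, in which case $\gamma_\cD$ is undefined and the counterexample says nothing about ATT. The paper closes this case with a relabeling trick: if $\Pr_{\cC_{\cD^{(1)}}}[T{=}1]=0$, swap the roles of treated and control units in both studies to obtain $\cD^{(3)},\cD^{(4)}$ with $\Pr[T{=}1]=1$; for these studies ATT coincides with ATE, the censored distributions still agree, and the ATEs still differ, so ATT remains non-identifiable. (When $\Pr[T{=}1]>0$ already holds, your reduction applies directly, since $\Pr[T{=}1]$ is determined by the common censored distribution and hence is positive for both studies.) With that case added, your argument is complete and coincides with the paper's.
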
 
   Thus, the above two results, together, imply that {identifiability of} ATT and ATE is characterized by the same \Cref{cond:iden}; up to the mild assumption that $\Pr[T{=}1]>0$ which holds in any practical scenario where ATT identification is meaningful.
    
     \paragraph{Discussion.} 
        The above collection of results adds to classical {identifiability} conditions in Statistics (\eg{}, \cite{everitt2013finite,teicher1963identifiability}), Statistical Learning Theory (\eg{}, \cite{angluin1980inductive,angluin1988identifying}\footnote{The characterizing condition in language identification concerns \emph{pairs} of languages \cite{angluin1980inductive}. This is also the case in our setting (see \cref{cond:iden:informal}). Intuitively, this is expected since identification in both problems requires being able to distinguish between pairs of task instances that have distinct ''identities.''}), and Econometrics 
     (\eg{}, \cite{manski1990nonparametric,athey2002identification}).
     To the best of our knowledge, these are the first tight characterizations of when ATE and ATT identification is possible in observational studies. For an overview of the proofs, see the technical overview in \cref{sec:overview}. While we focus on the average treatment effect and the average treatment effect on the treated, the proposed concept class-based framework is flexible and allows us to characterize when other types of treatment effects are identifiable; see \Cref{sec:iden:extension} for an application to the heterogeneous treatment effect.

    \subsection{Applications and Estimation of ATE}
        \label{sec:applicationsAndEstimation}
         For \Cref{infcond1} to be useful {given the} other existing conditions (such as unconfoundedness and overlap), it needs to capture interesting examples \emph{not captured by existing conditions}.
        In what follows, we revisit several well-studied scenarios in causal inference or their generalizations and, for each scenario, provide identification results based on \Cref{infthm:Suff} -- in the process -- obtaining several novel identification results.
        Finally, we also give finite sample complexity guarantees for each of these scenarios.

        \paragraph{Scenario~I: Unconfoundedness and Overlap.} 
            At the end of \Cref{sec:framework}, we mentioned that our framework can capture unconfoundedness and overlap.
            Identification in this scenario is standard and can also be deduced using \cref{infthm:Suff}; see \Cref{section:UO}.
            Estimation in this setting is also standard \cite{imbens2015causal} and we discuss how our framework captures it in \Cref{sec:est:unconfoundedness-overlap}.

        \paragraph{Scenario~II: Overlap without Unconfoundedness.} 
            Next, we consider observational studies $\cD$ which satisfy $c$-overlap for some $c\in (0,\nfrac{1}{2})$ but may not satisfy unconfoundedness. We are going to use our framework to characterize the subset of these studies $\cD$ for which ATE is identifiable.
            Since overlap holds with some parameter $c \in (0,\nfrac{1}{2})$, it restricts the concept class $\hyP$ to be $\hyPoverlap(c)$ where $c < p(x,y) < 1-c$ for any $(x,y)$ and $p \in \hyPoverlap(c)$. %
            This case generalizes several models studied in the causal inference literature \cite{tan2006distributional,rosenbaum2002observational,rosenbaum1987sensitivity,kallus2021minimax}; see the discussion after \cref{infthm:2}.  
            Under this scenario, we can ask: which conditions should the covariate-outcome distributions $\hyD$ satisfy for $\tau$ to be identifiable, \ie{}, for which observational studies realizable by $(\hyPoverlap(c), \hyD)$ is the ATE identifiable? 
            Our result is the following. 

        \begin{inftheorem}
        [Informal, see \Cref{lem:iden:overlap}]
        \label{infthm:2}
        \label{infthm:overlap}
 Assume that for any pair $\cP, \cQ \in \hyD$, either Item 1 or 2 of \Cref{infcond1} hold or there exist $x \in \supp(\cP_X)$, $y \in \R$ such that $\cP(x,y) \notin (\frac{c}{1-c}, \frac{1-c}{c}) \cdot \cQ(x,y)$. Then
        $\tau_{\cD}$ is identifiable from the censored distribution  $\cC_{\cD}$ for any observational study $\cD$ realizable with respect to $(\hyPoverlap{(c)}, \hyD)$. Moreover, this condition is necessary.
        \end{inftheorem}
        The above condition for identification is quite similar to \Cref{infcond1} and is satisfied
        by setting the outcomes marginal of $\cP \in \hyD$ to be, \eg{}, Gaussian, Pareto, or Laplace, and letting the $x$-marginal $\cP_X$ be unrestricted. 
        This captures important practical models where the outcomes are modeled as a generalized linear model with Gaussian noise \cite{rosenbaum2002observational,chernozhukov2024appliedcausalinferencepowered}.

        \begin{figure}[!ht]
            \centering
            \subfigure[]{
                {\includegraphics[width=0.45\linewidth,clip,trim={2.5cm 0.15cm 3cm 0cm}]{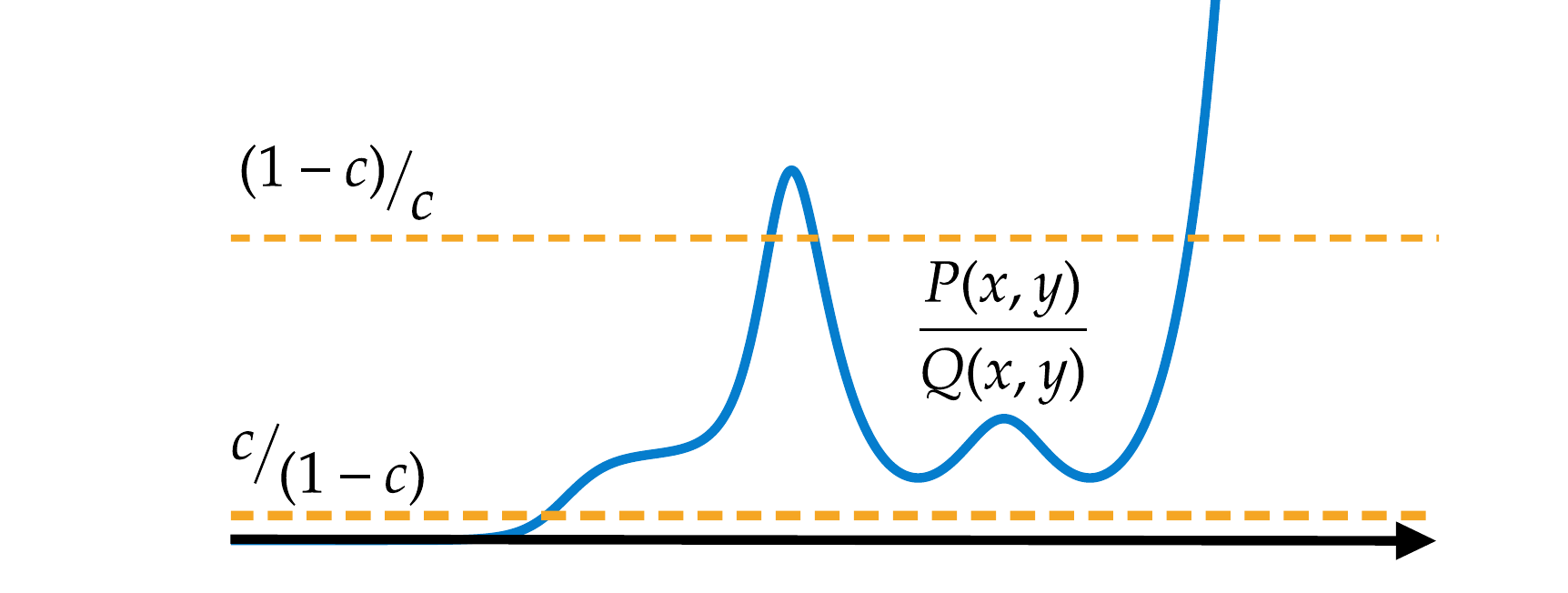}}
            }
            \subfigure[]{
                {\includegraphics[width=0.475\linewidth,clip,trim={2.5cm 0cm 3.25cm 0cm}]{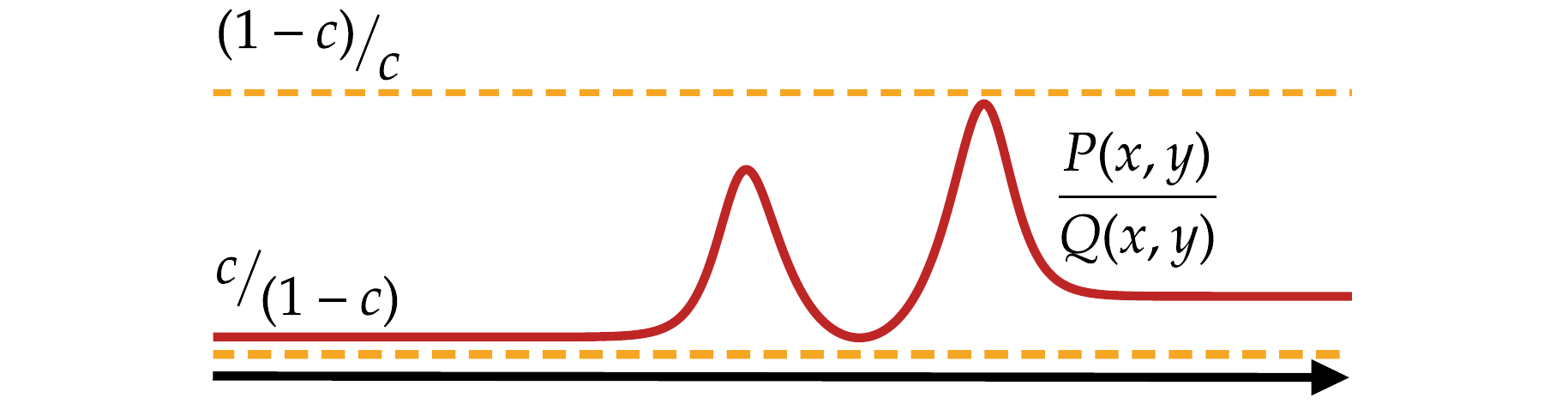}}
            }
            \caption{Illustration of identifiable and non-identifiable instances in Scenario II: 
                The left plot corresponds to an instance which is identifiable since {there are pairs $(x,y)$ where the density ratio $\cP(x,y)/\cQ(x,y)$ lies outside the interval $(\frac{c}{1-c}, \frac{1-c}{c})$; recall that, in Scenario II, the ratio of any two generalized propensity scores always lies in the interval $(\frac{c}{1-c}, \frac{1-c}{c})$.} 
                {The right plot illustrates a non-identifiable instance; to be precise, one also needs to check that neither Item 1 nor Item 2 of \cref{cond:iden} holds in this case.}
                }
            \label{fig:ScenarioII}
        \end{figure}

        \paragraphit{Connections to Prior Work.} 
            Since we do not require unconfoundedness in any form, the requirements {on the generalized propensity score class $\hyPoverlap{}(c)$,} in this scenario, are very mild and are already satisfied by most existing frameworks that relax unconfoundedness while retaining overlap.
           {The restriction on the propensity score class $\hyPoverlap{}(c)$} relaxes \citet{tan2006distributional}'s model and \citet{rosenbaum2002observational}'s odds-ratio model, which are widely used in the literature on sensitivity analysis; see \citet{kallus2021minimax,rosenbaum2002observational} and the references therein.
            Both of these models roughly speaking restrict the range of generalized propensity scores $p_0(x,y),p_1(x,y)$ for the same covariate $x$, while already assuming overlap; see \cref{sec:iden:overlap} for a detailed discussion.
            The range of the propensity scores in Tan's and Rosenbaum's models is parameterized by certain constants $\Lambda,\Gamma\geq 1$ respectively, where $\Lambda=\Gamma=1$ corresponds to unconfoundedness, and the extent of violation of unconfoundedness increases with $\Lambda$ and $\Gamma$.
            The parameter $c$ relates to $\Lambda$ and $\Gamma$ as $\Lambda,\Gamma=O\inparen{\nfrac{(1-c)^2}{c^2}}>1$. 
            As \citet{tan2006distributional,rosenbaum2002observational} note, when $\Lambda,\Gamma>1$, without distributional assumptions, $\tau$ can only be identified up to $O(\Lambda)$ and $O(\Gamma)$ factors respectively. 
            Hence, from earlier results, it is not clear which distribution classes $\hyD$ enable the identification of $\tau$; this is answered by \cref{infthm:2}.

        \paragraphit{Finite-Sample Complexity.} 
            Given the above characterization of when the identification of ATE is possible when only overlap holds, one can ask for finite sample estimation.
            We complement the above result with the following sample complexity guarantee.

        \begin{inftheorem}
        [Informal, see \Cref{lem:est:overlap}]
        \label{infthm:SampleComplexity1}
        Under a robust version of the condition in \Cref{infthm:2} with mass function $M(\cdot)$ and $c$-overlap (see \cref{cond:Over:estimation}) and mild smoothness conditions on $\hyD$, there is an algorithm that, 
                given $n$ \iid{} samples from the censored distribution $\cC_\cD$ for any $\cD$ realizable by $\inparen{\hyPoverlap{(c)},\hyD}$,
               and  $\eps,\delta \in (0,1)$,
            outputs an estimate $\hat{\tau}$ such that $\abs{\hat{\tau}-\tau_{\cD}}\leq \eps$ with probability $1-\delta$.
            The number of samples is ${\wt{O}\inparen{\nfrac{1}{M(\eps)}^2}} \cdot \log(\nfrac{1}{\delta}) \cdot  \mathrm{fat}_{O(\eps)}(\hyPoverlap(c)) \cdot \log N_\eps(\hyD)$.
        \end{inftheorem}
        The sample complexity depends on the fat-shattering dimension \cite*{alon1997scale,talagrand2003vc} of the class $\hyP = \hyPoverlap{(c)}$ and the covering number $\log N_\eps$ of the class of distributions $\hyD$. 
        Moreover, the mass function $M(\cdot)$ appearing in the sample complexity depends on the class of distributions studied (for illustrations, we refer to \cref{lem:est:overlap}).
        To the best of our knowledge, this result is the first sample complexity result for such a general setting. 
         For further details, we refer to \Cref{sec:est:overlap}.

        \begin{remark}
            For our estimation results, we use a ''robust'' version of our {identifiability} condition. 
            This is necessary, to some extent, as estimation is a harder problem than identification.\footnote{Here, we disregard computational considerations, exploring the relation between estimation and identification with computational constraints is an interesting direction.}
            To see this, consider an estimator $E(\cdot)$ of some quantity $\phi_\cD$ (associated with an observational study $\cD$).
            Let the estimator have rate $\eps(\cdot)$, \ie{}, $\abs{\Ex_{s_1,\dots,s_n\sim \cC_\cD}[E(s_1,\dots,s_n)] - \phi_\cD}\leq \eps(n)$; where $\lim_{n\to \infty}\eps(n)=0.$
            Now, one can define an identifier $I(\cdot)$ for $\phi_\cD$ as follows: $I(\cC_\cD)=\lim_{n\to\infty} \Ex_{s_1,\dots,s_n\sim \cC_\cD}[E(s_1,\dots,s_n)].$
        \end{remark}

        \paragraph{Scenario~III: Unconfoundedness without Overlap.} 
            We now consider the setting where overlap may fail but unconfoundedness holds. 
            Without additional assumptions, this allows for degenerate cases in which everyone (or no one) receives the treatment, making identification of the ATE impossible. 
            To rule out such extremes, one can assume that \emph{some} nontrivial subset of covariates satisfies overlap. 
            Concretely, there is a set $S\subseteq\R^d$ with Lebesgue measure  
            $\vol(S)\ge c$ such that for each $(x,y)\in S\times\R$, we have $c < p_0(x,y),\,p_1(x,y) < 1 - c$.\footnote{In general, we do not require the lower bound on $\vol(S)$ to match the lower bound on the propensity scores. We assume them to be the same in the exposition for notational convenience. Our approach still applies when the two lower bounds differ, and \Cref{lem:iden:unconfoundedness,lem:est:unconfoundedness} continue to hold with appropriate modifications.} This is already significantly weaker than the usual $c$-overlap assumption, which demands the previous inequalities \emph{pointwise} for every $(x,y)\in\R^d\times\R$. %
            We relax it further into the notion of \emph{$c$-weak-overlap} by removing the upper bound on $p_0(x, y)$ and $p_1(x, y)$ (defined formally in \cref{sec:scenario:unconfoundedness}). 
           Then, we define the class $\hyP = \hyPunconf(c)$ which captures both unconfoundedness and $c$-weak-overlap; see \cref{sec:scenario:unconfoundedness}.

            Scenarios with unconfoundedness but without full overlap frequently arise in practice.
            Classic examples include regression discontinuity designs \cite{imbens2008regressionDiscontinuity,lee2010regressionDiscontinuity,angrist2009mostlyHarmless} {(see also \citet{cook2008waitingforLife})} and observational studies with extreme propensity scores \cite{crump2009dealing,li2018overlapWeights,khan2024trimming,kalavasis2024cipw}; see further discussion after \cref{infthm:3}. 
            As before, we ask which conditions on $\hyD$ enable identification of ATE, \ie{},
            for which observational studies realizable with respect to $(\hyPunconf{(c)}, \hyD)$, can one identify the ATE?

         \begin{inftheorem}
         [Informal, see \Cref{lem:iden:unconfoundedness}]
        \label{infthm:3}
Assume that for any pair $\cP, \cQ \in \hyD$, either Item 1 or 2 of \Cref{infcond1} hold or there is {no set $S\subseteq\R^d$ with $\vol(S)\geq c$} such that $\cP(x,y) = \cQ(x,y)$ for $(x,y) \in S \times \R$.
        Then
        $\tau_{\cD}$ is identifiable from the censored distribution  $\cC_{\cD}$ for any $\cD$ realizable with respect to $(\hyPunconf{(c)}, \hyD)$. Moreover, this condition is necessary.
        \end{inftheorem} 

         \begin{figure}[!ht]
            \centering
            \subfigure[]{
                {\includegraphics[width=0.45\linewidth,clip,trim={2cm 0.1cm 2.5cm 0.45cm}]{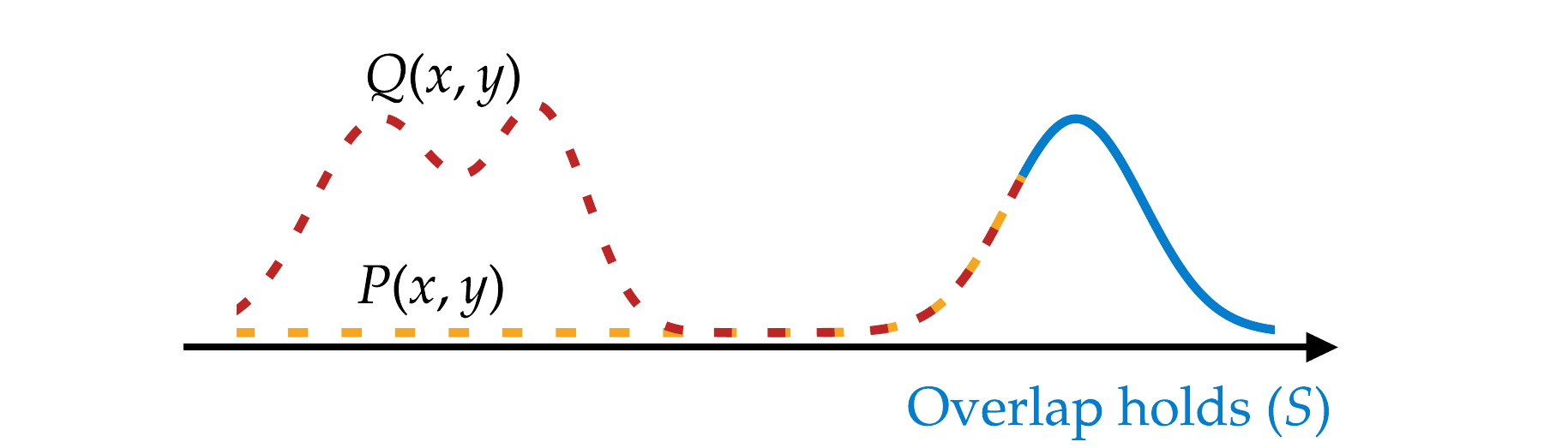}}
            }
            \subfigure[]{
                {\includegraphics[width=0.45\linewidth,clip,trim={2cm 0.1cm 2.5cm 0.45cm}]{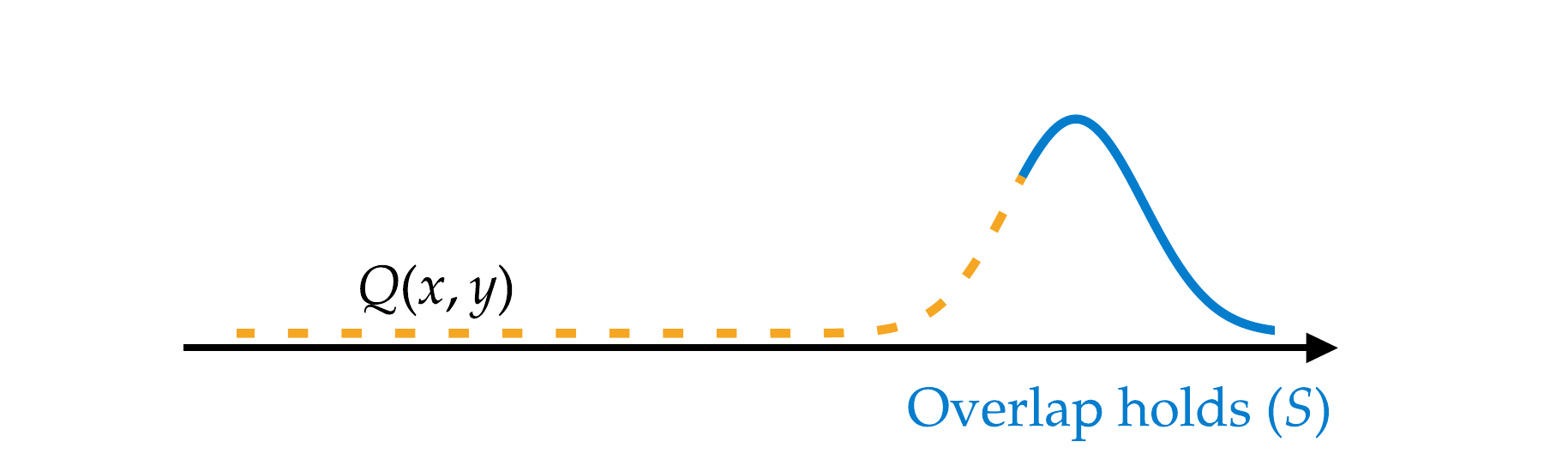}}
            }
            \caption{Illustration of Identifiable and Non-Identifiable Instances in Scenario III: 
                Left Plot: If there are two distributions $\cP$ and $\cQ$ in the class $\hyD$ that have (1) identical densities in the region $S$ where overlap holds  (highlighted in \textcolor{navyBlue}{blue}) but different densities outside this region and (2) $\Ex_\cP[y]\neq \Ex_\cQ[y]$, then ATE is non-identifiable.
                Right Plot: If the density in the overlap region uniquely determines the density $\cQ\in \hyD$ on the whole domain, then ATE is identifiable.
                (By unique, we mean that there is no $\cP\in \hyD$ with $\cP\neq \cQ$ and $\Ex_\cP[y]\neq \Ex_\cQ[y]$ with the same conditional density on $S$.) 
                In both plots, we assume that $\cP_X = \cQ_X$.
            }
            \label{fig:ScenarioIII}
        \end{figure}
        \noindent We refer the reader to \Cref{sec:scenario:unconfoundedness} for a formal discussion on this condition and result. We would like to stress that the above characterization has a novel conceptual connection with an important field of statistics, called \emph{truncated statistics} \cite{Galton1897,cohen1991truncated,woodroofe1985truncated,cohen1950truncated,laiYing1991truncation}. The main task in truncated statistics concerns \emph{extrapolation:} given a true density $\cD$ over some domain $X$ and a set $S \subseteq X$, the question is whether the structure of $\cD$ can be identified from \emph{truncated} samples, \ie{}, samples from the conditional density of $\cD$ on $S$. 
        The condition of the above result requires the pairs $\cP,\cQ$ to be distinguishable on any set of the form $S\times \R$ (where $S$ has sufficient volume).
        In other words, any $\cP$ and $\cQ$ (with $\cP_X=\cQ_X$) whose \textit{truncations} to the set $S\times \R$ are identical must also have the same \textit{untruncated} means.
        Roughly speaking, this condition holds for any family $\hyD$ whose elements $\cP$ can be extrapolated given samples from their truncations to full-dimensional sets, a problem which is well-studied and provides us with multiple applications \cite{Kontonis2019EfficientTS,daskalakis2021statistical,lee2024efficient} (see \cref{lem:extrapolation,rem:extensionOfExtrpolation}).
        We refer to \Cref{sec:iden:unconfoundedness,sec:est:unconfoundedness} for a more extensive discussion.

        \paragraphit{Connections to Prior Work.} 
            This scenario captures two important and practical settings.
            First, as mentioned before, it captures regression discontinuity (RD) designs where propensity scores violate the overlap assumption for a large fraction of individuals but unconfoundedness holds.
            These designs were introduced by \citet{thistlethwaite1960regressionDiscontinuity}, {were independently discovered in many fields \cite{cook2008waitingforLife},} and have found applications in various contexts from Education \cite{thistlethwaite1960regressionDiscontinuity,angrist1999classSizeRD,klaauw2002regressionDiscontinuityEnrollment,black1999regressionDiscontinuity}, to Public Health \cite{moscoe2015rdPublicHealth}, to Labor Economics \cite{lee2010regressionDiscontinuity}.
            Formally, in an RD design, the treatment is a known deterministic function of the covariates: there is some known set $S$ and $T=1$ if and only if $x\in S$.
            \begin{restatable}[Regression Discontinuity Design]{definition}{defRD}
                \label{def:rdDesign}
                Given $c \in (0,\nfrac{1}{2})$, an observational study  $\cD$ is said to have a $c$-RD-design if there exists $S\subseteq\R^d$ such that $\vol(S)>c$, $\vol(\R^d\setminus S)>c$, and 
                \[
                    \forall_{x\in \R^d}\,,~~\forall_{y\in \R}\,,~~\quad 
                    p_0(x,y) =\mathds{1}\inbrace{x\not\in S} 
                    \quadand
                    p_1(x,y) = \mathds{1}\inbrace{x\in S}\,.
                \]
            \end{restatable}
            To the best of our knowledge in RD designs, ATE is only known to be identifiable under strong linearity assumptions on the expected outcomes \cite{hahn2001regressionDiscontinuity}. 
            Due to that, recent work focuses on identifying certain local treatment effects, which, roughly speaking, measure the effect of the treatment for individuals close to the ``decision boundary'' \cite{imbens2008regressionDiscontinuity}.
            In contrast, \cref{infthm:3} enables us to achieve identification under much weaker restrictions, \eg{}, it allows the expected outcomes to be any polynomial functions of the covariates (see \cref{lem:unconfoundedness:identifiableFamilies}).

            \begin{figure}[!ht]
            \centering
                \subfigure[]{
                    \includegraphics[width=0.45\linewidth,clip,trim={2.1cm 1.2cm 2.25cm 0.25cm}]{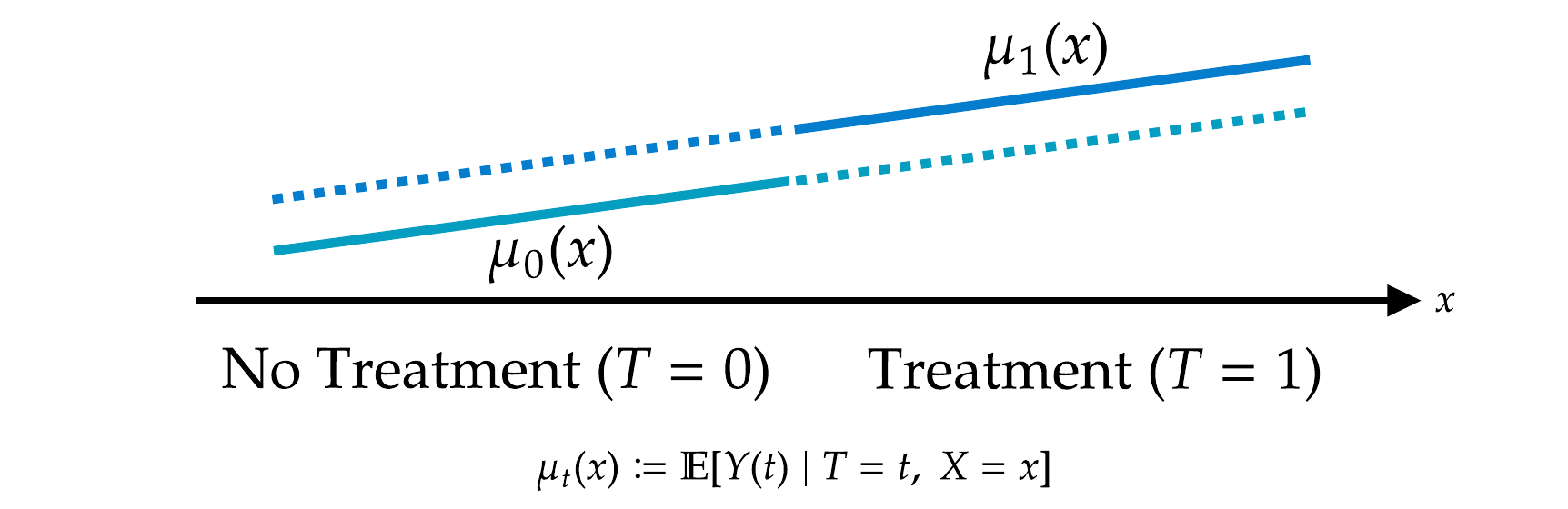}
                }
                \subfigure[]{
                    \includegraphics[width=0.45\linewidth,clip,trim={2.1cm 1.2cm 2.25cm 0.25cm}]{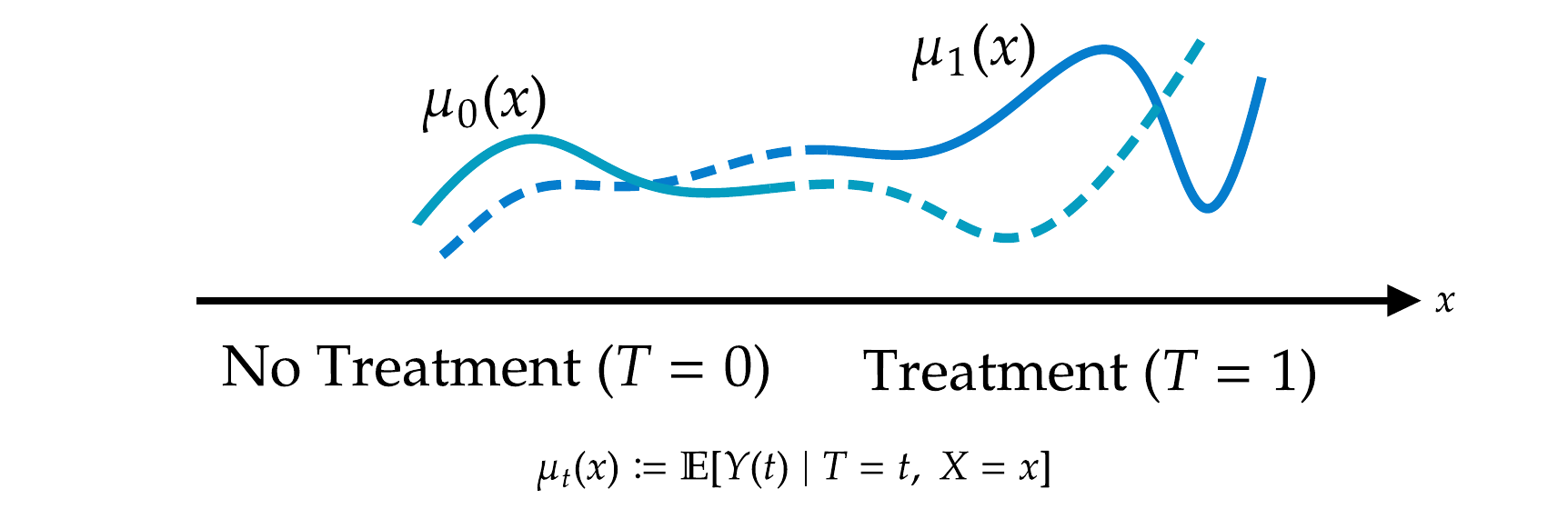}
                }
                \caption{
                {This figure illustrates two regression discontinuity designs. 
                In the first design (left figure), the expected outcomes $\mu_t\coloneqq \Ex[Y(t)\mid T{=}t, X{=}x]$ (for $t\in \zo$) are linear functions of the covariate $x\in \R$ and both $\mu_0(\cdot)$ and $\mu_1(\cdot)$ have the same slope.
                Hence, the treatment effect at the critical point $c$ (where the treatment assignment changes from 0 to 1) 
                is equal to ATE  --  \ie{}, $\tau_\cD=\lim_{x\to c^{+}}\mu_1(x)-\lim_{x\to c^{-}}\mu_0(x)$  --  this enables existing methods to identify ATE.
                In the second design (right figure), the expected outcomes are non-linear functions of the covariate and, hence, standard methods do not identify ATE.
                Here, provided $\mu_0(\cdot)$ and $\mu_1(\cdot)$ are polynomials, one can use the algorithms from \cref{infthm:3,infthm:SampleComplexity2} to identify and estimate ATE respectively.}
                }
                \label{fig:rdDesign}
            \end{figure}
        
            \noindent Apart from RD designs, the above scenario also captures observational studies where certain individuals have extreme propensity scores -- close to 0 or 1.
            This is a challenging case for the de facto inverse propensity weighted (IPW) estimators of $\tau$, whose error scales with $\sup_x \nfrac{1}{\inparen{e(x)(1-e(x))}}$ \cite{li2018overlapWeights,crump2009dealing,imbens2015causal}, and, hence, can be arbitrarily large even when overlap is violated for a single covariate $x$ \cite{kalavasis2024cipw}.
            In contrast to such estimators,  \cref{infthm:3} enables us to identify ATE even when propensity scores are violated for a large fraction of the covariates.

           \begin{remark}[Regression-Based Estimators]
               {Outcome-regression-based estimators for ATE estimate the regression functions $\mu_0(x)\coloneqq \Ex[Y|X{=}x, T{=}0]$ and $\mu_1(x)\coloneqq \Ex[Y|X{=}x, T{=}1]$.
               If overlap holds, this estimator can be computed from available censored data, providing an alternative proof of identification in Scenario~I. 
               Without overlap, the estimator may not be identifiable, and assumptions on $\mu_t(\cdot)$ are needed to enable identification. 
               A common assumption is that $\mu_t(\cdot)$ is a polynomial in $x$, this fits into the polynomial expectations model (\Cref{lem:unconfoundedness:identifiableFamilies}), {and can be used in Scenario~III as well}.}
               {Here, an interesting open problem is to use this approach to design some version of the popular doubly-robust estimators (\eg{}, \cite{Chernozhukov2018Double,Chernozhukov2018Double2018double,semenova2022estimationinferenceheterogeneoustreatment,Chernozhukov2022locally,robins2005doublyRobust,,foster2023orthognalSL,syrgkanis2022sampleSplitting,syrgkanis2022riesznet,syrgkanis2021long,syrgkanis2021dynamic}) in the general setting of Scenario~III.}
           \end{remark}

        \paragraphit{Finite-Sample Complexity.}
        As before, we complement the identification result with a finite sample complexity guarantee under a robust version of the above {identifiability} condition.

         \begin{inftheorem}
        [Informal, see \Cref{lem:est:unconfoundedness}] 
        \label{infthm:SampleComplexity2}
            Under a quantitative version of the condition in \Cref{infthm:3} parameterized with $c$ (see \cref{cond:Uncon:estimation} for details) and mild smoothness conditions on $\hyD$,
             there is an algorithm that, 
                given $n$ \iid{} samples from the censored distribution $\cC_\cD$ for any $\cD$ realizable by $\inparen{\hyPunconf(c),\hyD}$,
               and $\eps,\delta \in (0,1)$,
            outputs an estimate $\hat{\tau}$, such that $\abs{\hat{\tau}-\tau_{\cD}}\leq \eps$ with probability $1-\delta$.
            The number of samples is $\wt{O}(\nfrac{1}{\eps^4}) \cdot \log(\nfrac{1}{\delta}) \cdot  \mathrm{fat}_{O(\eps)}({\hyPunconf(c)}) \cdot \log N_\eps(\hyD)$.
        \end{inftheorem}
        As in the previous estimation result, the sample complexity depends on the fat-shattering dimension of $\hyP = \hyPunconf{(c)}$ and the covering number of $\hyD$. An interesting technical observation is that the estimation of (generalized) propensity scores corresponds to a well-known problem in learning theory, that of probabilistic-concept learning of \citet{kearns1994pconcept}. This connection allows us to get estimation algorithms for classes of bounded fat-shattering dimension.
        
        \paragraph{Scenario~IV: Neither Unconfoundedness nor Overlap.}
        {A natural extension of Scenarios~II and~III arises when both unconfoundedness and overlap fail simultaneously.
        In this setting, neither the overlap‐based arguments from Scenario~II nor the unconfoundedness‐based arguments from Scenario~III apply, making identification particularly challenging.
        Nevertheless, there are some special cases under this scenario where \cref{cond:iden} holds and, hence, ATE is identifiable.
        We illustrate one such example below, but we do not explore this scenario further because, to our knowledge, the resulting identifiable instances do not directly connect with existing causal inference literature. 
        
        \begin{example}
            This example is parameterized by a convex set $B$ with $\vol(\mathbb{R}^d{\setminus} B)>0$. 
            Let $\hyD_{\rm G}$ be the Gaussian family over $\R^d$ and $\hyP_B$ be the family of generalized propensities that \textit{(i)} may arbitrarily violate unconfoundedness and \textit{(ii)} satisfies $c$-overlap outside of $B$, \ie{}, for each $p\in \hyP_B$ and $x\not\in B$, $p(x,\cdot)\in (c,1-c)$ when $x\not\in B$.
            Here, ATE  is identifiable under any observational study realizable with respect to $\inparen{\hyD_{\rm G}, \hyP_B}$.
            (One way to see this is that restricting attention to $\R^d\setminus B$ recovers the overlap assumption in Scenario~II with $\hyD$ being truncations of Gaussians to non-convex sets -- which satisfies the corresponding identifiability condition, see \cref{infthm:2}.)
        \end{example} 
        }

\subsection{Related Work}     
    Our work is related to and connects several lines of work in causal inference and learning theory.
    We believe that an important contribution of our work is bridging these previously disconnected areas, possibly opening up new paths for applying learning-theoretic insights to causal inference problems.
    {We discuss the relevant lines of work below.}

    \subsubsection{Related Work in Causal Inference Literature}
        We begin with related work from the Causal Inference literature.
        Here, our work is related to the literature on sensitivity analysis -- which explores the sensitivity of results on deviations from unconfoundedness and is related to results in Scenario~II (\eg{}, \cite{cochran1965observationalStudies,rosenbaum1991sensitivity,tan2006distributional}), 
        the works on RD designs (\eg{}, \cite{hahn2001regressionDiscontinuity,imbens2008regressionDiscontinuity,cook2008waitingforLife}) -- which are a special case of Scenario~III -- and to works on handling extreme propensity scores (close to 0 or 1) which arise when overlap is violated and is considered in Scenario~III (\eg{}, \cite{crump2009dealing,li2018overlapWeights,khan2024trimming,kalavasis2024cipw}).  
        
    \vspace{-0.5mm}

        \paragraph{Extreme Propensity Scores.}
            Extreme propensity scores (those close to 0 or 1) are a common problem in observational studies.
            They pose an important challenge since the variance of most standard estimators of, \eg{}, the average treatment effect, rapidly increases as the propensity scores approach 0 or 1 -- leading to poor estimates. %
           {A} large body of work {designs estimators with lower variance} \cite{crump2009dealing,li2018overlapWeights,khan2024trimming,kalavasis2024cipw}.
            While these estimators are widely used, they introduce bias in the estimation of ATE, hence, they do not lead to point identification or consistent estimation, which is the focus of our work.
            We refer the reader to \citet{petersen2012diagnosing} for an extensive overview of violations of unconfoundedness and to \citet*{leger2022causal,li2018overlapWeights} for an empirical evaluation of the robustness of existing estimators in the absence of overlap.

        \vspace{-0.5mm}

        \paragraph{Sensitivity Analysis.}
        Sensitivity analysis methods in causal inference assess how unmeasured confounding can bias estimated treatment effects. 
        {The idea dates back to \citet*{cornfield1958smoking}, who studied the causal effect of smoking on developing lung cancer and showed that an unmeasured confounder needed to be nine times more prevalent in smokers than non-smokers to nullify the causal link between smoking and lung cancer  --  since this was unlikely, it strengthened the belief that smoking had harmful effects on health.
        \citet{rosenbaum1983sensitivity}, subsequently, proposed a sensitivity model for categorical variables.
        Since then, many works have extended the analysis of Rosenbaum's sensitivity model and introduced alternative parameterizations of the extent of confounding (\eg{}, \citet{rosenbaum2002observational,tan2006distributional,carnegie2016assessing,oster2019unobservable}).
        A notable line of work refines these models to obtain tight intervals in which the ATE lies with the desired confidence level \cite{zhao2019sensitivity,dorn2024doublyvalidSharpAnalysis,jin2022sensitivityanalysisfsensitivitymodels,dorn2023sharpSensitivityAnalysis,chernozhukov2023personalizedITE}.
        }
        {While these works construct valid uncertainty intervals that are valid without distributional assumptions, they do not achieve point identification.
        Finding the distributional assumptions necessary for point identification is the focus of our work.}

        \vspace{-0.5mm}
        
        \paragraph{Adversarial Errors in Propensity Scores.}
            Even with unconfoundedness, propensity scores have to be learned from data {(\eg{}, \cite{mccaffrey2004propensity,athey2019generalized,WESTREICH2010826})}, and errors in the estimation of propensity scores propagate to the estimate of ATE. %
            While under overlap, works from sensitivity analysis (discussed above) provide intervals containing the ATE, these intervals become vacuous even if overlap is violated for a single covariate.
            {\citet{kalavasis2024cipw} estimate ATE despite of adversarial errors and outliers, under specific assumptions, by merging outliers with nearby inliers to form ``coarse'' covariates.}
            {Our work is orthogonal to theirs in terms of both assumptions and objectives. They obtain interval estimates of treatment effects that are robust to adversarial errors, provided unconfoundedness holds. In contrast, we characterize settings where treatment effects can be point identified without adversarial errors, even when unconfoundedness or overlap fail.}

        \paragraph{Regression Discontinuity Designs.}
            {Regression discontinuity designs were introduced by \citet{thistlethwaite1960regressionDiscontinuity} in 1960, and have since been independently re-discovered\footnote{{Though there is some debate around this; see \citet{cook2008waitingforLife}.}} and studied in several disciplines, including Statistics (\eg{}, \cite{rubin1977regressionDiscontinuity,sacks1978regressionDiscontinuity}) and Economics (\eg{}, \cite{goldberger1972selection}). 
            See \citet{cook2008waitingforLife} for a detailed overview.}
            Today, there are two main types of Regression discontinuity (RD) designs: sharp RD designs, where treatment is deterministically assigned based on whether an observed covariate crosses a fixed cutoff,\footnote{We note that typically RD designs consider one-dimensional covariates and where the set $S$ (from \cref{def:rdDesign}) is an interval of the form $(\alpha,\infty)$ for some constant $\alpha$. In this work, we allow for high-dimensional covariates and any measurable set $S$ satisfying some mild assumptions on its volume.} and fuzzy RD designs, in which treatment assignment is probabilistic near the cutoff (\eg{}, \cite{lee2010regressionDiscontinuity, imbens2008regressionDiscontinuity,hahn2001regressionDiscontinuity}).
            In this work, we considered sharp RD designs, although our framework can also be applied to some fuzzy RD settings and exploring this further is a promising direction for future research. 
            {Recent works in regression discontinuity designs use} local linear regression to estimate the treatment effect at the cutoff (\eg{}, \cite{fan1996local,porter2003estimation,calonico2014robust}). %
            These approaches yield only a local average treatment effect and often require linearity or other strong parametric assumptions to ``extrapolate'' to a global average treatment effect (ATE); see \citet{hahn2001regressionDiscontinuity,cattaneo2019practical,chernozhukov2024appliedcausalinferencepowered}.  
            In contrast, our work facilitates point identification of the ATE in more general settings, by utilizing recent developments in truncated statistics (see \cref{rem:extensionOfExtrpolation,lem:iden:unconfoundedness:examples}). Finding interesting classes (apart from the ones mentioned in this work) that can be extrapolated is an interesting open question in truncated statistics, and any progress on it will also enable applications of our framework to these classes. 
    
    \subsubsection{Related Work in Learning Theory Literature}
        Next, we discuss relevant work in Learning Theory.
        Here, we draw on foundational results on probabilistic-concept learning \cite{kearns1994pconcept,alon1997scale} to get sample complexity bounds.
        Moreover, to satisfy the extrapolation condition in Scenario~III (\cref{infthm:3}), we leverage recent advances in truncated statistics \cite{daskalakis2021statistical}.
 
        \paragraph{Probabilistic Concepts.}
        {Most} prior works {in causal inference} assume {access to} an oracle that {estimates the propensity scores $e(x) = \Pr[T{=}1 | X{=}x]$}. 
       The propensity scores $e(\cdot)$ are $[0,1]$-valued, but the feedback provided to the learning algorithm is binary; it is the result of a coin toss where for each $x$, the probability of observing 1 is $e(x)$. Inference in this setting is well-studied in learning theory and corresponds to the problem of learning probabilistic concepts (or $p$-concepts), introduced by  \citet{kearns1994pconcept}. Learnability of a concept class of $p$-concepts is characterized by the finiteness of the fat-shattering dimension of the class (see \citet*{alon1997scale}). To the best of our knowledge, this connection was not reported in the area of causal inference prior to our work.
        
        \paragraph{Truncated Statistics.}
        Our work and in particular applications which violate overlap are closely related to the area of truncated statistics \cite{maddala1986limited,Galton1897,cohen1991truncated,woodroofe1985truncated,cohen1950truncated,laiYing1991truncation}. 
        Recently, there has been extensive work on truncated statistics regarding the design of efficient algorithms  \cite{daskalakis2018efficient,plevrakis2021learning,fotakis2020efficient,lee2025learningpositiveimperfectunlabeled}.
        However, all these works focus on computationally efficient learning of parametric families, while we focus on identification and estimation of treatment effects {and do not impose constraints on computational efficiency}.

\section{Preliminaries}
\label{sec:preliminaries}

     An observational study involves units (\eg{}, patients) with covariates $X\in\R^d$ (\eg{}, medical history). Each unit receives a binary treatment $T\in\{0,1\}$ (\eg{}, medication) with a fixed but unknown probability, independent across units, and we observe a treatment-dependent outcome $Y(T)\in\R$ (\eg{}, symptom severity). The tuple $(X,Y(0),Y(1),T)$ follows an unknown joint distribution $\cD$, which defines the study. For each $t\in\{0,1\}$, $\cD_{X,Y(t)}$ denotes the marginal over $X$ and $Y(t)$ and $\cD_X$ the marginal over $X$. To simplify the exposition, we assume that $\cD_{X,Y(0)}$ and $\cD_{X,Y(1)}$ are continuous distributions with densities throughout.

    \paragraph{Treatment Effects.}
    An important goal in causal inference is to identify treatment effects.
    The Average Treatment Effect $\tau_\cD$ (ATE) and the Average Treatment Effect on the Treated $\gamma_\cD$ (ATT) \cite{imbens2015causal, hernan2023causal, rosenbaum2002observational} are defined as
    \[
        \tau_\cD \coloneqq \Ex\nolimits_\cD\insquare{Y(1)-Y(0)}
        \qquadand
        \gamma_\cD \coloneqq \Ex\nolimits_\cD\insquare{Y(1)-Y(0)|T{=}1}
        \,.
    \]
    Since instead of observing full samples $(X,Y(0),Y(1),T)$, we only see the censored version $(X,Y(T),T)$, $\tau_\cD$ and $\gamma_\cD$ are non-identifiable without further assumptions \cite{chernozhukov2024appliedcausalinferencepowered,rosenbaum2002observational}.\footnote{In particular, $\Ex_\cD\insquare{Y(1)}$ is unobserved and may differ from $\Ex_\cD\insquare{Y(1)\mid T{=}1}$ by an arbitrary amount.}
    This brings us to our main tasks (presented in terms of  ATE but also relevant for any treatment effect):

    \begin{problem}[Identifying and Estimating ATE]\label{prob:main}
    An observational study is specified by the distribution $\cD$ of $(X,T,Y(0),Y(1))$ over $\R^d\times\{0,1\}\times\R\times\R$. Instead of $\cD$, the statistician has sample access to the censored distribution $\cC_\cD$ of $(X,T,Y(T))$. The statistician's goal is to address:
    \begin{enumerate}[leftmargin=15pt]
        \item \textbf{(Identification)}: What are  the minimal assumptions on an observational study $\cD$ so that there is a {deterministic} mapping $f$ satisfying $f(\cC_\cD)=\tau_\cD$ for any $\cD$ satisfying the assumptions?
        \item \textbf{(Estimation)}: What are the minimal assumptions on an observational study $\cD$
                    so that there is an algorithm $(\wh \tau_n)_{n \in \N}$
                    that given $n\geq 1$ \iid{} samples from $\cC_\cD$,  outputs an estimate $\wh{\tau}_n$ such that, with high probability, $\abs{\wh{\tau}_n - \tau_\cD}\leq \eps(n)$ for some $\eps(\cdot)$ satisfying $\lim_{n\to \infty}\eps(n)=0$?
    \end{enumerate}
    \end{problem}
    \noindent When the distribution $\cD$ is clear from context, we write $\tau$ and $\cC$ for $\tau_\cD$ and $\cC_\cD$, respectively. In general, $\tau_\cD$ cannot be identified from censored samples. This is because there exist $\cD^{(1)}$ and $\cD^{(2)}$ with $\abs{\tau_{\cD^{(1)}}-\tau_{\cD^{(2)}}}\gg1$ but $\cC_{\cD^{(1)}}=\cC_{\cD^{(2)}}$. 
    Hence, one needs some assumptions on $\cD$ to have any hope of solving \cref{prob:main}. The above can be naturally adapted to ATT.
    
       \paragraph{Unconfoundedness and Overlap.} Unconfoundedness and overlap are common sufficient assumptions that enable the identification and estimation of ATE, and have been utilized in a number of important studies; see \cite{imbens2015causal,hernan2023causal,rosenbaum2002observational} and \cref{sec:intro}.
      The {observational study} $\cD$ is said to satisfy unconfoundedness if, for each $x\in \R^d$, it holds: $Y(0)~\bot~ T~\mid~ X{=}x$ and $Y(1)~\bot~ T~\mid~ X{=}x$.
        In other words, the potential outcomes are independent of the treatment $T$ given $X{=}x$. 
        Next, we move to overlap, which ensures that treatment probabilities are bounded away from 0 and 1.
                The observational study $\cD$ is said to satisfy overlap if, for each $x\in \R^d$, $0<\Pr_\cD[T{=}1\mid X{=}x]<1$.
            Given a constant $c\in(0,\nfrac{1}{2})$, if $\cD$ satisfies $c<\Pr_\cD[T{=}1\mid X{=}x]<1-c$ (for each $x\in \R^d$) then $\cD$ is said to satisfy the $c$-overlap condition.
        Although unconfoundedness and overlap suffice to estimate $\tau$ with enough samples, they are not necessary. 
        Unconfoundedness and overlap are often violated (see \cref{sec:examples:violation} for a discussion and examples). %
        To derive necessary and sufficient conditions for identifying $\tau$, we now introduce certain conditional probabilities. 
        \begin{definition}[Generalized Propensity Score]\label{def:newScoresCov}
            Fix distribution $\cD$.
            For each $y\in \R$ and $t \in \zo$, the generalized propensity score induced by $\cD$ is  
            $
                p_t(x, y) \coloneqq \Pr\nolimits_\cD[T{=}t\mid X{=}x, Y(t){=}y].
        $
        \end{definition}
        For the reader familiar with causal inference terminology, note that the generalized propensity scores differ from the ``usual'' propensity score $e(x)\coloneqq\Pr_\cD[T{=}1\mid X{=}x]$: while $e(\cdot)$ is always identifiable from the data, $p_0(\cdot)$ and $p_1(\cdot)$ in general are not.\footnote{There exist $\cD^{(1)}$ and $\cD^{(2)}$ with very different generalized propensity scores but identical censored distributions.} To succinctly state assumptions on generalized propensity scores and $\cD$, we adopt a statistical learning theory notion of realizability.

        \begin{definition}
        [Concepts]
            We say that $\hyP$ is a concept class of generalized propensity scores if $\hyP \subseteq [0,1]^{\R^d \times \R}$ and $\hyD$ is a concept class of conditional-outcome distributions if $\hyD \subseteq \Delta(\R^d \times \R)$.
        \end{definition}
       Realizability couples the observational study $\cD$ with the pair of concept classes $\inparen{\hyP,\hyD}$.
        \begin{definition}
            [Realizability] 
            \label{def:realizability}
            Consider a pair of concept classes $(\hyP, \hyD)$.
            An observational study $\cD$ is said to be realizable with respect to $(\hyP, \hyD)$, if $p_0(\cdot),p_1(\cdot)\in \hyP$ and $\cD_{X,Y(0)},\cD_{X,Y(1)}\in \hyD$.
        \end{definition}
        If $\cD$ only satisfies $p_0(\cdot),p_1(\cdot)\in \hyP$ (respectively $\cD_{X,Y(0)},\cD_{X,Y(1)}\in \hyD$), then $\cD$ is said to be realizable with respect to $\hyP$ (respectively $\hyD$).
        We are interested in conditions on the pair $(\hyP, \hyD).$
        Finally, we use the following terminology for {certain} elements of {interest in} $\hyP \times \hyD.$

        \begin{definition}
        [Compatibility] 
        \label{def:compatible}
        
        {A tuple} $(p, \cP) \in \hyP \times \hyD$ is {said to be} compatible
        with concept classes $(\hyP, \hyD)$ if there {is} another tuple $(\wh p, \wh \cP)\in \hyP\times \hyD$ such that setting $(p_0,p_1,\cD_{X,Y(0)},\cD_{X,Y(1)})=(p,\wh{p},\cP,\wh{P})$ (or \textit{equivalently} the re-ordered {assignment} $(p_0,p_1,\cD_{X,Y(0)},\cD_{X,Y(1)})=(\wh{p},p,\wh{P},\cP)$) defines a valid observational study, \ie{}, a valid distribution over $(X,T,Y(0),Y(1))$.
        \end{definition}
        {Note that the tuples $(p,\cP)$ and $(\wh p, \wh \cP)$ in the above definition are not necessarily distinct.}

\section{Proofs of Characterizations and Overview of Estimation Algorithms}
\label{sec:overview} 
    In this section, we prove {characterizations of identifiability of} ATE and ATT (\cref{infthm:Suff,infthm1}) and provide an overview of our estimation algorithms.
    The characterizations of \cref{infthm:Suff,infthm1} consist of two parts.
    In \cref{sec:proofof:thm:identification}, we show that \cref{cond:iden} is sufficient for {identifying} ATE {and ATT}.
    Then, in \cref{sec:proofof:infthm:nece}, we prove that {it is also necessary}.
        Finally, we provide an overview of our algorithms for estimating ATE in Scenarios I, II, and III in \cref{sec:overview:estimation}.

    \subsection{\cref{cond:iden} is Sufficient for Identification of ATE and ATT} %
        \label{sec:overview:ATESufficient}
        \label{sec:proofof:thm:identification}

     \cref{cond:iden} is our main tool to obtain our identification characterizations for ATE and ATT (\cref{infthm:Suff,infthm1}). 
        In this section, we explain our technique for identifying these treatment effects from the censored distribution $\cC_\cD$ under \cref{cond:iden}.  
        We proceed with the following claim.

    \begin{claim}[Sufficiency of \Cref{cond:iden}]\label{claim:sufficiency_condition1}
Assume that $(\hyP, \hyD)$ satisfy \Cref{cond:iden}.
\begin{enumerate}[itemsep=0pt]
    \item The average treatment effect $\tau_\cD$ is identifiable from the censored distribution $\cC_\cD$ for any observational study $\cD$ realizable with respect to $(\hyP, \hyD)$.
    
    \item If $\Pr[T = 1] > 0$, then the average treatment effect on the treated $\gamma_\cD$ is identifiable from the censored distribution $\cC_\cD$ for any observational study $\cD$ realizable with respect to $(\hyP, \hyD)$.
\end{enumerate}
\end{claim}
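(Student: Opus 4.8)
The plan is to work with the equivalent ``pairwise'' formulation of identifiability recorded after \cref{prob:main}: a treatment effect $\eta_\cD$ is identifiable from $\cC_\cD$ over realizable studies exactly when $\cC_{\cD^{(1)}}=\cC_{\cD^{(2)}}$ implies $\eta_{\cD^{(1)}}=\eta_{\cD^{(2)}}$ for every pair $\cD^{(1)},\cD^{(2)}$ realizable with respect to $(\hyP,\hyD)$. So I would fix two such studies with identical censored distributions and argue that their effects coincide. The single structural fact driving everything is that $\cC_\cD$ decomposes into its two treatment slices: for any realizable $\cD$ and $t\in\zo$, the sub-density of $\cC_\cD$ on the event $\{T{=}t\}$ at $(x,y)$ equals $p_t(x,y)\,\cD_{X,Y(t)}(x,y)$, since $\Pr_\cD[T{=}t,X{=}x,Y(t){=}y]=\Pr_\cD[T{=}t\mid X{=}x,Y(t){=}y]\,\cD_{X,Y(t)}(x,y)$. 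Consequently $\cC_{\cD^{(1)}}=\cC_{\cD^{(2)}}$ is equivalent to the matching of both slices, i.e. $p_t^{(1)}(x,y)\,\cD^{(1)}_{X,Y(t)}(x,y)=p_t^{(2)}(x,y)\,\cD^{(2)}_{X,Y(t)}(x,y)$ for every $(x,y)$ and each $t\in\zo$; it also forces the shared covariate marginal $\cD^{(1)}_X=\cD^{(2)}_X$.

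For Part~1, since $\tau_\cD=\Ex_\cD[Y(1)]-\Ex_\cD[Y(0)]$, it suffices to identify $\Ex_\cD[Y(1)]$ and $\Ex_\cD[Y(0)]$ separately; I treat $\Ex_\cD[Y(1)]$, the other being symmetric. Given the two studies above, I would feed \cref{cond:iden} the pair $(p,\cP)\coloneqq(p_1^{(1)},\cD^{(1)}_{X,Y(1)})$ and $(q,\cQ)\coloneqq(p_1^{(2)},\cD^{(2)}_{X,Y(1)})$. Both are compatible: taking the companion $(\wh p,\wh\cP)=(p_0^{(i)},\cD^{(i)}_{X,Y(0)})$ and the re-ordered assignment in \cref{def:compatible} recovers exactly the valid study $\cD^{(i)}$. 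Now I would rule out Items~2 and~3 of \cref{cond:iden}: Item~2 fails because $\cP_X=\cD^{(1)}_X=\cD^{(2)}_X=\cQ_X$, and Item~3 fails because the matched $T{=}1$ slices give $p(x,y)\cP(x,y)=q(x,y)\cQ(x,y)$ for all $(x,y)$. Hence Item~1 must hold, i.e. $\Ex_{(x,y)\sim\cP}[y]=\Ex_{(x,y)\sim\cQ}[y]$, which is precisely $\Ex_{\cD^{(1)}}[Y(1)]=\Ex_{\cD^{(2)}}[Y(1)]$. The identical argument on the $T{=}0$ slice yields $\Ex_{\cD^{(1)}}[Y(0)]=\Ex_{\cD^{(2)}}[Y(0)]$, and subtracting proves $\tau_{\cD^{(1)}}=\tau_{\cD^{(2)}}$.

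For Part~2 I would decompose $\gamma_\cD=\Ex_\cD[Y(1)\mid T{=}1]-\Ex_\cD[Y(0)\mid T{=}1]$. The first term is a direct functional of $\cC_\cD$, being the conditional mean of the observed outcome on $\{T{=}1\}$. For the counterfactual second term, I would write $\Ex_\cD[Y(0)\mid T{=}1]=(\Ex_\cD[Y(0)]-\Ex_\cD[Y(0)\ind\{T{=}0\}])/\Pr_\cD[T{=}1]$, where $\Pr_\cD[T{=}1]>0$ by assumption. Here $\Pr_\cD[T{=}1]$ and $\Ex_\cD[Y(0)\ind\{T{=}0\}]=\int y\,p_0(x,y)\,\cD_{X,Y(0)}(x,y)\,\d x\,\d y$ are both read off the $T{=}0$ slice of $\cC_\cD$, while $\Ex_\cD[Y(0)]$ is identifiable by Part~1; hence $\gamma_\cD$ is a function of $\cC_\cD$ alone.

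The main obstacle is the application of \cref{cond:iden}: the crux is to select the correct compatible pairs out of the two candidate studies and to verify that equality of censored distributions negates Items~2 and~3, thereby forcing the desired outcome-expectation equality in Item~1. The ATT step hinges on the less obvious decomposition that isolates the unobservable $\Ex_\cD[Y(0)\mid T{=}1]$ into an already-identified global mean $\Ex_\cD[Y(0)]$ and a directly observable censored functional; everything else is routine.
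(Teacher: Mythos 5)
Your proposal is correct and follows essentially the same route as the paper: both hinge on reading off the slice densities $p_t(x,y)\,\cD_{X,Y(t)}(x,y)$ and the covariate marginal $\cD_X$ from $\cC_\cD$, applying \cref{cond:iden} to the resulting compatible tuples to rule out Items~2 and~3 and force equality of outcome expectations, and handling ATT via the same decomposition of $\Ex_\cD[Y(0)\mid T{=}1]$ into $\Ex_\cD[Y(0)]$ plus directly observable censored functionals. The only difference is presentational -- you argue via the pairwise-distinguishability formulation of identifiability, while the paper packages the same argument as a consistency set $S_{\Phi,\cC_\cD}$ whose members all share the same outcome expectation -- and these formulations are equivalent, as the paper itself notes in \cref{sec:results}.
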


       \begin{proof}
            First, we show that \cref{cond:iden} is sufficient to identify $\Ex_\cD[Y(0)]$ (which provides the sufficiency part of \cref{infthm1}\footnote{\label{fn:ATT_footnote}Recall that ATT is $\Ex[Y(1)\mid T{=}1]-\Ex[Y(0)\mid T{=}1]$.
            To identify ATT it is sufficient to identify $\Ex[Y(0)]$ since the first term is always identifiable and the second term is related to $\Ex[Y(0)]$ as $\Ex[Y(0)\mid T{=}1]=\inparen{\nfrac{1}{\Pr[T{=}1]}}\cdot \inparen{\Ex[Y(0)] - \Ex[Y(0)|T{=}0]\cdot \Pr[T{=}0]}$ (where all quantities except $\Ex[Y(0)]$ are always identifiable from $\cC_\cD$).
            Note $\Pr[T{=}1]>0$ as otherwise ATT may not be well-defined. Note that this connection between $\E[Y(0)]$ and ATT implies that in terms of identification, the two quantities are equivalent.}).
            Then, an analogous proof shows that the same condition is sufficient to identify $\Ex_\cD[Y(1)]$.
            The combination of the two is sufficient to identify ATE $\tau_\cD=\Ex_\cD[Y(1)]-\Ex_\cD[Y(0)]$ and proves the sufficiency part of \cref{infthm:Suff}.
            
            Fix some $\cD$ realizable with respect to $\inparen{\hyP,\hyD}$.
            We claim that, given as input the censored distribution $\cC_\cD$, there is a deterministic procedure $\Phi$ that constructs a function $\Phi(\cC_\cD)\colon \R^d \times \R \to \R$  such that
            $\Phi(\cC_\cD)(x,y)
                    =
                    p_0(x,y) \cdot \cD_{X,Y(0)}(x,y)$ for all $(x,y).$
            In other words, this means that, given $\cC_\cD$, there is a deterministic method that identifies the product $p_0(x,y) \cdot \cD_{X,Y(0)}(x,y)$ {at} any $x,y.$
            
            \paragraphit{Existence of $\Phi$.}
            Given $\cC_\cD$ as input, we let $\Phi(\cC_\cD)$ be a function that maps $(x,y) \mapsto p_0(x,y) \cdot \cD_{X,Y(0)}(x,y)$.
            This mapping can be identified from $\cC_\cD$ because a censored sample has the density of $X, Y(0) \mid T{=}0$ and we are interested in the density of $X, Y(0), T{=}0$.
            By the Bayes rule, we can obtain the latter from the former by multiplying with $\Pr[T{=}0]$, which itself is identifiable from the sample as there is no censoring over $T$.

            \paragraphit{Identification of ~$\E_\cD[Y(0)]$~ via  $\Phi$.}
            Given $\cC_\cD$, the procedure $\Phi$ allows us to eliminate some candidates in $(\hyP, \hyD).$
            For each $\cD$ realizable with respect to $(\hyP, \hyD)$, let $S_{\Phi,\cC_\cD}\subseteq\inparen{\hyP,\hyD}$ be the set of $(p,\cP)\in\sinparen{\hyP,\hyD}$ that are compatible (recall  \Cref{def:compatible}) and consistent with $\Phi(\cC_\cD)$ (the subset is non-empty because $\cD$ is realizable). 
            {That is,} each $(p,\cP) \in S_{\Phi, \cC_\cD}$ is compatible and satisfies
            \begin{align*}
                \forall_{x\in \R^d}\,,~~
                \forall_{y\in \R}\,,~~\quad 
                    p(x,y) \cdot \cP(x,y)
                    &= 
                    \Phi(\cC_\cD)(x,y)
                \,,
                \yesnum\label{eq:def_Sg}\\
                \forall_{x\in \R^d}\,,\qquad\qquad\quad
                    \cP_X(x) &= \cD_X(x)\,.
                \yesnum\label{eq:def_Sg:2}
            \end{align*}
            Here, $\cD_X$ is indeed specified by $\cC_\cD$ since there is no censoring on the covariates and, hence, $\inparen{\cC_\cD}_X=\cD_X$.
            Hence, due to \cref{eq:def_Sg}, for any $(p,\cP), (q,\cQ)\in S_{\Phi, \cC_\cD}$, it holds 
            that $ p(x,y) \cdot \cP(x,y)
                    =
                    q(x,y) \cdot \cQ(x,y)$
            for all $x,y$ and so, combining the above with {\cref{cond:iden}}, we get that 
            $
                \Ex_{(x,y)\sim \cP}[y]
                = \Ex_{(x,y)\sim \cQ}[y]\,.
              $
            Since $\cD$ is realizable with respect to $\inparen{\hyP,\hyD}$, $p_0\in \hyP$ and $\cD_{X,Y(0)}\in \hyD$, and, further, since $(p_0,\cD_{X,Y(0)})$ satisfies the requirements in \cref{eq:def_Sg,eq:def_Sg:2}, $(p_0,\cD_{X,Y(0)})\in S_{\Phi,\cC_\cD}$.
            Therefore, for any $(p,\cP)\in S_{\Phi, \cC_\cD}$, 
            $
                \Ex_{(x,y)\sim \cP}[y]
                =
                \Ex_\cD[Y(0)].
            $
            Now, we have shown $\Ex_\cD[Y(0)]$ is a deterministic function of $\Phi$ and $\cC_\cD$: 
                $\Ex_\cD[Y(0)]$ is equal to $\Ex_{(x,y)\sim\cP}[y]$ for any $\cP$ which is consistent with $\Phi$ and $\cC_\cD$. %
            Since $\Phi$ itself is a deterministic function of $\cC_\cD$ (due to our claim at the start), there is a mapping $m_0$ satisfying $m_0(\cC_\cD)=\Ex_\cD[Y(0)]$ for any $\cD$ consistent with $\inparen{\hyP, \hyD}$.
           \end{proof}

    \subsection{\cref{cond:iden} is Necessary for Identification of ATE and ATT}
    \label{sec:proofof:infthm:nece}

    In this section, we complete the proofs of \Cref{infthm:Suff,infthm1} by further showing that \Cref{cond:iden} is necessary for identification. To do that, we show that if the condition does not hold, then one can find two observational studies with different treatment effects but identical censored distributions. We prove the following claim, which deals with the case of ATT.

    \begin{claim}[Necessity of \Cref{cond:iden}]
If the tuple $(\hyP, \hyD)$ does not satisfy \Cref{cond:iden}, then:
\begin{enumerate}[itemsep=0pt]
    \item For every deterministic mapping $f$, there exists an observational study $\cD$ realizable with respect to $(\hyP, \hyD)$ such that the average treatment effect $\tau_\cD\neq f(\cC_\cD)$.

    \item For every deterministic mapping $f$, there exists an observational study $\cD$ realizable with respect to $(\hyP, \hyD)$ {satisfying} $\Pr_\cD[T {=} 1] > 0$ and the average treatment effect on the treated $\gamma_\cD \ne f(\cC_\cD)$.
\end{enumerate}
In both cases, $\cC_\cD$ denotes the censored distribution of $\cD$.
\label{claim1}
\end{claim}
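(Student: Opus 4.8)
The plan is to establish the statement directly by constructing a ``hard instance'': from a violation of \cref{cond:iden} I will build two observational studies $\cD_1,\cD_2$, both realizable with respect to $(\hyP,\hyD)$, that share the same censored distribution, $\cC_{\cD_1}=\cC_{\cD_2}$, but differ in their treatment effects, $\tau_{\cD_1}\neq\tau_{\cD_2}$ and $\gamma_{\cD_1}\neq\gamma_{\cD_2}$. Once such a pair exists, both parts are immediate: any deterministic $f$ satisfies $f(\cC_{\cD_1})=f(\cC_{\cD_2})$, so it cannot match the true effect on both studies, and whichever study it fails on witnesses the claim (for part~2 I will additionally verify $\Pr[T{=}1]>0$ for the witness).

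Negating \cref{cond:iden} yields compatible tuples $(p,\cP),(q,\cQ)\in\hyP\times\hyD$ for which all three items fail simultaneously: \textbf{(i)} $\E_{(x,y)\sim\cP}[y]\neq\E_{(x,y)\sim\cQ}[y]$, \textbf{(ii)} $\cP_X=\cQ_X$, and \textbf{(iii)} $p(x,y)\cP(x,y)=q(x,y)\cQ(x,y)$ for all $(x,y)\in\supp(\cP_X)\times\R$ (hence for almost every $(x,y)$, as both sides vanish off $\supp(\cP_X)$). I place these distributions in the control slot. Using compatibility of $(p,\cP)$ together with the treatment/control relabeling symmetry implicit in \cref{def:compatible}, I fix a partner $(\wh p,\wh\cP)\in\hyP\times\hyD$ so that the assignment $(p_0,p_1,\cD_{X,Y(0)},\cD_{X,Y(1)})=(p,\wh p,\cP,\wh\cP)$ is a valid study $\cD_1$, and then I set $\cD_2$ to be the assignment $(q,\wh p,\cQ,\wh\cP)$, that is, I reuse the \emph{same} treated slot.

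The crux is verifying that $\cD_2$ is itself a valid observational study. As in the construction of $\Phi$ in \cref{claim:sufficiency_condition1}, a censored law is completely determined by, and determines, the two products $p_0\cdot\cD_{X,Y(0)}$ and $p_1\cdot\cD_{X,Y(1)}$; moreover, given two covariate--outcome marginals with a common covariate marginal, the only obstruction to assembling them, together with prescribed generalized propensities, into a joint law of $(X,Y(0),Y(1),T)$ is the propensity normalization
\[
\int p_0(x,y)\,\tfrac{\cD_{X,Y(0)}(x,y)}{\cD_X(x)}\,\d y+\int p_1(x,y)\,\tfrac{\cD_{X,Y(1)}(x,y)}{\cD_X(x)}\,\d y=1
\]
for $\cD_X$-almost every $x$. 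For $\cD_2$ this holds: integrating \textbf{(iii)} over $y$ and dividing by $\cP_X(x)=\cQ_X(x)$ gives $\int q(x,y)\tfrac{\cQ(x,y)}{\cQ_X(x)}\d y=\int p(x,y)\tfrac{\cP(x,y)}{\cP_X(x)}\d y$, so the control-slot contribution of $\cD_2$ equals that of $\cD_1$, while the treated slot is shared; thus the normalization already satisfied by $\cD_1$ transfers to $\cD_2$. The explicit assembly -- for each fixed $x$, drawing $T$ with the correct marginal probability and then $Y(0),Y(1)$ from the corresponding tilts of the conditional marginals -- is routine, and I would isolate it as a gluing lemma. I expect this validity/gluing step to be the main obstacle; everything else is bookkeeping.

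It remains to read off the effects. Since $p\cP=q\cQ$, the control-slot censored densities of $\cD_1$ and $\cD_2$ coincide, and their treated slots are identical, so $\cC_{\cD_1}=\cC_{\cD_2}$. For ATE, $\tau_{\cD_i}=\E_{(x,y)\sim\wh\cP}[y]-\E_{\cD_i}[Y(0)]$ with $\E_{\cD_1}[Y(0)]=\E_{(x,y)\sim\cP}[y]$ and $\E_{\cD_2}[Y(0)]=\E_{(x,y)\sim\cQ}[y]$, so $\tau_{\cD_1}-\tau_{\cD_2}=\E_{\cQ}[y]-\E_{\cP}[y]\neq0$ by \textbf{(i)}, giving part~1. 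For ATT, I first note $\Pr_{\cD_1}[T{=}1]=\Pr_{\cD_2}[T{=}1]=:\pi$ (shared treated slot and common covariate marginal); were $\pi=0$, the normalization would force $p=q=1$ almost everywhere, whence $\cP=p\cP=q\cQ=\cQ$ and therefore $\E_{\cP}[y]=\E_{\cQ}[y]$, contradicting \textbf{(i)}. Hence $\pi>0$, and the identity $\gamma_\cD=\E[Y(1)\mid T{=}1]-\tfrac{1}{\Pr[T{=}1]}\inparen{\E[Y(0)]-\E[Y(0)\mid T{=}0]\Pr[T{=}0]}$ used in the proof of \cref{claim:sufficiency_condition1} expresses $\gamma_\cD$ through quantities that are functions of $\cC_\cD$ and the single unidentified quantity $\E[Y(0)]$; since every term other than $\E[Y(0)]$ agrees across $\cD_1,\cD_2$, we obtain $\gamma_{\cD_1}-\gamma_{\cD_2}=\tfrac1\pi\inparen{\E_{\cQ}[y]-\E_{\cP}[y]}\neq0$, giving part~2.
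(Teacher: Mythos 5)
Your proposal is correct and follows essentially the same route as the paper: the same pair of studies $(p,\wh p,\cP,\wh\cP)$ and $(q,\wh p,\cQ,\wh\cP)$ built from the compatibility witness, the same consistency check on the $(X,T)$-marginal obtained by integrating $p\cP=q\cQ$ over $y$, and the same read-off that the censored laws coincide while $\Ex[Y(0)]$ differs. The only divergence is the ATT edge case $\Pr[T{=}1]=0$, which you rule out as vacuous (it forces $p=q=1$ almost everywhere and hence $\cP=\cQ$, contradicting the unequal means), whereas the paper handles it by relabeling treated and control units; both arguments are valid.
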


       \begin{proof}[Proof of \Cref{claim1}]
            Fix any classes $\hyP,\hyD$ that do not satisfy {\cref{cond:iden}}.
            Since \cref{cond:iden} does not hold, there exist distinct compatible tuples $(p,\cP),(q,\cQ)\in (\hyP,\hyD)$  satisfying: 
            \begin{align*}
                \Ex_{(x,y)\sim \cP}[y] &\neq \Ex_{(x,y)\sim \cQ}[y],,\yesnum\label{eq:ATTproof:1}\\
                \cP_X &= \cQ_X\,,\yesnum\label{eq:ATTproof:2}\\
                \forall_{x\in   \supp(\cP_X)},~
                    \forall_{y\in\R},~\quad
                        p(x,y) \cP(x,y) &=  q(x,y) \cQ(x,y)\,.\yesnum\label{eq:ATTproof:3}
            \end{align*}
            We construct distributions $\cD^{(1)}$ and $\cD^{(2)}$ such that:  
{\textbf{(i)~} $\E_{\cD^{(1)}}[Y(0)] \neq \E_{\cD^{(2)}}[Y(0)]$;  
\textbf{(ii)~} $\E_{\cD^{(1)}}[Y(1)] = \E_{\cD^{(2)}}[Y(1)]$; and  
\textbf{(iii)~}} the censored distributions coincide, \ie{}, $\cC_{\cD^{(1)}} = \cC_{\cD^{(2)}}$. {Clearly, the ATEs differ: $\tau_{\cD^{(1)}} \ne \tau_{\cD^{(2)}}$. Since $\cC_{\cD^{(1)}} = \cC_{\cD^{(2)}}$, no deterministic mapping $f$ can satisfy $f(\cC_{\cD^{(i)}}) = \tau_{\cD^{(i)}}$ for both $i\in\sinbrace{1, 2}$. This establishes the first part of the statement: the ATE is not identifiable in the absence of \Cref{cond:iden}.}

            The construction is as follows and uses the compatibility of the tuples $(p,\cP)$ and $(q,\cQ)$. 
            Let $(\wh p,\wh \cP)\in\hyP\times \hyD$ be the tuple witnessing that $(p,\cP)$ is compatible.
            \begin{enumerate}
                \item {First, we construct $\cD^{(1)}$ as the  distribution consistent with the following
                \begin{align*}
                    \cD^{(1)}_{X,Y(0)} =  \cP\,,\quad &
                    \Pr\nolimits_{\cD^{(1)}}[T{=} 0\mid X{=}x, Y(0){=}y] = p(x,y)\,,\quad\yesnum\label{eq:constructionD1:a:ATT}\\
                    \cD^{(1)}_{X,Y(1)} = \wh \cP\,,\quad &
                    \Pr\nolimits_{\cD^{(1)}}[T{=} 1\mid X{=}x, Y(1){=}y] = \wh p(x,y)\,.\yesnum\label{eq:constructionD1:b:ATT}
                \end{align*}
                The distribution $\cD^{(1)}$ is a valid observational study due to compatibility of $(p,\cP)$.}
                \item {Next, we construct $\cD^{(2)}$ as the  distribution consistent with the following %
                \begin{align*}
                    \cD^{(2)}_{X,Y(0)} =  \cQ\,,\quad &
                    \Pr\nolimits_{\cD^{(2)}}[T{=} 0\mid X{=}x, Y(0){=}y] = q(x,y)\,,\quad\yesnum\label{eq:constructionD2:a:ATT}\\
                    \cD^{(2)}_{X,Y(1)} = \wh \cP\,,\quad &
                    \Pr\nolimits_{\cD^{(2)}}[T{=} 1\mid X{=}x, Y(1){=}y] = \wh p(x,y)\,.\yesnum\label{eq:constructionD2:b:ATT}
                \end{align*}
                Under
                these choices, it is not immediate that $\cD^{(2)}$ is a valid distribution on $(X,T,Y(0),Y(1)).$ In particular, the above pair of equations couples the distributions of $X$ and $T$ and hence we have to ensure that they are consistent between each other.
              To this end, to verify that the distribution $\cD^{(2)}$ constitutes a valid observational study, we need to ensure that the marginal distribution of $(X, T)$ induced by \cref{eq:constructionD2:a:ATT} is consistent with that induced by \cref{eq:constructionD2:b:ATT}.
                First, observe that the marginal over $X$ in the two equations is $\cQ_X$ and $\wh {\cP}_X$ respectively, which are identical because $\cQ_X=\cP_X$ (\cref{eq:ATTproof:2}) and $\wh {\cP}_X = \cP_X$ (because $(p,\wh p, \cP, \wh{\cP})=(p_0,p_1,\cD_{X,Y(0)},\cD_{X,Y(1)})$ forms a valid observational study).
                It remains to check that $e(x)=\Pr[T{=}1\mid X={x}]$ is identical across \cref{eq:constructionD2:a:ATT,eq:constructionD2:b:ATT}.
                To see this, fix any $x\in \supp(\cP_X)$.
                Observe that from \cref{eq:constructionD2:a:ATT}
                \[
                    \Pr[T{=}1\mid X={x}]
                    \,=\, 1 - \frac{\int_y q(x,y)\cQ(x,y)  \d y}{\int_y \cQ(x,y)  \d y}
                    ~~~\Stackrel{\eqref{eq:ATTproof:2},~\eqref{eq:ATTproof:3}}{=}~~~ 1 - \frac{\int_y p(x,y)\cP(x,y)  \d y}{\int_y \cP(x,y)  \d y}\,,
                \]
                and from \cref{eq:constructionD2:b:ATT},
                \[
                    \Pr[T{=}1\mid X={x}]
                    \,=\, \frac{\int_y \wh {p}(x,y)\wh \cP(x,y)  \d y}{\int_y \wh \cP(x,y)  \d y}
                    \,\Stackrel{}{=}\, 1 - \frac{\int_y p(x,y)\cP(x,y)  \d y}{\int_y \cP(x,y)  \d y}\,.
                \]
                To see the second equality, consider the observational study $\cD^{(1)}$ defined above.
                Since this study is valid, we have that for each $x\in \supp(\cP_X)$
                \[
                    \frac{\int_y \wh {p}(x,y)\wh \cP(x,y)  \d y}{\int_y \wh \cP(x,y)  \d y}
                    = \Pr\nolimits_{\cD^{(1)}}\sinsquare{T {=} 1\mid X {=}x}
                    = 1- \frac{\int_y \Pr[T{=}0 \mid X{=}x, Y(0) {=} y]\cdot \cD^{(1)}_{X,Y(0)}(x,y) \d y}{\int_y \cD^{(1)}_{X,Y(0)}(x,y) \d y}
                \]
                The result follows by noting that $\Pr[T{=}0 \mid X{=}x, Y(0) {=} y] = p$ and $\cD^{(1)}_{X,Y(0)} = \cP.$}
            \end{enumerate}
            {We have constructed valid observational studies $\cD^{(1)}$ and $\cD^{(2)}$, which by construction}
            are realizable by $\inparen{\hyP,\hyD}$ and satisfy 
              $  \E_{\cD^{(1)}}[Y(0)] - \E_{\cD^{(2)}}[Y(0)]
                    = 
                    \Ex_{(x,y)\sim \cP}[y] - \Ex_{(x,y)\sim \cQ}[y]
                    ~~\Stackrel{}{\neq}~~ 0\,.
                $ {It is also clear from our construction that $ \E_{\cD^{(1)}}[Y(1)] - \E_{\cD^{(2)}}[Y(1)]
                    =  \Ex_{(x,y)\sim \wh \cP}[y] - \Ex_{(x,y)\sim \wh \cP}[y]
                    ~~\Stackrel{}{=}~~ 0\,.$}
                
            Finally, we claim that $\cC_{\cD^{(1)}}=\cC_{\cD^{(2)}}$, which leads to a contradiction since it implies $f(\cC_{\cD^{(1)}})=f(\cC_{\cD^{(2)}})$ and, hence, for at least one $i\in \inbrace{1,2}$, $f(\cC_{\cD^{(i)}})\neq \E_{\cD^{(i)}}[Y(0)].$

                It remains to prove that $\cC_{\cD^{(1)}}=\cC_{\cD^{(2)}}$.
                Consider any $i\in \inbrace{1,2}$.
                Let $(X, Y_{\rm obs}, T)$ denote the random variables observed in the censored data, where if $T{=}1$, then $Y_{\rm obs}=Y(1)$ (\ie{}, $Y(0)$ is censored) and, otherwise, $Y_{\rm obs}=Y(0)$ (\ie{}, $Y(1)$ is censored). 
                Consider the observation $(X{=}x, Y_{\rm obs}=y, T{=}t).$
                $\cC_{\cD^{(i)}}$ is the distribution which assigns the following density to it: 
                \begin{align*}
                    \cC_{\cD^{(i)}}(x,y,t)
                    =%
                    \begin{cases}
                        \cD_{X,Y(1)}^{(i)}(x,y) \cdot \Pr_{\cD^{(i)}}[T{=}1\mid X{=}x, Y(1){=}y]
                            & \text{if }t=1\,,\\
                        \cD_{X,Y(0)}^{(i)}(x,y)\cdot {\Pr_{\cD^{(i)}}[T{=}0\mid X{=}x, Y(0){=}y]}
                        &\text{if }t=0\,.
                    \end{cases}
                \end{align*}
                Note that for any $(x,y)\in\supp{(\cP_X)}\times \mathbb{R}$, \[\cC_{\cD^{(i)}}(x,y,0)=\int_{y'\in\mathbb{R}} \cD^{(i)}_{X,Y(1),Y(0),T}(x,y',y,0) \d y'\] and \[\cC_{\cD^{(i)}}(x,y,1)=\int_{y'\in\mathbb{R}} \cD^{(i)}_{X,Y(1),Y(0),T}(x,y,y',1) \d y',\] so $\cD^{(i)}$ is a valid density for the censored distribution.
                We claim that the above does not depend on the choice of $i\in \inbrace{1,2}$.
                To see this, first consider $t=0$, then $\cC_{\cD^{(1)}}(x,y,t) = %
                \cP(x,y)p(x,y)$ and $\cC_{\cD^{(2)}}(x,y,t) = %
                \cQ(x,y)q(x,y)$.
                However, due to \cref{eq:ATTproof:3}, $\cP(x,y)p(x,y)$ and $\cQ(x,y)q(x,y)$ are identical for each $(x,y)\in \supp(\cP_X)\times \R$.
                Moreover, the two are also identical for any $x\not\in \supp(\cP_X)$ and $y\in \R$, as $\supp(\cP_X)=\supp(\cQ_X)$, hence, for this $(x,y)$, $\cP(x,y)=\cQ(x,y)=0$.
                Finally, consider $t=1$ and observe that in this case the constructions of $\cD^{(1)}$ and $\cD^{(2)}$ are identical and, hence, immediately $\cC_{\cD^{(1)}}(x,y,t)=\cC_{\cD^{(2)}}(x,y,t)$.

               In the following two paragraphs, we argue for the unidentifiability of ATT in the absence of \Cref{cond:iden}. Assume $\Pr_{\cC_{\cD^{(1)}}}[T = 1] > 0$. Since $\Pr_{\cD^{(i)}}[T = 1] = \Pr_{\cC_{\cD^{(i)}}}[T = 1]$ for both $i \in \{1, 2\}$, it follows that $\Pr_{\cD^{(i)}}[T = 1] > 0$ for both $i \in \{1, 2\}$. Moreover, since $\E_{\cD^{(1)}}[Y(0)] \ne \E_{\cD^{(2)}}[Y(0)]$ and $\cC_{\cD^{(1)}} = \cC_{\cD^{(2)}}$, it follows from the argument in~\cref{fn:ATT_footnote} that ATTs differ: $\gamma_{\cD^{(1)}} \ne \gamma_{\cD^{(2)}}$. Therefore, no deterministic function $f$ can satisfy $f(\cC_{\cD^{(i)}}) = \gamma_{\cD^{(i)}}$ for both $i \in \{1, 2\}$.
                
                If $\Pr_{\cC_{\cD^{(1)}}}[T = 1] = 0$, consider two new distributions $\cD^{(3)}$ and $\cD^{(4)}$ constructed from $\cD^{(1)}$ and $\cD^{(2)}$, respectively, by relabeling the treatment and control groups -- that is, switching the roles of treated and control units. Then $\Pr_{\cD^{(i)}}[T = 1] = 1$ for both $i \in \{3, 4\}$. In other words, the ATT for $\cD^{(3)}$ and $\cD^{(4)}$ equals the ATE for $\cD^{(3)}$ and $\cD^{(4)}$, respectively. Since $\cC_{\cD^{(3)}} = \cC_{\cD^{(4)}}$ and $\tau_{\cD^{(3)}} \ne \tau_{\cD^{(4)}}$, it follows that no deterministic mapping $f$ can satisfy $f(\cC_{\cD^{(i)}}) = \tau_{\cD^{(i)}}$, which equals $\gamma_{\cD^{(i)}}$, for both $i \in \{3, 4\}$.
                
 \end{proof}

\noindent By combining \Cref{claim:sufficiency_condition1,claim1}, we complete the proofs of \Cref{infthm:Suff,infthm1}.

    \subsection{Overview of Estimation Algorithms}\label{sec:overview:estimation}
        In this section, we overview our algorithms for estimating ATE in Scenarios I-III. We refer the reader to \cref{sec:estimation} for formal statements of results and algorithms.

        \paragraph{Standard Approach to Estimate ATE.}
        We begin with the standard scenario (Scenario I) where unconfoundedness and $c$-overlap hold (and where methods to estimate ATE are already known).
        Recall that in this scenario, $\tau_\cD$ can be decomposed as in \cref{eq:decomposition:unconfoundedness}, which leads to the following finite sample version: given estimates $\wh{e}(\cdot)$ of the propensity scores $e(\cdot)$, 
        \[ 
            \wh{\tau} = \sum_i \frac{y_i t_i}{\wh{e}(x_i)} - \sum_i \frac{y_i(1-t_i)}{1-\wh{e}(x_i)}\,.
            \yesnum\label{eq:overview:decomposition}
        \] 
        \noindent This decomposition has several useful properties.
        First, when the outcomes are bounded -- a standard setting (see, \eg{}, \citet{kallus2021minimax}) -- each term in the decomposition (\ie{}, $\nfrac{y_i t_i}{\wh{e}(x)}$ and $\nfrac{y_i (1-t_i)}{(1-\wh{e}(x))}$) is a bounded random variable.
        Roughly speaking, under the assumption that $\nfrac{1}{\wh{e}(\cdot)}\approx \nfrac{1}{e(\cdot)}$, this enables one to use the Central Limit Theorem to deduce that, given $n$ samples, $\abs{\tau-\wh{\tau}}\leq O\inparen{\nfrac{1}{\sqrt{n}}}$ with high probability.
        Second, because we assume $c$-overlap, one can show that if $\wh{e}(\cdot)$ is close to $e(\cdot)$ (\eg{}, $\int\abs{e(x)-\wh{e}(x)}\cD_X(x)\d x\approx 0$), then their inverses $\nfrac{1}{\wh{e}(\cdot)}$ and $\nfrac{1}{e(\cdot)}$ -- which show up in the above decomposition -- are also close to each other.
        The sample complexity of learning $e(\cdot)$ can be bounded by observing that the problem is equivalent to estimating probabilistic concepts, introduced by \citet{kearns1994pconcept} and imposing the family of propensity scores to have a finite fat-shattering dimension.
        While the equivalence to probabilistic concept learning is straightforward, we have not been able to find a reference for it in the learning theory or causal inference literature (which usually assume an estimation oracle {with a small, \eg{}, $L_2$-error,} as a black-box{; see} \cite{kennedy2024agnostic,foster2023orthognalSL,jin2024structureagnosticoptimalitydoublyrobust}). For completeness, we present details on obtaining the sample complexity in \cref{sec:nuisanceParameters}.

        \paragraph{Hurdles in Using \cref{eq:overview:decomposition} in General Scenarios.}
        None of these ideas work in the more general Scenarios II and III.
        \begin{itemize}
            \item[$\triangleright$] 
                In Scenario II, since unconfoundedness does not hold, the above decomposition is not even true and, while one could write a similar decomposition with generalized propensity scores $p_0(\cdot)$ and $p_1(\cdot)$, they cannot generally be estimated from censored data.
            \item[$\triangleright$] 
                In Scenario III, overlap does not hold and, hence, the terms in the above decomposition are no longer bounded.
            In fact, for regression discontinuity designs \textit{all} terms are unbounded, since for each $x,$ $e(\cdot)\in \zo$.
            In fact, even when overlap holds for ``most'' covariates, extreme propensity scores (close to 0 or 1) are known to be problematic -- a number of heuristics have been proposed in the literature (\eg{}, \cite{crump2009dealing,li2018overlapWeights,khan2024trimming}). Finally, the recent work of \citet{kalavasis2024cipw} presents a (rigorous) variant of IPW estimators that handles outliers and errors in the propensities but requires additional assumptions on data. %
        \end{itemize}
        Thus, different ideas are needed to estimate $\tau$ in general scenarios.

        \paragraph{Our Approach.}
            We take a completely different approach to estimation, based on \cref{cond:iden}.
            Since \cref{cond:iden} is sufficient for identifying ATE in all scenarios, our approach is quite general -- we present two algorithms -- one for Scenario II and one for Scenario III -- that work for all of the interesting and widely studied special cases of these scenarios discussed in \cref{sec:applicationsAndEstimation}.
            Having general estimators can be useful since, like unconfoundedness, distributional assumptions and, hence, \cref{cond:iden}, are \textit{not} testable from $\cC_\cD$.\footnote{{In particular, given censored samples from  $\cC_\cD$ and concept classes $(\hyP,\hyD)$, it is impossible to verify whether $\cD$ is realizable with respect to $(\hyP,\hyD)$; then it could be the case that is either realizable with respect to $(\hyP,\hyD)$ or with respect to  alternative classes $(\hyP',\hyD')$ by balancing the products accordingly.}} Thus one cannot pick the estimator based on whether specific assumptions hold. 

            \paragraphit{Estimator for Scenario II.}
                Our estimator is simple: it first uses the censored samples to find $(p,\cP),(q,\cQ)\in \hyP\times \hyD$ 
                such that $p\,\cP$ approximates $p_0\,\cD_{X,Y(0)}$ and $q\,\cQ$ approximates $p_1\,\cD_{X,Y(1)}$ in the following sense:
                for a sufficiently small $\eps>0$,
                \begin{align*}
                    \begin{split}
                        \snorm{p\cP - p_0\cD_{X,Y(0)}}_1
                    &\coloneqq 
                    \iint \abs{p(x,y)\cP(x,y)-p_0(x,y)\cD_{X,Y(0)}(x,y)}\d x\d y
                     \leq \eps \,,\\
                    \snorm{q\cQ - p_1\cD_{X,Y(1)}}_1
                    &\coloneqq 
                    \iint \abs{p(x,y)\cP(x,y)-p_1(x,y)\cD_{X,Y(1)}(x,y)}\d x\d y
                     \leq \eps \,.
                    \end{split}
                    \yesnum\label{eq:techOverview:est:2}
                \end{align*}
                Then, it outputs $\widehat{\tau} = \Ex_{\cP}[y] - \Ex_{\cQ}[y]$.
                Here, $\Ex_{\cP}[y]$ estimates $\Ex_\cD[Y(1)] $ and $\Ex_{\cQ}[y]$ estimates $\Ex_\cD[Y(0)] $.
                
                {The correctness of the estimator follows because under \cref{cond:Over:estimation} (a robust version of the condition in \cref{infthm:overlap}), we show that  
                \[
                    \sabs{ \Ex\nolimits_{\cP}[y] - \Ex\nolimits_\cD[Y(1)]}\leq f(\eps)
                    \quadand
                    \sabs{ \Ex\nolimits_{\cQ}[y] - \Ex\nolimits_\cD[Y(0)]}\leq f(\eps)\,,
                \]
                where $f(\cdot)$ is a function determined by \cref{cond:Over:estimation} with the property that $\lim_{z\to 0^+} f(z) = 0$.}
               {To see that this procedure can be implemented,} note that each product $p_t\,\cD_{X,Y(t)}$ (for $t\in\{0,1\}$) is identified from the censored data. 
                {To obtain} finite-sample guarantees, we use the following standard assumptions: 
                \begin{enumerate}[itemsep=0pt]
                    \item $\hyP$ has finite fat-shattering dimension at scale $O(\eps)$,
                    \item each distribution in $\hyD$ is $O(1)$-smooth with respect to a reference measure $\mu$,
                    \item $\hyD$ admits an $O(\eps)$-TV cover.
                \end{enumerate}
               {Under these assumptions, we can construct finite covers $C_{P}$ of~$\hyP$ and $C_{D}$ of~$\hyD$, so that $C_{P}\times C_{D}$ is an $O(\eps)$‐cover of~$\hyP\times\hyD$.}
                {This, in particular, ensures that to find the pairs $(p,\cP)$ and $(q,\cQ)$ in \cref{eq:techOverview:est:2}, it suffices to select the elements of the cover $C_P\times C_D$ that are closest to $(p_0,\cD_{X,Y(0)})$ and $(p_1,\cD_{X,Y(1)})$ respectively -- as estimated from a suitably large set of samples  (see \cref{sec:nuisanceParameters}).}
                {Hence, the} estimation of $\wh{\tau}$ {reduces} to finding $(\wh{p},\wh{\cP})$ of $C_P\times C_D$ that is closest to the {empirical distribution induced by the censored samples}. 
               {We note that the size of the cover $C_P\times C_D$ is} exponential in $O(\log(\nfrac{1}{\eps})) \cdot  \mathrm{fat}_{O(\eps)}(\hyP) \cdot \log N_{O(\eps)}(\hyD)$ {and this is why we obtain the} sample complexity {claimed in} \cref{infthm:SampleComplexity1}.

                The pseudo-code of the algorithm appears in Algorithm~\ref{alg:est:scenario2}.\begin{remark}
                    {There are also other approaches to estimation in Scenario II. For instance, one could follow the template laid out in \cref{cond:iden} by (1) first, learning an estimate of $\cD_X$ and using it to eliminate any members of the cover $C_D$ that are not close to (the learned estimate of) $\cD_X$ thus resulting in a class $C_D'$, (2) then, picking the element of $C_D' \times C_P$ which is closest to the empirical estimate of $p_t(x,y) D_{X,Y(t)}(x,y)$ and, (3) outputting the resulting estimate of $\Ex[Y(t)]$. %
                    }
                    Since this approach does not improve our sample complexity, we present the more direct approach which slightly deviates from the outline in \cref{cond:iden}.
                \end{remark}

            \paragraphit{Estimator for Scenario~III.}
                In this scenario, unconfoundedness holds, but overlap is very weak: there are sets $S_0,S_1\subseteq\R^d$ with $\vol(S_0),\vol(S_1)\geq c$ such that for each $(x,y)\in S_t\times \R$, $p_t(x,y)\geq c$ (for each $t\in \zo$).
                If one has membership access to sets $S_0$ and $S_1$ and query access to the propensity scores $e(\cdot)$, then a slight modification of the Scenario~II estimator would suffice:  
                    one can find $\inparen{p,\cP}$ such that the product $p\cP$ approximates the product $p_1\cD_{X,Y(1)}$ over $S_1$, and output $\Ex_{\cP}[y]$ as an estimate for $\Ex_\cD[Y(1)]$.
                (With an analogous algorithm to estimate $\Ex_\cD[Y(0)]$.)
                The correctness of this algorithm {follows} from a robust version of the condition in \cref{infthm:3} (see \cref{cond:Uncon:estimation}) -- which guarantees that if $\cP_{S_1}$ (the truncation of $\cP$ to ${S_1}$) is close in TV distance to $\sinparen{\cD_{X,Y(1)}}_{S_1}$ (the truncation of $\cD_{X,Y(1)}$ to ${S_1}$), then their means are also close.
                However, because we neither have access to $S_0,S_1$ nor to $e(\cdot)$, we must estimate both of them from samples and carefully handle the estimation errors.
                
                Next, we describe our estimator for $\Ex_\cD[Y(1)]$, the estimator for
                $\Ex_\cD[Y(0)]$ is symmetric, {and} the estimator for ATE follows by subtracting the two. %
                Our estimator proceeds in three phases:
                \begin{enumerate}[itemsep=0pt]
                    \item First, it uses censored samples to find a propensity score $\hat{e}(\cdot)$ that approximates $e(\cdot)$. Let $\wh{S}=\inbrace{x\given \wh{e}(x)\geq c-\eps}$ which satisfies $\vol(\wh{S})\geq c-\eps$ and $\min_{x\in \wh{S}} e(x)\geq c-2\eps$.  
                    \item Then, it finds $\inparen{p,\cP}\in \hyP\times \hyD$ approximating $p_1\cdot \cD_{X,Y(1)}$ (over censored samples).
                    \item Third, it finds $\cP'$ approximating $\frac{p(x,y)\cP(x,y)}{\wh{e}(x,y)}$ over $\wh{S}$, such that,
                    \[
                        \iint_{(x,y)\in \wh{S}\times \R} \abs{\cP'(x,y)-\frac{p(x,y)\cP(x,y)}{\wh{e}(x,y)}} \d x \d y\leq O(\eps)\,.
                    \] 
                    It finally outputs $\Ex_{(x,y)\sim\cP'}[y]$ as the estimate for $\Ex_\cD[Y(1)]$.
                \end{enumerate}
                Here, as in the algorithm in Scenario II, one might be tempted to use $\Ex_{(x,y)\sim\cP}[y]$ (instead of  $\Ex_{(x,y)\sim\cP'}[y]$) as an estimate for $\Ex_\cD[Y(1)]$.
                However, this fails because $\cP$ does not approximate $\cD_{X,Y(1)}$ well in regions outside of $S_1$ -- where overlap is violated.
                This is also why Step 2 above is necessary: intuitively, in Step 2, we find a distribution $\cP'$ which approximates $\nfrac{p(x,y)\cP(x,y)}{\wh{e}(x,y)}$ over the set $\wh{S}$ -- restricting the optimization to $\wh{S}$ is important because over $\wh{S}$, it holds that $\nfrac{p(x,y)\cP(x,y)}{\wh{e}(x,y)}\approx \cP(x,y)\approx \cD_{X,Y(1)}$.
                Now, the correctness follows due to a robust version of the condition in \cref{infthm:3} which, at a high level, ensures that $\hyP'$ extrapolates and is a good estimate of $\cD_{X,Y(1)}$ over the whole domain and not just $\wh{S}.$
               {We provide the pseudo-code of our algorithm in}  Algorithm~\ref{alg:est:scenario3}.
                
                As for the previous algorithm, all the quantities estimated by this algorithm are also identifiable from the censored samples. 
                For finite sample guarantees, we use the same standard assumptions as for the previous scenario.
                As mentioned above, proving the correctness of this estimator is much more challenging than for the previous estimator and requires a careful analysis; see \cref{sec:proofof:lem:est:unconfoundedness}.

\section{Identification of ATE in Scenarios I-III}
    \label{sec:scenarios}
    In this section, we present several scenarios, including many novel ones, that satisfy \cref{cond:iden} and, hence, enable the identification of average treatment effect $\tau$.
    Later, in the upcoming \cref{sec:estimation}, we show that, under natural assumptions, $\tau$ can also be estimated from finite samples in all of these scenarios.

    \subsection{Identification under Scenario~I (Unconfoundedness and Overlap)}
    \label{section:UO}
        To gain some intuition about \cref{cond:iden}, we begin with the classical scenario where unconfoundedness and overlap both hold. 
        We verify that this scenario satisfies \cref{cond:iden}.
        Before proceeding, we note that in this scenario $\tau$ is already known to be identifiable and, under mild additional assumptions, one also has finite sample estimators for it \cite{imbens2015causal,chernozhukov2024appliedcausalinferencepowered}.
        To verify that \cref{cond:iden} is satisfied, we first need to put this scenario in the context of \cref{cond:iden} by identifying the structure of the concept classes $\hyP$ and $\hyD$.
        As mentioned in \cref{sec:framework},  an observational study $\cD$ satisfies unconfoundedness and overlap if and only if it is realizable with respect to $\hyPou(c)$ (see \cref{eq:PinScenarioI}).\footnote{To see that if $\cD$ satisfies unconfoundedness and $c$-overlap it belongs to $\hyPou(c)$ consider that $p_t(x,y_1) = \Pr[T{=}t \mid X{=}x, Y(t){=}y_1] = \Pr[T{=}t \mid X{=}x] = \Pr[T{=}t \mid X{=}x, Y(t){=}y_2] = p_t(x,y_2)$ whenever $T\bot Y(t) \mid X$ for $t\in\{0,1\}$. %
        {Next, to see that if} $\cD$ belongs to $\hyPou(c)$, {then} it satisfies unconfoundedness and $c$-overlap consider that for $t\in\{0,1\}$ $p_t(x,y) = \Pr[T{=}t\mid X{=}x]$ by the first property and, so $c$-overlap holds and 
        $\Pr\insquare{T{=}t, Y(t)\in S\mid X{=}x}
                    =\Pr[T{=}t\mid X{=}x]\cdot \int_{S}\cD_{Y(t)\mid X{=}x}(y) \d y
                    =\Pr[T{=}t\mid X{=}x]\cdot \cD_{Y(t)\mid X{=}x}(S)$, 
        \ie{}, $T\bot Y(1), Y(0) \mid X$. 
        }  
        Since unconfoundedness and overlap place no restrictions on the concept class $\hyD$, we let $\hyD$ be the set of all distributions over $\R^d\times \R$, which we denote by $\hyDall$.
        Now, we are ready to verify that unconfoundedness and overlap satisfy \cref{cond:iden}.
        \begin{restatable}[Identification in Scenario~I]{theorem}{thmScenarioOne}
            \label{lem:iden:unconfoundedness-overlap}
            For any $c \in (0,\nfrac{1}{2})$, $\inparen{\hyPou(c), \hyDall}$ satisfies \cref{cond:iden}.
        \end{restatable}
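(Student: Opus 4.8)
The plan is to prove the statement by contraposition against the two ``distinguishing'' requirements of \cref{cond:iden}. Fix any compatible tuples $(p,\cP),(q,\cQ)\in\hyPou(c)\times\hyDall$, and suppose that neither Item~2 nor Item~3 holds; I will show that Item~1 must then hold. Concretely, the failure of Item~2 gives $\cP_X=\cQ_X$, and the failure of Item~3 gives the pointwise identity $p(x,y)\cP(x,y)=q(x,y)\cQ(x,y)$ for every $(x,y)\in\supp(\cP_X)\times\R$. The whole argument will hinge on the defining structural feature of $\hyPou(c)$: every $p\in\hyPou(c)$ is independent of its second argument (this is precisely the unconfoundedness constraint $p(x,y)=p(x,z)$), so I may write $p(x)$ and $q(x)$ for the common values, with $c<p(x),q(x)<1-c$ by $c$-overlap.

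First I would integrate the Item~3 identity over the outcome variable $y$. Since $p(x)$ does not depend on $y$, the left-hand side integrates to $p(x)\int_\R\cP(x,y)\,\d y=p(x)\cP_X(x)$, and symmetrically the right-hand side gives $q(x)\cQ_X(x)$; hence $p(x)\cP_X(x)=q(x)\cQ_X(x)$ for all $x\in\supp(\cP_X)$. Invoking the failure of Item~2, \ie{} $\cP_X=\cQ_X$, and noting that $\cP_X(x)>0$ on $\supp(\cP_X)$, I can cancel the marginals to conclude $p(x)=q(x)$ for every $x\in\supp(\cP_X)$.

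Next I would feed this back into the Item~3 identity: for $x\in\supp(\cP_X)$ we now have $p(x)\cP(x,y)=p(x)\cQ(x,y)$, and here the $c$-overlap lower bound is essential, since it guarantees $p(x)>c>0$ and therefore lets me divide through to obtain $\cP(x,y)=\cQ(x,y)$ for all such $(x,y)$. Outside the support, $\supp(\cP_X)=\supp(\cQ_X)$ (again from $\cP_X=\cQ_X$) forces both densities to vanish, so in fact $\cP=\cQ$ as distributions over $\R^d\times\R$. In particular $\E_{(x,y)\sim\cP}[y]=\E_{(x,y)\sim\cQ}[y]$, which is exactly Item~1, completing the contrapositive and hence the proof.

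I expect the substantive content to be the recognition that the $y$-independence of propensity scores in $\hyPou(c)$ is what makes the integrating-out step collapse the censored-density identity to an identity on covariate marginals; without that feature the generalized propensity score could not be pulled out of the $y$-integral. The main obstacle is therefore not any hard inequality but careful bookkeeping around supports and measure-zero sets, ensuring that the cancellations $\cP_X(x)>0$ and $p(x)>c$ are justified exactly on $\supp(\cP_X)$ and that the conclusion $\cP=\cQ$ is drawn on the full domain. Since compatibility of the tuples plays no role beyond guaranteeing that they arise from valid studies, I would not need to use it in the argument.
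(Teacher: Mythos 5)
Your proposal is correct and follows essentially the same route as the paper's proof: exploit the $y$-independence of elements of $\hyPou(c)$ to integrate the censored-density identity over $y$, conclude $\bar p=\bar q$ on $\supp(\cP_X)$ from $\cP_X=\cQ_X$, and then use the overlap lower bound to divide and obtain $\cP=\cQ$, contradicting (equivalently, establishing) the outcome-expectation condition. The only cosmetic difference is that the paper phrases it as a contradiction rather than a contrapositive, and your extra remark about the behavior outside $\supp(\cP_X)$ is a harmless bit of additional bookkeeping.
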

        Hence, if an observational study $\cD$ is realizable with respect to $\inparen{\hyPou, \hyDall}$, then $\tau_\cD$ can be identified.
        The proof of \cref{lem:iden:unconfoundedness-overlap} appears in \cref{sec:proofof:lem:iden:unconfoundedness-overlap}. 
    
    \subsection{Identification under Scenario~II (Overlap without Unconfoundedness)}
        \label{sec:scenario:overlap}
        \label{sec:iden:overlap}
        Next, we consider the scenario where overlap holds but unconfoundedness may not.
        Concretely, in this scenario, the generalized propensity scores belong to the following concept class. %
        \begin{lemma}
        [Structure of Class $\hyP$; Immediate from Definition]
        \label{def:pcoverlap}
            For any $c \in (0,\nfrac{1}{2}),$ an observational study $\cD$ satisfies $c$-overlap if and only if $\cD$ is realizable with respect to $\hyP = \hyPoverlap(c)$, where
             \[ 
                \hyPoverlap (c)
                \coloneqq \inbrace{p\colon \R^d\times \R \to [0,1]\given ~\forall_{x\in \R^d},~~\forall_{y\in \R}\,,~~
                    c < p(x,y) < 1-c 
                }\,.
            \]
        \end{lemma}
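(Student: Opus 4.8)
The plan is to prove the biconditional by directly unfolding the relevant definitions, since—as the attribution ``Immediate from Definition'' signals—the two sides are really two names for the same pointwise constraint on the generalized propensity scores $p_0(\cdot),p_1(\cdot)$ induced by $\cD$. First I would invoke \cref{def:realizability}: realizability of $\cD$ with respect to a class $\hyP$ means precisely that both induced generalized propensity scores $p_0(\cdot)$ and $p_1(\cdot)$ lie in $\hyP$. Specializing to $\hyP=\hyPoverlap(c)$ and plugging in the definition of $\hyPoverlap(c)$, this says exactly that for each $t\in\{0,1\}$ and every $(x,y)\in\R^d\times\R$ one has $c<p_t(x,y)<1-c$ (the membership $p_t(x,y)\in[0,1]$ required by the codomain of $\hyPoverlap(c)$ is automatic, as each $p_t$ is a conditional probability).

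Next I would line this up against the definition of $c$-overlap used in the concept-class framework (the footnote in \cref{sec:framework}), namely that $c<p_0(x,y),p_1(x,y)<1-c$ for every $(x,y)$. The two displays are literally the same bound on $p_0$ and $p_1$, so the ``if and only if'' is obtained by reading the chain of equivalences in either direction. No quantitative estimate, construction, or case analysis is needed; the content is entirely bookkeeping of definitions.

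The only point worth flagging is definitional rather than mathematical: the preliminaries phrase the \emph{standard} overlap condition through the ordinary propensity score $e(x)=\Pr_\cD[T{=}1\mid X{=}x]$, whereas $\hyPoverlap(c)$ constrains the \emph{generalized} scores $p_t(x,y)$ pointwise, and these are not the same condition. To keep the statement self-contained I would add one reconciling sentence observing that the generalized bound implies the ordinary one by averaging over the outcome: whenever $c<p_1(x,\cdot)<1-c$ we get $e(x)=\int_{\R} p_1(x,y)\,\cD_{Y(1)\mid X{=}x}(y)\,\d y\in(c,1-c)$, and symmetrically $1-e(x)=\int_{\R} p_0(x,y)\,\cD_{Y(0)\mid X{=}x}(y)\,\d y\in(c,1-c)$. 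The converse fails at the level of $e(x)$—an average in $(c,1-c)$ does not force the pointwise generalized scores into $(c,1-c)$—which is precisely why the concept-class formulation is stated against the stronger generalized notion of $c$-overlap, and why, under that notion, the lemma reduces to an unfolding of definitions with no real obstacle to overcome.
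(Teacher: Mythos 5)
Your proposal is correct and matches the paper, which offers no argument beyond the label ``Immediate from Definition'': unfolding \cref{def:realizability} against the membership condition defining $\hyPoverlap(c)$ is exactly what is intended. Your closing flag about the two inequivalent readings of $c$-overlap in the paper (the $e(x)$-based one in the preliminaries versus the generalized-score one in the footnote of \cref{sec:framework}) is apt --- the biconditional only holds under the latter, and your averaging remark correctly shows the former is strictly weaker, so the reconciling sentence you add is a genuine improvement in precision rather than a detour.
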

        This is a very weak requirement on the generalized propensity scores. Since it makes no assumptions related to unconfoundedness, it already captures the many existing models for relaxing unconfoundedness in the literature \cite{tan2006distributional,rosenbaum2002observational,rosenbaum1987sensitivity,kallus2021minimax}.
        \begin{itemize}
            \item For instance, it captures \citet{tan2006distributional}'s model which requires that, {apart from $c$-overlap}, the propensity scores satisfy the following bound for some $\Lambda\geq 1$:
            \[
                \forall_{x\in \R^d}\,~~ 
                    \forall_{y\in \R}\,,~~ 
                        \forall_{t\in \zo}\,,\qquad 
                    \frac{1}{\Lambda} \leq {\frac{\inparen{1-e(x)}p_0(x,y)}{e(x) \inparen{1-p_0(x,y)}}, \frac{e(x)\inparen{1-p_1(x,y)}}{\inparen{1-e(x)}p_1(x,y)}}
                \leq \Lambda\,.
                \yesnum\label{eq:ratioOfTan}
            \]
            (Where $e(x)$ is the usual propensity score defined as $e(x)=\Pr[T{=}1|X{=}x]$.)
            The scenario we consider is strictly weaker since we only require $c$-overlap and not the above condition.
            We note that both \citet{tan2006distributional,rosenbaum2002observational} also assume overlap for an unspecified constant $c > 0$ as their focus is \textit{not} on getting sample complexity bounds, \ie{}, they assume $e(x)\in (c,1-c)$ for each $x$. 
            {(At first, this may seem weaker than overlap for generalized propensity scores. However, \cref{eq:ratioOfTan} along with $c$-overlap for $e(\cdot)$, implies $\Omega(\nfrac{c}{\Lambda})$-overlap for generalized propensity scores.)}
            To get sample complexity bounds for any standard estimator one either requires $c$-overlap (for, \eg{}, inverse propensity score weighted estimators) or additional assumptions (for, \eg{}, outcome-regression-based estimators).
            \item As another example, $\hyPoverlap(c)$ also captures the seminal odds ratio model that was formalized and extensively studied by Rosenbaum \cite{rosenbaum1987sensitivity,rosenbaum1991sensitivity,rosenbaum1988sensitivityMultiple}, and has since been utilized in a number of studies for conducting sensitivity analysis; see \citet{lin1998assessing,rosenbaum2002observational} and the references therein. 
            This model also aims to relax unconfoundedness.
            In addition to $c$-overlap, the odds ratio model places the following constraint for some $\Gamma \geq 1$:\footnote{To be precise, Rosenbaum assumes propensity score $e(x, i)$ can differ for different individuals $i$ with the same covariates $x$, but does not specify the reason for the differences \citet{rosenbaum2002observational}. Here, we study differences due to differences in the outcomes of individuals $i$ as also studied by \citet{kallus2021minimax}.}
            \[
                \forall_{x\in \R^d}\,~~ 
                    \forall_{y_1,y_2\in \R}\,,~~ 
                        \forall_{t\in \zo}\,,\qquad 
                \frac{1}{\Gamma}
                \leq 
                \frac{p_t(x,y_1)\inparen{1-p_t(x,y_2)}}{p_t(x,y_2)\inparen{1-p_t(x,y_1)}}
                \leq \Gamma\,.
            \]
            Again, we capture this model since we only require $c$-overlap without the above condition.
            {Like \citet{tan2006distributional}, \citet{rosenbaum2002observational} assumes overlap for an unspecified constant $c>0$, as their focus is not bounding the sample complexity; to get sample complexity bounds, we need to use either $c$-overlap or other assumptions.}
        \end{itemize}
        Under the scenario we consider we can ensure that $\Gamma=\Lambda=O(\nfrac{(1-c)^2}{c^2})>1$.
        However, as noted by \citet{rosenbaum2002observational,tan2006distributional}, if $\Gamma, \Lambda > 1$, then without distributional assumptions, $\tau$ cannot be identified up to factors better than $O(\Gamma)$ and $O(\Lambda)$ respectively.
        Hence, based on earlier results, it is not clear when $\tau$ can be identified. 
        Our main result in this section is a characterization of the concept class $\hyD$ that enables identifiability in the above scenario -- where overlap holds but unconfoundedness may not.
        Its proof appears in \cref{sec:proofof:lem:iden:overlap}.
        \begin{restatable}[Characterization of Identification in Scenario~II]{theorem}{thmScenarioTwo}
            \label{lem:iden:overlap}
            The following hold:
        \begin{enumerate}[itemsep=-1pt]
            \item \textbf{(Sufficiency)}\quad  If~ $\hyD$ satisfies \cref{cond:Over}, then there is a mapping $f\colon  \Delta(\R^d \times \{0,1\} \times \R) \to \R$ with $f(\cC_\cD)=\tau_\cD$ for each distribution $\cD$ realizable with respect to $\inparen{\hyPoverlap(c),\hyD}$.
            \item \textbf{(Necessity)}\quad Otherwise, for any map $f\colon \Delta(\R^d \times \{0,1\} \times \R) \to \R$, there exists a  distribution $\cD$ realizable with respect to $\inparen{\hyPoverlap(c),\hyD}$ such that $f(\cC_\cD)\neq \tau_\cD$.
        \end{enumerate}    
        \end{restatable}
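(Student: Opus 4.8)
The plan is to deduce both parts of the theorem from the general characterization in \cref{infthm:Suff} by showing that, when the propensity class is $\hyPoverlap(c)$, the hypothesis \cref{cond:Over} on $\hyD$ is \emph{equivalent} to \cref{cond:iden} for the pair $\inparen{\hyPoverlap(c), \hyD}$. The single structural fact driving everything is that any two propensity scores $p, q \in \hyPoverlap(c)$ take values in $(c, 1-c)$, so their pointwise ratio $q(x,y)/p(x,y)$ always lies in $\inparen{\nfrac{c}{1-c}, \nfrac{1-c}{c}}$ -- precisely the interval appearing in \cref{cond:Over}. Consequently, the censoring products $p(x,y)\cP(x,y)$ and $q(x,y)\cQ(x,y)$ can coincide only if the density ratio $\cP(x,y)/\cQ(x,y)$ also lies in this interval, which is exactly the complement of the condition in \cref{cond:Over}.

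\emph{Sufficiency.} Assuming \cref{cond:Over}, I would verify \cref{cond:iden} and then invoke \cref{claim:sufficiency_condition1}. Fix any compatible tuples $(p,\cP),(q,\cQ)$ and apply \cref{cond:Over} to the pair $\cP,\cQ \in \hyD$. If it yields Item 1 or Item 2 of \cref{cond:iden}, we are done immediately. Otherwise it produces a point $(x,y)\in\supp(\cP_X)\times\R$ with $\cP(x,y)\notin\inparen{\nfrac{c}{1-c}, \nfrac{1-c}{c}}\cdot\cQ(x,y)$; at such a point the equality $p(x,y)\cP(x,y)=q(x,y)\cQ(x,y)$ is impossible, since it would force $\cP(x,y)/\cQ(x,y)=q(x,y)/p(x,y)\in\inparen{\nfrac{c}{1-c}, \nfrac{1-c}{c}}$ (if instead $\cQ(x,y)=0$, then $\cP(x,y)>0$ and $p(x,y)\cP(x,y)>0=q(x,y)\cQ(x,y)$, so the products still differ). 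Hence Item 3 of \cref{cond:iden} holds, establishing identifiability.

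\emph{Necessity.} I would show that failure of \cref{cond:Over} forces \cref{cond:iden} to fail and then invoke \cref{claim1}. The failure supplies $\cP,\cQ\in\hyD$ with $\Ex_{(x,y)\sim\cP}[y]\neq\Ex_{(x,y)\sim\cQ}[y]$, $\cP_X=\cQ_X$, and $\cP(x,y)/\cQ(x,y)\in\inparen{\nfrac{c}{1-c}, \nfrac{1-c}{c}}$ for every $(x,y)\in\supp(\cP_X)\times\R$ (with $\cP=\cQ=0$ wherever $\cQ=0$). The crux is to manufacture $p,q\in\hyPoverlap(c)$ with $p(x,y)\cP(x,y)=q(x,y)\cQ(x,y)$ everywhere, so that Item 3 fails while Items 1 and 2 already fail. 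Writing the common product as $g(x,y)$, I need $g\in(c\cP,(1-c)\cP)\cap(c\cQ,(1-c)\cQ)$, and this intersection is nonempty exactly because the ratio lies in $\inparen{\nfrac{c}{1-c}, \nfrac{1-c}{c}}$; I can therefore pick $g$ measurably (say, the midpoint of the intersection, and $g=0$ where both densities vanish) and set $p=g/\cP$, $q=g/\cQ$, both valued in $(c,1-c)$. Finally, I would certify that $(p,\cP)$ and $(q,\cQ)$ are compatible by taking the $Y(1)$-component to be $\cP$ itself with the $y$-independent propensity $\hat p(x)=1-\frac{\int_y p(x,y)\cP(x,y)\,\d y}{\cP_X(x)}$, which lies in $(c,1-c)\subseteq\hyPoverlap(c)$; this yields a valid observational study for both tuples, since the induced $e(\cdot)$ agree because $p\cP=q\cQ$ and $\cP_X=\cQ_X$. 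Thus \cref{cond:iden} fails and \cref{claim1} gives non-identifiability.

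I expect the main obstacle to be the necessity construction rather than sufficiency, which is essentially the ratio computation above. Necessity requires simultaneously (i) realizing the equalized censoring product $g$ by genuine propensity scores inside $\hyPoverlap(c)$ and (ii) certifying compatibility within $\inparen{\hyPoverlap(c),\hyD}$; both hinge on the interval bound, and the only delicate bookkeeping is the measurable selection of $g$ together with the treatment of the points where $\cQ(x,y)=0$, which become routine once the interval constraint is in hand.
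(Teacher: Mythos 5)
Your proposal is correct and follows essentially the same route as the paper: reduce both directions to \cref{cond:iden} via \cref{infthm:Suff}, prove sufficiency by noting that $q(x,y)/p(x,y)$ always lies in $\bigl(\nfrac{c}{1-c},\nfrac{1-c}{c}\bigr)$ while \cref{cond:Over} supplies a point where $\cP/\cQ$ does not, and prove necessity by constructing $p,q\in\hyPoverlap(c)$ with $p\cP=q\cQ$ everywhere together with a compatibility witness. The only cosmetic differences are that the paper obtains $p,q$ from the ratio identity $\cP/\cQ=q/p$ via the intermediate value theorem rather than your midpoint-of-intersection choice of the common product $g$, and uses $\wh p=1-p$ rather than your $y$-independent $\wh p(x)$ as the compatibility witness; both variants are valid.
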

        \vspace{-5mm}
        \begin{restatable}[Structure of Class $\hyD$]{condition}{conditionScenarioTwo}
            \label{cond:Over}
            Given a constant $c>0$, the class of distributions $\hyD$ over $(X,Y)$ is said to satisfy \cref{cond:Over} with constant $c$ if for each $\cP,\cQ\in \hyD$ with $\Ex_{(x,y)\sim\cP}[y] \neq \Ex_{(x,y)\sim\cQ}[y]$,
            either
            \begin{enumerate}[itemsep=-1pt]
                \item the marginals of $\cP$ and $\cQ$ over $X$ are different, \ie{}, $\cP_X \neq \cQ_X$ or
                \item there exist some $x\in \supp(\cP_X)$ and $y\in \R$, such that $
            \nfrac{\cP(x,y)}{\cQ(x,y)}
            \notin 
            \inparen{
                    \nfrac{c}{(1-c)}, \nfrac{(1-c)}{c}}.
               $
            \end{enumerate}            
        \end{restatable}
This condition is similar to \cref{cond:iden}.
            Each tuple $(p,\cP)$ corresponds to some propensity score $p_t(\cdot)$ and distribution $\cD_{X,Y(t)}$ for some $t\in \zo$. 
            The above condition ensures that any two tuples that lead to different guesses for $\tau$, are distinguishable from the available samples.
                This is because of two reasons.
                First, as before, the marginal $\cD_X$ can be identified from data and, hence, all distributions $\cP$ with $\cP_X\neq \cD_X$ can be eliminated.
                Now, all remaining distributions have the same marginal over $X$.
                Since any two propensity scores $p,q$, their ratio $\nfrac{p(x,y)}{q(x,y)}\in \inparen{\nfrac{c}{(1-c)}, \nfrac{(1-c)}{c}}$ (due to $c$-overlap).
                The above condition ensures that $p(x,y)\cdot \cP(x,y)\neq q(x,y)\cdot \cQ(x,y)$ for some $x,y$ {enabling} us to distinguish $\inparen{p,\cP}$ and $\inparen{q,\cQ}$ as in \cref{cond:iden}.

             The above result is valuable because a number of common distribution families, including the Gaussian distributions, Pareto distributions, and Laplace distributions, can be shown to satisfy \Cref{cond:Over} (for any $c>0$).
            Hence, the above characterization shows that overlap alone already enables identifiability for many distribution families. 
            A specific, interesting, and practically relevant example captured by this condition is generalized linear models (GLMs):
                in this setting, for each $t\in \zo$, $Y(t)=\mu_t(x)+\xi_t$ for some function $\mu_t(\cdot)$ and noise $\xi_t\sim \cN(0,1)$.

    \subsection{Identification under Scenario~III (Unconfoundedness without Overlap)}    
        \label{sec:scenario:unconfoundedness}
        \label{sec:iden:unconfoundedness}
        {Next, we consider the scenario where unconfoundedness holds but overlap may not. 
        Without further assumptions, this includes the extreme cases where either no one receives the treatment or everyone receives the treatment, \ie{}, for any $t\in \zo$,
        \[
            \forall_{x\in \R^d}\,,~~
                \forall_{y\in \R}\,,\quad
                    p_t(x,y) = 0 
            \qquadtext{or}
            \forall_{x\in \R^d}\,,~~
                \forall_{y\in \R}\,,\quad
                    p_t(x,y) = 1\,.
        \]
        Clearly, in these cases, identifying ATE is impossible. 
        To avoid these extreme cases, we assume that at least some non-trivial set of covariates satisfies overlap. 
        A natural way to satisfy this is to require that there is some set $S$ of covariates with $\vol(S)\geq \Omega(1)$ such that for each $(x,y)\in S\times \R$ overlap holds, \ie{}, $c < p_0(x,y), p_1(x,y) < 1-c$.
        This requirement is already significantly weaker than $c$-overlap which requires $c<p_0(x,y),p_1(x,y)<1-c$ to hold \textit{pointwise} for each $(x,y)\in \R^d\times \R$.
        We make an even weaker requirement, which we call \textit{$c$-weak-overlap} that removes the lower bound on $p_0(\cdot)$ and $p_1(\cdot)$:
        \begin{definition}[$c$-weak-overlap]\label{def:weakoverlap}
            Given $c\in (0,\nfrac{1}{2})$, 
            the observational study $\cD$ is said to satisfy $c$-weak-overlap if, for each $t\in \zo$, there exists a set $S_t\subseteq \R^d$ with $\vol(S_t)\geq c$ such that
            \[
                \forall_{(x,y)\in S_t\times \R}\,,~~\quad
                    p_t(x,y) > c\,.
            \] 
        \end{definition}
        The following class encodes the resulting scenario.
        \begin{lemma}
        [Structure of Class $\hyP$]
        \label{def:classPu}
        For any $c \in (0,\nfrac{1}{2})$, an observational study $\cD$ satisfies unconfoundedness with $c$-weak overlap if and only if $\cD$ is realizable with respect to $\hyP = \hyPunconf(c)$, where 
            \[
                \hyPunconf(c) \coloneqq 
                \inbrace{
                    p\colon \R^d\times \R\to [0,1]\given  
                    \begin{array}{c}
                         ~\forall_{x\in \R^d},~~\forall_{y_1,y_2\in \R}\,,\quad 
                                p(x,y_1)=p(x,y_2)\,,\\
                                \exists S~~\text{with}~~\vol(S)\geq c ~~\text{such that,}~~ \forall_{(x,y)\in S\times \R}\,,~~ p(x,y) > c 
                    \end{array}
                }\,.
            \]
        \end{lemma}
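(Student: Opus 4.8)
The plan is to establish the biconditional by proving each implication separately; the argument mirrors the footnote that verifies the analogous structural lemma for $\hyPou(c)$ in Scenario~I, with ``$c$-overlap'' replaced by ``$c$-weak-overlap'' (\cref{def:weakoverlap}). Recall that realizability with respect to $\hyP$ means precisely that $p_0(\cdot),p_1(\cdot)\in\hyP$, so it suffices to show that the generalized propensity scores induced by $\cD$ lie in $\hyPunconf(c)$ if and only if $\cD$ satisfies unconfoundedness and $c$-weak-overlap.

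For the forward direction, I would assume $\cD$ satisfies unconfoundedness and $c$-weak-overlap and verify the two defining properties of $\hyPunconf(c)$ for each $p_t$, $t\in\zo$. The first property (independence of $y$) follows from unconfoundedness: since $Y(t)\perp T\mid X$, we have
\[
    p_t(x,y_1)=\Pr\nolimits_\cD[T{=}t\mid X{=}x,Y(t){=}y_1]=\Pr\nolimits_\cD[T{=}t\mid X{=}x]=p_t(x,y_2)
\]
for all $y_1,y_2$, so $p_t(x,\cdot)$ is constant. The second property (existence of a set $S$ with $\vol(S)\geq c$ on which $p_t>c$) is exactly the statement of $c$-weak-overlap, taking $S=S_t$. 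Hence $p_0,p_1\in\hyPunconf(c)$.

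For the reverse direction, I would assume $p_0,p_1\in\hyPunconf(c)$ and derive both conditions. Weak overlap is immediate: property~2 of the class supplies, for each $t$, a set $S$ with $\vol(S)\geq c$ on which $p_t(x,y)>c$, which is the definition of $c$-weak-overlap. For unconfoundedness, I would fix $t$ and use property~1, that $p_t(x,\cdot)$ is constant in $y$; denote its value by $g_t(x)$. Integrating the conditional density against this constant gives, for any measurable $A$,
\[
    \Pr\nolimits_\cD[T{=}t,\,Y(t)\in A\mid X{=}x]
    =\int_A p_t(x,y)\,\cD_{Y(t)\mid X{=}x}(y)\,\d y
    =g_t(x)\cdot\Pr\nolimits_\cD[Y(t)\in A\mid X{=}x]\,,
\]
and taking $A=\R$ identifies $g_t(x)=\Pr_\cD[T{=}t\mid X{=}x]$, which yields the factorization underlying $T\perp Y(t)\mid X$ for the event $\{T{=}t\}$.

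The only step requiring a little care --- and hence the main (though minor) obstacle --- is upgrading this factorization to full conditional independence $T\perp Y(t)\mid X$. Since $T$ is binary, the complementary event is handled by subtraction: $\Pr_\cD[T{=}1{-}t,\,Y(t)\in A\mid X{=}x]=\Pr_\cD[Y(t)\in A\mid X{=}x]-g_t(x)\Pr_\cD[Y(t)\in A\mid X{=}x]=\Pr_\cD[T{=}1{-}t\mid X{=}x]\cdot\Pr_\cD[Y(t)\in A\mid X{=}x]$, so the factorization holds for both values of $T$ and thus $Y(t)\perp T\mid X$. Applying this for $t=0$ and $t=1$ establishes unconfoundedness, completing the biconditional.
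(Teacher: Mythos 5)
Your proof is correct and follows essentially the same route the paper takes: the paper treats this lemma as immediate from the definitions, with the details being exactly the footnote argument it gives for the Scenario~I analogue (constancy of $p_t(x,\cdot)$ in $y$ is equivalent to $Y(t)\perp T\mid X$ via the factorization you write out, and the set-$S$ clause is a verbatim restatement of $c$-weak-overlap). Your extra care in upgrading the factorization for $\{T{=}t\}$ to the complementary event is a fine, if minor, elaboration of what the paper leaves implicit.
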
 
        Two remarks are in order.
        First, to simplify the notation, we use the same constant $c$ to denote the lower bound on $\vol(S)$ and the values of $p(\cdot)$.
        One can extend our results to use different constants $c_S,c_p>0$.
        Second, for the above guarantee to be meaningful, the set $S$ must be a subset of $\supp(\cD_X)$; otherwise, the propensity scores could be 0 for each $x\in \supp(\cD_X)$ or 1 for each $x\in \supp(\cD_X)$, returning us to the extreme cases described above where ATE is clearly not identifiable.
        To ensure that this is always the case, in this section, we  make the simplifying assumption $\supp(\cD_X)=\R^d$ and, hence, also assume for each $\hyP\in \hyD$, $\supp(\cP_X)=\R^d$ (otherwise, we can remove $\cP$ from $\hyD$).}

        The identification and estimation methods we develop in this scenario are relevant to many well-studied topics in causal inference.
        \begin{itemize}
            \item First, this scenario captures the regression discontinuity designs -- where propensity scores violate overlap for a large fraction of individuals but unconfoundedness holds -- which have found wide applicability \cite{hahn2001regressionDiscontinuity,thistlethwaite1960regressionDiscontinuity,imbens2008regressionDiscontinuity,angrist2009mostlyHarmless,lee2010regressionDiscontinuity}.
            (Also see the more extensive discussion on RD designs at the end of this section). 
            To the best of our knowledge, in RD designs, ATE is only known to be identifiable under strong linearity assumptions whereas we are able to achieve identification under much weaker restrictions.
            \item Second, most standard estimators of ATE are based on inverse propensity score weighting (IPW). 
            IPW estimators require overlap and unconfoundedness to identify $\tau$. 
            These estimators, however, are fragile: their error scales with $\sup_x\nfrac{1}{\inparen{e(x)\inparen{1-e(x)}}}$ \cite{li2018overlapWeights,crump2009dealing,imbens2015causal,kalavasis2024cipw,khan2024trimming}.
            In particular, this quantity can be arbitrarily large even when the (usual) propensity score $e(\cdot)$ violates overlap for a single covariate $x$ \cite{kalavasis2024cipw}, as is bound to arise in high-dimensional data \cite{damour2021highDimensional}.
            In contrast to such estimators, the estimators we design can identify and estimate ATE even when propensity scores are violated for a large fraction of the covariates. 
            Moreover, while our estimators do rely on certain distributional assumptions, these distributional assumptions are satisfied for standard models, \eg{}, when the conditional outcome distributions follow a linear regression or polynomial regression model.
        \end{itemize}
        Next, we present the class of conditional outcome distributions that, together with the propensity scores in \cref{def:classPu}, characterize the identifiability of $\tau$.
        \begin{restatable}[Structure of Class $\hyD$]{condition}{conditionScenarioThree}
            \label{cond:Uncon}
            Given $c\in (0,\nfrac{1}{2})$, a class $\hyD$ is said to satisfy \cref{cond:Uncon} if for each $\cP,\cQ\in \hyD$ with $\Ex_{(x,y)\sim \cP}[y]\neq \Ex_{(x,y)\sim \cQ}[y]$ either
            \begin{enumerate}[itemsep=-2pt]
                \item the marginals of $\cP$ and $\cQ$ over $X$ are different, \ie{}, $\cP_X \neq \cQ_X$, or
                \item there is no $S\subseteq \R^d$ with $\vol(S)\geq c$ such that $\cP(x,y)=\cQ(x,y)$ for each $(x,y)\in S\times \R$.
            \end{enumerate}
        \end{restatable} 
        As for the other conditions we discussed so far, this condition allows us to distinguish any pair of tuples $(p,\cP)$ and $(q,\cQ)$ that lead to a different prediction for $\tau$.
        The requirement for the marginal of $\cP$ and $\cQ$ over $X$ to match is the same as in \cref{cond:iden,cond:Over}, so let us consider the second requirement.
        It requires the pairs $\cP,\cQ$ to be distinguishable on any set of the form $S\times \R$ where $S$ is a full-dimensional set.
        In other words, any $\cP$ and $\cQ$ (with $\cP_X=\cQ_X$) whose \textit{truncations} to the set $S\times \R$ are identical must also have the same \textit{untruncated} means.
        Roughly speaking, this condition holds for any family $\hyD$ whose elements $\cP$ can be extrapolated given samples from their truncations to full-dimensional sets.
        While this might seem like a strong requirement at first, it is satisfied by many families of parametric densities:
        For instance, using Taylor's theorem, one can show that it holds for distributions of the form $\propto e^{f(x,y)}$ for any polynomial $f(x,y)$ (see \cref{lem:unconfoundedness:identifiableFamilies}).
        This already includes several exponential families, including the Gaussian family.

        Now, we are ready to state the main result of this section: a characterization of when $\tau$ is identifiable under unconfoundedness when overlap may not hold.
        The proof of this result appears in \cref{sec:proofof:lem:iden:unconfoundedness}.
        \begin{restatable}[Characterization of Identification in Scenario~III]{theorem}{thmScenarioThree}
            \label{lem:iden:unconfoundedness}
            {Fix any $\hyD$ such that each $\cP\in \hyD$ satisfies $\supp(\cP_X)=\R^d$.}
            The following hold:
            \begin{enumerate}[itemsep=-1pt]
                \item \textbf{(Sufficiency)}\quad  If ~$\hyD$ satisfies \cref{cond:Uncon}, then there is a mapping $f\colon  \Delta(\R^d \times \{0,1\} \times \R) \to \R$ with $f(\cC_\cD)=\tau_\cD$ for each distribution $\cD$ realizable with respect to $\inparen{\hyPunconf(c),\hyD}$.
                \item \textbf{(Necessity)}\quad Otherwise, for any map $f\colon \Delta(\R^d \times \{0,1\} \times \R) \to \R$, there exists a  distribution $\cD$ realizable with respect to $\inparen{\hyPunconf(c),\hyD}$ such that $f(\cC_\cD)\neq \tau_\cD$. 
            \end{enumerate}    %
        \end{restatable}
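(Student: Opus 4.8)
The plan is to reduce the theorem to the general characterization already established in \cref{infthm:Suff} (proved via \cref{claim:sufficiency_condition1,claim1}) by showing that the pair $(\hyPunconf(c),\hyD)$ satisfies \cref{cond:iden} if and only if $\hyD$ satisfies \cref{cond:Uncon}. Granting this equivalence, the sufficiency direction of the theorem is immediate from \cref{claim:sufficiency_condition1} and the necessity direction from \cref{claim1}, both instantiated with $\hyP=\hyPunconf(c)$. So the entire content reduces to translating the abstract \cref{cond:iden} into the concrete \cref{cond:Uncon} using the structure of $\hyPunconf(c)$ (unconfoundedness plus $c$-weak-overlap) and the standing assumption $\supp(\cP_X)=\R^d$ for all $\cP\in\hyD$.

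For the forward implication I would fix an arbitrary compatible pair $(p,\cP),(q,\cQ)\in\hyPunconf(c)\times\hyD$ and verify one of the three requirements of \cref{cond:iden}. If $\E_{(x,y)\sim\cP}[y]=\E_{(x,y)\sim\cQ}[y]$ or $\cP_X\neq\cQ_X$ we are done, so assume $\E_\cP[y]\neq\E_\cQ[y]$ and $\cP_X=\cQ_X$ and argue that Requirement~3 must hold. Suppose instead that $p(x)\cP(x,y)=q(x)\cQ(x,y)$ for all $(x,y)$, where $p,q$ are functions of $x$ alone by unconfoundedness. Integrating out $y$ gives $p(x)\cP_X(x)=q(x)\cQ_X(x)$; since $\cP_X=\cQ_X$ has full support $\R^d$, this forces $p\equiv q$. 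Substituting back yields $\cP(x,y)=\cQ(x,y)$ on $\{x:p(x)>0\}\times\R$, and the $c$-weak-overlap guarantee built into $p\in\hyPunconf(c)$ produces a set $S$ with $\vol(S)\geq c$ on which $p>c>0$, hence $\cP=\cQ$ on $S\times\R$. This contradicts the second item of \cref{cond:Uncon}, completing the forward direction.

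For the reverse implication I would take the witnessing pair $\cP,\cQ\in\hyD$ with $\E_\cP[y]\neq\E_\cQ[y]$, $\cP_X=\cQ_X$, and a set $S$ with $\vol(S)\geq c$ on which $\cP=\cQ$, and construct compatible tuples violating all three requirements of \cref{cond:iden}. The natural choice takes $p=q$ equal to $\tfrac12$ on $S$ and $0$ off $S$: this is a function of $x$ only and exceeds $c$ on $S$, so $p\in\hyPunconf(c)$, and its complement $\hat p=1-p$ (which equals $\tfrac12$ on $S$ and $1$ off $S$) also lies in $\hyPunconf(c)$ and witnesses compatibility, since setting $(p_0,p_1,\cD_{X,Y(0)},\cD_{X,Y(1)})=(p,\hat p,\cP,\cP)$ defines a valid unconfounded study with $c$-weak-overlap. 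By construction $p\,\cP=q\,\cQ$ everywhere (on $S$ because $\cP=\cQ$, off $S$ because both sides vanish), while $\E_\cP[y]\neq\E_\cQ[y]$ and $\cP_X=\cQ_X$; thus all three requirements fail and \cref{cond:iden} is violated. Invoking \cref{claim1} then yields non-identifiability.

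I expect the forward direction to be the main obstacle: the crux is converting the pointwise product identity $p\,\cP=q\,\cQ$ into a genuine density collision $\cP=\cQ$ on a full-dimensional slab $S\times\R$ of the precise form demanded by \cref{cond:Uncon}. This step succeeds only because two structural features combine cleanly — unconfoundedness makes $p,q$ depend on $x$ alone, so integrating over $y$ decouples the propensity from the outcome density, and the full-support hypothesis $\supp(\cP_X)=\R^d$ upgrades the conclusion from $p=q$ almost everywhere on a possibly small support to $p\equiv q$ on all of $\R^d$, which is what lets the weak-overlap set $S$ be used. A secondary, more routine point is checking in the reverse construction that the degenerate propensity (vanishing outside $S$) still satisfies $c$-weak-overlap for both $p_0$ and $p_1$ and induces a bona fide joint distribution on $(X,T,Y(0),Y(1))$, which must be confirmed against \cref{def:weakoverlap,def:classPu}.
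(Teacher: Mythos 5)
Your proposal is correct and follows essentially the same route as the paper: both directions reduce to \cref{cond:iden} via the general characterization, with sufficiency proved by integrating out $y$ (using unconfoundedness and full support of $\cP_X$) to force $p\equiv q$ and then invoking the weak-overlap set $S$ to contradict \cref{cond:Uncon}, and necessity proved by the identical construction $p=q=\tfrac12\cdot\mathds{1}\{x\in S\}$ with witness $\hat p=1-p$. The details you flag as requiring care (validity of the degenerate propensity and compatibility of the constructed tuples) are exactly the points the paper also verifies.
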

        {The requirement that $\supp(\cP_X)=\R^d$ for each $\cP\in \hyD$, in particular, ensures that $\supp(\cD_X)=\R^d$, which is necessary to ensure that the definition of $c$-weak-overlap is meaningful.
        Recall that if it does not hold and one can select a set $S$ with $\vol(S)>c$ disjoint from $\supp(\cD_X)$, then one can satisfy $c$-weak-overlap even in cases where no one receives the treatment or everyone receives the treatment, where ATE is clearly non-identifiable.
        That said, we note that the above result can be generalized to require $\supp(\cP_X)=K$ for any full-dimensional set $K$.}

        Our next result presents several examples of families of distributions that satisfy \cref{cond:Uncon}. 
        \begin{restatable}[]{lemma}{lenUnconfoundednessIdentifiableFamilies}
            \label{lem:unconfoundedness:identifiableFamilies}       
            \label{lem:iden:unconfoundedness:examples}
            The following concept classes $\hyD \subseteq \Delta(\R^d \times \R)$ satisfy \cref{cond:Uncon}: %
            \begin{enumerate}[itemsep=-2pt,leftmargin=17.5pt]
                \item \textbf{(Polynomial Log-Densities)}\quad  
                    Each element $\cP$ of this family can have an arbitrary marginal over $X$ and, for each $x$, the conditional distribution $\cP(y\mid x)$ is parameterized by a polynomial $f=f_\cP$ as %
                    \[
                        \cP(y|x) \propto e^{f(x,y)}\,.
                    \]
                \item \textbf{(Polynomial Expectations)}\quad 
                    Each element $\cP$ of this family can have an arbitrary marginal over $X$ and, for each $x$, the conditional distribution $\cP(y\mid x)$ satisfies the following for some polynomial $f=f_\cP$ %
                    \[
                            \Ex\nolimits_{(x,y)\sim \cP}[y|X{=}x] = f(x)\,.
                    \]
            \end{enumerate} 
        \end{restatable}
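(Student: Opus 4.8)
The plan is to verify \cref{cond:Uncon} directly through its contrapositive. Fix $\cP,\cQ\in\hyD$ and suppose that \emph{both} alternatives in \cref{cond:Uncon} fail, i.e.\ $\cP_X=\cQ_X$ and there is a set $S\subseteq\R^d$ with $\vol(S)\geq c$ on which $\cP(x,y)=\cQ(x,y)$ for all $(x,y)\in S\times\R$. I would then show this forces $\Ex_{(x,y)\sim\cP}[y]=\Ex_{(x,y)\sim\cQ}[y]$, which is exactly what \cref{cond:Uncon} demands. The single analytic tool underlying both cases is the rigidity of polynomials: a multivariate polynomial vanishing on a set of positive Lebesgue measure is identically zero. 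Since $S$ is full-dimensional, any polynomial identity established on $S$ in fact holds on all of $\R^d$. In both parts I use the standing assumption $\supp(\cP_X)=\R^d$ from \cref{lem:iden:unconfoundedness} to divide the joint-density agreement on $S\times\R$ by the common positive marginal $\cP_X=\cQ_X$, obtaining agreement of the conditionals $\cP(y\mid x)=\cQ(y\mid x)$ for every $x\in S$ and all $y$.

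For Part~1 (Polynomial Log-Densities), I would write $\cP(y\mid x)=e^{f_\cP(x,y)}/Z_\cP(x)$ and $\cQ(y\mid x)=e^{f_\cQ(x,y)}/Z_\cQ(x)$ and take logarithms of the conditional agreement. This shows the polynomial $h(x,y)\coloneqq f_\cP(x,y)-f_\cQ(x,y)$ equals $g(x)\coloneqq\log Z_\cP(x)-\log Z_\cQ(x)$, which is independent of $y$, for all $x\in S$ and all $y$. Writing $h(x,y)=\sum_k c_k(x)\,y^k$ with coefficient-polynomials $c_k$ in $x$, the fact that $h(x,\cdot)$ is constant in $y$ on $S\times\R$ forces $c_k(x)=0$ for all $x\in S$ and all $k\geq 1$; by polynomial rigidity, $c_k\equiv 0$ for $k\geq 1$, so $h(x,y)=g(x)$ for \emph{all} $(x,y)\in\R^d\times\R$. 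Hence $e^{f_\cP(x,y)}=e^{g(x)}e^{f_\cQ(x,y)}$, and the $x$-only factor $e^{g(x)}$ is absorbed by normalization, giving $\cP(y\mid x)=\cQ(y\mid x)$ everywhere. Together with $\cP_X=\cQ_X$ this yields $\cP=\cQ$, and in particular equal means.

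Part~2 (Polynomial Expectations) is more direct and reuses the same scaffolding. The conditional agreement on $S$ immediately gives equal conditional means there: $f_\cP(x)=\Ex_\cP[y\mid X{=}x]=\Ex_\cQ[y\mid X{=}x]=f_\cQ(x)$ for each $x\in S$. Since $f_\cP,f_\cQ$ are polynomials in $x$ agreeing on the positive-measure set $S$, polynomial rigidity gives $f_\cP\equiv f_\cQ$ on all of $\R^d$. By the tower rule, $\Ex_\cP[y]=\Ex_{x\sim\cP_X}[f_\cP(x)]=\Ex_{x\sim\cQ_X}[f_\cQ(x)]=\Ex_\cQ[y]$, using $\cP_X=\cQ_X$, which completes the verification.

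The only genuinely delicate step is the normalization bookkeeping in Part~1: passing from pointwise equality of two exponential-family densities on the slab $S\times\R$ to a clean polynomial identity, one must correctly isolate the $x$-dependent log-partition terms and argue that the ``independent of $y$'' conclusion upgrades, coefficient by coefficient, from $S$ to all of $\R^d$. Part~2 and the polynomial-rigidity lemma itself are routine. I would also note that the same template applies to any family $\hyD$ for which agreement of truncations to a full-dimensional set forces agreement of the relevant moments of the untruncated laws, which is precisely the extrapolation property discussed after \cref{cond:Uncon}.
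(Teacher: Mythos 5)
Your proposal is correct and follows essentially the same route as the paper's proof: reduce to agreement of conditionals on $S\times\R$ via $\cP_X=\cQ_X$ and full support, then use rigidity of polynomials (coefficient-by-coefficient in $y$ for the log-density case, directly on $f_\cP-f_\cQ$ for the expectations case) to upgrade agreement on $S$ to all of $\R^d$, with the $x$-only term absorbed by normalization. The only differences are cosmetic — you argue the contrapositive where the paper argues by contradiction, and you state the rigidity lemma in the sharper "positive Lebesgue measure" form where the paper loosely says "infinite number of points."
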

        These distribution families capture a broad range of parametric assumptions commonly used in causal inference. 
        The polynomial log-density framework includes widely applied exponential families, such as Gaussian outcome models with arbitrary distributions over covariates $X$. 
        The second family allows for polynomial conditional expectations, covering popular linear and polynomial regressions \cite{chernozhukov2024appliedcausalinferencepowered}.  
        Both families leave the marginal distribution of $X$ unrestricted, allowing for rich covariate distributions while ensuring identifiability under the present scenario.
        The proof of \cref{lem:unconfoundedness:identifiableFamilies} appears in \cref{sec:proofof:lem:unconfoundedness:identifiableFamilies}.

    \paragraph{Regression Discontinuity Design.} As a concrete application of Scenario~III,
        we study regression discontinuity (RD) designs \cite{hahn2001regressionDiscontinuity,thistlethwaite1960regressionDiscontinuity,imbens2008regressionDiscontinuity,lee2010regressionDiscontinuity,rubin1977regressionDiscontinuity,sacks1978regressionDiscontinuity} which were introduced by and studied in several disciplines \cite{rubin1977regressionDiscontinuity,sacks1978regressionDiscontinuity,goldberger1972selection} (see \cite{cook2008waitingforLife} for an overview), and have found applications in various contexts from Education \cite{thistlethwaite1960regressionDiscontinuity,angrist1999classSizeRD,klaauw2002regressionDiscontinuityEnrollment,black1999regressionDiscontinuity}, to Public Health \cite{moscoe2015rdPublicHealth}, to Labor Economics \cite{lee2010regressionDiscontinuity}.
        In an RD design, the treatment assignment is a known deterministic function of the covariates.
        \defRD*
        \noindent Since the treatment assignment is only a function of the covariates and not the outcomes, unconfoundedness is immediately satisfied.
        However, overlap may fail since any covariate $x$ outside of the treatment set $S$ does not receive treatment, while individuals within $S$ always receive treatment. 
        To avoid degenerate cases in which the entire population is treated (or untreated), we require the treatment set $S$ and its complement to have a positive volume. Under these conditions, RD designs become a special case of Scenario~III, where the generalized propensity scores lie in $\hyPunconf(c)$.
        The following corollary of \cref{lem:iden:unconfoundedness} shows that ATE can be identified in any RD design.

        \begin{restatable}[Identification is Possible]{corollary}{thmIdenRD}
            \label{lem:iden:rdDesign} 
            Fix any $c\in (0,\nfrac{1}{2)}$, set $S\subseteq \R^d$, and class $\hyD$ satisfying \cref{cond:Uncon}. 
            Then, there exists a mapping $f$ with $f(\cC_\cD)=\tau_\cD$
            for each $c$-RD-design $\cD$ that is realizable with respect to $\hyD$.
        \end{restatable}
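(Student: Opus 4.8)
The plan is to recognize any $c$-RD-design as a special case of Scenario~III and then invoke the sufficiency direction of \cref{lem:iden:unconfoundedness}. Concretely, I would show that every $c$-RD-design $\cD$ which is realizable with respect to $\hyD$ is in fact realizable with respect to the pair $\inparen{\hyPunconf(c), \hyD}$; the conclusion then follows immediately, since $\hyD$ is assumed to satisfy \cref{cond:Uncon} and hence \cref{lem:iden:unconfoundedness} supplies a mapping $f$ with $f(\cC_\cD) = \tau_\cD$ for every such $\cD$.

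The only thing that requires verification is that the generalized propensity scores of a $c$-RD-design belong to $\hyPunconf(c)$; the membership $\cD_{X,Y(0)}, \cD_{X,Y(1)} \in \hyD$ is exactly the hypothesis that $\cD$ is realizable with respect to $\hyD$. Recall from \cref{def:rdDesign} that $p_0(x,y) = \mathds{1}\inbrace{x \notin S}$ and $p_1(x,y) = \mathds{1}\inbrace{x \in S}$. For the first (unconfoundedness) requirement defining $\hyPunconf(c)$, both functions are independent of $y$, so $p_t(x,y_1) = p_t(x,y_2)$ holds trivially for each $t \in \zo$. For the second (weak-overlap) requirement, I would exhibit the witnessing sets directly: taking the witnessing set to be $S$ itself for $p_1$, we have $p_1(x,y) = 1 > c$ for every $(x,y) \in S \times \R$ (using $c < \nfrac{1}{2} < 1$), together with $\vol(S) > c$ from \cref{def:rdDesign}; symmetrically, taking $\R^d \setminus S$ as the witnessing set for $p_0$ gives $p_0(x,y) = 1 > c$ on $(\R^d \setminus S) \times \R$ with $\vol(\R^d \setminus S) > c$. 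Hence $p_0, p_1 \in \hyPunconf(c)$, which completes the reduction.

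There is no genuinely hard step here; the result is essentially a matter of unwinding definitions and checking containment in the concept class. The one point that warrants care is the support hypothesis underlying \cref{lem:iden:unconfoundedness}, namely that $\supp(\cP_X) = \R^d$ for each $\cP \in \hyD$, which is what makes $c$-weak-overlap meaningful: without it, the witnessing sets $S$ and $\R^d \setminus S$ could be disjoint from the covariate support, collapsing to a degenerate ``everyone treated'' or ``no one treated'' instance in which ATE is not identifiable. I would therefore carry this assumption through from the theorem and note that the volume conditions $\vol(S) > c$ and $\vol(\R^d \setminus S) > c$ in \cref{def:rdDesign} ensure that both the treated and untreated regions intersect the (full-dimensional) covariate support, so the identifiability machinery of \cref{lem:iden:unconfoundedness} applies without modification.
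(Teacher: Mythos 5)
Your proposal is correct and matches the paper's own (implicit) argument: the paper likewise observes that unconfoundedness holds because the RD treatment rule depends only on covariates, that the volume conditions on $S$ and $\R^d\setminus S$ supply the $c$-weak-overlap witnesses, and hence that $p_0,p_1\in\hyPunconf(c)$, so the result follows from the sufficiency direction of \cref{lem:iden:unconfoundedness}. Your additional remark about carrying the support hypothesis $\supp(\cP_X)=\R^d$ through from the theorem is a careful and appropriate observation.
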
 
        To the best of our knowledge, all results for identifying ATE in RD assume linear outcome regressions, \ie{}, that $\Ex[Y(t)\mid X{=}x]$ is a linear function of $x$ (for each $t\in \zo$). 
        \cref{lem:iden:rdDesign} substantially broadens these assumptions and is applicable in very general and practical models where $\Ex[Y(t)\mid X{=}x]$ are polynomial functions of $x$ and the distribution of covariates is arbitrary; see \cref{lem:iden:unconfoundedness:examples} for a proof.

\section{Estimation of ATE in Scenarios I-III}\label{sec:estimation}
    In this section, we study the estimation of the average treatment effect $\tau$ from finite samples in the scenarios presented in \cref{sec:scenarios}. We show that, under mild additional assumptions, the estimation of the ATE is possible in all of them.
     \subsection{Estimation under Scenario~I (Unconfoundedness and Overlap)}
    \label{sec:est:unconfoundedness-overlap}
        We begin with estimating ATE under the classical assumptions of unconfoundedness and $c$-overlap.
        As mentioned before, given access to propensity scores, estimators for ATE are already known in this scenario \cite{imbens2015causal,chernozhukov2024appliedcausalinferencepowered}.
        For completeness, we prove ATE's end-to-end estimability (the proof appears in \cref{sec:proofof:lem:est:unconfoundedness-overlap}).
        \begin{restatable}[Estimation of ATE under Scenario~I]{theorem}{thmScenarioOneEstimation}
            \label{lem:est:unconfoundedness-overlap}
            Fix constants $c\in (0,\nfrac{1}{2})$, $B>0$, $\eps,\delta\in (0,1)$.
            Let concept classes $\hyP\subseteq\hyPou(c)$ and $\hyD$ satisfy:
            \begin{enumerate}[itemsep=-1pt]
                \item  $\hyP$ has a finite fat-shattering dimension (\cref{def:fat-shattering-dimension}) $\mathrm{fat}_{\gamma}(\hyP)<\infty$ at scale $\gamma=\Theta(\nfrac{c^2\eps}{B})$; 
                \item Each $\cP\in \hyD$ has support $\supp(\cP)\subseteq [-B,B]$.
            \end{enumerate}
            There is an algorithm that, given $n$ \iid{} samples from the censored distribution $\cC_\cD$ for any $\cD$ realizable with respect to $\inparen{\hyP,\hyD}$ and $\eps,\delta \in (0,1)$,
            outputs an estimate $\hat{\tau}$, such that, with probability $1-\delta$,
            \[
                \abs{\hat{\tau}-\tau_{\cD}}\leq \eps\,.
            \]
            There is a universal constant $\eta\geq \nfrac{1}{256}$, such that, the number of samples $n$ is 
            \[
                n = 
                O\inparen{
                    \frac{B^2}{c^4\eps^2} 
                    \cdot
                    \inparen{
                        \mathrm{fat}_{\eta c^2\eps/B}(\hyP)\cdot\log(\frac{B}{c^2\eps})
                        +\log(\nfrac{1}{\delta})}}
                \,.
            \] 
        \end{restatable}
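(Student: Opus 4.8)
The plan is to analyze the inverse-propensity-weighted (IPW) estimator of \cref{eq:overview:decomposition} together with a sample-splitting scheme. Since $\cD$ is realizable with respect to $\inparen{\hyP,\hyD}$ and $\hyP\subseteq\hyPou(c)$, unconfoundedness holds and the usual propensity $e(x)\coloneqq\Pr[T{=}1\mid X{=}x]$ coincides with the generalized propensity $p_1(x,\cdot)\in\hyP$ (and $1-e(x)=p_0(x,\cdot)$), so $c<e(x)<1-c$ for all $x$. I would split the $n$ samples into two halves: use $\cS_1$ only to learn an estimate $\wh e(\cdot)$ of $e(\cdot)$, and use the fresh half $\cS_2$ only to form the empirical IPW average with $\wh e$ plugged in. The independence of $\cS_1$ and $\cS_2$ lets me treat $\wh e$ as a fixed function when analyzing the average over $\cS_2$.

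\textbf{First phase (learning $\wh e$).} Only the pairs $(x_i,t_i)$ from $\cS_1$ are needed, and each $t_i$ is a Bernoulli draw with mean $e(x_i)$; this is exactly the problem of learning the probabilistic-concept class $\hyP$ (see \cref{sec:nuisanceParameters} and \citet{kearns1994pconcept,alon1997scale}). Invoking the fat-shattering sample-complexity bound for $p$-concepts, with $O\inparen{\nfrac{1}{\gamma^2}\inparen{\mathrm{fat}_{\eta\gamma}(\hyP)\log(\nfrac{1}{\gamma}) + \log(\nfrac{1}{\delta})}}$ samples one obtains $\wh e$ with $\Ex_{X\sim\cD_X}\sabs{e(X)-\wh e(X)}\le\gamma$ with probability $1-\nfrac{\delta}{2}$, for a scale $\gamma=\Theta(\nfrac{c^2\eps}{B})$ fixed below. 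I would then clip $\wh e$ into $[c,1-c]$; since $e(X)\in[c,1-c]$ this only decreases the $L_1$ error and guarantees all inverse weights $\nfrac{1}{\wh e},\nfrac{1}{1-\wh e}$ are at most $\nfrac1c$.

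\textbf{Second phase (bias and concentration).} The IPW identity from \cref{eq:decomposition:unconfoundedness} states that with the true weights $\Ex\insquare{\nfrac{YT}{e(X)}-\nfrac{Y(1-T)}{1-e(X)}}=\tau_\cD$; hence, conditioning on $\wh e$, the bias of the plug-in estimator is $\Ex\insquare{YT\inparen{\nfrac{1}{\wh e(X)}-\nfrac{1}{e(X)}}}-\Ex\insquare{Y(1-T)\inparen{\nfrac{1}{1-\wh e(X)}-\nfrac{1}{1-e(X)}}}$. Bounding $\sabs{Y}\le B$ and using $\sabs{\nfrac{1}{\wh e(x)}-\nfrac{1}{e(x)}}\le\nfrac{\sabs{e(x)-\wh e(x)}}{c^2}$ (valid since $\wh e,e\ge c$), each of the two terms is at most $\nfrac{B}{c^2}\cdot\Ex_X\sabs{e(X)-\wh e(X)}\le\nfrac{B}{c^2}\gamma$, so the total bias is at most $\nfrac{2B}{c^2}\gamma\le\nfrac{\eps}{2}$ for $\gamma=\Theta(\nfrac{c^2\eps}{B})$. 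For concentration, each summand of the $\cS_2$-average lies in $[-\nfrac{2B}{c},\nfrac{2B}{c}]$ because $\sabs{y_i}\le B$ and the clipped weights are at most $\nfrac1c$; Hoeffding's inequality then gives deviation at most $\nfrac{\eps}{2}$ with probability $1-\nfrac{\delta}{2}$ once $\sabs{\cS_2}=\Omega\inparen{\nfrac{B^2}{c^2\eps^2}\log(\nfrac1\delta)}$. A union bound and the triangle inequality then yield $\sabs{\wh\tau-\tau_\cD}\le\eps$ with probability $1-\delta$. Since the first-phase count dominates, plugging $\gamma=\Theta(\nfrac{c^2\eps}{B})$ into its bound recovers the stated sample complexity. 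The main obstacle is precisely this first phase: converting a finite fat-shattering dimension into an $L_1$-accurate estimate $\wh e$ at the required scale $\gamma$, which rests on the (non-trivial) $p$-concept learning machinery; the remaining bias and concentration steps are routine once the overlap lower bound $c$ is used to control the inverse weights.
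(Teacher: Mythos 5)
Your proposal is correct and follows essentially the same route as the paper's (sketched) proof: estimate the propensity score via $p$-concept learning at scale $\Theta(c^2\eps/B)$ using the fat-shattering machinery of \cref{thm:prop-scores-estimation}, plug it into the IPW estimator of \cref{eq:decomposition:unconfoundedness} on fresh samples, bound the bias by $\nfrac{2B\gamma}{c^2}$ using $c$-overlap, and conclude with Hoeffding and a union bound. The only differences are cosmetic: you make the sample splitting and the clipping of $\wh e$ into $[c,1-c]$ explicit, whereas the paper absorbs these into the guarantee of the propensity-score oracle.
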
 
        The assumption on the range of the outcomes being bounded is standard in the causal inference literature when one aims to get sample complexity (\eg{}, \citet{kallus2021minimax}), and the bound on the fat-shattering dimension is expected because of the reduction to probabilistic concepts from statistical learning theory \cite{alon1997scale}.
    
    \subsection{Estimation under Scenario~II (Overlap without Unconfoundedness)}
        \label{sec:est:overlap}
        Next, we estimate ATE in Scenario~II where $c$-overlap holds but unconfoundedness does not.
        In \cref{lem:iden:overlap}, we characterized the identifiability of ATE under this scenario: ATE was identifiable if and only if the class $\hyD$ satisfied \cref{cond:Over}.
        To estimate ATE, we need the following quantitative version of \cref{cond:Over}.
        \begin{restatable}[Estimation Condition for Scenario II]{condition}{conditionScenarioTwoEstimation}
            \label{cond:Over:estimation}
            Let $\eps > 0$ be an accuracy parameter.
            The class $\hyD$ satisfies \cref{cond:Over:estimation} with mass function $M\colon (0, \infty) \to [0, 1]$ if, for any $\cP, \cQ \in \hyD$ with  
            \[
            \abs{\Ex\nolimits_{(x,y) \sim \cP}[y] - \Ex\nolimits_{(x,y) \sim \cQ}[y]} > \eps\,,  
            \]  
            there exists a set $S \subseteq \R^d \times \R$ with $\cP(S), \cQ(S) \geq \nfrac{M(\eps)}{c}$ such that  
            \[
                \forall_{(x, y) \in S}\,,\qquad \frac{\cP(x, y)}{\cQ(x, y)} \notin \inparen{\frac{c}{2(1-c)}, \frac{2(1-c)}{c}}\,.
                \yesnum\label{cond:Over:estimation:req}
            \]  
        \end{restatable}
        \cref{cond:Over:estimation} and \cref{cond:Over} differ in two key aspects:
        First, \cref{cond:Over:estimation} scales the bounds on the ratio between any pair of distributions $\cP, \cQ \in \hyD$ by a factor of $2$.  
        This factor is arbitrary and can be replaced by any constant strictly greater than 1.  
        The crucial aspect of \cref{cond:Over:estimation} is that the bound on the ratio of densities holds not just at a single point but on a set $S$ with non-trivial probability mass.  
        Intuitively, this ensures that differences between distributions can be detected using finite samples, allowing us to correctly identify the underlying distribution.  
        In the next result, we formalize this intuition, showing that the sample complexity naturally depends on the mass function $M(\cdot)$.
  
        \begin{restatable}[Estimation of ATE under Scenario~II]{theorem}{thmScenarioTwoEstimation}
            \label{lem:est:overlap}
            Fix constants $c\in (0,\nfrac{1}{2})$, $\sigma,\eps,\delta\in (0,1)$, and a distribution $\mu$ over $\R^d\times \R$.
            Let concept classes $\hyP\subseteq\hyPoverlap(c)$ and $\hyD$ satisfy:
            \begin{enumerate}[itemsep=-1pt] 
                \item $\hyD$ satisfies \cref{cond:Over:estimation} with mass function $M(\cdot)$.
                \item Each $\cP\in \hyD$ is $\sigma$-smooth with respect to $\mu$.\footnote{Distribution $\cP$ is said to be $\sigma$-smooth with respect to $\mu$ if its probability density function $p$ satisfies $p(\cdot)\leq \inparen{\nfrac{1}{\sigma}}\cdot \mu(\cdot)$.}
                \item  $\hyP$ has a finite fat-shattering dimension (\cref{def:fat-shattering-dimension}) $\mathrm{fat}_{\gamma}(\hyP)<\infty$ at scale $\gamma=\Theta({c}\sigma M(\nfrac{\eps}{2}))$;  
                \item $\hyD$ has a finite covering number with respect to total variation distance $N_{O({c}M(\nfrac{\eps}{2}))}(\hyD)<\infty$.
            \end{enumerate}
            There is an algorithm that, given $n$ \iid{} samples from the censored distribution $\cC_\cD$ for any $\cD$ realizable with respect to $\inparen{\hyP,\hyD}$ and $\eps,\delta \in (0,1)$,
            outputs an estimate $\hat{\tau}$, such that, with probability $1-\delta$,
            \[
                \abs{\hat{\tau}-\tau_{\cD}}\leq \eps\,.
            \]
            There is a universal constant $\eta\geq \nfrac{1}{256}$, such that, the number of samples $n$ is 
            \[
                n = 
                O\inparen{
                    \frac{1}{\eta M(\nfrac{\eps}{2})^2}
                    \cdot 
                    \inparen{
                        \mathrm{fat}_{\eta c\sigma M(\nfrac{\eps}{2})}(\hyP)\cdot 
                            \log{\frac{1}{\eta {c}\sigma M(\nfrac{\eps}{2})}}
                        +\log{\frac{N_{\eta {c}M(\nfrac{\eps}{2})}(\hyD)}{\delta}}
                    }
                }\,.
            \]   
        \end{restatable}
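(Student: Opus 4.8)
The plan is to turn the identification argument behind \cref{lem:iden:overlap} into a finite-sample procedure, using \cref{cond:Over:estimation} as a quantitative (``robust'') substitute for \cref{cond:Over}. Recall that the censored distribution is itself a density on $\R^d\times\R\times\{0,1\}$, with $\cC_\cD(x,y,0)=p_0(x,y)\cD_{X,Y(0)}(x,y)$ and $\cC_\cD(x,y,1)=p_1(x,y)\cD_{X,Y(1)}(x,y)$, so the two products we wish to recover sit on the disjoint slices $t{=}0$ and $t{=}1$ of $\cC_\cD$. The estimator will (i) build a finite cover of $\hyP\times\hyD$, (ii) form candidate censored densities from pairs of cover elements, (iii) select the candidate closest to $\cC_\cD$ from samples by a standard minimum-distance (Scheff\'e) tournament, and (iv) read off the distributions $\cP\approx\cD_{X,Y(0)}$ and $\cQ\approx\cD_{X,Y(1)}$ from the selected candidate and output $\hat\tau=\Ex_{(x,y)\sim\cQ}[y]-\Ex_{(x,y)\sim\cP}[y]$.

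First I would construct the cover. Assumption 3 (finite $\mathrm{fat}_\gamma(\hyP)$ at scale $\gamma=\Theta(c\sigma M(\eps/2))$) yields, via the covering-number bounds of \citet{alon1997scale}, a finite $L_1(\mu)$-cover $C_P$ of $\hyP$ with $\log\abs{C_P}=O(\mathrm{fat}_\gamma(\hyP)\log(1/\gamma))$; assumption 4 gives a TV-cover $C_D$ of $\hyD$ with $\log\abs{C_D}=\log N_{\Theta(cM(\eps/2))}(\hyD)$. For any $(p_0,\cD_{X,Y(0)})$ pick a nearby $(p,\cP)\in C_P\times C_D$; splitting $\snorm{p\cP-p_0\cD_{X,Y(0)}}_1\le \snorm{p(\cP-\cD_{X,Y(0)})}_1+\snorm{(p-p_0)\cD_{X,Y(0)}}_1$ and bounding the first term by $\snorm{\cP-\cD_{X,Y(0)}}_1=O(cM(\eps/2))$ (since $p\le1$) and the second, using $\sigma$-smoothness $\cD_{X,Y(0)}\le\mu/\sigma$, by $\tfrac1\sigma\snorm{p-p_0}_{L_1(\mu)}=O(cM(\eps/2))$, shows the induced product cover has $L_1$-radius $O(cM(\eps/2))$. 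Hence $C_P\times C_D$ induces candidate densities on $\R^d\times\R\times\{0,1\}$ that contain one within $L_1$-distance $O(cM(\eps/2))$ of $\cC_\cD$.

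The ``rounding'' step is where \cref{cond:Over:estimation} enters, and it is the conceptual heart of the proof. I would prove the contrapositive implication: if $\snorm{p\cP-p_0\cD_{X,Y(0)}}_1<M(\eps/2)/2$ then $\abs{\Ex_{(x,y)\sim\cP}[y]-\Ex_\cD[Y(0)]}\le\eps/2$. Indeed, suppose the means differ by more than $\eps/2$; applying \cref{cond:Over:estimation} with accuracy $\eps/2$ to $\cP$ and $\cD_{X,Y(0)}$ gives a set $S$ with $\cD_{X,Y(0)}(S)\ge M(\eps/2)/c$ on which $\cP/\cD_{X,Y(0)}\notin(\tfrac{c}{2(1-c)},\tfrac{2(1-c)}{c})$. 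Since $p,p_0\in(c,1-c)$ force $p/p_0\in(\tfrac{c}{1-c},\tfrac{1-c}{c})$, on $S$ the product ratio $p\cP/(p_0\cD_{X,Y(0)})$ is either $\ge2$ or $\le\tfrac12$, so $\abs{p\cP-p_0\cD_{X,Y(0)}}\ge\tfrac{c}{2}\cD_{X,Y(0)}$ pointwise on $S$; integrating yields $\snorm{p\cP-p_0\cD_{X,Y(0)}}_1\ge\tfrac{c}{2}\cD_{X,Y(0)}(S)\ge M(\eps/2)/2$, a contradiction. The symmetric argument handles $\cD_{X,Y(1)}$, so any selected pair with both product-$L_1$ errors below $M(\eps/2)/2$ gives $\abs{\hat\tau-\tau_\cD}\le\eps$. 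This step replaces the outcome-boundedness used in \cref{lem:est:unconfoundedness-overlap}; here the mass function $M(\cdot)$ does the work instead.

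Finally I would run a minimum-distance/Scheff\'e tournament over the $K=\abs{C_P\times C_D}^2$ candidate densities against the $n$ samples from $\cC_\cD$; this returns a candidate whose $L_1$-distance to $\cC_\cD$ is at most $3\,\mathrm{OPT}+O(\alpha')$ using $O(\log(K/\delta)/\alpha'^2)$ samples, where $\mathrm{OPT}=O(cM(\eps/2))$ from the cover. Choosing $\alpha'=\Theta(M(\eps/2))$ and all hidden constants small (absorbed into the universal $\eta$) forces the selected candidate's two product-$L_1$ errors below $M(\eps/2)/2$, triggering the rounding bound. Since $\log K=O(\mathrm{fat}_\gamma(\hyP)\log(1/\gamma)+\log N_{\Theta(cM(\eps/2))}(\hyD))$, this gives exactly the claimed sample complexity. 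The main obstacle I anticipate is the bookkeeping in the selection step: the candidate products are sub-densities whose total masses $\int p\cP$ need not equal the true $\Pr[T{=}0]$, so matching normalized shapes in the tournament must be reconciled with closeness of the un-normalized products. I would handle this using $c$-overlap, which pins $\Pr[T{=}t]\in(c,1-c)$ and forces the good cover element's masses to agree with $\Pr[T{=}t]$ up to $O(cM(\eps/2))$, so the tournament can be run on (near-)normalized candidates while still controlling $\snorm{p\cP-p_0\cD_{X,Y(0)}}_1$; carefully propagating these mass errors and tuning the constants into $\eta$ is the most delicate part.
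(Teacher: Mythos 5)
Your proposal is correct and follows essentially the same route as the paper: a fat-shattering/TV cover of $\hyP\times\hyD$ combined with $\sigma$-smoothness, a Yatracos-style minimum-distance selection against $\cC_\cD$ with normalization pinned down by $c$-overlap, and an accuracy argument that derives a contradiction with \cref{cond:Over:estimation} by showing the product ratio $p\cP/(p_0\cD_{X,Y(0)})$ escapes $(\nfrac{1}{2},2)$ on a set of mass $\ge M(\nfrac{\eps}{2})/c$, forcing $\snorm{p\cP-p_0\cD_{X,Y(0)}}_1\ge M(\nfrac{\eps}{2})/2$. The only cosmetic difference is that you bound $\sabs{p\cP-p_0\cD_{X,Y(0)}}\ge\tfrac{c}{2}\cD_{X,Y(0)}$ uniformly on $S$, whereas the paper splits $S$ into the two ratio regimes and bounds them separately; both yield the same conclusion and constants.
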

        The proof of \cref{lem:est:overlap} appears in \cref{sec:proofof:lem:est:overlap}.

   \begin{algorithm}[tbh!]
             \begin{curvybox}
                    \DontPrintSemicolon
                    \small
                
                    \KwIn{Classes $(\hyP,\hyD)$, 
                        $\eps,\delta \in (0,1)$, 
                        access to $M(\cdot)$, and 
                        i.i.d.\ censored samples $\dataset=\inbrace{c_1,c_2,\dots}$}  
                
                    \SetKwFunction{FEstimateATE}{\textsf{Estimate~ATE~in~Scenario~II}}
                    \SetKwProg{Fn}{Function}{:}{}
                
                    \Fn{\FEstimateATE{$(\hyP,\hyD), \,\dataset, \, M(\cdot), \,\eps,  \,\delta$}}{

                        \vspace{2mm}
                         
                        Use the censored samples $\dataset$, to find $(p,\cP),(q,\cQ)\in \hyP\times\hyD$, such that, with probability at least $1-\delta$,
                        \[
                            \norm{p\cP - p_1\cD_{X,Y(1)}}_1 \;\le\; M(O(\eps))
                            \quadand
                            \norm{q\cQ - p_0\cD_{X,Y(0)}}_1 \;\le\; M(O(\eps))\,.
                        \]
                        \noindent \hspace{-3mm} $\#$ \textit{Where we define the $L_1$-norm between $\alpha(\cdot)$ and $\beta(\cdot)$ as $\|\alpha - \beta\|_1 \coloneqq \iint \bigl|\alpha(x,y) - \beta(x,y)\bigr|\;\d x\d y$\;}

                        \vspace{2mm}
                
                        Define the estimator $\wh{\tau} \;\gets\; \Ex_{\cP}[y] \;-\; \Ex_{\cQ}[y]$
                        
                        \vspace{2mm}
                        
                        \Return $\wh{\tau}$  ~ $\#$ \textit{which is an estimate of $\tau$} \;
                    }
                    \vspace{4mm}  
                    \caption{Algorithm to estimate ATE in Scenario II. \label{alg:est:scenario2}}
                    \end{curvybox}
                \end{algorithm}

        \noindent\textbf{Proof Sketch of \cref{lem:est:overlap}.} The argument proceeds in two steps. 
        
        \noindent\textit{Construction of estimator $\wh{\tau}$.}
            At a high level, the assumptions on $\hyP$ and $\hyD$ enable one to create a cover of $\hyP\times \hyD$ {with respect to the $L_1$-norm}. ({Where, we define the $L_1$-norm between $\alpha(x,y)$ and $\beta(x,y)$ as $\|\alpha - \beta\|_1 \coloneqq \iint \bigl|\alpha(x,y) - \beta(x,y)\bigr|\;\d x\d y.$)} %
            This, in turn, is sufficient to get $(p,\cP)$ and $(q,\cQ)$ such that the products $p\cP$ and $q\cQ$ are good approximations for the products $p_1\cD_{X,Y(1)}$ and $p_0\cD_{X,Y(0)}$ respectively.
            Concretely, they satisfy the following guarantee
            \[
                \norm{p_1\cD_{X,Y(1)} - p\cP}_1 < M(O(\eps))
                \qquad\text{ and }\qquad
                \norm{p_0\cD_{X,Y(0)} - q\cQ}_1 < M(O(\eps))
                \,,
                \yesnum\label{sec:est:overlap:guarantee}
            \]
            where we define the $L_1$-norm between $\alpha(x,y)$ and $\beta(x,y)$ as $\|\alpha - \beta\|_1 \coloneqq \iint \bigl|\alpha(x,y) - \beta(x,y)\bigr|\;\d x\d y.$
            We present the details of constructing the cover and finding the tuples $(p,\cP)$ and $(q,\cQ)$ using finite samples in \cref{sec:nuisanceParameters}.
            We then define our estimator as
            \[
                \hat{\tau}
                \;\;=\;\;
                \abs{\Ex\nolimits_{(x,y)\sim\cP}[y] - \Ex\nolimits_{(x,y)\sim\cQ}[y]}\,.
            \]

            \noindent\textit{Proof of accuracy of $\wh{\tau}$.}
                Due to \cref{cond:Over:estimation} and the fact that all elements of $\hyP$ satisfy overlap, if $\Ex_{\cD_{X,Y(1)}}[y]$ is $\eps$-far from $\Ex_{\cP}[y]$, then $\cD_{X,Y(1)}(x,y)/\cP(x,y)$ must be very large or very small (concretely, outside $\inparen{\nfrac{c}{2(1-c)}, \nfrac{2(1-c)}{c}}$) for each $(x,y)\in S$ where $S$ is a set with measure at least $M(\eps)$ under $\cP$ and $\cQ$.  
                Because $p_1,p\in \hyPoverlap$, their ratios are bounded and always lie in $\inparen{\nfrac{c}{(1-c)}, \nfrac{(1-c)}{c}}$.

                Our proof relies on the following observation: intuitively,  \cref{cond:Over:estimation} forces any two distributions, say $\cP$ and $\cD_{X,Y(1)}$ in $\hyD$, with a large difference in mean-outcomes to have a large (multiplicative) difference in their densities over a set of measure at least $M(\eps)$. 
                Concretely, if $\sabs{\Ex_{\cD_{X,Y(1)}}[y]-\Ex_{\cP}[y]}\geq O(\eps)$, then $\cD_{X,Y(1)}(x,y)/\cP(x,y)\not\in \inparen{\nfrac{c}{2(1-c)}, \nfrac{2(1-c)}{c}}$ on at least a set $S$ of mass $M(\eps)$ under both $\cP$ and $\cD_{X,Y(1)}$.
                Further, the ratios of propensity scores $p(\cdot)$ and $p_1(\cdot)$ are bounded between $\inparen{\nfrac{c}{(1-c)}, \nfrac{(1-c)}{c}}$.
                The combination of these facts allows one to show that if $\sabs{\Ex_{\cD_{X, Y(1)}}[y]-\Ex_{\cP}[y]}\geq O(\eps)$, then 
                \[
                    \norm{p_1\cD_{X,Y(1)} - p\cP} > M(O(\eps))\,,
                \]
                which contradicts the guarantee in \cref{sec:est:overlap:guarantee}.
                Thus, due to the contradiction, one can conclude that $\sabs{\Ex_{\cD_{X,Y(1)}}[y]-\Ex_{\cP}[y]}\leq O(\eps)$.
                An analogous proof shows $\sabs{\Ex_{\cD_{X,Y(0)}}[y]-\Ex_{\cQ}[y]}\leq O(\eps)$.
                Together, these are sufficient to conclude the proof.

    \subsection{Estimation under Scenario~III (Unconfoundedness without Overlap)} 
    \label{sec:est:unconfoundedness}
        Next, we study estimation under Scenario~III, where unconfoundedness is guaranteed but overlap is not.
        Recall that this scenario is  captured by the following class of propensity scores.
        \[
            \hyPunconf(c) \coloneqq 
            \inbrace{
                p\colon \R^d\times \R\to [0,1]\given  
                \begin{array}{c}
                     ~\forall_{x\in \R^d},~~\forall_{y_1,y_2\in \R}\,,\quad 
                            p(x,y_1)=p(x,y_2)\,,\\
                            \exists S~~\text{with}~~\vol(S)\geq c ~~\text{such that,}~~ \forall_{(x,y)\in S\times \R}\,,~~ p(x,y) > c 
                \end{array}
            }\,.
        \]
        In \cref{lem:iden:unconfoundedness}, we showed that, in this case, the identifiability of $\tau$ is characterized by \cref{cond:Uncon}.
        In this section, we show that $\tau$ can be estimated from finite samples under the following quantitative version of \cref{cond:Uncon}.
        \begin{restatable}[]{condition}{conditionScenarioThreeEstimation}
            \label{cond:Uncon:estimation}
            Given $c,C>0$, a class $\hyD$ is said to satisfy \cref{cond:Uncon:estimation}  with constants $c,C$ if for each $\cP,\cQ\in \hyD$ and set $S\subseteq\R^d$ with $\vol(S)>c$, the following holds: for each $\eps>0$
            \[
                \qquadtext{if} \tv{\cP\sinparen{S\times \R}}{\cQ\sinparen{S\times \R}} \leq \eps\,,
                \qquadtext{then}
                \abs{\Ex\nolimits_{\cP}[y] - \Ex\nolimits_{\cQ}[y]}\leq \eps\cdot C\,.
            \]
            Where distributions $\cP\sinparen{S\times \R}$ and $\cQ\sinparen{S\times \R}$ are the truncations of $\cP$ and $\cQ$ to $S\times \R$ defined as follows:
            for each $(x,y)$, $\cP\sinparen{S\times \R; x, y}\propto \mathds{1}\inbrace{x\in S}\cdot \nfrac{\cP(x,y)}{\cP(S\times \R)}$ and analogously for $\cQ.$     
        \end{restatable}
        To gain some intuition, fix a set $S$.
        Now, the above condition holds if whenever the truncated distributions $\cP\sinparen{S\times \R}$ and $\cQ\sinparen{S\times \R}$ are close, then the means of the \textit{untruncated} distributions $\cP,\cQ$ are also close.
        \cref{cond:Uncon:estimation} requires this for any set $S$ of sufficient volume.
        At a high level, this holds whenever the truncated distribution can be  ``approximately extended'' to the whole domain to recover the original distribution -- \ie{}, whenever \textit{extrapolation} is possible.
        At the end of this section, in \cref{lem:extrapolation}, we show that -- under some mild assumptions -- a rich class of distributions can be extrapolated and, hence, satisfy \cref{cond:Uncon:estimation}. 
        \mbox{Now, we are ready to state our estimation result.}
        \begin{restatable}[Estimation of ATE under Scenario~III]{theorem}{thmScenarioThreeEstimation}
            \label{lem:est:unconfoundedness} 
            Fix constants $c\in (0,\nfrac{1}{2})$, $C>0$, $\sigma,\eps,\delta\in (0,1)$, and a distribution $\mu$ over $\R^d\times \R$.
            Let concept classes $\hyP\subseteq\hyPunconf(c)$ and $\hyD$ satisfy:
            \begin{enumerate}[itemsep=-1pt]   
                \item $\hyD$ satisfies \cref{cond:Uncon:estimation} with constant $C>0$.
                \item Each $\cP\in \hyD$ is $\sigma$-smooth with respect to $\mu$.\footnote{Distribution $\cP$ is said to be $\sigma$-smooth with respect to $\mu$ if its probability density function $p$ satisfies $p(\cdot)\leq \inparen{\nfrac{1}{\sigma}}\cdot \mu(\cdot)$.}
                \item  $\hyP$ has a finite fat-shattering dimension (\cref{def:fat-shattering-dimension}) $\mathrm{fat}_{\gamma}(\hyP)<\infty$ at scale $\gamma=\Theta(\sigma c^3\eps/C)$;  
                \item $\hyD$ has a finite covering number with respect to TV distance $N_{O(c^3\eps/C)}(\hyD)<\infty$.
            \end{enumerate}
            There is an algorithm that, given $n$ \iid{} samples from the censored distribution $\cC_\cD$ for any $\cD$ realizable with respect to $\inparen{\hyP,\hyD}$ with $2c<\Pr_\cD[T{=}1]<1-2c$ and $\eps,\delta \in (0,1)$,
            outputs an estimate $\hat{\tau}$, such that, with probability $1-\delta$,
            \[
                \abs{\hat{\tau}-\tau_{\cD}}\leq \eps\,.
            \]
            There is a universal constant $\eta\geq \nfrac{1}{256}$, such that, the number of samples $n$ is 
            \[
                n = 
                O\inparen{
                    \frac{C^2}{(c^2 \eps)^4 }
                    \cdot 
                    \inparen{
                        \mathrm{fat}_{\eta\sigma c^3\eps/C}(\hyP)\cdot 
                            \log{\frac{C}{\sigma c^3\eps}}
                        +\log{\frac{N_{\eta c^3\eps/C}(\hyD)}{\delta}}
                    }
                }\,.
            \]  
            \vspace{-5mm}
        \end{restatable}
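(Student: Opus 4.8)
The plan is to estimate the two treatment means $\mu_1 \coloneqq \Ex_\cD[Y(1)]$ and $\mu_0 \coloneqq \Ex_\cD[Y(0)]$ separately and output $\wh\tau = \wh\mu_1 - \wh\mu_0$; by symmetry I describe only $\wh\mu_1$, using the treated samples. The starting observation is that unconfoundedness forces every $p\in\hyPunconf(c)$ to be independent of $y$, so the generalized propensity score coincides with the usual one, $p_1(x,y)=e(x)=\Pr_\cD[T{=}1\mid X{=}x]$, and the product $p_1\cdot\cD_{X,Y(1)} = e\cdot\cD_{X,Y(1)}$ is exactly the sub-density of $(X,Y(1),T{=}1)$, which is identifiable from $\cC_\cD$ (it equals $\Pr[T{=}1]$ times the law of $(X,Y)\mid T{=}1$). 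Hence, on any region where $e$ is bounded away from $0$, dividing this identifiable product by $e$ recovers $\cD_{X,Y(1)}$; the whole difficulty is that overlap, and so this lower bound on $e$, holds only on an unknown set of volume $\geq c$. The algorithm follows the three phases sketched in \cref{sec:overview:estimation}.

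First I would learn the propensity score. Since the feedback on $T$ is a biased coin with mean $e(x)$, estimating $e$ from censored samples is exactly the problem of learning a probabilistic concept, introduced by \citet{kearns1994pconcept}, whose sample complexity is governed by $\mathrm{fat}_\gamma(\hyP)$ at the scale $\gamma=\Theta(\sigma c^3\eps/C)$ in the statement; this yields $\wh e$ close to $e$. Spending $\sigma$-smoothness to pass from the $\cD_X$-error of $\wh e$ to control of Lebesgue volume, I would set $\wh S \coloneqq \{x:\wh e(x)\geq c-\eps\}$ and argue that $\vol(\wh S)\geq c-O(\eps)$ while $e(x)\geq c-O(\eps)$ for every $x\in\wh S$, so that $\wh S$ is a genuine overlap set on which division by $\wh e$ is stable. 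In parallel, exactly as in Scenario~II, I would build an $L_1$-cover of $\hyP\times\hyD$ from the fat-shattering cover of $\hyP$ and the $O(c^3\eps/C)$-TV-cover of $\hyD$ (\cref{sec:nuisanceParameters}), and select a pair $(p,\cP)$ from the cover with $\norm{p\cP - e\,\cD_{X,Y(1)}}_1\leq O(\sigma c^3\eps/C)$. Here the assumption $2c<\Pr_\cD[T{=}1]<1-2c$ guarantees enough treated (and, symmetrically, control) samples and keeps the relevant normalizing masses bounded below.

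Next I would refit on the overlap set and extrapolate. On $\wh S\times\R$ form $g \coloneqq p\cP/\wh e$; since $p\cP$ is $L_1$-close to $e\,\cD_{X,Y(1)}$ and $\wh e\approx e\geq\Omega(c)$ there, this gives $\norm{g-\cD_{X,Y(1)}}_{1,\wh S\times\R}\leq O(\sigma c^2\eps/C)$, the extra $1/c$ coming from the division. I would then choose $\cP'\in\hyD$ from the cover whose truncation to $\wh S\times\R$ best matches $g$, so that the truncations $\cP'(\wh S\times\R)$ and $\cD_{X,Y(1)}(\wh S\times\R)$ are $O(\eps)$-close in total variation (converting $L_1$ proximity of unnormalized densities into TV proximity of normalized truncations uses that both truncated masses are $\Omega(c)$). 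Since $\cP',\cD_{X,Y(1)}\in\hyD$ and $\vol(\wh S)>c$, \cref{cond:Uncon:estimation} extrapolates this agreement to the full means, $\abs{\Ex_{\cP'}[y]-\mu_1}\leq O(C\eps)$. Outputting $\wh\mu_1=\Ex_{\cP'}[y]$, performing the symmetric computation for $\wh\mu_0$ from the $T{=}0$ samples and $1-e$, and rescaling $\eps$ by the constant $O(C)$ yields $\abs{\wh\tau-\tau_\cD}\leq\eps$ with probability $1-\delta$; the cover size, which is exponential in $\mathrm{fat}_\gamma(\hyP)\cdot\log N_{O(c^3\eps/C)}(\hyD)$, together with the $1/c$ and $C$ blow-ups tracked above, produces the stated $C^2/(c^2\eps)^4$ sample bound.

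The main obstacle is the interplay between the data-dependent overlap set $\wh S$ and the division step. Three things must hold simultaneously: (i) $\vol(\wh S)$ must stay above the threshold \cref{cond:Uncon:estimation} needs, yet $e$ must be bounded below on $\wh S$ so that $p\cP/\wh e$ is a faithful estimate of $\cD_{X,Y(1)}$ there — both are delicate because $\wh e$ is only close to $e$ in an average ($\cD_X$-weighted) sense while we need a Lebesgue-volume conclusion, which is exactly where $\sigma$-smoothness is spent; (ii) the passage from $L_1$-closeness of the unnormalized products to TV-closeness of the normalized truncations requires lower-bounding the truncated masses, which is where $\Pr_\cD[T{=}1]>2c$ is essential; and (iii) the error must be propagated through the division (a $1/c$ factor) and through the extrapolation constant $C$ without the accumulated constants destroying the final $\eps$-guarantee. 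Verifying (i)--(iii) with explicit constants, and checking that the cover-based selection in Phase~2 and the refit in Phase~3 can both be done from the same sample under a single union bound, is the technical heart of the argument; see \cref{sec:proofof:lem:est:unconfoundedness}.
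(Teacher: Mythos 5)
Your algorithm is the same as the paper's (Algorithm~\ref{alg:est:scenario3}): learn $\wh e$, threshold it to get a data-dependent overlap set $\wh S$, fit $(p,\cP)$ to the identifiable product $e\cdot\cD_{X,Y(1)}$ via a cover of $\hyP\times\hyD$, divide by $\wh e$ on $\wh S$, refit a $\cP'\in\hyD$ to the truncated quotient, and invoke \cref{cond:Uncon:estimation} to extrapolate the mean. The structure, the use of the $p$-concept reduction, and the role of $2c<\Pr_\cD[T{=}1]<1-2c$ all match.

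There is, however, one step that fails as written, and it is precisely the step you flag as delicate. You assert that $e(x)\geq c-O(\eps)$ for \emph{every} $x\in\wh S$ and, later, that $\wh e\approx e$ pointwise on $\wh S$ so that $\norm{p\cP/\wh e-\cD_{X,Y(1)}}_{1,\wh S\times\R}\leq O(\sigma c^2\eps/C)$. But the propensity-score oracle only guarantees $\Ex_{x\sim\cD_X}\sabs{\wh e(x)-e(x)}\leq\eps$, an average bound; there can be points of $\wh S$ where $\wh e$ overestimates $e$ by a constant, and $\sigma$-smoothness does not repair this ($\sigma$-smoothness is a density \emph{upper} bound, used in the paper only to build the $L_1$ cover of $\hyP\times\hyD$ in \cref{thm:oracleConstruction}; it cannot convert an average error into a pointwise one). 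The paper's fix is Markov's inequality: off a bad set $B$ with $\cD_X(B)\leq\sqrt{\eps}$ one has $\sabs{\wh e(x)-e(x)}\leq\sqrt{\eps}$, the set $\wh S$ is thresholded at $c-\sqrt{\eps}$, and the contribution of $\wh S\cap B$ to the $L_1$ error of the quotient is bounded separately by $O(\sqrt{\eps}/c)$ (\cref{eq:est:scenario3:bound3}). This $\eps\to\sqrt{\eps}$ degradation is then undone by the final reparameterization $\eps\mapsto\eps^2 c/C$, and it is exactly why the theorem's sample complexity is $\propto 1/\eps^4$ rather than $1/\eps^2$. Your linear-in-$\eps$ error accounting is therefore internally inconsistent with the $C^2/(c^2\eps)^4$ bound you quote: if your pointwise claim held, the theorem would have a better rate (the paper remarks this might be achievable by boosting, but does not prove it). To make your argument go through, replace the pointwise claim on $\wh S$ by the Markov/bad-set decomposition and track the resulting $\sqrt{\eps}$ losses.
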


            \begin{algorithm}[tbh!]
             \begin{curvybox}
                    \DontPrintSemicolon
                    \small
                
                    \KwIn{Classes $(\hyP,\hyD)$, 
                        $\eps \in (0,1)$, 
                        and 
                        i.i.d.\ censored samples $\dataset=\inbrace{c_1,c_2,\dots,c_n}$} %
                
                    \SetKwFunction{FEstimateATE}{\textsf{Estimate~ATE~in~Scenario~III}}
                    \SetKwProg{Fn}{Function}{:}{}
                
                    \Fn{\FEstimateATE{$(\hyP,\hyD), \,\dataset, \,  \,\eps$}}{

                        \For{$t \in \{0,1\}$}{
                            
                            Split censored samples $\dataset_t{=}\inbrace{(X_i,Y_i,T_i = t)}_i \subseteq \dataset$ into $\dataset^{(1)}_t,\dataset^{(2)}_t,\dataset^{(3)}_t$

                            \vspace{2mm}

                            Find an estimate $\wh e_t(\cdot)$ of the propensity score $\Pr[T{=}t\mid X{=}x]$  using $\dataset^{(1)}_t$

                            \vspace{2mm}

                            Create $\wh S_t = \inbrace{x \given \wh e_t(x) \geq c - \eps}$ 
                            
                            \textit{$\#$~ The set $\wh{S}_t$ satisfies $\mathrm{vol}(\wh S_t) \geq c - \eps$}\;

                            \vspace{2mm}
                            Eliminate all distributions $\cP\in \hyD$, which do not satisfy $\cP(\wh{S}_t)\geq c-\sqrt{\eps}$\;

                            \vspace{2mm}

                            Use $\dataset^{(2)}_t$ to find $\sinparen{\wh p_t, \wh\cP_t}\in \hyP\times \hyD$ approximating $p_t \cdot \cD_{X,Y(t)}$

                            \vspace{2mm}

                            Use $\dataset_t^{(3)}$ to find $\cP'_t\in \hyD$ approximating the ratio $\nfrac{\wh p_t(x,y) \wh \cP_t(x,y)}{\wh{e}_t(x)}$ over $\wh{S}_t$, such that, \[\iint_{(x,y)\in \wh{S}_t\times \R} \abs{\cP'_t(x,y)-\frac{\wh p_t(x,y) \wh \cP_t(x,y)}{\wh{e}_t(x)}} \d x \d y\leq O(\eps)\,.
                        \]
                        \vspace{-4mm}
                        }
                     
                        \vspace{2mm}
                
                        Define the estimator $\wh{\tau} \;\gets\; \Ex_{\cP'_1}[y] \;-\; \Ex_{\cP'_0}[y]$

                        \vspace{2mm}
                        
                        \Return $\wh{\tau}$  ~ $\#$ \textit{which is an estimate of $\tau$ that is $\eps$-close with high probability} \;
                        
                    }
                    \vspace{4mm} 
                    
                    \caption{Algorithm to estimate ATE in Scenario III. \label{alg:est:scenario3}}
                    \end{curvybox}
                \end{algorithm}

        \noindent 
        We expect that the $\nfrac{1}{\eps^4}$ dependence on the sample complexity can be improved using boosting, but we did not try to optimize it.
        We refer the reader to \cref{sec:overview} for a sketch of the proof of \cref{lem:est:unconfoundedness} and to \cref{sec:proofof:lem:est:unconfoundedness} for a formal proof.
        {Before showing that \cref{cond:Uncon:estimation} is satisfied by interesting distribution families, we pause to note that apart from the constraints on concept classes $\hyP$ and $\hyD$, we require the additional requirement that $c < \Pr_\cD[T{=}1] < 1-c$.
        First, observe that this is a mild requirement and is significantly weaker than overlap, which requires $c < \Pr_\cD[T{=}1|X={x}] < 1-c$ for each $x$.
        (To see why it is a mild requirement, observe that it allows the propensity scores $e(x)=$ to be 0 or 1 for all covariates as in regression discontinuity designs.)
        Second, our constraints on $\hyP$ and $\hyD$ already ensure that $\Pr[T{=}1]\in (0,1)$, which was sufficient for identification; however, they allow $\Pr[T{=}1]$ to approach 0 or 1, which makes estimation impossible.
        We require this constraint to avoid these extreme cases.}
         
        Next, we show that a rich family of distributions satisfies \cref{cond:Uncon:estimation} (also see \cref{rem:extensionOfExtrpolation}).

        \begin{restatable}{lemma}{lemmaExtrapolation}
            \label{lem:extrapolation}
            Let $K = [0,1]^{d+1}$ and let $M,k\ge 1$ be constants. Define $\hyDpoly(K,M)$ as the set of all distributions with support $K$ of the form $f(x,y) \propto e^{p(x,y)},$
            where $p$ is a degree-$k$ polynomial satisfying %
            \[
                \max_{(x,y)\in K} \abs{p(x,y)} \le M\,.
            \]
            Then, the class $\hyDpoly(K,M)$ satisfies \cref{cond:Uncon:estimation} with constant 
            \[
                C = e^{5M} \cdot \Bigl(O\bigl(\min\{d,k\}\bigr)\Bigr)^k \cdot c^{-(k+1)}\,.
            \]
        \end{restatable}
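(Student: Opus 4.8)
The plan is to read \cref{cond:Uncon:estimation} as an \emph{extrapolation} statement and prove it for $\hyDpoly(K,M)$ as follows: if the truncations of two polynomial‑log‑density distributions to $S\times[0,1]$ are TV‑close, then on the positive‑volume set $S$ they agree up to a multiplicative constant, so their log‑density difference---a degree‑$k$ polynomial---is forced to be uniformly small on all of $K$, which in turn makes the full distributions, and hence the means, close. Fix $\cP,\cQ\in\hyDpoly(K,M)$, a set $S\subseteq[0,1]^d$ with $\vol(S)>c$, and $\eps>0$ with $\tv{\cP(S\times\R)}{\cQ(S\times\R)}\le\eps$. The first step is to reduce the mean comparison to an \emph{untruncated} TV bound: since $y\in[0,1]$ on $K=[0,1]^{d+1}$,
\[
    \abs{\Ex_{\cP}[y]-\Ex_{\cQ}[y]}
    = \abs{\int_K y\,(\cP-\cQ)}
    \le \int_K \abs{\cP-\cQ}
    = 2\,\tv{\cP}{\cQ},
\]
so it suffices to show $\tv{\cP}{\cQ}\le \frac{C}{2}\eps$. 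Writing $\cP\propto e^{p}$, $\cQ\propto e^{q}$ with $\abs{p},\abs{q}\le M$ on $K$ immediately gives the pointwise density bounds $\cP,\cQ\in[e^{-2M},e^{2M}]$, normalizers $Z_p,Z_q\in[e^{-M},e^{M}]$, and truncation masses $\cP(S\times[0,1]),\cQ(S\times[0,1])\ge e^{-2M}\vol(S)\ge e^{-2M}c$.

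Next I would turn the TV hypothesis into an $L^1$‑smallness statement for the log‑ratio polynomial. On $S\times[0,1]$ the ratio of the two truncated densities equals $e^{\kappa+r}$, where $r:=p-q$ is a degree‑$\le k$ polynomial with $\abs r\le 2M$ on $K$, and $\kappa$ is the constant coming from the ratio of the two truncation normalizers; the mass and normalizer bounds above yield $\abs{\kappa}\le 4M+\log(1/c)$. Using that $\cQ$'s truncated density is $\ge e^{-2M}$, together with $\abs{e^{t}-1}\ge e^{-B_0}\abs{t}$ for $\abs t\le B_0:=6M+\log(1/c)$, the hypothesis gives
\[
    \int_{S\times[0,1]}\abs{\kappa+r}\,\le\, 2\eps\,e^{2M}\,e^{B_0} \,=\, O\inparen{e^{8M}/c}\,\eps .
\]
I would then invoke a multivariate Remez / polynomial anti‑concentration inequality---the extrapolation tool underlying the truncated‑statistics results cited in \cref{sec:iden:unconfoundedness}---for the degree‑$k$ polynomial $\kappa+r$ on the set $S\times[0,1]\subseteq K$ of relative volume $\ge c$:
\[
    \norm{\kappa+r}_{L^1(K)}\,\le\, \Lambda\,\norm{\kappa+r}_{L^1(S\times[0,1])},
    \qquad \Lambda=\inparen{O(\min\{d,k\})}^{k}\,c^{-k}.
\]
This is the step that produces the $\inparen{O(\min\{d,k\})}^k$ factor and the extra $c^{-k}$.

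Finally I would translate uniform $L^1$‑smallness of $\kappa+r$ on $K$ back into closeness of the full distributions. The untruncated ratio is $\cP/\cQ=e^{\kappa'+r}$ with a \emph{different} constant $\kappa'=\log(Z_q/Z_p)$, and matching the two constants is the one delicate bookkeeping point. Writing $\kappa'+r=(\kappa'-\kappa)+(\kappa+r)$ and using that $\cP,\cQ$ both integrate to $1$ on $K$ (so $\Ex_\cQ[e^{\kappa'+r}]=1$), a short computation with $\abs{e^t-1-t}\le \frac{1}{2}t^2 e^{\abs t}$ shows $\abs{\kappa'-\kappa}=O(\norm{\kappa+r}_{L^1(K)})$, hence $\norm{\kappa'+r}_{L^1(K)}=O(\norm{\kappa+r}_{L^1(K)})$. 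Since $\abs{\kappa'+r}$ is bounded on $K$, $\int_K\abs{\cP-\cQ}=\int_K\cQ\,\abs{e^{\kappa'+r}-1}\le e^{O(M)}\norm{\kappa'+r}_{L^1(K)}$, and chaining the three bounds gives $\tv{\cP}{\cQ}=O\inparen{\Lambda\,e^{O(M)}\,c^{-1}}\eps$, which is of the advertised form $C=e^{5M}\inparen{O(\min\{d,k\})}^k c^{-(k+1)}$ once the elementary $e^t$ estimates are carried out with sharp constants (the $c^{-1}$ from the $\abs\kappa$ bound and the $c^{-k}$ from $\Lambda$ combine to $c^{-(k+1)}$).

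The main obstacle is the middle step: pinning down the polynomial extrapolation inequality with the advertised $\inparen{O(\min\{d,k\})}^k c^{-k}$ constant. A plain sup‑to‑sup Brudnyi--Ganzburg/Remez bound gives a factor governed by the \emph{ambient} dimension $d+1$ through a Chebyshev value $T_k(\rho)$ with $\rho\approx 2(d+1)/c$, which is too lossy when $k<d$; obtaining $\min\{d,k\}$ instead requires either reducing to the $\le k$ directions that a degree‑$k$ polynomial genuinely depends on, or a Carbery--Wright‑type anti‑concentration estimate on the (log‑concave) uniform measure on $K$, whose exponent is dimension‑free. Reconciling such an anti‑concentration bound---naturally an $L^2$/measure statement---with the $L^1$‑on‑a‑subset information actually available, and absorbing the norm‑equivalence constants for bounded‑degree polynomials, is the part that demands care; the remaining ingredients are elementary estimates on $e^t$ and constant collection.
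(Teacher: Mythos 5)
Your overall architecture matches the paper's: both proofs first bound $\abs{\Ex_{\cP}[y]-\Ex_{\cQ}[y]}$ by $2\,\tv{\cP}{\cQ}$ using $y\in[0,1]$, and then reduce everything to an extrapolation statement saying that for polynomial log-densities on $K$, the untruncated TV distance is controlled by the TV distance of the truncations to $S\times[0,1]$. The difference is what happens next. The paper closes the argument in one line by invoking Lemma~4.5 of \citet{daskalakis2021statistical} as a black box: that lemma is stated exactly as a bound on the ratio $\tv{\cP}{\cQ}/\tv{\cP(T)}{\cQ(T)}$ with constant $8e^{5M}(2C\min\{d,2k\})^k/\vol(T)^{k+1}$, so no log-ratio manipulation, no Remez inequality, and no normalizer bookkeeping is needed. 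You instead try to re-derive that lemma from scratch: pass from TV of truncations to $L^1$-smallness of the log-ratio polynomial $\kappa+r$ on $S\times[0,1]$, extrapolate via a multivariate Remez/anti-concentration inequality to $L^1(K)$, and convert back to TV. Your first and last steps are sound (the density bounds, the $\abs{e^t-1}\ge e^{-B_0}\abs{t}$ linearization, and the matching of the two constants $\kappa,\kappa'$ via $\Ex_{\cQ}[e^{\kappa'+r}]=1$ all check out up to immaterial constants), so your route would also work if the middle inequality were established.

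The issue is that the middle inequality \emph{is} the theorem: the bound $\norm{\kappa+r}_{L^1(K)}\le (O(\min\{d,k\}))^k c^{-k}\,\norm{\kappa+r}_{L^1(S\times[0,1])}$ with that specific dimension-versus-degree dependence is precisely the technical content of the cited extrapolation lemma, and you explicitly leave it unresolved ("obtaining $\min\{d,k\}$ \ldots requires either \ldots or \ldots; [this] is the part that demands care"). A plain Brudnyi--Ganzburg/Remez bound gives the wrong constant, as you note, and reconciling a Carbery--Wright-type anti-concentration estimate with $L^1$-on-a-subset information is nontrivial. So as written the proposal has a genuine gap at its only hard step. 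The fix is either to actually carry out that polynomial inequality or to do what the paper does and cite \citet{daskalakis2021statistical} directly -- in which case most of your intermediate machinery (the $\kappa$ bookkeeping, the $L^1$ log-ratio reduction) becomes unnecessary.
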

        In particular, when $M,k=O(1)$ and $c=\Omega(1)$, the constant is $C=O(1)$. This result is a corollary of Lemma~4.5 in \citet{daskalakis2021statistical} and relies on the anti-concentration properties of polynomials \cite{carbery2001distributional}. Moreover, the conclusion can be generalized to the case where $K$ is any convex subset of $\R^d\times\R$. Specifically, if $[a,b]^{d+1}\subseteq K\subseteq [c,d]^{d+1}$, then the constant $C$ will scale linearly with the diameter of $K$ and with a function of the aspect ratio $\frac{d-c}{b-a}$. 
        The proof of \cref{lem:extrapolation} appears in \cref{sec:proofof:lem:extrapolation}.
        \begin{remark}[Extensions of \cref{lem:extrapolation}]
            \label{rem:extensionOfExtrpolation}
            As we mentioned, the key step in proving \cref{lem:extrapolation} is an extrapolation result by \citet{daskalakis2021statistical}. 
            More generally, one can leverage other extrapolation results -- both existing and future ones -- from the truncated statistics literature to show that \cref{cond:Uncon:estimation} is satisfied by distribution families of interest.
            For instance, one can use an extrapolation result by \citet{Kontonis2019EfficientTS} to show that \cref{cond:Uncon:estimation} is satisfied by the family of Gaussians over unbounded domains, and an extrapolation result by \citet{lee2024efficient} to show that it is satisfied by exponential families satisfying mild regularity conditions.
        \end{remark}

    \noindent\textbf{Estimation under Regression Discontinuity Design.}
        Next, we consider the estimation of $\tau$ with regression discontinuity (RD) designs.
        As mentioned before, RD designs are a special case of Scenario~III and, hence, we get the following result as an immediate corollary of \cref{lem:est:unconfoundedness}.
        \begin{corollary}
            \label{lem:est:rdDesign}
            Fix constants $c\in (0,\nfrac{1}{2})$, $C>0$, $\sigma,\eps,\delta\in (0,1)$, and a distribution $\mu$ over $\R^d\times \R$.
            Fix any class $\hyD$ that satisfies the conditions in \cref{lem:est:unconfoundedness} with constants $(C,\sigma,\eps)$.
            There is an algorithm that, given $n$ \iid{} samples from the censored distribution $\cC_\cD$ for any $c$-RD-design $\cD$ and $\eps,\delta \in (0,1)$,
            outputs an estimate $\hat{\tau}$, such that, with probability $1-\delta$,
            \vspace{-4mm}
            
            \[
                \abs{\hat{\tau}-\tau_{\cD}}\leq \eps\,.
            \]
            \vspace{-5mm}
            
            \noindent The number of samples $n$ is  
            \[
                n = 
                O\inparen{
                    \frac{C^2}{(\sigma c \eps)^2 }
                    \cdot 
                    \inparen{
                        \mathrm{fat}_{O(\sigma c\eps/C)}(\hyP)\cdot 
                            \log{\frac{C}{\sigma c\eps}}
                        +\log{\frac{N_{O(\eps c/C)}(\hyD)}{\delta}}
                    }
                }\,.
            \]   
        \end{corollary}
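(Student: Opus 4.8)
The plan is to obtain \cref{lem:est:rdDesign} as a specialization of the Scenario~III estimator (\cref{lem:est:unconfoundedness}), leveraging the two features that distinguish regression discontinuity designs from the general scenario: the generalized propensity scores are exactly $\{0,1\}$-valued, and the treatment set $S$ is known to the statistician. First I would verify that every $c$-RD-design is realizable with respect to $(\hyPunconf(c),\hyD)$. The scores $p_1(x,y)=\mathds{1}\{x\in S\}$ and $p_0(x,y)=\mathds{1}\{x\notin S\}$ depend only on $x$ and not on $y$, which is exactly the unconfoundedness clause of \cref{def:classPu}; and since $\vol(S)>c$ with $p_1\equiv 1>c$ on $S\times\R$ (respectively $\vol(\R^d\setminus S)>c$ with $p_0\equiv 1>c$ on $(\R^d\setminus S)\times\R$), the $c$-weak-overlap clause holds with witness sets $S$ and $\R^d\setminus S$. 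Combined with the hypothesis $\cD_{X,Y(0)},\cD_{X,Y(1)}\in\hyD$, this places $\cD$ squarely in Scenario~III, so identifiability is already given by \cref{lem:iden:rdDesign} and it only remains to argue the finite-sample guarantee with the improved rate.

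The key observation driving the improved $\nfrac{1}{\eps^2}$ (rather than $\nfrac{1}{\eps^4}$) dependence is that the three-phase algorithm of \cref{lem:est:unconfoundedness} (Algorithm~\ref{alg:est:scenario3}) collapses when $S$ is known. Because treatment is deterministic, the treated subsample $\dataset_1$ is i.i.d.\ from the truncation of $\cD_{X,Y(1)}$ to $S\times\R$ and the control subsample $\dataset_0$ is i.i.d.\ from the truncation of $\cD_{X,Y(0)}$ to $(\R^d\setminus S)\times\R$; there is no need to estimate $e(\cdot)$ nor $\wh S_t$, and, crucially, no division by a noisy $\wh e_t$. Indeed, on $S$ we have $e\equiv 1$ and $p_1\equiv 1$, so the ratio $\nfrac{p_1(x,y)\cP(x,y)}{\wh e_1(x)}$ appearing in the algorithm equals $\cP(x,y)$ exactly. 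I would therefore run only the final learning step: using the $\sigma$-smoothness and the $O(\nfrac{c\eps}{C})$-TV cover of $\hyD$, find $\wh\cP_1\in\hyD$ whose truncation to $S\times\R$ is within $O(\nfrac{c\eps}{C})$ in total-variation distance of the empirical truncated treated distribution (and symmetrically $\wh\cP_0$ on the control arm), and output $\wh\tau=\Ex_{\wh\cP_1}[y]-\Ex_{\wh\cP_0}[y]$.

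Correctness then reduces to a single invocation of the extrapolation condition. Since $\vol(S)>c$ and $\vol(\R^d\setminus S)>c$, \cref{cond:Uncon:estimation} applies to the pairs $(\wh\cP_1,\cD_{X,Y(1)})$ on $S$ and $(\wh\cP_0,\cD_{X,Y(0)})$ on $\R^d\setminus S$: TV-closeness of the truncations implies $\sabs{\Ex_{\wh\cP_t}[y]-\Ex_{\cD_{X,Y(t)}}[y]}\le O(\eps)$ for each $t$, whence $\sabs{\wh\tau-\tau_\cD}\le\eps$ by the triangle inequality. The sample count is dominated by learning each truncated distribution in TV to accuracy $O(\nfrac{c\eps}{C})$, which by the probabilistic-concept and covering machinery (\cref{sec:nuisanceParameters}) costs $\wt O\bigl((\mathrm{fat}_{O(\sigma c\eps/C)}(\hyP)\log(\nfrac{C}{\sigma c\eps})+\log\nfrac{N}{\delta})/(\nfrac{c\eps}{C})^2\bigr)$ samples, matching the stated bound.

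I expect the main obstacle to be careful bookkeeping rather than a new idea. One must confirm that the treated and control subsamples are genuinely clean i.i.d.\ draws from the correct truncations, with no propensity-estimation error leaking in, and that each arm receives an adequate share of the $n$ samples. The latter is exactly where the inherited side condition $2c<\Pr_\cD[T{=}1]<1-2c$ enters: it lower-bounds both $\Pr[T{=}1]=\cD_X(S)$ and $\Pr[T{=}0]$ away from $0$ (note the volume bounds $\vol(S)>c$ alone do \emph{not} suffice, since they say nothing about $\cD_X$-mass), so a $\poly(\eps,\delta)$-fraction of samples lands in each arm and a Chernoff bound controls the split. Translating per-arm TV accuracy back to mean accuracy through \cref{cond:Uncon:estimation}, tracking the constant $C$ from extrapolation against the $c$ from the volume lower bound, is the only place where attention to the parameters is required.
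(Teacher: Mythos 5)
Your proposal is correct, and it is in fact more of a proof than the paper supplies: the paper disposes of \cref{lem:est:rdDesign} with the single sentence that RD designs are a special case of Scenario~III, so the result is ``an immediate corollary'' of \cref{lem:est:unconfoundedness}. Taken literally, that would only give the $C^2/(c^2\eps)^4$ rate of the general theorem (the $\nfrac{1}{\eps^4}$ arises from the Markov/$\sqrt{\eps}$ loss in estimating $e(\cdot)$ and the final reparameterization $\eps\mapsto\eps^2 c/C$), whereas the corollary asserts a $C^2/(\sigma c\eps)^2$ rate. Your argument supplies exactly the missing justification: since $S$ is known and the propensities are exactly $\{0,1\}$-valued, the propensity-estimation phase, the set $\wh S_t$, and the division by $\wh e_t$ all disappear; each arm is a clean i.i.d.\ sample from the truncation of $\cD_{X,Y(t)}$ to $S\times\R$ (resp.\ $(\R^d\setminus S)\times\R$); and a single Yatracos-style selection over the $O(c\eps/C)$-TV cover followed by one application of \cref{cond:Uncon:estimation} (valid because $\vol(S),\vol(\R^d\setminus S)>c$) yields per-arm mean accuracy $O(\eps)$ at the quadratic rate. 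Your realizability check and your observation that the corollary silently needs the inherited side condition $2c<\Pr_\cD[T{=}1]<1-2c$ (since $\vol(S)>c$ bounds Lebesgue volume, not $\cD_X$-mass, and without it one arm may receive essentially no samples) are both correct; the latter is a defect of the corollary's statement rather than of your argument. The only cosmetic remark is that in the pure RD case $\hyP$ is a two-element class of indicators, so the fat-shattering and $\sigma$-smoothness terms in the stated bound are carried along only for uniformity with the general theorem.
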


\vspace{-8mm}
               
\section{Conclusion}

This work extends the identification and estimation regimes for treatment effects beyond the standard assumptions of unconfoundedness and overlap, which are often violated in observational studies. Inspired by classical learning theory, we introduce a new condition that is both sufficient and necessary for the identification of ATE, even in scenarios where treatment assignment is deterministic or hidden biases exist. This condition allows us to build a framework that unifies and extends prior identification results by characterizing the distributional assumptions required for identifying ATE without the standard assumptions of unconfoundedness and overlap \cite{tan2006distributional,rosenbaum2002observational,thistlethwaite1960regressionDiscontinuity}. Beyond immediate theoretical contributions, our results establish a deeper connection between learning theory and causal inference, opening new directions for analyzing treatment effects in observational studies with complex treatment mechanisms. 

    \vspace{-1mm}
    
    \subsection*{Acknowledgments}
    \vspace{-2mm}
        This project is in part supported by NSF Award CCF-2342642.
        Alkis Kalavasis was supported by the Institute for Foundations of Data Science at Yale.
        We thank the anonymous COLT reviewers for helpful suggestions on presentation and for suggesting to include a fourth scenario.
         
\printbibliography

\appendix
\addtocontents{toc}{\protect\setcounter{tocdepth}{3}}
\newpage

\addtocontents{toc}{\protect\setcounter{tocdepth}{1}}

\section{Further Discussion of Violation of Unconfoundedness and Overlap}
    \label{sec:examples:violation} 
    In this section, we present different reasons why unconfoundedness and overlap can be violated in practice.
    Following the rest of this paper, we focus on non-longitudinal studies without network effects.
    In longitudinal studies (\ie{}, studies with repeated observations of the same individuals over long periods of time), there are many other reasons why unconfoundedness and overlap can be violated.
    Further, with network effects, unconfoundedness and overlap alone are not sufficient to enable the identification of ATE.
    
    \subsection{Violation of Unconfoundedness}
        \label{sec:examples:violation:Unc}
        First, we present a few scenarios illustrating how unconfoundedness can be violated in observational studies and RCTs.

    \paragraph{Omitted Covariates.}
        One of the main sources of confounding is that certain covariates affecting treatment assignment are omitted from the analysis.
        This can arise due to various reasons.
        As a concrete example, consider an observational study investigating the causal effect of air pollution (\emph{treatment}) on the incidence of asthma (\emph{outcome}). 
        If the study fails to include socioeconomic status (SES) (or an appropriate proxy for it), then unconfoundedness can be violated.
        This is because SES can affect both the likelihood of exposure to air pollution and health outcomes: individuals with higher SES tend to live in urban areas with elevated levels of air pollution, while simultaneously enjoying better access to healthcare services that could mitigate adverse health effects. 
        This dependence can be ``hidden'' if SES is omitted as a covariate, leading to confounding between treatment and outcomes.
        For a more detailed discussion of this scenario, we refer the reader to the comprehensive review by \citet{pope2006healthPollution}.

        As another example, consider observational studies in healthcare that use data drawn from healthcare databases, such as claims data.
        While this data is rich -- incorporating administrative interactions -- it can omit important covariates such as the patient's medical history and disease severity, which affect treatment decisions.
        Here, observational studies based on electronic medical records (EMRs) can offer a more comprehensive set of covariates -- including full treatment and diagnostic histories, past medical conditions, and fine-grained clinical measurements like vital signs \cite{hoffman2011emrs}.
        However, even with this richer dataset, the potential of confounding remains \cite{kallus2021minimax}.

    \paragraph{Excess Covariates.}
        At first, it might seem that including a very rich set of covariates would help ensure unconfoundedness -- by capturing all factors that affect treatment assignment -- however, including certain covariates can introduce confounding.
        One reason for this is that some covariates are themselves dependent on the outcomes $Y(0)$ and $Y(1).$
        For instance, one example by \citet{wooldridge2005violatingIgnorability} is as follows: 
            Consider an observational study evaluating the effects of drug courts (treatment) on recidivism (outcome).\footnote{``Drug Treatment Court is a type of alternative sentencing that allows eligible non-violent offenders who are addicted to drugs or alcohol to complete a treatment program and upon successful completion, get the criminal charges reduced or dismissed;'' see \url{https://www.hhs.gov/opioids/treatment/drug-courts/index.html}}
        Here, one should not include post-sentencing education and employment as covariates, since these quantities are themselves affected by outcomes (recidivism).
        We refer the reader to the work of \citet{wooldridge2005violatingIgnorability} for a concrete mathematical example demonstrating that including certain covariates can introduce confounding.
         
    \paragraph{Non-Compliance in RCTs.}
        In a randomized control trial (RCT), treatment assignment is explicitly randomized and typically depends only on observed covariates, so unconfoundedness is ensured by design under normal conditions. 
        However, for certain types of treatments, such as completing physical exercise and therapy sessions, participants must actively comply, making some degree of non-compliance inevitable. 
        This non-compliance violates unconfoundedness when unobserved covariates -- like the level of stress experienced at work -- affect the probability of complying with the assigned treatment.
        {As a concrete example, consider an RCT conducted by \citet{sommer1991nonComplianceVitaminA} in rural Indonesia -- in northern Sumatra -- during the early 1980s.
        In the trial, villages were randomly assigned to receive Vitamin A supplements or serve as controls. %
        This study displayed non-compliance, and nearly 20\% of the infants in the treatment villages did not receive the supplements. %
        Importantly, the mortality rate among these non-compliant infants was twice that of the control group (1.4\% vs.\ 0.64\%).
        This suggests that infants in treatment villages who did not receive Vitamin A (\ie{}, the non-compliers) had poorer health outcomes, indicating that the non-compliance was likely caused by outcome-related factors --  thereby introducing confounding.}
        For further discussion and empirical evidence on non-compliance in RCTs, see \citet{lee1991itt,rubin1995ittandGoals,hewitt2006noncompliance}. %
        We also refer the reader to \citet{rosenbaum2002observational} (\eg{}, Section 5.4.3), \citet{imbens2015causal} (\eg{}, Chapter 23), and \citet*{ngo2021noncompliance} for a more detailed discussion of non-compliance and its effect on unconfoundedness.
        Additionally, \citet{balke1997bounds,imbens2015causal} and references therein discuss how to obtain non-point estimates of ATE in studies with non-compliance.

\subsection{Violation of Overlap}
    Next, we present several reasons why the overlap condition might be violated in practice.

\paragraph{Regression Discontinuity Designs.}  
Regression discontinuity (RD) designs \cite{thistlethwaite1960regressionDiscontinuity,rubin1977regressionDiscontinuity,sacks1978regressionDiscontinuity} inherently violate the overlap condition by design. In these settings, there is a fixed partition $(S, \R^d \setminus S)$ of the covariates domain $\R^d$ and treatment is assigned to covariates in $S$. Any $x \in \R^d \setminus S$ is assigned to the control group. For instance, consider a university scholarship program that awards financial aid only to applicants whose test scores exceed $c$, \ie{}, the covariate $X$ is one-dimensional, and the treatment assignment is
\[
T = \ind{}\{X \geq c\}.
\]
In this example, no student with $X < c$ receives the treatment, and all students with $X \geq c$ do. Although valid local treatment effects can be estimated near the cutoff, the complete absence of treated individuals on one side of $c$ (or controls on the other side) implies that the overall overlap condition is violated \cite{hahn2001regressionDiscontinuity,imbens2008regressionDiscontinuity}.

\paragraph{Participation-based Studies.}
In studies where individuals must actively show up -- commonly referred to as participation-based or volunteer-based studies -- a key challenge arises in achieving overlap between the treated and control groups. In these settings, the population naturally partitions into those who choose to participate and those who do not, often leading to a self-selected sample. This self-selection (or non-response) can result in significant differences in observed and unobserved covariates between participants and nonparticipants, thereby limiting the common support necessary for valid causal inference.

For instance, consider a study evaluating the effect of a health education workshop on diabetes management. In this study, the intervention requires participants to travel to a centralized location. Individuals with higher mobility, better baseline health, or greater health motivation are more likely to attend the workshop, while those with mobility challenges or lower health literacy might opt out. This leads to a partitioning of the target population into distinct groups: one in which the propensity to participate is near one, and another where it is nearly zero. Standard sampling strategies, such as sending random invitations, oversampling underrepresented groups, or employing stratified sampling methods, are often used to mitigate these issues. However, even these strategies may not fully overcome the challenge, as the willingness to participate is frequently correlated with unobserved factors -- like intrinsic motivation or baseline health status -- that affect the outcome \cite{dillman2014internet,groves2005survey}.

\addtocontents{toc}{\protect\setcounter{tocdepth}{3}}

\section{Proofs of Identification and Estimation Results for ATE} 
    In this section, we present the proofs of results on identification and estimation of the average treatment effect in Scenarios I, II, and III.
    First, we present the proofs of identification in \cref{sec:proofs:identification}, and then the proofs of estimation in \cref{sec:proofs:estimation}.
    \subsection{Proofs of Identification Results for ATE in Scenarios I-III}\label{sec:proofs:identification}
        In this section, we present the proofs of \cref{lem:iden:unconfoundedness-overlap,lem:iden:overlap,lem:iden:unconfoundedness}, which characterize identification of ATE in Scenarios I, II, and III, respectively.
        \subsubsection{Proof of \cref{lem:iden:unconfoundedness-overlap} (Scenario~I)}
            \label{sec:proofof:lem:iden:unconfoundedness-overlap}
            In this section, we prove \cref{lem:iden:unconfoundedness-overlap}, which we restate below. %
            \thmScenarioOne*
            \begin{proof}[Proof of \cref{lem:iden:unconfoundedness-overlap}]
                Toward a contradiction, suppose that $\inparen{\hyPou(c), \hyDall}$ does not satisfy \cref{cond:iden}.
                Hence, there exist a (compatible) pair of tuples $(p, \cP),(q,\cQ)\in \inparen{\hyPou, \hyDall}$ such that 
                \begin{align*}
                    \Ex\nolimits_{\cP}[y] &\neq \Ex\nolimits_{\cQ}[y]\,,
                        \yesnum\label{eq:iden:sanity:negationa}\\
                    \cP_X &= \cQ_X\,,
                        \yesnum\label{eq:iden:sanity:negationb}\\
                    \forall_{x\in \supp(\cP_X)}\,,~~
                        \forall_{y\in\R}\,,~~\quad 
                        p(x,y)&\cdot \cP(x,y) =  q(x,y)\cdot \cQ(x,y)\,.
                    \yesnum\label{eq:iden:sanity:negation2}
                \end{align*}
                Since $p,q\in \hyPou(c)$, $p(x,y)$ and $q(x,y)$ only depend on $x$; for each $x$, let $\overline{p}(x)$ and $\overline{q}(x)$ denote the values of $p(x,y)$ and $q(x,y)$ respectively.
                For each $x\in \supp(\cP_X)$, integrating \cref{eq:iden:sanity:negation2} over $y\in \R$ implies that $\overline{p}(x)\cdot \cP_X(x) = \overline{q}(x)\cdot \cQ_X(x)$.
                But $\cP_X=\cQ_X$, hence $\overline{p}(\cdot)$ and $\overline{q}(\cdot)$ are identical over $\supp(\cP_X)$.
                Further since $0<\overline{p}(\cdot),\overline{q}(\cdot)<1$, \cref{eq:iden:sanity:negation2} implies that $\cP(x,\cdot)=\cQ(x,\cdot)$ for each $x\in \supp(\cP_X)$ contradicting  \cref{eq:iden:sanity:negationa}.
                Thus, due to the contradiction, our initial supposition must be incorrect, and $\inparen{\hyPou(c), \hyDall}$ satisfies \cref{cond:iden}.
            \end{proof}

        \subsubsection{Proof of \cref{lem:iden:overlap} (Scenario~II) }\label{sec:proofof:lem:iden:overlap}
            
            In this section, we prove \cref{lem:iden:overlap}, which is restated below with the corresponding condition.
            \thmScenarioTwo*
            \conditionScenarioTwo*
            \begin{proof}[Proof of \cref{lem:iden:overlap}]
Y                We first prove sufficiency and then necessity.
                
                \paragraph{Sufficiency.}
                    Let $\hyD$ satisfy \cref{cond:Over}.
                    By \cref{infthm:Suff}, it suffices to show that $\inparen{\hyPoverlap(c),\hyD}$ satisfies \cref{cond:iden}.
                    To this end, consider any $\inparen{p,\cP},\inparen{q,\cQ}\in \hyPoverlap(c)\times \hyD$.
                    If either of the following conditions holds, then we are done:
                    $
                        \Ex\nolimits_{\cP}[y]
                        =\Ex\nolimits_{\cQ}[y]
                        \quad\text{or}\quad
                        \cP_X\neq \cQ_X.
                    $
                    Hence, to proceed, suppose that
                    $
                        \Ex\nolimits_{\cP}[y]
                        \neq\Ex\nolimits_{\cQ}[y]
                        \quadand
                        \cP_X = \cQ_X.
                    $
                    Now, since $\hyD$ satisfies \cref{cond:Over}, there exist $x\in \supp(\cP_X)$ and $y\in \R$ such that
                    $
                        \frac{\cP(x,y)}{\cQ(x,y)}\not\in \inparen{
                                \frac{c}{1-c}, \frac{1-c}{c}
                            }
                        .
                    $
                    Fix these $x$ and $y$.
                    Since $p,q\in \hyPoverlap(c)$, it holds that
                    \[
                        \frac{q(x,y)}{p(x,y)}\in \insquare{
                                \frac{c}{1-c}, \frac{1-c}{c}
                            }\,.
                    \]
                    Hence, $\nfrac{\cP(x,y)}{\cQ(x,y)}\neq \nfrac{q(x,y)}{p(x,y)}$ and, therefore, we have found an $x\in \supp(\cP_X)$ and $y$ such that 
                    $
                        p(x,y)\cdot \cP_x(y)\neq q(x,y)\cdot \cQ_x(y),
                    $
                    completing the proof that \cref{cond:iden} holds.
    
                \paragraph{Necessity.}
                    Suppose that $\hyD$ does not satisfy \cref{cond:Over} and, hence, there exist $\cP,\cQ\in \hyD$ satisfying:
                    \begin{align*}
                        \Ex\nolimits_{\cP}[y] &\neq \Ex\nolimits_{\cQ}[y]\,,\yesnum\label{eq:iden:3:a}\\
                        \cP_X &= \cQ_X\,,\yesnum\label{eq:iden:3:b}\\
                        \forall_{x\in \supp(\cP_X)}\,,~~
                        \forall_{y\in \R}\,,~~\quad 
                            \frac{\cP(x,y)}{\cQ(x,y)} &\in \inparen{
                                \frac{c}{1-c}, 
                                \frac{1-c}{c}
                            }\,.
                            \yesnum\label{eq:iden:3:c}
                    \end{align*}
                    Since \cref{eq:iden:3:c} holds, we can find generalized propensity scores $p,q\in \hyPoverlap(c)$ such that\footnote{To see this, note that $\nfrac{q(x,y)}{p(x,y)}$ is an increasing function of $q(x,y)$ and $-p(x,y)$, and maximum and minimum values (namely, $\frac{c}{1-c}$ and $\frac{1-c}{c}$ respectively) are achieved for $\inparen{q(x,y), p(x,y)} = \inparen{1-c, c}$ and $\inparen{q(x,y), p(x,y)} = \inparen{c, 1-c}$ respectively. Now the construction follows due to the intermediate value theorem.} %
                    \begin{align*}
                    \yesnum\label{eq:iden:3:d}
                        \forall_{x\in \supp(\cP_X)}\,,~~
                        \forall_{y\in \R}\,,~~\quad 
                            \frac{\cP(x,y)}{\cQ(x,y)} =
                            \frac{q(x,y)}{p(x,y)}\,.
                    \end{align*}
                    Moreover, for $\wh{p}\coloneq1{-}p$ and $\wh{q}\coloneq1{-}q$, the pairs $(\wh{p}, \cP), (\wh{q},\cQ)$ also belong to $(\hyP, \hyD)$ since $\wh{p}, \wh{q}$ satisfy the $c$-overlap condition.
                    So $(p,\cP)$ and $(q,\cQ)$ are both compatible,\footnote{Note $(\wh{p}, \cP)$ witnesses the compatibility of $(p, \cP)$ (as in the proof of \cref{infthm:Suff}) because $\cP_{X} =\wh\cP_X$ and $\Pr[T{=}0\mid X{=}x] = \nfrac{\int_{y} p(x,y) \cP(x,y) \d y}{\cP_{X}(x)} = 1 - \nfrac{\int_{y} p(x,y) \cP(x,y) \d y}{\cP_{X}(x)} = 1 - \Pr[T{=}1 \mid X{=}x]$. Similarly for $(q,\cQ)$.} 
                    satisfy \cref{eq:iden:3:a,eq:iden:3:b} and 
                    $
                        \forall_{x\in \supp(\cP_X)}\,,
                        \forall_{y\in \R}\,, 
                            p(x,y)\cdot \cP(x,y) =
                            q(x,y) \cdot \cQ(x,y),
                    $
                    because of \cref{eq:iden:3:d}.
                    Thus, $\inparen{\hyPoverlap(c),\hyD}$ does not satisfy \cref{cond:iden} which is necessary for identification of $\tau_{\cD}$.
            \end{proof}
            
        \subsubsection{Proof of \cref{lem:iden:unconfoundedness} (Scenario~III) }
            \label{sec:proofof:lem:iden:unconfoundedness}
            In this section, we prove \cref{lem:iden:unconfoundedness}, which is restated below with the corresponding condition.
            \thmScenarioThree*
            \conditionScenarioThree*
            \begin{proof}[{Proof of \cref{lem:iden:unconfoundedness}}]
                We first prove sufficiency and then necessity.

                \paragraph{Sufficiency.}
                    Toward a contradiction, suppose $\hyD$ satisfies \cref{cond:Uncon} and, yet, $\inparen{\hyPunconf(c), \hyD}$ violate \cref{cond:iden}.
                    Since \cref{cond:iden} is violated, there exist $(p, \cP),(q,\cQ)\in \inparen{\hyPunconf, \hyD}$ such that 
                    \begin{align*}
                        \Ex\nolimits_{\cP}[y] &\neq \Ex\nolimits_{\cQ}[y]\,,
                            \yesnum\label{eq:iden:unconfoundedness:negationa}\\
                        \cP_X &= \cQ_X\,,
                            \yesnum\label{eq:iden:unconfoundedness:negationb}\\
                        \forall_{x\in \supp(\cP_X)}\,,~~
                            \forall_{y\in\R}\,,~~\quad 
                            p(x,y)&\cdot \cP(x,y) =  q(x,y)\cdot \cQ(x,y)\,.
                    \end{align*}
                    Since $\supp(\cP_X)=\R^d$ from the assumption in \cref{lem:iden:unconfoundedness}, \mbox{the last equation above is equivalent to:}
                    \[
                        \forall_{x\in \R^d}\,,~~
                            \forall_{y\in\R}\,,~~\quad 
                            p(x,y)\cdot \cP(x,y) =  q(x,y)\cdot \cQ(x,y)\,.
                        \yesnum\label{eq:iden:unconfoundedness:negationc}
                    \]
                    Since $p,q\in \hyPunconf(c)$, $p(x,y)$ and $q(x,y)$ only depend on $x$; for each $x$, let $\overline{p}(x)$ and $\overline{q}(x)$ denote the values of $p(x,y)$ and $q(x,y)$ respectively.
                    For each $x\in \R^d$, integrating \cref{eq:iden:unconfoundedness:negationc} over $y\in \R$ implies that $\overline{p}(x)\cdot \cP_X(x) = \overline{q}(x)\cdot \cQ_X(x)$, \ie{}, $\overline{p}(\cdot)$ and $\overline{q}(\cdot)$ are identical (we know that $\cP_X=\cQ_X$).
                    Which implies that $p(x,y)$ and $q(x,y)$ are identical.
                    Since $p(x,y)$ and $q(x,y)$ are identical and both lie in $\hyPunconf(c)$, there exists a set $S$ with $\vol(S)\geq c$ such that $p(x,y) > c$ for each $(x,y)\in S\times \R$.
                    So for any $(x,y)\in S\times\R$, $p(x,y)=q(x,y)>0$ and, by \cref{eq:iden:unconfoundedness:negationc}, $\cP(x,y)=\cQ(x,y)$.
                    This along with \cref{eq:iden:unconfoundedness:negationa,eq:iden:unconfoundedness:negationb} is a contradiction to \cref{cond:Uncon} and, hence, our initial supposition must be incorrect, and $\inparen{\hyPou(c), \hyDall}$ satisfies \cref{cond:iden}.

                \paragraph{Necessity.}
                    Next, we show that if $\inparen{\hyPunconf(c),\hyD}$ violate \cref{cond:Uncon}, then they also violate \cref{cond:iden}.
                    To see this, suppose $\inparen{\hyPunconf(c), \hyD}$ do not satisfy \cref{cond:Uncon}.
                    Then, there exist $\cP,\cQ\in \hyD$ such that 
                    \begin{align*}
                        \Ex\nolimits_{\cP}[y] &\neq \Ex\nolimits_{\cQ}[y]\,,
                            \yesnum\label{eq:iden:unconfoundedness:negationd}\\
                        \cP_X &= \cQ_X\,, 
                            \yesnum\label{eq:iden:unconfoundedness:negatione} 
                    \end{align*}
                    and there exists a set $S\subseteq\R^d$ with $\vol(S)\geq c$ such that 
                    \[
                        \forall_{(x,y)\in S\times \R}\,,~~\quad 
                            \cP(x,y) = \cQ(x,y)\,.
                            \yesnum\label{eq:iden:unconfoundedness:negationf}
                    \]
                    Define the generalized propensity score $p(x,y)$ as follows: for each $(x,y)\in \R^d\times \R$
                    \[
                        p(x,y) = \begin{cases}
                            \nfrac{1}{2} & \text{if } x\in S\,,\\
                            0 & \text{otherwise}\,.
                        \end{cases}
                        \yesnum\label{eq:iden:unconfoundedness:defp}
                    \]
                    Observe that $p(\cdot)$ satisfies unconfoundedness (as it is only a function of the covariates) and also satisfies $c$-weak-overlap (\cref{def:weakoverlap}) since $\vol(S)\geq c$ and $p(x,y)> c$ for each $(x,y)\in S\times \R$ (as $c<\nfrac{1}{2}$).
                    Hence, $p(\cdot)\in \hyPunconf(c)$.
                    Also, observe that for $\wh{p}\coloneq1{-}p$ we have $\wh{p}(x,y){=}1{-}p(x,y){=}\sfrac{1}{2}$ for $x\in S$ and, so, $\wh{p} \in \hyPunconf(c)$ as well.
                    We claim that the tuples $(p,\cP)$ and $(p,\cQ)$ witness that \cref{cond:iden} is violated:
                    First, notice that $(p,\cP),(p,\cQ)$ are compatible as witnessed by $(\wh{p},\cP),(\wh{p},\cQ)$ respectively.\footnote{Note that $(\wh{p}, \cP)$ witnesses the compatibility of $(p, \cP)$ (as in the proof of \cref{infthm:Suff}) since $\cP_{X} =\wh \cP_X$ and $\Pr[T{=}0\mid X{=}x] = \nfrac{\int_{y} p(x,y) \cP(x,y) \d y}{\int_y \cP(x,y)\d y} = 1 - \nfrac{\int_{y} p(x,y) \cP(x,y) \d y}{\int_y \cP(x,y)\d y} = 1 - \Pr[T{=}1 \mid X{=}x]$. Similarly for $(p',\cQ)$.}
                    Then,  since $\Ex_{\cP}[y]\neq \Ex_{\cQ}[y]$ and $\cP_X=\cQ_X$ (\cref{eq:iden:unconfoundedness:negationd,eq:iden:unconfoundedness:negatione}), it suffices to show that 
                    \[
                        \forall_{x\in \supp(\cP_X)}\,,~~
                            \forall_{y\in\R}\,,~~\quad 
                                p(x,y)\cP(x,y)=p(x,y)\cQ(x,y)\,.
                    \] 
                    To see this consider any $(x,y)\in \R^d\times \R$.
                    If $x\in S$, then the relation holds, since $\cP(x,y)=\cQ(x,y)$ (\cref{eq:iden:unconfoundedness:negationf}) and, otherwise if $x\not\in S$, the relation still holds as $p(x,y)=0$ (\cref{eq:iden:unconfoundedness:defp}).
                    
            \end{proof}

    \subsection{Proof of Estimation Results for ATE in Scenarios I-III}
        \label{sec:proofs:estimation}
        
        In this section, we present the proofs of \cref{lem:est:unconfoundedness-overlap,lem:est:overlap,lem:est:unconfoundedness}, which provide sufficient conditions for estimation of ATE in Scenarios I, II, and III, respectively.
        \subsubsection{Proof of \cref{lem:est:unconfoundedness-overlap} (Scenario~I)}
            \label{sec:proofof:lem:est:unconfoundedness-overlap}
            In this section, we prove \cref{lem:est:unconfoundedness-overlap}, which we restate below.
            \thmScenarioOneEstimation*
            \noindent {Since \cref{lem:est:unconfoundedness-overlap} is well known (\eg{}, \cite{wager2020notes}), we only provide a sketch of the proof here.}
            \begin{proof}[{Proof {sketch} of \cref{lem:est:unconfoundedness-overlap}}]
                First, we construct the estimator $\wh{\tau}$. 
                Let $e(x)=\Pr[T{=}1\mid X{=}x]$ be the propensity score function. 
                By \cref{thm:prop-scores-estimation}, we get an estimation for the propensity score function $\wh{e}(\cdot)$ such that {$\wh{e}(\cdot)$ satisfies $c$-overlap and}
                $
                    \Ex_{x\sim\cD_X}\abs{e(x)-\wh{e}(x)} 
                        \leq \frac{c^2\eps}{4B}$
                with probability $1-\delta$,
                using $O\inparen{\frac{B^2}{(c^2\eps)^2} \inparen{\text{fat}_{\sfrac{c^2\eps}{B}}(\hyP)\log(\nfrac{B}{c^2\eps})+\log(\nfrac{1}{\delta})}}$ samples from $\cC_{\cD}$.
                
                Then we define $\wh{\tau}$ as
                \[
                    \wh{\tau} = \frac{1}{m} \sum_{i=1}^m \frac{y_it_i}{\wh{e}(x_i)}
                                - \frac{1}{m} \sum_{i=1}^m \frac{y_i(1-t_i)}{1-\wh{e}(x_i)}
                    \,,
                \]
                for $m$ is the number of {(fresh)} samples $(x_i, y_i, t_i)$ from $\cC_{\cD}$. Now, we show that $\wh{\tau}$ is $\eps$-close to $\tau$ in two steps.
                First, suppose we knew the propensity score function $e$.
                Then we define the estimator
                $
                    \bar{\tau} = \frac{1}{m} \sum_{i=1}^m \frac{y_it_i}{e(x_i)}
                                - \frac{1}{m} \sum_{i=1}^m \frac{y_i(1-t_i)}{1-e(x_i)}.
                $
                The result follows due to the following standard observations
                \[
                    \sabs{
                        \Ex[\wh{\tau}]
                        -
                        \Ex[\bar{\tau}]
                    }
                    \leq \frac{\eps}{2}\,,\quad 
                    \Ex[\bar{\tau}] = \tau\,,\quadand
                    \Pr\insquare{
                        \sabs{
                            \wh{\tau}
                            - 
                            \Ex[\bar{\tau}] 
                        }
                        \leq \frac{\eps}{2}
                    }
                    \geq 1-\delta\,.
                \]
                The first result follows by simple calculations since both $e(\cdot)$ and $\wh{e}(\cdot)$ satisfy $c$-overlap.
                The second observation is a consequence of the linearity of expectation and unconfoundedness.
                The third observation follows due to, \eg{}, Hoeffding's bound and the fact that the outcome variables are bounded in absolute value by $B$.
            \end{proof}
 
        \subsubsection{Proof of \cref{lem:est:overlap} (Scenario~II)}
            \label{sec:proofof:lem:est:overlap}
            
            In this section, we prove \cref{lem:est:overlap}, which is restated below with the corresponding condition.
            \thmScenarioTwoEstimation*
            \conditionScenarioTwoEstimation*
            \medskip 
            
            \begin{proof}[{Proof of \cref{lem:est:overlap}}]
                First, we construct $\hat{\tau}$ and then show that its $\eps$-close to $\tau$ under \cref{cond:Over:estimation}.

                \paragraph{Construction of $\wh{\tau}$.}
                    The algorithm to construct $\wh{\tau}$ is simple (see Algorithm~\ref{alg:est:scenario2}) and relies on estimating certain nuisance parameters.
                    We explain the construction of the nuisance parameter estimators in \cref{sec:nuisanceParameters} and use the estimators as black boxes here.
                    Notice that, since $c$-overlap holds, it also holds that $\Pr[T{=}1]\in(c,1-c)$, as required by \cref{thm:oracleConstruction}.\footnote{To see this, consider that $\Pr[T{=}1] = \int_{(x,y)} p_1(x,y) \cD_{X,Y(1)}\d x \d y$ and $p_1(x,y)\in(c,1-c)$ for all $(x,y)$.}
                    In particular, to construct $\wh{\tau}$ we query the $L_1$-estimation oracle (\cref{def:densityOracle}) with accuracy $\sfrac{M(\nfrac{\eps}{2})}{2}$ and confidence $\delta$.
                    This oracle has the property that, with probability $1-\delta$, the tuples $\inparen{p,\cP}$ and $\inparen{q,\cQ}$ returned by the oracle satisfy $\cP_X=\cQ_X$ and are close to $p_1\cD_{X,Y(1)}$ and $p_0\cD_{X,Y(0)}$ in the following sense:\footnote{Where, recall that, where we define $
            \norm{p\cP - q\cQ}_1\coloneqq
                \iint \abs{p(x,y)\cP(x,y)-q(x,y)\cQ(x,y)} \d x \d y\,.
        $}
                    \[
                        \norm{p_1{\cD_{X,Y(1)}} - p\cP}_1 < \frac{M(\nfrac{\eps}{2})}{2}
                        \qquadand
                        \norm{p_0{\cD_{X,Y(0)}} - q\cQ}_1 < \frac{M(\nfrac{\eps}{2})}{2}\,,
                        \yesnum\label{def:estOne:guarantee}
                    \]
                    We define the estimator $\wh{\tau}$ as follows 
                    \[
                        \hat{\tau} = \Ex\nolimits_{\cP}[y] - \Ex\nolimits_{\cQ}[y]
                        \,.
                        \yesnum\label{def:estOne:estimato}
                    \] 
                    In the above construction, samples from $\cC_\cD$ are only used in the query to the $L_1$-approximation oracle, and the sample complexity claimed in the result follows from the sample complexity of the $L_1$-approximation oracle (see \cref{thm:oracleConstruction}).

                \paragraph{Accuracy of $\wh{\tau}$.}
                Condition on the event $\evE$ that the above guarantee holds.
                We show that 
                \[
                    \abs{\Ex\nolimits_{\cD_{X,Y(1)}}[y]- \Ex\nolimits_{\cP}[y]}\leq \frac{\eps}{2}
                    \qquadand
                    \abs{\Ex\nolimits_{\cD_{X,Y(0)}}[y]- \Ex\nolimits_{\cQ}[y]}\leq \frac{\eps}{2}
                    \,,
                    \yesnum\label{def:estOne:guarantee:b}
                \]
                which implies the desired result due to the triangle inequality and the definition of $\wh{\tau}$ (\cref{def:estOne:estimato}).
                Toward a contradiction, suppose Inequality~\eqref{def:estOne:guarantee:b} is violated. 
                First, suppose $\sabs{\Ex\nolimits_{\cD_{X,Y(1)}}[y]- \Ex\nolimits_{\cP}[y]} > \nfrac{\eps}{2}$ and the other case will follow by substituting $Y(0)$, $\cP$, and $p_1$ by $Y(1)$, $\cQ$, and $p_0$ respectively in the subsequent proof.
                Consider the set $S$ in \cref{cond:Over:estimation} for the tuple $\sinparen{\cD_{X,Y(1)}, \cP}$ and partition it into the following two parts:\footnote{To see why this is a partition, observe that $S$ satisfies \cref{cond:Over:estimation:req} in \cref{cond:Over:estimation}.}
                \[  
                    S_L\coloneqq 
                        \inbrace{
                            (x,y)\in S \given 
                            \frac{\cD_{X,Y(1)}(x,y)}{\cP(x,y)} < \frac{c}{2(1-c)}
                    }
                    \quadand
                    S_R\coloneqq  \inbrace{
                            (x,y)\in S \given 
                            \frac{\cD_{X,Y(1)}(x,y)}{\cP(x,y)} > \frac{2(1-c)}{c}
                    }
                    \,.
                \]
                These parts satisfy the following properties:
                \begin{itemize}
                    \item[(P1)] For each $(x,y)\in S_L$ and the generalized propensity score $p(\cdot)$ returned by the density oracle
                    \[
                        p(x,y)\cP(x,y) > 2(1-c) \cdot \cD_{X,Y(1)}(x,y)\,,
                    \]
                    where we used the definition of $S_L$ and that $p(\cdot)\in \hyPoverlap(c)$ and, hence, $p(x,y)>c$.
                    \item[(P2)]
                        For each $(x,y)\in S_R$, 
                        \[
                            p(x,y)\cP(x,y) < \frac{c}{2} \cdot \cD_{X,Y(1)}(x,y)\,,
                        \]
                        where we used the definition of $S_R$ and that $p(\cdot)\in \hyPoverlap(c)$ and, hence, $p(x,y)<1-c$.
                \end{itemize}
                In the remainder of the proof, we lower bound $\snorm{p_1\cD_{X,Y(1)}-p\cP}_1$ to obtain a contradiction to \cref{def:estOne:guarantee}.
                The definition of the $L_1$-norm and the disjointness of $S_L$ and $S_R$ implies 
                \[
                    \norm{p_1\cD_{X,Y(1)}-p\cP}_1
                    \geq 
                    \snorm{\mathds{1}_{S_L}\cdot p_1\cD_{X,Y(1)}-\mathds{1}_{S_L}\cdot p\cP}_1
                    + 
                    \snorm{\mathds{1}_{S_R}\cdot p_1\cD_{X,Y(1)}-\mathds{1}_{S_R}\cdot p\cP}_1\,.
                    \yesnum\label{eq:estOne:lb}
                \]
                Where for each set $T\in \inbrace{S_L,S_R}$, $\mathds{1}_{T}$ denotes the indicator function $\mathds{1}\inbrace{(x,y)\in T}$.
                Toward lower bounding the first term, observe that for any $(x,y) \in S_L$
                \begin{align*}
                    p_1(x,y)\cD_{X,Y(1)}(x,y) - p(x,y)\cP(x,y)~~
                    &\Stackrel{(\mathrm{P1})}{<}~~ 
                    p_1(x,y)\cD_{X,Y(1)}(x,y) - 2(1-c)\cD_{X,Y(1)}(x,y)\\
                    &\leq~~ -(1-c)\cdot \cD_{X,Y(1)}(x,y)\,.
                    \tag{since $p_1\in \hyPoverlap(c)$}
                \end{align*}
                Therefore, 
                \begin{align*}
                    \snorm{\mathds{1}_{S_L}\cdot p_1\cD_{X,Y(1)}-\mathds{1}_{S_L}\cdot p\cP}_1
                    > (1-c)\cdot \cD_{X,Y(1)}(S_L)\,.
                    \yesnum\label{eq:estOne:lb:1}
                \end{align*}
                A similar approach lower bounds the second term:
                    for any $(x,y) \in S_R$
                \begin{align*}
                    p_1(x,y)\cD_{X,Y(1)}(x,y) - p(x,y)\cP(x,y)~~
                    &\Stackrel{(\mathrm{P2})}{>}~~ 
                    p_1(x,y)\cD_{X,Y(1)}(x,y) - \frac{c}{2}\cdot \cD_{X,Y(1)}(x,y)\\
                    &\geq~~ \frac{c}{2}\cdot \cD_{X,Y(1)}(x,y)\,.
                    \tag{since $p_1\in \hyPoverlap(c)$}
                \end{align*}
                Consequently, we obtain the following lower bound on the second term in \cref{eq:estOne:lb}
                \begin{align*}
                    \snorm{\mathds{1}_{S_R}\cdot p_1\cD_{X,Y(1)}-\mathds{1}_{S_R}\cdot p\cP}_1
                    > \frac{c}{2} \cdot \cD_{X,Y(1)}({S_R})\,.
                    \yesnum\label{eq:estOne:lb:2}
                \end{align*}
                Substituting \cref{eq:estOne:lb:1,eq:estOne:lb:2} into \cref{eq:estOne:lb} and using $c<\nfrac{1}{2}$, implies that 
                \[
                    \norm{p_1\cD_{X,Y(1)}-p\cP}_1
                    > 
                    \frac{c}{2}\inparen{
                        \cD_{X,Y(1)}(S_L)
                        + \cD_{X,Y(1)}(S_R)
                    }\,.
                \]
                Since $S=S_L\cup S_R$, $S_L$ and $S_R$ are disjoint, and $\cD_{X,Y(1)}(S)\geq \sfrac{M(\nfrac{\eps}{2})}{c}$ due to \cref{cond:Over:estimation},
                \[
                    \norm{p_1\cD_{X,Y(1)}-p\cP}_1
                    > 
                    \frac{c}{2}\cdot \frac{M(\nfrac{\eps}{2})}{c}
                    = \frac{M(\nfrac{\eps}{2})}{2}
                    \,,
                \]
                which is a contradiction to \cref{def:estOne:guarantee}.
                Finally, in the other case, where $\sabs{\Ex\nolimits_{\cD_{X,Y(0)}}[y]- \Ex\nolimits_{\cQ}[y]}\leq \frac{\eps}{2}$, substituting $Y(1)$, $p_1(\cdot)$, $p$, and $\cP$ in the above argument by $Y(0)$, $p_0(\cdot)$, $q$, and $\cQ$ implies that $\snorm{p_0\cD_{X,Y(0)}-q\cQ}_1 >  \sfrac{M(\nfrac{\eps}{2})}{2}$ also contradicting \cref{def:estOne:guarantee}.

            \end{proof}

        \subsubsection{Proof of \cref{lem:est:unconfoundedness} (Scenario~III)}
            \label{sec:proofof:lem:est:unconfoundedness}
            
            In this section, we prove \cref{lem:est:unconfoundedness}, which is restated below with the corresponding condition.
            \thmScenarioThreeEstimation*
            \conditionScenarioThreeEstimation*
            
            \paragraph{Overview of Estimation Algorithm (Algorithm~\ref{alg:est:scenario3}).}
            In this scenario, unconfoundedness holds and we assume a weak form of overlap: there are $S_0, S_1 \subseteq \mathbb{R}^d$ with $\vol(S_0),\vol(S_1) \geq c$ such that
            \[
            \forall\, x \in S_0:\quad e_0(x) \coloneqq \Pr[T{=}0\mid X{=}x] \geq c,\quad \text{and} \quad
            \forall\, x \in S_1:\quad e_1(x) \coloneqq \Pr[T{=}1\mid X{=}x] \geq c.
            \]
            If we had oracle membership access to $S_0, S_1$ and query access to functions $e_0(\cdot),e_1(\cdot)$, a slight modification of the Scenario II estimator (Algorithm~\ref{alg:est:scenario2}) suffices: one would find a pair $(p,\mathcal{P})$ so that the product $p\mathcal{P}$ approximates $p_1\mathcal{D}_{X,Y(1)}$ on $S_1$ and output $\mathbb{E}_{\mathcal{P}}[y]$ as an estimator for $\mathbb{E}_{\mathcal{D}}[Y(1)]$ (with an analogous procedure for $\mathbb{E}_{\mathcal{D}}[Y(0)]$). 
            Under \cref{cond:Uncon:estimation}, one can prove the correctness of this approach. 
            However, we lack direct membership and query access to $S_0, S_1$ and the propensity functions; hence, these must be estimated from samples while controlling for estimation error. 
            This is what Algorithm~\ref{alg:est:scenario3} does.

            \paragraph{Correctness of Algorithm~\ref{alg:est:scenario3}.}  
            The algorithm proceeds in three phases. For brevity, we detail the argument for estimating $\mathbb{E}[Y(1)]$; the analysis for $\mathbb{E}[Y(0)]$ is analogous.
            First, since $\Pr_{\mathcal{D}}[T{=}1] > 2c$, the set 
            \[
                S_1 \coloneqq \{ x \in \mathbb{R}^d \mid e(x) \geq c \}
            \]
            has $\mathcal{D}_X$-mass at least\footnote{Indeed, 
            $\Pr[T{=}1] \leq c\int_{x\not\in S_1} \, d\mathcal{D}_X(x) + \int_{x\in S_1} \, d\mathcal{D}_X(x) \leq c + \mathcal{D}_X(S_1)$, so that $\mathcal{D}_X(S_1) \geq c$.}
            \[
                    \cD_X(S_1)\geq c\,.
                    \yesnum\label{eq:est:scenario3:masslb}
            \]
            The first step of Algorithm~\ref{alg:est:scenario3} is to estimate the propensity score $e(\cdot)$. Because the hypothesis class $\hyP$ has finite fat-shattering dimension (see \cref{sec:nuisanceParameters}),
            we can obtain
            a propensity score estimate $\widehat{e}(\cdot)$ that satisfies
            \[
\mathbb{E}_{x\sim\mathcal{D}_X}\Bigl[ \bigl|\widehat{e}(x)-e(x)\bigr|\Bigr] \leq \eps.
            \]
            Since $|\widehat{e}(x)-e(x)| \in [0,1]$, Markov's inequality yields that for any $\gamma>0$
            \[
            \Pr_{x\sim \cD_X}\insquare{
                        \abs{\wh{e}(x)-e(x)}
                        \geq \gamma
                    } \leq \nfrac{\eps}{\gamma}\,.
            \]
            Define the \emph{bad set} 
            \[
                B \coloneqq \{ x \in \mathbb{R}^d \mid |\widehat{e}(x)-e(x)| \geq \sqrt{\eps} \}\,.
            \]
                The previous inequality implies that 
                \[
                    \cD_X(B) \leq \sqrt{\eps}\,.\yesnum\label{eq:est:scenario3:badSet}
                \]
                The next step in Algorithm~\ref{alg:est:scenario3} is to construct the following set 
                \[
                    \wh{S}_1 \coloneqq \inbrace{x\in \R^d\mid \wh{e}(x) \geq c-\sqrt{\eps}}\,.
                \]
                Since for any point $x\not\in B$, $\abs{\wh{e}(x)-e(x)} \leq \sqrt{\eps}$, and for each $x\in S_1$, $e(x)\geq c$, it follows that 
                \[
                    \wh{S}_1 \supseteq S_1 \setminus B\,,
                \]
                and, hence, \cref{eq:est:scenario3:masslb,eq:est:scenario3:badSet} imply that,
                \[
                    \cD_X(\wh{S}_1)\geq c-\sqrt{\eps}\,.
                    \yesnum\label{eq:est:scenario3:lbEstSet}
                \] 
                Further, for any $x\in \wh{S}_1\setminus B$, $e(x)\geq \wh{e}(x)-\sqrt{\eps}\geq c-2\sqrt{\eps}$ by the definition of $\wh{S}_1$ and $B$.
                Therefore, 
                \begin{align*}
                    \Pr_{x\sim \cD_X}\insquare{
                        e(x)\geq {c-2\sqrt{\eps}}
                        \mid x\in \wh{S}_1
                    }
                    &{\geq} \frac{\cD_X\sinparen{ \wh{S}_1\setminus B}}{\cD_X(\wh{S}_1)}\,.
                \end{align*}
                Since $\cD_X(B) < \sqrt{\eps}$ and $\cD_X(\wh{S}_1)\geq c-\sqrt{\eps}$, it follows that 
                \[
                    \Pr_{x\sim \cD_X}\insquare{
                        e(x)\geq c-2\sqrt{\eps}
                        \mid x\in \wh{S}_1
                    }
                    \geq \frac{\cD_X\sinparen{\wh{S}_1}-\cD_X(B)}{\cD_X(\wh{S}_1)}
                    \geq \frac{c-2\sqrt{\eps}}{c-\sqrt{\eps}}
                    = 1 - \frac{\sqrt{\eps}}{c-\sqrt{\eps}}\,.
                    \yesnum\label{eq:est:scenario3:whp}
                \]
                Since $\cD_X$ satisfies the constraint in \cref{eq:est:scenario3:lbEstSet} and $\wh{S}_1$ is known, we eliminate all distributions $\cP\in \hyD$, which do not satisfy $\cP(\wh{S}_1)\geq c-\sqrt{\eps}$. 
                With abuse of notation, we use $\hyD$ to denote the resulting concept class in the remainder of the proof.

                The final pair of steps in Algorithm~\ref{alg:est:scenario3} are as follows:
                \begin{enumerate}
                    \item Estimate a pair $\sinparen{\wh{p}, \wh{\cP}}\in \hyP\times \hyD$ such that the product $\wh{p}(x,y)\wh{\cP}(x,y)$ is close to the product ${p}(x,y){\cP}(x,y)$ in the following sense 
                    \[
                        \iint 
                            \abs{
                                \wh{p}(x,y)\wh{\cP}(x,y)
                                -
                                {p}(x,y){\cP}(x,y)
                            }\d x \d y 
                            < \eps\,.
                            \yesnum\label{eq:est:scenario3:productOracle}
                    \]
                    \item Estimate a distribution $\cP'\in \hyD$ such that 
                    \[
                        \iint_{x\in \wh{S}_1} 
                            \abs{
                                \cP'(x,y)
                                -
                                \frac{
                                    \wh{p}(x,y)\wh{\cP}(x,y)
                                }{  
                                    \wh{e}(x)
                                }
                            }\d x \d y < O\inparen{\frac{\sqrt{\eps}}{c}}\,.
                            \yesnum\label{eq:est:scenario3:accuracyTruncated}
                    \]
                \end{enumerate}
                The claim is that $\mathbb{E}_{(x,y)\sim {\cP'}}[y]$ is $O(\sqrt{\epsilon})$-close to $\mathbb{E}_{\mathcal{D}}[Y(1)]$. 
                However, before proving this, we must verify that the preceding steps can be implemented with finite samples.
                First, note that the estimation in the first step is feasible because, by the requirements on $\hyP$ and $\hyD$ in 
                \cref{lem:est:unconfoundedness}, one can construct an $\epsilon$-cover of $\hyP\times \hyD$ with respect to the specified distance (see \cref{sec:nuisanceParameters} for details).
                Regarding the second step, two checks are needed: (i) that there exists a distribution $\cP'$ satisfying \cref{eq:est:scenario3:accuracyTruncated}, and (ii) that such a $\cP'$ can be found from samples. For (ii), it suffices to construct an $O(\nfrac{\epsilon}{c})$-cover of $\hyP$, which is possible since $\hyP$ has finite fat-shattering dimension and we have a lower bound on the mass assigned to $\wh{S}_1$ by any distribution $\cP\in \hyD$ (see \cref{sec:nuisanceParameters} for the construction). It remains to verify (i).

                Towards this, it suffices to show that the function $\nfrac{
                                    \wh{p}(x,y)\cdot \wh{\cP}(x,y)
                                }{  
                                    \wh{e}(x)
                                }$ is $O\inparen{\nfrac{\sqrt{\eps}}{c}}$-close to some density function in $\hyD$ over the set $\wh{S}_1\times \R$.
                In fact, we show closeness to $\cP$.
                In other words, we want to upper bound
                \[
                    \iint_{x\in \wh{S}_1}
                    \abs{
                        \frac{
                            \wh{p}(x,y)\cdot \wh{\cP}(x,y)
                        }{  
                            \wh{e}(x)
                        }
                        - \cP(x,y)
                    }\d x \d y\,.
                \]
                The triangle inequality implies that 
                \begin{align*}
                    &\iint_{x\in \wh{S}_1}\abs{
                        \frac{
                            \wh{p}(x,y)\cdot \wh{\cP}(x,y)
                        }{  
                            \wh{e}(x)
                        }
                        - \cP(x,y)
                    }\d x\d y\\
                    &\quad\leq 
                    \iint_{x\in \wh{S}_1}
                    \frac{1}{\wh{e}(x)}
                    \cdot \abs{
                        \wh{p}(x,y)\cdot \wh{\cP}(x,y)
                             - 
                             p(x,y)\cdot \cP(x,y)
                    } \d x\d y  + 
                    \iint_{x\in \wh{S}_1}\abs{
                        \frac{
                            {p}(x,y)\cdot {\cP}(x,y)
                        }{  
                            \wh{e}(x)
                        }
                        -
                        \cP(x,y)
                    }\d x\d y\,.
                    \yesnum\label{eq:est:scenario3:bound0}
                \end{align*}
                Since for each $x\in \wh{S}_1$, $\wh{e}(x)\geq c-\sqrt{\eps}\geq \nfrac{c}{2}$, \cref{eq:est:scenario3:productOracle} implies that the first term is at most $\nfrac{2\eps}{c}$.
                Hence, it remains to upper bound the second term.
                Another application of the triangle inequality implies the following 
                \begin{align*}
                    &\iint_{x\in \wh{S}_1}\abs{
                        \frac{
                            {p}(x,y)\cdot {\cP}(x,y)
                        }{  
                            \wh{e}(x)
                        }
                        -
                        \cP(x,y)
                    }\d x\d y
                    \leq \\
                    &\quad \iint_{x\in \wh{S}_1\setminus B}\abs{
                        \frac{
                            {p}(x,y)\cdot {\cP}(x,y)
                        }{  
                            \wh{e}(x)
                        }
                        -
                        \cP(x,y)
                    }\d x\d y
                    + \iint_{x\in \wh{S}_1\cap B}\abs{
                        \frac{
                            {p}(x,y)\cdot {\cP}(x,y)
                        }{  
                            \wh{e}(x)
                        }
                        -
                        \cP(x,y)
                    }\d x\d y
                    \,.
                    \yesnum\label{eq:est:scenario3:bound1}
                \end{align*}
                Since $\abs{e(x)-\wh{e}(x)}\leq \sqrt{\eps}$ and $\wh{e}(x)\geq c-\sqrt{\eps}$ for each $x\in \wh{S}_1\setminus B$, for each $x\in \wh{S}_1\setminus B$, $\frac{p(x,y)}{\wh{e}(x)}=\frac{e(x)}{\wh{e}(x)}=1\pm \frac{\sqrt{\eps}}{c-\sqrt{\eps}}$ (where in the first equality we used unconfoundedness) the first term is at most 
                \begin{align*}
                    \iint_{x\in \wh{S}_1\setminus B}\abs{
                        \frac{
                            {p}(x,y)\cdot {\cP}(x,y)
                        }{  
                            \wh{e}(x)
                        }
                        -
                        \cP(x,y)
                    }\d x\d y
                    \leq 
                    \sqrt{\eps}
                    \cdot \iint_{x\in \wh{S}_1\setminus B}\abs{ \cP(x,y) }\d x\d y
                    \leq \sqrt{\eps}\,.
                    \yesnum\label{eq:est:scenario3:bound2}
                \end{align*}
                Regarding the second term, for each $x\in \wh{S}_1$, $\wh{e}(x)\geq c-\sqrt{\eps}$ and $p(x,y)\le 1$ (for any $y\in \R$), and hence, triangle inequality and \cref{eq:est:scenario3:badSet} imply
                \[
                    \iint_{x\in \wh{S}_1\cap B}\abs{
                        \frac{
                            {p}(x,y)\cdot {\cP}(x,y)
                        }{  
                            \wh{e}(x)
                        }
                        -
                        \cP(x,y)
                    }\d x\d y
                    \leq \frac{1}{c-\sqrt{\eps}}\cP\inparen{\wh{S}_1\cap B}
                    + \cP\inparen{\wh{S}_1\cap B}
                    \leq \frac{2 \sqrt{\eps}}{c-\sqrt{\eps}}\,.
                    \yesnum\label{eq:est:scenario3:bound3}
                \]
                Combining \cref{eq:est:scenario3:bound0,eq:est:scenario3:bound1,eq:est:scenario3:bound2,eq:est:scenario3:bound3} implies the desired bound in \cref{eq:est:scenario3:accuracyTruncated}.
                This completes the proof that $\cP'$ satisfies \cref{eq:est:scenario3:accuracyTruncated}, then 
                \[
                    \abs{\Ex_{(x,y)\sim \cP'}[y] - \Ex_\cD[Y(1)]}
                    \leq O\inparen{\frac{\sqrt{\eps}\cdot C}{c}}\,,
                \]
                this follows by an application of \cref{cond:Uncon:estimation} since $\cP'\in \hyD$ and $\cD$ is realizable with respect to $\hyD$.
                \cref{lem:est:unconfoundedness} follows by changing $\eps$ to $\eps^2\cdot \nfrac{c}{C}$.

\addtocontents{toc}{\protect\setcounter{tocdepth}{1}}
\section{{Proofs Omitted from Scenario III}}
        In this section, we prove \cref{lem:extrapolation,lem:iden:unconfoundedness:examples}, which give natural examples of distribution classes $\hyD$ that satisfy our conditions.

        \subsection{Proof of \cref{lem:iden:unconfoundedness:examples} (Classes $\hyD$ Identifiable in Scenario~III)} 
            \label{sec:proofof:lem:iden:unconfoundedness:examples}
            \label{sec:proofof:lem:unconfoundedness:identifiableFamilies}
            In this section, we prove \cref{lem:unconfoundedness:identifiableFamilies}, which we restate below.
            \lenUnconfoundednessIdentifiableFamilies*
            \smallskip
            \begin{proof}[Proof of \cref{lem:iden:unconfoundedness:examples}]
                We proceed in two parts: one for each distribution family.
                
                \paragraphit{Proof for Polynomial Log-Densities.}
                    Consider any pair $\cP,\cQ$ in the polynomial log-density family such that $\Ex_{(x,y)\sim\cP}[y]\neq\Ex_{(x,y)\sim\cQ}[y]$.
                    It is immediate that \cref{cond:Uncon} holds if $\cP_{X}\neq\cQ_{X}$.
                    So, we need to show that, if $\cP_{X}=\cQ_{X}$, the following is true
                    \[
                        \nexists S\subseteq\R^d
                        \quadtext{with}
                            \vol(S)\geq c
                        \quad \text{such that} \quad
                        \forall_{(x,y)\in S\times\R}
                            \,,~\cP(x,y)=\cQ(x,y) 
                        \,.
                    \]
                    For the sake of contradiction, assume there exists such a set $S\subseteq\R^d$.
                    Then, since $\cP(x,y) = \cP_{Y\mid X}(y\mid x) \cP_X(x)$ and similarly for $\cQ$, it follows that, for every $(x,y)\in S\times\R$
                    \[
                        \cP_{Y\mid X}(y\mid x) \cP_X(x)
                             = \cQ_{Y\mid X}(y\mid x) \cQ_X(x)
                        \,.
                    \]
                    Since $\cP_X=\cQ_X$ and $\supp(\cP_x)=\supp(\cQ_X)=\R^d$, we have
                    \[
                        \cP_{Y\mid X}(y\mid x) = \cQ_{Y\mid X}(y\mid x)
                        \,.
                    \]
                    Let $f_{\cP}$ be the polynomial for which $\cP(x,y)\propto e^{f_{\cP}(x,y)}$ and, similarly, for polynomial $f_{\cQ}$ and $\cQ$.
                    Then it must be
                    \[
                        e^{f_{\cP}(x,y)}
                            = c(x) \cdot e^{f_{\cQ}(x,y)}
                        \,,
                    \]
                    where $c(x)$ is the ratio of {the partition functions} $Z_{\cP}(x) = \int_y e^{f_{\cP}(x,y)}$ over $Z_{\cQ}(x) = \int_y e^{f_{\cQ}(x,y)}$.
                    Equivalently, 
                    \[
                        f_{\cP}(x,y) - f_{\cQ}(x,y)
                            - \log(c(x)) = 0
                        \,,
                        \yesnum\label{eq:polynomials}
                    \]
                    for all $(x,y)\in S\times\R$.
                    Fix any value $x\in\R^d$. Then the LHS in \cref{eq:polynomials} is a polynomial with respect to $y$ and can either be the zero polynomial or be zero only on a finite number of points, equal to the degree of the polynomial.
                    The second case cannot be true since we want $\cP(x,y)=\cQ(x,y)$ for all $(x,y)\in S\times\R$.
                    Thus, the polynomial must be identically zero with respect to $y$, for all $x\in S$, \ie{}, its coefficients must be identically zero.
                    However, the coefficients of $y$, are polynomials over $x$, so, if they are zero on an infinite number of points, they must be identically zero as well.
                    Thus, $f_{\cP}(x,y)-f_{\cQ}(x,y) = p(x) = \log(c(x))$, for all $x\in\R^d$, where $p(x)$ is a polynomial of $x$.
                    Now, for all $(x,y)\in\R^d\times\R$ 
                    \[
                        \cP(x,y) = \cP_{Y\mid X}(y\mid x) \cP_X(x)
                                 = \frac{1}{Z_{\cP}(x)} \cdot e^{f_{\cP}(x,y)} \cP_X(x)
                        \,.
                    \]
                    Note that $Z_{\cP}(x) = \int_y e^{f_{\cP}(x,y)} \d y = e^{p(x)} \int_y e^{f_{\cQ}(x,y)} \d y $.
                    So, since $\cP_X=\cQ_X$, we have
                    \[
                        \cP(x,y) 
                        = \frac{e^{f_{\cP}(x,y)} }{Z_{\cP}(x)} \cdot \cP_X(x)
                        = \frac{e^{p(x)} \cdot e^{f_{\cQ}(x,y)}}{e^{p(x)}Z_{\cQ}(x)} \cdot \cP_X(x)
                        = \frac{1}{Z_{\cQ}(x)} \cdot e^{f_{\cQ}(x,y)} \cdot \cQ_X(x)
                        = \cQ(x,y)
                        \,,
                    \]
                    for all $(x,y)\in\R^d\times\R$, and so $\cP,\cQ$ are the same distribution, and thus have the same mean value over $y$, which is a contradiction.

                \bigskip
                
                \paragraphit{Proof of Polynomial Expectations.}
                    Consider any pair $\cP,\cQ$ in the polynomial expectations family such that $\Ex_{(x,y)\sim\cP}[y]\neq\Ex_{(x,y)\sim\cQ}[y]$.
                    It is immediate that \cref{cond:Uncon} holds if $\cP_{X}\neq\cQ_{X}$.
                    So, we need to show that, if $\cP_{X}=\cQ_{X}$, the following is true
                    \[
                        \nexists S\subseteq\R^d
                        \quadtext{with}
                            \vol(S)\geq c
                        \quad \text{such that} \quad
                        \forall_{(x,y)\in S\times\R}
                            \,,~\cP(x,y)=\cQ(x,y) 
                        \,.
                    \]
                    For the sake of contradiction, assume there exists such a set $S\subseteq\R^d$.
                    Let the polynomial $f_{\cP}$ be such that $\Ex_{(x,y)}[y\mid X{=}x] = f_{\cP}(x)$ and similarly for polynomial $f_{\cQ}$ and $\cQ$.
                    Then, since $\cP(y\mid x) = \sfrac{\cP(x,y)}{\cP_X(x)}$ and similarly for $\cQ$, it is true for every $(x,y)\in S\times\R$
                    \[
                        \Ex_{(x,y)\sim\cP}[y\mid X{=}x] 
                            = \int_y y \cP_{Y\mid X}(y\mid x) \d y
                            = \Ex_{(x,y)\sim\cQ}[y\mid X{=}x]
                        \,.
                    \]
                    So, for all $x\in S$ we have $f_{\cP}(x)=f_{\cQ}(x)$ and since $S$ is infinite, it must be $f_{\cP}(x)=f_{\cQ}(x)$ for all $x\in\R^d$.
                    This is because two polynomials can be either identically equal or agree on a finite number of points, as many as their degree.
                    But then we have
                    \[
                        \Ex_{(x,y)\sim\cP}[y] 
                            = \iint_{(x,y)} y \cP(x,y) \d y\d x
                            = \int_x f_{\cP}(x) \cP_X(x) \d x
                        \,.
                    \]
                    Since $\cP_X=\cQ_X$ and $f_{\cP}=f_{\cQ}$, we get $\Ex_{(x,y)\sim\cP}[y]=\Ex_{(x,y)\sim\cQ}[y]$, which is a contradiction.

            \end{proof}

        \subsection{Proof of \cref{lem:extrapolation} (Classes $\hyD$ Identifiable in Scenario~III)} 
            \label{sec:proofof:lem:extrapolation}
            In this section, we prove \cref{lem:extrapolation}, which we restate below.
            \lemmaExtrapolation*
            \smallskip
            
            \begin{proof}[Proof of \cref{lem:extrapolation}]
                Consider any pair $\cP, \cQ\in\hyDpoly(K,M)$. 
                Let $S\subseteq\R^d$ be such that $\vol(S)>c$ such that $\tv{\cP(S\times\R)}{\cQ(S\times\R)}\leq\eps$ for some $\eps>0$.
                Since $\cP$ and $\cQ$ are supported on $K=[0,1]^{d+1}$, it follows that 
                \[
                    \tv{\cP(S\times[0,1])}{\cQ(S\times[0,1])}
                    =\tv{\cP(S\times\R)}{\cQ(S\times\R)}
                    \leq\eps \,.
                \]
                Consider the following bound:
                \[
                    \abs{
                        \Ex\nolimits_{(x,y)\sim\cP}[y] - \Ex\nolimits_{(x,y)\sim\cQ[y]}
                    } \leq \iint \abs{y} \abs{\cP(x,y)-\cQ(x,y)} \d y\d x
                    \,.
                \]
                Since $(x,y)\in[0,1]^{d+1}$, $\abs{y}\leq 1$ and we can write
                \[
                    \abs{
                        \Ex\nolimits_{(x,y)\sim\cP}[y] - \Ex\nolimits_{(x,y)\sim\cQ}[y]
                    } \leq 
                    \iint \abs{\cP(x,y)-\cQ(x,y)} \d y\d x
                    =
                    2\tv{\cP}{\cQ}
                    \,.
                \]
                Now, it suffices to upper bound $\tv{\cP}{\cQ}$ by the total variation distance between the \textit{truncated} distributions: $\tv{\cP(S\times [0,1])}{\cQ(S\times [0,1])}$ (which is at most $\eps$).
                For this, we use the following  result by \citet{daskalakis2021statistical}.
                \begin{lemma}[Lemma 4.5, \citet{daskalakis2021statistical}]
                    \label{lem:extrapolation:das}
                    Consider any two distributions $\cP,\cQ\in\hyDpoly(K,M)$ such that the logarithms of their probability density functions are proportional to polynomials of degree at most $k$.
                    There exists absolute constant $C>0$ such that for every $T\subseteq[0,1]^{d+1}$ with $\vol(T)>0$ it holds %
                    \[
                        e^{-2M}\vol(S) \leq
                        \frac{\tv{\cP}{\cQ}}{\tv{\cP(T)}{\cQ(T)}} 
                            \leq 8e^{5M} \frac{(2C\min\{d,2k\})^k}{\vol(T)^{k+1}}
                        \,. 
                    \]
                \end{lemma}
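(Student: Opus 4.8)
The plan is to prove the two inequalities of \cref{lem:extrapolation:das} separately, treating the lower bound as the routine direction and the upper bound (the genuine extrapolation statement) via anti-concentration of polynomials. Throughout write $z=(x,y)\in K$, $\cP(z)=e^{p(z)}/Z_p$ and $\cQ(z)=e^{q(z)}/Z_q$ with $Z_p=\int_K e^{p}$, and let $Z_p^T=\int_T e^{p}$ (analogously for $q$). Since $\abs{p},\abs{q}\le M$ on $K$ and $\vol(K)=1$, every density value lies in $[e^{-2M},e^{2M}]$ and the truncated masses satisfy $\cP(T),\cQ(T)\ge e^{-2M}\vol(T)$.

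For the lower bound I would avoid comparing densities pointwise and instead use the mass identities. Writing $\tv{\cP(T)}{\cQ(T)}=\tfrac12\int_T\abs{\cP(z)/\cP(T)-\cQ(z)/\cQ(T)}\,\d z$ and splitting $\cP(z)/\cP(T)-\cQ(z)/\cQ(T)=(\cP(z)-\cQ(z))/\cP(T)+\cQ(z)\inparen{1/\cP(T)-1/\cQ(T)}$, the triangle inequality bounds the first piece by $\tfrac{1}{\cP(T)}\int_T\abs{\cP-\cQ}\le 2\tv{\cP}{\cQ}/\cP(T)$ and the second by $\abs{\cP(T)-\cQ(T)}/\cP(T)\le \tv{\cP}{\cQ}/\cP(T)$. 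Combined with $\cP(T)\ge e^{-2M}\vol(T)$ this yields $\tv{\cP(T)}{\cQ(T)}\le \tfrac32 e^{2M}\tv{\cP}{\cQ}/\vol(T)$, which gives the claimed lower bound $\tv{\cP}{\cQ}/\tv{\cP(T)}{\cQ(T)}\ge e^{-2M}\vol(T)$ on the ratio up to a universal constant (tightening the split removes the $\tfrac32$).

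The upper bound is the core. I would first reduce both TV distances to the oscillation of the single degree-$k$ polynomial $r\coloneqq p-q$. Factoring $\cP-\cQ=\cQ\inparen{\beta e^{r}-1}$ with $\beta=Z_q/Z_p$ shows $\tv{\cP}{\cQ}=\tfrac12\Ex_{\cQ}\abs{\beta e^{r}-1}$, and a direct computation gives $\Ex_\cQ[\beta e^r-1]=0$. Because $r\in[-2M,2M]$ and $\beta\in[e^{-2M},e^{2M}]$, the map $u\mapsto e^u$ is bi-Lipschitz on the relevant range, so $\tv{\cP}{\cQ}\asymp_{e^{O(M)}}\Ex_\cQ\abs{r-\Ex_\cQ r}$; the identical argument gives $\tv{\cP(T)}{\cQ(T)}\asymp_{e^{O(M)}}\Ex_{\cQ(T)}\abs{r-\Ex_{\cQ(T)}r}$. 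Since $\cQ$ and $\cQ(T)$ have densities within a factor $e^{\pm 2M}$ of the uniform densities on $K$ and $T$, it remains to prove a purely analytic comparison of the mean absolute deviation of $r$ under $\unif(K)$ to that under $\unif(T)$, losing only $\inparen{O(\min\{d,2k\})/\vol(T)}^{k}$; the extra power of $\vol(T)$ in the stated constant is bookkeeping from the truncated normalizer $Z^T\asymp\vol(T)$.

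This last comparison is where the polynomial structure is essential and is the step I expect to be the main obstacle: a degree-$k$ polynomial cannot have much smaller spread on a subset of volume $\vol(T)$ than on all of $K$, and quantifying this is exactly the content of the Carbery--Wright anti-concentration inequality \cite{carbery2001distributional}, namely $\unif_K\inparen{\abs{r-\bar r}\le t\,\norm{r-\bar r}_{L^2(K)}}\le O(k)\,t^{1/k}$ for the log-concave measure $\unif(K)$. Choosing $t$ so that this probability drops below $\vol(T)/2$ forces $\abs{r-\bar r}\gtrsim\inparen{\vol(T)/O(k)}^{k}\norm{r-\bar r}_{L^2(K)}$ on at least half of $T$; converting this pointwise lower bound into a lower bound on $\Ex_{\unif(T)}\abs{r-\bar r_T}$ and using the reverse-H\"older equivalence $\norm{r-\bar r}_{L^2(K)}\ge\Ex_{\unif(K)}\abs{r-\bar r}$ closes the loop. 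The two delicate points are (i) the re-centering from $\bar r$ to the optimal truncated constant $\bar r_T$, for which I would argue that the median of $r$ over $T$ cannot absorb the spread guaranteed by anti-concentration, and (ii) extracting the factor $\min\{d,2k\}$ rather than a bare dependence on the ambient dimension $d$, which requires the dimension-reduced form of Carbery--Wright reflecting that a degree-$k$ polynomial has effective dimension $O(k)$. This reconstructs Lemma~4.5 of \citet{daskalakis2021statistical}, which I would ultimately cite to supply the sharp constants.
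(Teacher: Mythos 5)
The paper never proves this statement: it is imported verbatim as Lemma~4.5 of \citet{daskalakis2021statistical} and used as a black box inside the proof of \cref{lem:extrapolation}, so there is no in-paper argument to compare against. Your reconstruction does follow the same route as the cited source --- reduce both TV distances to the oscillation of the degree-$k$ polynomial $r=p-q$ via the $e^{\pm O(M)}$ density bounds, then invoke Carbery--Wright anti-concentration to show that a degree-$k$ polynomial cannot be much flatter on $T$ than on $K$ --- and your lower-bound argument is correct; the factor $\tfrac{3}{2}$ indeed disappears if one uses the positive-part form of TV, which yields $\tv{\cP(T)}{\cQ(T)}\le \tv{\cP}{\cQ}/\max\{\cP(T),\cQ(T)\}$ directly.

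There is, however, a genuine gap in your handling of delicate point (i), the re-centering. The implication you propose --- from ``$|r-\bar r|\ge s$ on half of $T$'' conclude that the median of $r$ over $T$ cannot absorb the spread --- is false. Take $r(z)=A z_1^k$ and $T=\{z_1\le\delta\}$: then $|r-\bar r|\approx A/(k+1)$ on \emph{all} of $T$, so the anti-concentration conclusion for $r-\bar r$ holds with a large $s$, yet $r$ varies by only $A\delta^k$ on $T$, so $\Ex_{\unif(T)}|r-\mathrm{med}_T(r)|$ is vastly smaller than $s$; the guaranteed distance from $\bar r$ says nothing about the spread \emph{within} $T$. The correct fix stays inside your framework but applies Carbery--Wright to the already-recentered polynomial: for the truncated centering constant $c_T$ coming from the truncated normalizers, $r-c_T$ is still a polynomial of degree at most $k$, and $\norm{r-c_T}_{L^2(K)}\ge\norm{r-\bar r}_{L^2(K)}$ because the mean minimizes $L^2$ distance to constants. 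Hence the set where $|r-c_T|\le t\,\norm{r-\bar r}_{L^2(K)}$ has volume at most $O(k)\,t^{1/k}$, and choosing $t=\inparen{\vol(T)/O(k)}^{k}$ gives $\Ex_{\unif(T)}|r-c_T|\ge \tfrac{t}{2}\,\Ex_{\unif(K)}|r-\bar r|$ with no recentering loss --- exactly the comparison your outline needs. With that substitution (and the bookkeeping you already describe for the truncated normalizer and for the $\min\{d,2k\}$ form of the constant, where deferring to \citet{daskalakis2021statistical} is reasonable), your sketch becomes a sound reconstruction of the cited lemma.
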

                Substituting the bound from \cref{lem:extrapolation:das} for $T=S\times [0,1]$ implies that 
                \[
                    \abs{
                        \Ex\nolimits_{(x,y)\sim\cP}[y] - \Ex\nolimits_{(x,y)\sim\cQ}[y]
                    } \leq 
                    2\tv{\cP}{\cQ}
                    \leq 
                    16e^{5M} \frac{(2C\min\{d,2k\})^k}{\vol(T)^{k+1}}\cdot \eps
                    \,.
                \]
                Since $\vol(T)=\vol(S) > c$, the desired result follows.
            \end{proof}

\addtocontents{toc}{\protect\setcounter{tocdepth}{1}}

\section{Need for Distributional Assumptions}
    \label{sec:unconfoundednessOverlap}
    This section presents some reasons why unconfoundedness and overlap {cannot} be weakened without restricting $\hyD$.  
    {We believe that these results are well-known but since we could not find an explicit reference, we include the results and proofs for completeness.}

    \subsection{Need for Distributional Assumptions to Relax Overlap} %
    Let us assume that we make no distributional assumptions, \ie{}, $\hyD = \hyDall.$
    We show that if $\hyP$ does not satisfy overlap even at two points then the pair $(\hyP, \hyDall)$ fails to satisfy \cref{cond:iden}.
        \begin{proposition}
            [Impossibility of Point Identification without Distributional Assumptions]
            \label{lem:impossibility:overlap}
            Fix any class $\hyP$ that violates overlap at two points in the following sense: there exists a generalized propensity score $p\in \hyP$, a covariate $x\in \R^d$, and distinct values $y_1,y_2\in \R$ such that $p(x,y_1)=p(x,y_2)=0$.
            Then, the pair $\inparen{\hyP,\hyDall}$ does not satisfy \cref{cond:iden}.
        \end{proposition}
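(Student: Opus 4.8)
The plan is to show that $\inparen{\hyP, \hyDall}$ fails \cref{cond:iden} by exhibiting a pair of compatible tuples $(p,\cP),(p,\cQ)\in\hyP\times\hyDall$ for which all three requirements of \cref{cond:iden} are simultaneously false: $\Ex_{(x,y)\sim\cP}[y]\neq\Ex_{(x,y)\sim\cQ}[y]$ (so Requirement~1 fails), $\cP_X=\cQ_X$ (so Requirement~2 fails), and $p(x,y)\cP(x,y)=p(x,y)\cQ(x,y)$ for every $(x,y)$ (so Requirement~3 fails). Both tuples use the \emph{same} given propensity $p$, so Requirement~3 reduces to demanding that $\cP$ and $\cQ$ agree wherever $p(x,y)>0$.

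Working in the discrete (probability-mass) formulation---which is legitimate since, as noted in \cref{sec:preliminaries}, all results transfer to discrete domains---I would treat $p$ as the control propensity $p_0$ and place atomic mass at the two zero-overlap outcomes $(x^\ast,y_1)$ and $(x^\ast,y_2)$. First I would build one valid observational study $\cD^{(1)}$ realizable with respect to $\inparen{\hyP,\hyDall}$ whose control marginal $\cP=\cD^{(1)}_{X,Y(0)}$ assigns positive mass to $(x^\ast,y_1)$, together with a witness $(\hat p,\hat\cP)\in\hyP\times\hyDall$ for the treatment arm. Then I would define $\cQ$ by transferring some of that mass from $(x^\ast,y_1)$ to $(x^\ast,y_2)$, keeping the treatment arm $(\hat p,\hat\cP)$ and everything else fixed, and let $\cD^{(2)}$ be the induced study. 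The crucial observation is that because $p(x^\ast,y_1)=p(x^\ast,y_2)=0$, these two outcomes contribute nothing to the observed-control density $p(x,y)\cD_{X,Y(0)}(x,y)$; hence the transfer leaves (i) the covariate marginal $\cD_X$ unchanged (the mass stays at $x^\ast$), (ii) the treatment probability $\Pr[T{=}0\mid X{=}x^\ast]$ unchanged, and therefore (iii) the censored distribution unchanged, $\cC_{\cD^{(1)}}=\cC_{\cD^{(2)}}$, with the validity of $\cD^{(2)}$ inherited from $\cD^{(1)}$. At the same time $\Ex_\cP[y]\neq\Ex_\cQ[y]$, since $y_1\neq y_2$ and the transferred mass is positive.

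With this construction the three requirements fail exactly as needed: the means differ, the $X$-marginals coincide, and $p\cP=p\cQ$ pointwise (the two densities agree wherever $p>0$, and both products vanish at the two zero-overlap points). What remains is to confirm compatibility, and this is where the real work lies: I must produce a genuine valid study $\cD^{(1)}$---equivalently a witness $(\hat p,\hat\cP)\in\hyP\times\hyDall$---whose per-covariate treatment probabilities are consistent, i.e.\ for each $x$ in the support, $\Ex_{\hat\cP(\cdot\mid x)}[\hat p(x,\cdot)]+\Ex_{\cP(\cdot\mid x)}[p(x,\cdot)]=1$. The subtlety is that the witness propensity must itself lie in $\hyP$, so I would take $\hat p=p$ and exploit the total freedom afforded by $\hyDall$ to choose the atomic control and treatment masses at $x^\ast$ so that these probabilities balance while still keeping positive mass at $(x^\ast,y_1)$; distributing a little of the conditional mass onto outcomes where $p(x^\ast,\cdot)$ is large lets one realize the required treatment probability. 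Verifying that such a consistent, valid completion always exists---rather than the elementary checks of the three requirements---is the main obstacle, and it is precisely here that the unrestricted class $\hyDall$ is essential.
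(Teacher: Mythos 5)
Your proposal is correct and takes essentially the same route as the paper: the paper also witnesses the failure of \cref{cond:iden} with two tuples $(p,\cP)$ and $(p,\cQ)$ sharing the same propensity $p$, where $\cP$ and $\cQ$ have the same $X$-marginal and agree everywhere except at the two zero-propensity points $(x,y_1),(x,y_2)$, so that $p\,\cP=p\,\cQ$ pointwise while the outcome means differ. The only difference is that you are more careful on two points the paper's proof passes over silently: the paper never verifies that the chosen tuples are \emph{compatible} (which \cref{cond:iden} requires of its witnesses), and it modifies the densities at just two points, which in the continuous setting is a measure-zero change that would not actually alter $\Ex[y]$ -- your move to atomic mass and your explicit balancing of $\Pr[T{=}0\mid X{=}x^\ast]+\Pr[T{=}1\mid X{=}x^\ast]=1$ address both. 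Note that your compatibility step does rely on $\hyP$ admitting \emph{some} valid completion with $p$ as a generalized propensity score (e.g.\ it fails for $\hyP=\{0\}$, where no compatible tuples exist and the condition holds vacuously); this is an implicit hypothesis of the proposition that the paper's own proof also does not surface.
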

        Hence, \cref{infthm:Suff} implies that, for any $\hyP$ that violates overlap in the above sense, there are observational studies $\cD$ realizable with respect to $\inparen{\hyP,\hyDall}$ where $\tau_\cD$ cannot be identified. 
        
        \smallskip
        
        \begin{proof}[Proof of \cref{lem:impossibility:overlap}]
            Fix any concept class $\hyP$ satisfying the condition described.
            Due to this condition, there exists $p\in \cP$, $x\in \R^d$, and distinct $y_1,y_2\in\R$ with $p(x,y_1)=p(x,y_2)=0$.
            Consider any two distributions $\cP,\cQ\in \hyDall$ that satisfy the following conditions:
            \begin{enumerate}[itemsep=0pt]
                \item They have the same marginal on $X$, \ie{}, $\cP_X=\cQ_X$;  
                \item The densities satisfy: $\cP(x,y_1) < \cQ(x,y_1)$ and $\cP(x,y_2) > \cQ(x,y_2)$.
                \item For each $(x',y')\not\in S$, $\cP(x',y')=\cQ(x',y')$ where $S\coloneqq \inbrace{(x,y_1),(x,y_2)}.$
            \end{enumerate} 
            We claim that the tuples $(p,\cP)$ and $(p,\cQ)$ witness that $\inparen{\hyP,\hyDall}$ violate \cref{cond:iden}.
            To see this, fix any $(x',y')$ and from the following cases observe that regardless of the choice of $(x',y')$, $p(x,y)\cP(x,y)=p(x,y)\cQ(x,y)$.

            \paragraph{Case A ($(x',y') \in S$):}
                In this case, $p(x,y)=0$ and, hence, $p(x,y)\cP(x,y)=p(x,y)\cQ(x,y)$.

            \paragraph{Case B ($(x',y') \not\in S$):}
                Since we have that $\cP(x',y')=\cQ(x',y')$ for $(x',y')\not\in S$, it holds that $p(x,y)\cP(x,y)=p(x,y)\cQ(x,y)$.
        \end{proof}

    \subsection{Unconfoundedness and Overlap are Maximal for Distribution-Free Identification}
        Next, we show that unconfoundedness and overlap are \emph{maximal} when $\hyD = \hyDall$: we show that if one extends the class $\hyP$ to be a strict superset of $\hyPou$, then $(\hyP, \hyDall)$ cannot satisfy \cref{cond:iden}.
        
        \begin{proposition}[Impossiblity of Identification without Distributional Assumptions]\label{lem:impossibility:unconfoundedness}
            For any class $\hyP \supsetneq \hyPou(0)$ that satisfies overlap (\ie{}, for each $p\in \hyP$ and $(x,y)\in \R^d\times \R$, $p(x,y)\in (0,1)$), the tuple $\inparen{\hyP, \hyDall}$ does not satisfy \cref{cond:iden}.
        \end{proposition}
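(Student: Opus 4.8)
The plan is to show directly that \cref{cond:iden} fails for $\inparen{\hyP,\hyDall}$ by producing two \emph{compatible} tuples that violate all three of its items at once. Since $\hyP\supsetneq\hyPou(0)$ while every $p\in\hyP$ satisfies overlap, there must exist some $p^\star\in\hyP\setminus\hyPou(0)$ that is valued in $(0,1)$ yet fails to be constant in its second argument (otherwise it would lie in $\hyPou(0)$). Concretely, I would fix $x^\star\in\R^d$ and $y_1\neq y_2$ with $a\coloneqq p^\star(x^\star,y_1)\neq p^\star(x^\star,y_2)\eqqcolon b$, where $a,b\in(0,1)$ by overlap. The entire construction will live over the two points $(x^\star,y_1),(x^\star,y_2)$: this ``$y$-dependence'' of $p^\star$ is exactly what unconfoundedness forbids and is precisely what lets the censored product $p^\star(\cdot)\cP(\cdot)$ be reproduced by a different, confounding-free pair, thereby collapsing two distinct treatment effects onto the same censored law.

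First I would take $p\coloneqq p^\star$ and let $\cP$ place equal mass $\nfrac12$ on each of $(x^\star,y_1),(x^\star,y_2)$. Setting $e^\star\coloneqq\nfrac{(a+b)}{2}\in(a,b)\subseteq(0,1)$, I let $q\equiv e^\star$, a constant score lying in $\hyPou(0)\subseteq\hyP$, and define $\cQ$ by forcing the products to agree: $\cQ(x^\star,y_1)\coloneqq\nfrac{a}{e^\star}\cdot\nfrac12$ and $\cQ(x^\star,y_2)\coloneqq\nfrac{b}{e^\star}\cdot\nfrac12$. The choice $e^\star=\nfrac{(a+b)}{2}$ is what makes $\cQ$ a genuine distribution (its masses sum to $1$) and what makes $\cP_X=\cQ_X$ (both are a unit mass at $x^\star$), so Item~2 fails. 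By construction $p(x,y)\cP(x,y)=q(x,y)\cQ(x,y)$ at both points, and both products vanish off the support, so Item~3 fails. A one-line computation then gives $\Ex_{\cP}[y]-\Ex_{\cQ}[y]=\nfrac{(b-a)(y_1-y_2)}{2(a+b)}\neq 0$, so Item~1 fails as well.

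The hard part will be verifying \emph{compatibility} of $(p^\star,\cP)$ and $(q,\cQ)$ with $\inparen{\hyP,\hyDall}$, since \cref{def:compatible} demands that each tuple extend to a consistent joint law of $(X,T,Y(0),Y(1))$; the only genuine constraint is that the induced $\Pr[T{=}1\mid X{=}x^\star]$ agree across the two defining equations. For each tuple I would exhibit the witness $(\wh p,\wh\cP)$ with $\wh\cP$ equal to $\cP$ (resp.\ $\cQ$) and the constant score $\wh p\equiv 1-e^\star\in(0,1)$, which again lies in $\hyPou(0)\subseteq\hyP$. The $p_0$-side gives $\Pr[T{=}0\mid X{=}x^\star]=\int p^\star(x^\star,y)\cP(x^\star,y)\,\d y=\nfrac{(a+b)}{2}=e^\star$ (and the analogue with $q\equiv e^\star$ for the second tuple), while the $p_1$-side gives $\Pr[T{=}1\mid X{=}x^\star]=1-e^\star$; these are consistent, so the $(X,T)$-marginal is well defined and a valid coupling exists, exactly as in the compatibility arguments of \cref{claim1}. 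Having two compatible tuples in $\hyP\times\hyDall$ that falsify Items~1--3 shows \cref{cond:iden} fails for $\inparen{\hyP,\hyDall}$.

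One point that needs care is the measure-zero convention: the construction is cleanest with two atoms, so $\cP$ and $\cQ$ are genuinely distinct distributions with distinct means. This is justified because $\hyDall$ is the class of \emph{all} distributions and, as noted in the paper, the results extend to discrete domains; moreover it matches the pointwise density treatment already employed in \cref{lem:impossibility:overlap}, where $\cP$ and $\cQ$ are modified at finitely many points. I would therefore keep $\cP,\cQ$ atomic (invoking the discrete-extension remark) rather than smoothing into bumps, since smoothing would require control of $p^\star$ on a positive-measure set, which the hypotheses do not provide.
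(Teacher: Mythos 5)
Your construction is correct, and it is genuinely different from --- and in one respect more complete than --- the paper's own argument. The paper also starts from a score $\bar p\in\hyP$ with $\bar p(x^\star,y_1)\neq\bar p(x^\star,y_2)$, but it then sets $q(x,\cdot)\equiv \bar p(x,y_2)$ and concentrates \emph{both} conditionals $\cP(\cdot\mid x)$ and $\cQ(\cdot\mid x)$ on the single atom $y_2$; this makes $\cP=\cQ$, so while Items~2 and~3 of \cref{cond:iden} fail for that pair, Item~1 ($\Ex_{\cP}[y]=\Ex_{\cQ}[y]$) holds trivially, and the displayed pair does not by itself witness a violation of the condition (the paper's ``it suffices to find distinct tuples with $\cP_X=\cQ_X$ and $p\cP=q\cQ$'' omits the requirement that the outcome means differ). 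Your two-atom version supplies exactly the missing ingredient: spreading $\cP$ uniformly over $(x^\star,y_1)$ and $(x^\star,y_2)$ and defining $\cQ\propto p^\star\cP/e^\star$ with $e^\star=(a+b)/2$ forces the censored products to agree pointwise while tilting the outcome law so that $\Ex_{\cP}[y]-\Ex_{\cQ}[y]=\nfrac{(b-a)(y_1-y_2)}{2(a+b)}\neq 0$, so all three items fail simultaneously. You also verify compatibility explicitly via the constant witnesses $1-e^\star\in\hyPou(0)\subseteq\hyP$ and the $(X,T)$-marginal consistency check of \cref{claim1}, a step the paper's proof of this proposition skips even though \cref{cond:iden} quantifies only over compatible tuples. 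Two cosmetic points: $e^\star$ lies between $\min\{a,b\}$ and $\max\{a,b\}$ rather than in the ordered interval ``$(a,b)$'', though only $e^\star\in(0,1)$ is used; and your use of atomic distributions is consistent both with $\hyDall$ being the class of all distributions and with the paper's own use of point masses in this appendix.
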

        Hence, \cref{infthm:Suff} implies that, for any $\hyP$ satisfying the condition in \cref{lem:impossibility:unconfoundedness}, there is $\cD$ realizable with respect to $\inparen{\hyP, \hyDall}$ where $\tau_\cD$ cannot be identified.
        \smallskip
        \begin{proof}[Proof of \cref{lem:impossibility:unconfoundedness}]
            Fix any concept class $\hyP\supsetneq \hyPou(c)$.
            By definition, $\hyP$ contains $\bar{p}(\cdot)$ with the following property:
                for some $x^\star\in \R^d$ and $y_1\neq y_2$, 
                \[
                    \bar{p}(x^\star,y_1)\neq \bar{p}(x^\star,y_2) \quadtext{with}
                    \bar{p}(x^\star,y_1),\bar{p}(x^\star,y_2)\in (0,1)\,.
                \]
            The second requirement holds since $\hyP$ satisfies overlap.
            Our goal is to show that the pair $\inparen{\hyP,\hyDall}$ does not satisfy \cref{cond:iden}.
            Recall that to show this it suffices to find distinct tuples $(p,\cP)$ and $(q,\cQ)$ such that $\cP_X=\cQ_X$ and for each $x\in \supp(\cP_X)$ and $y\in \R$
            \[
                p(x,y)\cP(x,y)=q(x,y)\cQ(x,y)\,.
                \yesnum\label{eq:impossibility:requirement}
            \]
            Fix $p(\cdot)=\bar{p}$. %
            Next, for each $x$, we iteratively construct the function $q(\cdot)\in \hyPou \supsetneq \hyP$ and distributions $\cP(x,y), \cQ(x,y)$ to satisfy \cref{eq:impossibility:requirement} and $\cP_X=\cQ_X$.
            For each $x\in \R^d$, we consider the following cases.

            \paragraph{Case A ($\forall_{y_1,y_2\in \R}$,~~~ $\bar{p}(x,y_1)=\bar{p}(x,y_2)$):}
                In this case, for each $y\in \R$, we set $\cP(x,y)=\cQ(x,y)=0$ and set $q(x,y)=\alpha$ for an arbitrary constant $\alpha\in (0,1)$ independent of $y$ (which ensures that $q$ can be an element of $\hyPou$).
                Therefore, $\cP_X(x)=\cQ_X(x)=0$ and \cref{eq:impossibility:requirement} is satisfied.

            \paragraph{Case B ($\exists_{y_1,y_2\in \R}$,~~~ $\bar{p}(x,y_1)\neq \bar{p}(x,y_2)$):}
                We set $q(x,y)=p(x,y_2)\in (0,1)$ for each $y\in \R$.
                (Since $q(x,y)$ is independent of $y$, $q$ can be an element of $\hyPou$.)
                We also set 
                \[
                    \forall_{y\neq y_2},~~\cP(y|x)=\cQ(y\mid x)=0\,,
                    \quad 
                    \cP(y_2|x)=\cQ(y_2\mid x)=1\,,
                    \quadand 
                    \cP_X(x)=\cQ(x)>0\,.
                \]
                Now, by construction $\cP_X(x)=\cQ_X(x)$ and \cref{eq:impossibility:requirement} holds.

            \medskip 
            
            \noindent Observe that in both cases, the function $q(\cdot)$ satisfies the requirements of $\hyPou$ and, hence, $q\in \hyPou\supsetneq \hyP$.
            Further, since in $\cP_X(x)=\cQ(x)$ and \cref{eq:impossibility:requirement} holds in both cases, we have proved that the \cref{cond:iden} is violated for the pair of tuples $\inparen{p,\cP}$ and $\inparen{q,\cQ}$.
        \end{proof}
 
\section{Identifiability of the Heterogeneous Treatment Effect}
    \label{sec:iden:extension}
    In this section, we study identification of the heterogeneous treatment effect: given an observational study $\cD$, the heterogeneous treatment effect for covariate $x\in \R^d$ is defined as 
    \[
        \tau_\cD(x) \coloneqq \Ex\nolimits_\cD\insquare{Y(1)-Y(0)\mid X{=}x}\,.
    \]
    By identification of the heterogeneous treatment effect, we mean identification of the function $\tau_\cD(\cdot)$.
    {We show that the} following variant of \cref{cond:iden}, characterizes the identification of heterogeneous treatment effects.
    \begin{restatable}[{Identifiability} Condition for HTE]{condition}{conditionIdenHTE}
        \label{cond:iden:HTE}
        The concept classes $\inparen{\hyP,\hyD}$ satisfy the Identifiability Condition if for any compatible, distinct tuples $(p,\cP), (q,\cQ) \in {\hyP} \times {\hyD}$, at least one of the following holds:
    
        \begin{enumerate}[itemsep=-1pt,leftmargin=17.5pt]
            \item \textbf{(Equivalence Outcome Distributions)} $\cP=\cQ$
            \item \textbf{(Distinction of Covariate Marginals)} $\cP_X \neq \cQ_X$ 
            \item \textbf{(Distinction under Censoring)} $\exists (x,y)\in \supp(\cP_X)\times \R$, such that, $p(x,y) \cP(x, y) \neq q(x,y) \cQ(x,y)$
        \end{enumerate}
    \end{restatable}
    The above condition is sufficient to identify the heterogeneous treatment effect. 
    The reason is similar to why \cref{cond:iden} is sufficient to identify ATE: consider two observational studies $\cD_1$ and $\cD_2$ which correspond to the pairs $(p, \cP)$ and $(q,\cQ)$ respectively, where $\cP$ and $\cQ$ are ``guesses'' for the distributions of, say, $(X,Y(1)).$ 
    Assume that the true observational study $\cD$ is either $\cD_1$ or $\cD_2$.
    Then, one can identify the correct observational study $\cD_1$ or $\cD_2$ with samples from the censored distribution $\cC_\cD$ and, as a consequence, one can identify the correct HTE from among $\tau_{\cD_1}(\cdot)$ and $\tau_{\cD_2}(\cdot)$.
    As for necessity, for any observational study $\cD$ realizable with respect to $\hyP$ and $\hyD$, \cref{cond:iden:HTE} is necessary for identifying HTE.
    The proofs of sufficiency and necessity are nearly identical to the proof of \cref{infthm:Suff} and are omitted.
    We summarize the results for HTE's identifiability below.
    \begin{theorem}
        For any concept classes $(\hyP, \hyD)$, the following are true:
        \begin{itemize}[itemsep=-1pt,leftmargin=17.5pt]
            \item[$\triangleright$] \textbf{(Sufficiency)}\quad If $(\hyP, \hyD)$ satisfy \cref{cond:iden:HTE}, then the heterogeneous treatment effect $\tau_\cD(\cdot)$ is identifiable from the censored distribution $\cC_\cD$ for any observational study $\cD$ realizable with respect to $(\hyP, \hyD)$.
            \item[$\triangleright$] \textbf{(Necessity)}\quad If the heterogeneous treatment effect $\tau_\cD(\cdot)$ is identifiable from the censored distribution $\cC_\cD$ for any observational study $\cD$ realizable with respect to $(\hyP, \hyD)$, then $(\hyP, \hyD)$ satisfy \cref{cond:iden:HTE}.
        \end{itemize}
    \end{theorem}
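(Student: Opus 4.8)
The plan is to mirror the two-part argument behind \cref{infthm:Suff} (i.e.\ \cref{claim:sufficiency_condition1,claim1}), replacing the mean-matching clause (Item~1 of \cref{cond:iden}) by the distribution-matching clause (Item~1 of \cref{cond:iden:HTE}). For \textbf{sufficiency}, I would reuse verbatim the deterministic procedure $\Phi$ constructed in \cref{claim:sufficiency_condition1}: from $\cC_\cD$ it reconstructs the product $p_0(x,y)\,\cD_{X,Y(0)}(x,y)$ pointwise (since the $X$-marginal is uncensored and $\Pr[T{=}0]$ is observed), and symmetrically $p_1(x,y)\,\cD_{X,Y(1)}(x,y)$. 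I would then form the set of compatible tuples $(p,\cP)$ consistent with $\Phi(\cC_\cD)$ and with the observed marginal $\cD_X$, exactly as in \cref{eq:def_Sg,eq:def_Sg:2}. Any two candidates $(p,\cP),(q,\cQ)$ in this set satisfy $p\cP=q\cQ$ everywhere and $\cP_X=\cQ_X$, so Items~2 and~3 of \cref{cond:iden:HTE} both fail; the condition therefore forces Item~1, namely $\cP=\cQ$. Since $(p_0,\cD_{X,Y(0)})$ lies in this candidate set, the set pins down $\cD_{X,Y(0)}$ as a \emph{full} joint distribution (not merely its mean), and the same argument identifies $\cD_{X,Y(1)}$.

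Once both conditional-outcome joints are recovered as deterministic functions of $\cC_\cD$, the heterogeneous effect follows immediately: for each $x$ I would read off $\mu_t(x)=\Ex_{\cD_{X,Y(t)}}[y\mid X{=}x]$ from the identified joint and output $\tau_\cD(x)=\mu_1(x)-\mu_0(x)$. For \textbf{necessity}, I would argue the contrapositive as in \cref{claim1}: if \cref{cond:iden:HTE} fails there are distinct compatible tuples $(p,\cP),(q,\cQ)$ with $\cP\neq\cQ$, with $\cP_X=\cQ_X$, and with $p(x,y)\cP(x,y)=q(x,y)\cQ(x,y)$ for all $(x,y)\in\supp(\cP_X)\times\R$. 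Taking the compatibility witness $(\wh p,\wh\cP)$ of $(p,\cP)$ for the treated arm of \emph{both} studies, I would build $\cD^{(1)}$ and $\cD^{(2)}$ through the assignments in \cref{eq:constructionD1:a:ATT,eq:constructionD1:b:ATT,eq:constructionD2:a:ATT,eq:constructionD2:b:ATT}, reuse the propensity-consistency computation of \cref{claim1} to check both are valid observational studies, and conclude $\cC_{\cD^{(1)}}=\cC_{\cD^{(2)}}$ while $\cD^{(1)}_{X,Y(0)}=\cP\neq\cQ=\cD^{(2)}_{X,Y(0)}$ and the treated arms coincide.

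The step I expect to be the main obstacle is converting the distributional inequality $\cP\neq\cQ$ into a genuine gap in the HTE \emph{function} $\tau_\cD(\cdot)$. In the ATE proof the analogous pair satisfied $\Ex_\cP[y]\neq\Ex_\cQ[y]$, which instantly separates the scalar effects; here $\cP\neq\cQ$ together with $\cP_X=\cQ_X$ only guarantees that the two joints disagree somewhere, so I must exhibit a covariate $x$ at which the control-arm conditional means $\mu_0(x)$ differ (the treated arms are shared). The clean way to close this gap is to read Item~1 of \cref{cond:iden:HTE} as equality of the conditional-mean functions $x\mapsto\Ex_{\cP}[y\mid X{=}x]$, which is exactly what $\tau_\cD(\cdot)$ depends on; under this reading the failure of the condition supplies a covariate witnessing $\mu_0^{(1)}(x)\neq\mu_0^{(2)}(x)$, hence $\tau_{\cD^{(1)}}(\cdot)\neq\tau_{\cD^{(2)}}(\cdot)$, so no deterministic $f$ can return the correct HTE for both. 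Since the identifier is allowed to depend on the entire density, distributions differing on a measure-zero covariate set can still be separated pointwise (as noted after \cref{cond:iden}), and this is the single place where the argument is not a literal transcription of \cref{claim1}.
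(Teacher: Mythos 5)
Your proposal follows the same route the paper intends (the paper omits the proof, stating it is ``nearly identical'' to that of \cref{infthm:Suff}, and your argument is exactly that transcription): sufficiency via the reconstruction $\Phi$ of the products $p_t\,\cD_{X,Y(t)}$ and elimination over the consistent candidate set as in \cref{claim:sufficiency_condition1}, necessity via the two-study construction of \cref{claim1} with a shared treated arm. Your sufficiency argument is correct as written. More importantly, the obstacle you flag in the necessity direction is genuine and not just a presentational wrinkle: with Item~1 of \cref{cond:iden:HTE} read literally as $\cP=\cQ$, the failure of the condition only hands you $\cP\neq\cQ$, and two joints with identical conditional-mean functions $x\mapsto\Ex[y\mid X{=}x]$ (for instance, differing only in conditional variance) yield two observational studies with identical censored distributions \emph{and} identical HTE functions, so the construction does not witness non-identifiability; under the literal reading the condition is sufficient but not necessary. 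Your proposed reading of Item~1 as equality of the conditional-mean functions on $\supp(\cP_X)$ is precisely the weakening under which both directions close: sufficiency still goes through because the candidate set then pins down $\mu_t(\cdot)$ rather than the full joint, which is all $\tau_\cD(\cdot)$ depends on, and necessity now supplies a covariate $x$ with $\mu_0^{(1)}(x)\neq\mu_0^{(2)}(x)$, hence $\tau_{\cD^{(1)}}(\cdot)\neq\tau_{\cD^{(2)}}(\cdot)$ while $\cC_{\cD^{(1)}}=\cC_{\cD^{(2)}}$. In short: same approach as the paper, correct modulo this reinterpretation, and the reinterpretation is one the paper's stated condition actually needs for the characterization to be tight.
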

    
    \vspace{-5mm}
    
\section{Estimation of Nuisance Parameters from Finite Samples}
\label{appendix:probconcepts}
    \label{sec:nuisanceParameters}
    As it is standard in Causal Inference \cite{foster2023orthognalSL}, our estimators for treatment effects use certain \textit{nuisance parameters}, such as the generalized propensity scores and the outcome distributions, and then use these nuisance parameters to deduce the treatment effects of interest. 
    In this section, we prove that estimators of these nuisance parameters can be implemented under standard assumptions.
    In this section, we implement the following two nuisance parameter oracles.
    \begin{restatable}    
        [Propensity Score Estimation Oracle]{definition}{defPropensityScoreEstimationOracle}
        \label{def:propOracle}
        The propensity score estimation oracle for class $\hyE\subseteq\inbrace{e\mid e\colon \R^d\to [0,1]}$ is a primitive that, given accuracy parameter $\eps>0$, a confidence parameter $\delta>0$, and $N_P(\eps,\delta)$ independent samples from the censored distribution $\cC_\cD$ for some $\cD$ realizable with respect to $\hyE$, outputs an estimate of the propensity score $\wh{e}\colon \R^d \to [0,1]$, such that, with probability $1-\delta$,
                \[
                    \Ex\nolimits_{x\sim \cD_X}{
                        \abs{
                            \Pr\nolimits_\cD\insquare{T{=}1\mid X{=}x}
                            -
                            \wh{e}(x)
                        }
                    }
                    \leq \eps\,.
                \] 
    \end{restatable}
    \vspace{-5mm}
    \begin{restatable}    
        [$L_1$-Approximation Oracle]{definition}{defDensityEstimationOracle}
        \label{def:densityOracle}
        The $L_1$-approximation oracle for class $\hyP\times\hyD$ is a primitive that, given accuracy parameter $\eps>0$, a confidence parameter $\delta>0$, and $N_D(\eps,\delta)$ independent samples from the censored distribution $\cC_\cD$, 
            outputs generalized propensity scores $p,q \colon \R^d\times \R\to[0,1]$ and 
            distributions $\cP,\cQ$  
        such that, with probability $1-\delta$,
        \begin{align*}
            \norm{p_1\cD_{X,Y(0)} - p\cP}_1\leq \eps\,,\quad
            \norm{p_0\cD_{X,Y(1)} - q\cQ}_1\leq \eps\,,
        \end{align*}
        where we define the $L_1$-norm between $\alpha(x,y)$ and $\beta(x,y)$ as $\|\alpha - \beta\|_1 \coloneqq \iint \bigl|\alpha(x,y) - \beta(x,y)\bigr|\d x\d y$.
    \end{restatable}
    A few remarks are in order.
    First, as a sanity check, one can verify that all the quantities being estimated by the above oracles are identifiable from the censored distribution $\cC_\cD$.
    Second, while in the definition of the oracles, we measure the error in the $L_1$-norm one can change to the $L_2$-norm without affecting the results.
        The above strategy, based on estimating nuisance parameters, may not always be optimal. 
        For instance, for specific concept classes $\hyP$ and $\hyD$, one may be able to learn $\tau$ without estimating the nuisance parameters, resulting in significantly better sample complexity.
        We focus on the above strategy because it is simple and already widely used \cite{foster2023orthognalSL}, but obtaining better sample complexities for specific examples is an important direction for future work.
    \subsection{Implementing the Propensity Score Oracle}
    In this section, we construct the propensity score estimation oracle (\cref{def:propOracle}). 
    The task of estimating propensity scores turns out to be equivalent to the problem of learning probabilistic concepts (henceforth, $p$-concepts) introduced by \citet{kearns1994pconcept}, and we use the results on learning $p$-concepts by \citet*{63453,kearns1994pconcept,alon1997scale} to implement the propensity score oracle and bound its sample complexity.
    The following condition characterizes when $p$-concepts are learnable and hence will also characterize when propensity scores can be estimated.
    \begin{definition}[Fat-shattering dimension]\label{def:fat-shattering-dimension}
        Let $\hyE \subseteq \{e \colon \R^d \to [0,1]\}$ be a hypothesis class and let $S = \{s_1, \ldots, s_m\}$ be a set of points in $\R^d$.  
        We say $S$ is $\gamma$-shattered by $\hyE$ if there exists a threshold vector $t = \{t_1, \ldots, t_m\}\in\R^m$ such that for any binary vector $b = \{b_1, \ldots, b_m\} \in \{\pm1\}^m$, there exists a function $e_b \in \hyE$ satisfying  
        \[
        b_i(e_b(s_i) - t_i) \geq \gamma \quad \text{for all } i \in [m].
        \]  
        The fat-shattering dimension of $\hyE$ at scale $\gamma$, denoted $\fatShatDim(\hyE)$, is the maximum cardinality of a set $S$ that is $\gamma$-shattered by $\hyE$.
        \end{definition}
    If the fat-shattering dimension of $\hyE$ is finite, then we get the following result.
    \begin{theorem}[Propensity score estimation]\label{thm:prop-scores-estimation} 
        Let $\hyE$ be a concept class of propensity scores with fat-shattering dimension $\fatShatDim(\hyE)<\infty$ at all scales $\gamma>0$.
        Then, there exists a propensity score estimation oracle for $\hyE$ with sample complexity (for any $\eps,\delta\in (0,1)$)
        \[
            N_P(\eps,\delta)=O\inparen{ \frac{1}{\eps^2} \cdot \inparen{ \mathrm{fat}_{\sfrac{\eps}{256}}(\hyE) \log\inparen{\nfrac{1}{\eps}} + \log\inparen{\nfrac{1}{\delta}}}}\,.
        \]  
        \end{theorem}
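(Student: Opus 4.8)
The plan is to recognize that estimating the propensity score $e(x)=\Pr_\cD[T{=}1\mid X{=}x]$ from the censored samples is \emph{exactly} the problem of learning a probabilistic concept in the sense of \citet{kearns1994pconcept}: discarding the outcome coordinate, each censored sample yields a pair $(x_i,b_i)$ where $x_i\sim\cD_X$ and $b_i=\mathds{1}\{T_i{=}1\}$ is a Bernoulli draw with bias $e(x_i)$, and by realizability $e\in\hyE$. First I would set up the quadratic-loss functional $R(h)\coloneqq\E_{(x,b)}[(h(x)-b)^2]$ and record the decomposition $R(h)=\E_{x\sim\cD_X}[(h(x)-e(x))^2]+\E_{x\sim\cD_X}[e(x)(1-e(x))]$; the second term is the irreducible Bernoulli variance and is independent of $h$, so the excess risk $R(h)-R(e)$ equals precisely the squared $L_2(\cD_X)$ error $\E_{x\sim\cD_X}[(h(x)-e(x))^2]$. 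Hence the empirical quadratic-loss minimizer $\wh e\coloneqq\argmin_{h\in\hyE}\tfrac1n\sum_i(h(x_i)-b_i)^2$ is a good estimate of $e$ as soon as its excess risk is small.

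Next I would control the excess risk uniformly over $\hyE$ via scale-sensitive uniform convergence. Two ingredients are needed: (i) a covering-number bound, namely that by \citet{alon1997scale} a class of $[0,1]$-valued functions with finite fat-shattering dimension admits an $L_1(\cD_X)$-cover of size $N=(1/\gamma)^{O(\mathrm{fat}_{\Theta(\gamma)}(\hyE))}$ at scale $\gamma$, so $\log N=O(\mathrm{fat}_{\Theta(\gamma)}(\hyE)\log(1/\gamma))$; and (ii) the fact that squared loss is \emph{self-bounding} in the realizable regime, i.e. the per-example difference $(h(x)-b)^2-(e(x)-b)^2$ has variance bounded by a constant times its mean $R(h)-R(e)$. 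Feeding (ii) into a Bernstein/localization argument over the cover from (i) upgrades the naive $1/\alpha^2$ rate to the fast $1/\alpha$ rate for driving the excess risk below $\alpha$; concretely, with $n=O\!\big(\tfrac1\alpha(\log N+\log\tfrac1\delta)\big)$ samples one obtains $R(\wh e)-R(e)\le\alpha$ with probability $1-\delta$.

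Finally I would set $\alpha=\eps^2$, so that by Jensen's inequality $\E_{x\sim\cD_X}|\wh e(x)-e(x)|\le\sqrt{\E_{x\sim\cD_X}[(\wh e(x)-e(x))^2]}=\sqrt{R(\wh e)-R(e)}\le\eps$, which is exactly the guarantee demanded by \cref{def:propOracle}. Taking the cover scale $\gamma=\Theta(\eps)$ (the constant $\tfrac{1}{256}$ in the statement pins down the hidden constant) makes $\log N=O(\mathrm{fat}_{\eps/256}(\hyE)\log(1/\eps))$, and substituting $\alpha=\eps^2$ into the sample bound yields $n=O\!\big(\tfrac1{\eps^2}(\mathrm{fat}_{\eps/256}(\hyE)\log(1/\eps)+\log(1/\delta))\big)$, as claimed.

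The step I expect to be the main obstacle is securing the fast $1/\eps^2$ (rather than $1/\eps^4$) dependence: a black-box application of scale-sensitive uniform convergence to the loss class only certifies additive error $\alpha$ in $R$ using $1/\alpha^2$ samples, which after the substitution $\alpha=\eps^2$ would cost $1/\eps^4$. Extracting the extra factor requires the variance-to-mean (Bernstein) bound for squared loss in the realizable case together with a localized cover, and one must verify that the fat-shattering scale at which the cover is taken stays $\Theta(\eps)$ throughout the localization — this bookkeeping of scales is the delicate part of the argument.
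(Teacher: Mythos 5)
Your first step---discarding the outcome coordinate and observing that estimating $e(x)=\Pr[T{=}1\mid X{=}x]$ from pairs $(x_i,\mathds{1}\{T_i{=}1\})$ with $x_i\sim\cD_X$ is exactly the Kearns--Schapire $p$-concept learning problem---is precisely the paper's proof: after making that identification the paper simply invokes Theorem~4.2 of \cite{alon1997scale} (restated as \cref{thm:concept-estimation}) as a black box and stops. Everything after that in your write-up is an attempt to re-derive that theorem from scratch via least squares, covers, and localization, and this is where a gap appears.

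The gap is the scale of the cover, i.e., exactly the step you flag as ``delicate'' but do not carry out. Your route to the $L_1(\cD_X)$ guarantee of \cref{def:propOracle} goes through excess quadratic risk $R(\wh e)-R(e)\le\alpha=\eps^2$ followed by Jensen, and your estimator minimizes empirical risk over (or is compared against) a finite cover of $\hyE$. For two hypotheses one only has $\abs{R(h)-R(h')}=\abs{\E[(h-h')(h+h'-2b)]}\le 2\norm{h-h'}_{L_1(\cD_X)}$, so to keep the discretization contribution to the excess risk below $\eps^2$ you must take the $L_1$-cover at scale $\gamma=O(\eps^2)$, which turns the covering-number bound into $\log N=O\inparen{\mathrm{fat}_{\Theta(\eps^2)}(\hyE)\log(\nfrac{1}{\eps})}$ rather than $O\inparen{\mathrm{fat}_{\sfrac{\eps}{256}}(\hyE)\log(\nfrac{1}{\eps})}$. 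Since the fat-shattering dimension is non-increasing in its scale, this is a strictly weaker conclusion than the one claimed (for classes with $\mathrm{fat}_\gamma\approx\gamma^{-1}$ it costs an extra factor of $\nfrac{1}{\eps}$). The Bernstein step itself is sound---$\var\insquare{(h(x)-b)^2-(e(x)-b)^2}\le 4\inparen{R(h)-R(e)}$ does hold and correctly buys the $\nfrac{1}{\eps^2}$ rather than $\nfrac{1}{\eps^4}$ sample dependence once $\log N$ is fixed---but keeping the cover scale at $\Theta(\eps)$ requires a genuine multi-scale peeling argument (e.g., covering the shells $\norm{h-e}_{L_2(\cD_X)}\in[2^{j}\eps,2^{j+1}\eps]$ in $L_2$ at scale proportional to the shell radius, exploiting $\abs{R(h)-R(h')}\le\norm{h-h'}_{L_2}\cdot\inparen{\norm{h-e}_{L_2}+\norm{h'-e}_{L_2}}$ to recover the missing factor of $\eps$), none of which appears in your sketch. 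As written, your argument proves the theorem with $\mathrm{fat}_{\Theta(\eps^2)}$ in place of $\mathrm{fat}_{\sfrac{\eps}{256}}$; to obtain the stated scale you should either supply that peeling argument or do what the paper does and cite \cite{alon1997scale} directly.
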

    \begin{proof}[Proof of \cref{thm:prop-scores-estimation}]
        First, we introduce the probabilistic concept learning framework and argue that propensity score estimation is a probabilistic concept learning problem.
        Then the result follows by the main result of \citet{kearns1994pconcept}. Let us define the problem of learning probabilistic concepts.
        Consider a concept class $\hyH\subseteq\{h\colon\R^d\to[0,1]\}$ and a function $h\in\hyH$ which we call the $p$-concept.
        Then we get a sample $X\in\R^d$ from a distribution $\cF$, \ie{}, $X\sim\cF$ and assign $X$ label $1$ with probability $h(X)$, otherwise, we give label $0$.
        Our goal is to use the samples $X$ along with their $\{0,1\}$ labels to estimate $h$.
        That is, we want an algorithm to find a concept $\hat{h}\colon\R^d\to[0,1]$ such that $\Ex_{x\sim\cF}\abs{\hat{h}(x)-h(x)}\leq\eps$.
        Notice that this is the exact problem of estimating the propensity score $e(x)=\Pr[T{=}1\mid X{=}x]$ from the samples $X$ and the labels $T\in\{0,1\}$.
        In terms of sample complexity for learning probabilistic concepts, the work of \cite{kearns1994pconcept} gave an upper bound based on the Pollard dimension (a.k.a.\ pseudo-dimension) and a lower bound based on the (scaled) fat-shattering dimension, \ie{}, a scaled version of the pseudo-dimension. Next, \citet{alon1997scale} provided a tight dimension for $p$-concept learnability based on the fat-shattering dimension, establishing an equivalence between this dimension and uniform convergence. \footnote{{The sample complexity in \cref{thm:concept-estimation} follows from Theorem 3.6 of \cite{alon1997scale} (see also discussion below Theorem 3.6) along with the fact that $\eps$-uniform convergence implies learnability in the $p$-concept model with accuracy $3\eps$ \cite[Lemma 4.3]{alon1997scale}. The constant $256$ is not optimized and can probably be improved.}}
        \begin{theorem} 
        [Theorem 4.2 in \cite{alon1997scale}]
        \label{thm:concept-estimation}
            Fix any $\eps,\delta\in (0,1)$.
            Consider the function class $\hyH = \{ h \colon \R^{\ell}\to[0,1]\}$ with finite fat-shattering dimension $\fatShatDim(\hyE)<\infty$ for all scales $\gamma>0$.
            Let $\cF$ be a probability distribution over $\R^{\ell}$.
            Then, there exists an algorithm that, for 
            \[
                m=O\inparen{\frac{1}{\eps^2}\cdot \inparen{\mathrm{fat}_{\sfrac{\eps}{256}}(\hyH) \log(\nfrac{1}{\eps})+\log(\nfrac{1}{\delta})}}\,,
            \]
            given a sample set $S=\inbrace{(X_i, Y_i)}_{i=1}^m$ of \iid{} samples $X_i\sim\cF$ and $Y_i\sim\Be(h(X_i))$, returns a function $\hat{h}\in\hyH$ such that with probability $1-\delta$ it holds
            $
                \Ex\nolimits_{x\sim\cF}\sabs{h(x)-\hat{h}(x)} < \eps
                \,.
            $\qedhere
        \end{theorem} 
    \end{proof}

    \subsection{Implementing the $L_1$-Approximation Oracle}
    In this section, we construct the $L_1$-approximation oracle (\cref{def:densityOracle}).
    We require some standard assumptions on the classes $\hyP$ and $\hyD$ to implement the $L_1$-approximation oracle.
    Concretely, we require (1) a bound on $\hyD$'s covering number with respect to the TV distance (\cref{def:covers}), (2) a bound on the smoothness of the distributions in $\hyD$ with respect to some measure $\mu$ (\cref{def:smooth-distribution}), and (3) a bound on the fat-shattering dimension of $\hyP$ (\cref{def:fat-shattering-dimension}).
    These assumptions enable us to ``cover'' the class $\hyP\times\hyD$ in $L_1$-norm.
    \mbox{We begin by formally defining a cover.}
    \begin{definition}
    [Covers and Covering Numbers]
    \label{def:covers}
        Consider the concept class $\hyH \subseteq \{ h \colon \R^{\ell}\to[0,1]\}$ with a metric $d(\cdot,\cdot)$.
        Then the function class $\hyH_{\eps}$ is a $\eps$-cover of $\hyH$, if, for every function $h\in\hyH$, there is a function $\bar{h}\in\hyH_{\eps}$ such that $d(h, \bar{h})\leq\eps$.
        The size of the smallest cover $\hyH_{\eps}$ for $\hyH$ is called the covering number of $\hyH$ and is denoted by $N(\hyH,d,\eps)$.
    \end{definition}
    Having a cover of the class $\hyP\times \hyD$ is useful because, roughly speaking, given a cover, standard results in statistical estimation enable us to identify the element of the cover closest to the true concept with finite samples.
    \begin{theorem}
        [\citet{yatracos1985rates}]
        \label{thm:distrLearningFiniteSet}
            There is a deterministic algorithm that, 
            given candidate distributions $f_1,f_2,\ldots,f_M$, 
            a parameter $\zeta>0$, and 
            $\lceil\log(3M^2/\delta)/2\zeta^2\rceil$ samples from an unknown distribution $g$, 
            outputs index $j\in[M]$ such that
            $
                \norm{f_j - g}_1 \leq 
                    3\min_{i\in[M]}\norm{f_i-g}_1 + 4\zeta
                \,,
            $
            with probability at least $1-\nfrac{\delta}{3}$.
        \end{theorem}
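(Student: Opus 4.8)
The plan is to establish the classical \emph{minimum-distance} (Scheffé-tournament) estimator built on the \emph{Yatracos class} of the candidates. For each pair $i\neq j$ in $[M]$ define the Yatracos set $A_{ij}\coloneqq\inbrace{x\colon f_i(x)>f_j(x)}$ and let $\cA\coloneqq\inbrace{A_{ij}\colon i\neq j}$; note $\sabs{\cA}\leq \binom{M}{2}\leq M^2/2$, since $A_{ij}$ and $A_{ji}$ are complementary up to the tie set (which carries no mass). Writing $\mu_f(A)\coloneqq\int_A f$ for the mass a density $f$ places on $A$, and $\wh{\mu}_n$ for the empirical measure of the $n$ samples drawn from $g$, the estimator outputs $\wh{\jmath}\coloneqq\argmin_{i\in[M]}\Delta_i$, where $\Delta_i\coloneqq\sup_{A\in\cA}\sabs{\mu_{f_i}(A)-\wh{\mu}_n(A)}$. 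Since $\Delta_i$ uses the absolute value, it is insensitive to replacing a set by its complement, so I may freely use either direction $A_{ij}$ or $A_{ji}$ below. This is a deterministic function of the samples and the candidate list, as required.

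The only probabilistic ingredient is a single uniform-convergence bound over the finite class $\cA$. First I would apply Hoeffding's inequality to a fixed $A\in\cA$ to obtain $\Pr\insquare{\sabs{\wh{\mu}_n(A)-\mu_g(A)}>\zeta}\leq 2e^{-2n\zeta^2}$, and then union bound over the at most $M^2/2$ sets in $\cA$. With $n=\lceil\log(3M^2/\delta)/2\zeta^2\rceil$ we have $e^{-2n\zeta^2}\leq \delta/(3M^2)$, so the total failure probability is at most $2\cdot(M^2/2)\cdot\delta/(3M^2)=\delta/3$. I then condition on the good event $\evE$ that $\sup_{A\in\cA}\sabs{\wh{\mu}_n(A)-\mu_g(A)}\leq \zeta$.

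The core is a purely deterministic argument on $\evE$. The key identity is the Scheffé fact that for any two densities $\int_{A_{ij}}(f_i-f_j)=\tfrac12\norm{f_i-f_j}_1$ (the positive part of $f_i-f_j$ is supported exactly on $A_{ij}$ and $\int(f_i-f_j)=0$), equivalently $\tfrac12\norm{f-h}_1=\sup_{A}\sabs{\mu_f(A)-\mu_h(A)}$ over all measurable $A$. Let $i^\star\coloneqq\argmin_i\norm{f_i-g}_1$ and $\OPT\coloneqq\norm{f_{i^\star}-g}_1$. The variational characterization together with $\evE$ gives $\Delta_{i^\star}\leq \tfrac12\OPT+\zeta$, and by optimality of $\wh\jmath$ we get $\Delta_{\wh\jmath}\leq \Delta_{i^\star}\leq \tfrac12\OPT+\zeta$. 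Taking $A=A_{\wh\jmath\, i^\star}\in\cA$ and decomposing
\begin{align*}
\mu_{f_{\wh\jmath}}(A)-\mu_{f_{i^\star}}(A)
&=\insquare{\mu_{f_{\wh\jmath}}(A)-\wh\mu_n(A)}
+\insquare{\wh\mu_n(A)-\mu_g(A)}\\
&\quad+\insquare{\mu_g(A)-\mu_{f_{i^\star}}(A)}\,,
\end{align*}
I would bound the three brackets by $\Delta_{\wh\jmath}\leq\tfrac12\OPT+\zeta$, by $\zeta$, and by $\tfrac12\OPT$ respectively, so that $\norm{f_{\wh\jmath}-f_{i^\star}}_1=2\inparen{\mu_{f_{\wh\jmath}}(A)-\mu_{f_{i^\star}}(A)}\leq 2\OPT+4\zeta$. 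A final triangle inequality $\norm{f_{\wh\jmath}-g}_1\leq \norm{f_{\wh\jmath}-f_{i^\star}}_1+\OPT\leq 3\OPT+4\zeta$ closes the proof.

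The main obstacle is not conceptual but lies in the bookkeeping of the constants: one must verify that restricting the supremum to the Yatracos class $\cA$ (rather than all measurable sets) still recovers $\norm{f_{\wh\jmath}-f_{i^\star}}_1$ \emph{exactly} through the single set $A_{\wh\jmath\, i^\star}$, which is precisely what the Scheffé identity guarantees, and that the factor $3$ and the additive $4\zeta$ fall out of the three-term decomposition above. Care is likewise needed in the union-bound counting to match the stated sample size $\lceil\log(3M^2/\delta)/2\zeta^2\rceil$ and confidence $1-\delta/3$ exactly -- in particular, using $\sabs{\cA}\leq M^2/2$ to absorb the factor of $2$ from Hoeffding's two-sided tail.
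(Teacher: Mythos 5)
Your proposal is correct, and it reconstructs in full the standard Scheffé-tournament argument behind a result that the paper itself imports as a black box: \cref{thm:distrLearningFiniteSet} is stated with a citation to Yatracos (1985) and no proof is given anywhere in the paper, so there is no internal argument to compare against. Your constants check out exactly: Hoeffding plus a union bound over at most $M^2/2$ Yatracos sets gives failure probability at most $(M^2/2)\cdot 2e^{-2n\zeta^2}\leq \delta/3$ for $n=\lceil\log(3M^2/\delta)/2\zeta^2\rceil$, and the three-term decomposition yields $\tfrac12\norm{f_{\wh\jmath}-f_{i^\star}}_1\leq(\tfrac12\OPT+\zeta)+\zeta+\tfrac12\OPT$, hence $\norm{f_{\wh\jmath}-g}_1\leq 3\OPT+4\zeta$ after the final triangle inequality. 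The one place where your justification is slightly imprecise is the claim that $A_{ij}$ and $A_{ji}$ are ``complementary up to the tie set (which carries no mass)'': the tie set $\inbrace{f_i=f_j}$ can certainly carry positive mass, so $A_{ji}\neq A_{ij}^c$ in general and the complementation-insensitivity of $\Delta_i$ does not literally let you swap the two. This is harmless, however, because the symmetric form of the Scheffé identity, $\int_{A_{ij}}(f_i-f_j)=\int_{A_{ji}}(f_j-f_i)=\tfrac12\norm{f_i-f_j}_1$, lets you run the final decomposition with whichever of $A_{\wh\jmath\,i^\star}$ or $A_{i^\star\wh\jmath}$ lies in the restricted class $\inbrace{A_{ij}\colon i<j}$, so the count of $\binom{M}{2}$ sets and all constants survive unchanged.
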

    Note that the above theorem holds for covers over \textit{distributions}.
    However, elements of $\hyP\times \hyD$ may not be distributions.
    Nevertheless, the above theorem is still sufficient for us because elements of $\hyP\times \hyD$ that interest us are distributions up to a {normalizing} factor of $\Pr[T{=}1]$ or $\Pr[T{=}0]$ (the choice depends on the specific element).
    That is, the true distribution of samples $(X,Y(t),t)$ is an element in the class $\hyP\times\hyD$, normalized by $\Pr[T{=}t]$, for $t\in\{0,1\}$.
    So the elements that we are interested in, should also satisfy this condition, and thus, define a probability distribution class.

    In the remainder of this section, we present the assumptions on $\hyP$ and $\hyD$ and then use these assumptions to bound the size of the resulting cover.

    \paragraph{Assumption 1 (Covering Number of $\hyD$).}
    We directly impose such an assumption over $\hyD$.
    For a hypothesis class, it is well known that the notion of fat-shattering defined in \cref{def:fat-shattering-dimension} implies the existence of a cover over the class in the following sense.
    \begin{lemma}
    [\citet{rudelson2006combinatorics}]%
    \label{lem:hypothesis-covering-bound}
        Fix $\eps>0$, $R>0$ and let $\mu$ be a probability density function over $\R^{\ell}$.
        Consider a concept class $\hyP\subseteq \{p\colon \R^{\ell}\to[0,1]\}$ with finite fat-shattering dimension such that $\Ex_{x\sim\mu}[|p(x)|^4]\leq R$ for all $p\in\hyP$.
        Then it holds
        \[
            \log(N(\hyP, L_1(\mu), \eps)) \leq 4C \fatShatDim(\hyP) \log(\frac{R}{c\eps})
            \,,
        \]
        where $\gamma=c\eps$, $C, c$ are universal constants, and the metric $L_1(\mu)$ is $\Ex_{x\sim\mu}\abs{p(x)-q(x)}$ for any $p,q\in\hyP$.
    \end{lemma}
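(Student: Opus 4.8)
The plan is to treat this as a classical metric-entropy estimate for $[0,1]$-valued function classes and to reduce the continuous $L_1(\mu)$ covering number to a finite combinatorial quantity that the fat-shattering dimension controls. The first observation is that since every $p\in\hyP$ maps into $[0,1]$, the fourth-moment hypothesis $\Ex_{x\sim\mu}[|p(x)|^4]\le R$ holds automatically with $R\le 1$; thus $R$ enters only through the logarithmic term and imposes no genuine constraint. It is retained here purely for compatibility with the more general statement of \citet{rudelson2006combinatorics}, whose functions need not be bounded in $[0,1]$.

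First I would quantize the range: replace each $p\in\hyP$ by its rounding to the grid $\{0,\eps/4,\eps/2,\ldots,1\}$, which perturbs any $L_1(\mu)$ distance by at most $\eps/4$ and reduces the task to covering, at scale $\eps/4$, a class of functions valued in $O(1/\eps)$ levels. Next, by the standard symmetrization (double-sampling) argument, an $\eps$-separated set in $L_1(\mu)$ of size $N$ forces, after drawing a sufficiently large \iid{} sample $x_1,\ldots,x_n\sim\mu$, a family of $N$ functions that remain pairwise separated under the empirical measure $\hat\mu_n$; hence $\log N(\hyP, L_1(\mu), \eps)$ is controlled, up to constants, by $\sup_n \log N(\hyP, L_1(\hat\mu_n), \eps/2)$. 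This reduces everything to bounding the empirical $\eps$-packing number on a finite point set.

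The core step is the combinatorial bound: on $n$ points, the number of distinct $\eps$-quantized behaviors realizable by $\hyP$ is governed by the fat-shattering dimension at a scale proportional to $\eps$. This is the real-valued analogue of the Sauer--Shelah lemma, and the sharp form -- giving $\log N(\hyP, L_1(\hat\mu_n),\eps)\le C\,\fatShatDim(\hyP)\log(1/(c\eps))$ with only a \emph{single} logarithmic factor and $\gamma=c\eps$ -- is exactly the content of \citet{rudelson2006combinatorics}. Assembling the quantization loss, the symmetrization reduction, and this combinatorial estimate, and absorbing the bound $R\le 1$ into the logarithm, yields the claimed $4C\,\fatShatDim(\hyP)\log(R/(c\eps))$.

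The main obstacle is precisely this combinatorial step with the optimal logarithmic dependence. The earlier bound of \citet{alon1997scale} carries an extra $\log(1/\eps)$ factor (producing a $\log^2$ term), and removing it to reach the single-log form requires the refined chaining and geometric argument of Rudelson--Vershynin rather than elementary counting. Since the lemma is stated in the single-log form, I would invoke their theorem as a black box at this step; the quantization and symmetrization reductions are routine and do not affect the leading $\fatShatDim(\hyP)$ term.
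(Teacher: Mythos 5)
The paper gives no proof of this lemma at all: it is stated as an imported result of \citet{rudelson2006combinatorics} and used as a black box, and your proposal ultimately does the same, deferring the decisive single-logarithm combinatorial bound to that reference. Your surrounding scaffolding (range quantization, symmetrization to empirical measures, the observation that boundedness in $[0,1]$ forces $R\le 1$, and the remark that \citet{alon1997scale} would only yield a $\log^2$ factor) is accurate and standard, so the proposal is correct and takes essentially the same approach as the paper.
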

    Observe that this cover is with respect to the expected $L_1$-norm given a probability density function $\mu$.
    Thus, in our case, such a cover is not directly useful.
    Ideally, we would like this cover to be with respect to the distribution in $\hyD$ which is the underlying distribution for our problem $\cC_{\cD}$.
    However, we cannot know this distribution and cannot estimate from samples as argued before.
    
    \paragraph{Assumption 2 (Smoothness of $\hyD$).} This is where our next assumption comes in: it will make the distributions in the class ``comparable'' to another measure $\mu$, that we have access to.
    \begin{definition}
    [Smooth Distribution]
    \label{def:smooth-distribution}
        Consider any probability density function $\mu$ over $\R^{\ell}$.
        We say that a probability density function $p$ over $\R^{\ell}$ is $\sigma$-smooth with respect to $\mu$ if $p(x) \leq \inparen{\nfrac{1}{\sigma}} \mu(x)$ for all $x\in\R^{\ell}$.
    \end{definition}

    \paragraph{Assumption 3 (Fat-shattering dimension of $\hyP$).}
        Our final assumption is a bound on the fat-shattering dimension of $\hyP$. 
        We have already discussed the fat-shattering dimension in the previous section (see \cref{def:fat-shattering-dimension}). 
        It is useful for us because it turns out that a bound on the fat-shattering dimension also implies a bound on the covering number in $L_1$ norm.
        
    \paragraph{$L_1$-approximation oracle.}
    We are now ready to construct the cover of $\hyP\times\hyD$, which immediately gives us the $L_1$-approximation oracle.
    \begin{theorem}[Sample Complexity for $L_1$-approximation]
    \label{thm:oracleConstruction}
        Fix any $\eps\in(0,1)$, $\sigma\in(0,1], \eta\in(0,\nfrac{1}{2}]$, with $\eta>\eps$, and a distribution $\mu$ over $\R^d\times\R$.
        Let the concept classes $\hyP$ and $\hyD$ satisfy:
        \begin{enumerate}[itemsep=0pt]
            \item Each $\cP\in \hyD$ is $\sigma$-smooth with respect to the distribution $\mu$.
            \item 
            $\hyP$ has a finite fat-shattering dimension $\mathrm{fat}_{(\eta\sigma\eps)/16}(\hyP)<\infty$ at scale $\nfrac{\eta\sigma\eps}{16}$; %
            \item $\hyD$ has a finite covering number with respect to TV distance $N=N(\hyD,d_{\mathsf{TV}},\nfrac{\eta\eps}{32})<\infty$.
        \end{enumerate}
        Consider any  $\cD$ is realizable by $(\hyP,\hyD)$ and satisfying $\Pr[T{=}1]\in(\eta, 1-\eta)$.
        Then, there exists an algorithm that implements an $L_1$-approximation oracle of accuracy $\eps$ and confidence parameter $\delta$ for $\hyP\times\hyD$ using $N_D(\eps,\delta)$ samples from $\cC_{\cD}$ where
        \[
            N_D(\eps,\delta) = O\inparen{\frac{1}{\eps^2}\inparen{\mathrm{fat}_{(\eta\sigma\eps)/16}(\hyP)\cdot \log(\nfrac{1}{\eta\sigma\eps})+\log(\nfrac{N}{\delta})}}\,.
        \] 
    \end{theorem}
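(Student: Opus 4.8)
The plan is to construct the $L_1$-approximation oracle by building an explicit $O(\eps)$-cover of the product class $\hyP \times \hyD$ (with respect to the relevant $L_1$-norm) and then applying the Scheff\'e/Yatracos tournament (\cref{thm:distrLearningFiniteSet}) to select, from this finite cover, an element close to the true normalized density. First I would recall that the censored samples with $T{=}t$ are distributed according to the density $p_t(x,y)\cD_{X,Y(t)}(x,y)/\Pr[T{=}t]$, which is a genuine probability distribution; since the requirement $\Pr[T{=}1]\in(\eta,1-\eta)$ gives two-sided control on both normalizing factors $\Pr[T{=}0]$ and $\Pr[T{=}1]$, an $L_1$-error of $O(\eta\eps)$ on the normalized densities translates into an $L_1$-error of $O(\eps)$ on the \emph{products} $p_t\cD_{X,Y(t)}$ that the oracle is required to approximate.

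The core of the argument is covering $\hyP \times \hyD$. I would proceed in two stages. First, invoke Assumption~3 together with \cref{lem:hypothesis-covering-bound}: because each $\cP \in \hyD$ is $\sigma$-smooth with respect to $\mu$ (Assumption~2), the fourth-moment boundedness hypothesis of \cref{lem:hypothesis-covering-bound} is automatically satisfied (indeed $p(\cdot)\in[0,1]$), and a bound on $\mathrm{fat}_{\Theta(\eta\sigma\eps)}(\hyP)$ yields an $L_1(\mu)$-cover $C_P$ of $\hyP$ of size $\exp\!\big(O(\mathrm{fat}_{\Theta(\eta\sigma\eps)}(\hyP)\cdot\log(\nfrac{1}{\eta\sigma\eps}))\big)$. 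Second, use Assumption~1 to take a TV-cover $C_D$ of $\hyD$ of size $N=N(\hyD,d_{\mathsf{TV}},\Theta(\eta\eps))$. The key gluing step is that for $p,\bar p \in \hyP$ and $\cP,\bar\cP\in\hyD$,
\[
    \norm{p\cP - \bar p\,\bar\cP}_1
    \;\le\; \norm{(p-\bar p)\cP}_1 + \norm{\bar p(\cP-\bar\cP)}_1
    \;\le\; \iint \abs{p-\bar p}\,\cP \,\d x\,\d y + 2\,d_{\mathsf{TV}}(\cP,\bar\cP)\,,
\]
and smoothness converts the first term from an integral against $\cP$ into an integral against $\mu$: since $\cP(\cdot)\le(\nfrac{1}{\sigma})\mu(\cdot)$, we get $\iint\abs{p-\bar p}\cP\le(\nfrac{1}{\sigma})\norm{p-\bar p}_{L_1(\mu)}$. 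Thus choosing the $L_1(\mu)$-radius of $C_P$ to be $\Theta(\sigma\eta\eps)$ and the TV-radius of $C_D$ to be $\Theta(\eta\eps)$ makes $C_P\times C_D$ an $O(\eta\eps)$-cover of $\hyP\times\hyD$ in the product $L_1$-norm, of total size $\abs{C_P}\cdot N$.

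Having a finite cover whose elements (after normalizing) are probability distributions, I would then run the Yatracos tournament of \cref{thm:distrLearningFiniteSet} separately for each $t\in\zo$: feed it the $M=\abs{C_P}\cdot N$ candidate normalized densities and the censored samples with treatment value $t$, with tolerance $\zeta=\Theta(\eps)$, to obtain an index whose selected candidate is within $3\cdot O(\eta\eps)+O(\eps)=O(\eps)$ of the true normalized density in $L_1$. Un-normalizing (multiplying by the empirically estimable $\Pr[T{=}t]$, which is concentrated by Hoeffding since there is no censoring on $T$) yields the products $p\cP$ and $q\cQ$ with the claimed guarantee. The sample-complexity bookkeeping then gives $N_D(\eps,\delta)=\lceil\log(3M^2/\delta)/2\zeta^2\rceil = O\!\big(\tfrac{1}{\eps^2}(\mathrm{fat}_{\Theta(\eta\sigma\eps)}(\hyP)\log(\nfrac{1}{\eta\sigma\eps})+\log(\nfrac{N}{\delta}))\big)$, matching the statement. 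The main obstacle I anticipate is the bookkeeping around normalization: the tournament operates on probability distributions, but elements of $\hyP\times\hyD$ are only sub-normalized, so I must carefully track how the $\Pr[T{=}t]$ factors (bounded away from $0$ and $1$ by $\eta$) propagate through the error terms and ensure the covers are constructed at the right $\eta$-dependent radii so that the final product-$L_1$ error is $O(\eps)$ rather than $O(\eps/\eta)$; getting these constants to land correctly, together with verifying the moment hypothesis of \cref{lem:hypothesis-covering-bound} via smoothness, is where the care is required.
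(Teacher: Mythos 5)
Your proposal is correct and follows essentially the same route as the paper's proof: an $L_1(\mu)$-cover of $\hyP$ from the fat-shattering bound, a TV-cover of $\hyD$, the same triangle-inequality gluing with $\sigma$-smoothness converting $L_1(\cP)$ to $L_1(\mu)$, and the Yatracos tournament on the normalized candidates with the $\eta$-lower bound on $\Pr[T{=}t]$ controlling the normalization constants. The only cosmetic difference is that you re-multiply by an empirical estimate of $\Pr[T{=}t]$ at the end, whereas the paper directly bounds the distance between normalized cover elements and prunes candidates with too-small normalizing constants; both are equivalent bookkeeping.
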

    \begin{proof}[Proof of \cref{thm:oracleConstruction}]
        First, we construct a $\nfrac{\eta\eps}{8}$-cover in $L_1$ norm for $\hyP\times\hyD$ and then show that we find a good estimation of the true product $p\cdot\cP$ from samples using this cover. 
    
        \paragraphit{Cover of $\hyP\times\hyD$.}
        We show that the product space $\hyP\times\hyD$ has a $(\nfrac{\eta\eps}{8})$-cover in $L_1$ norm.
        Note that, by \cref{lem:hypothesis-covering-bound}, $\hyP$ accepts a cover in $L_1(\mu)$-norm of size $N'$ such that $\log(N')\leq 4C \mathrm{fat}_{\sfrac{\eta\sigma\eps}{16}} \log(\nfrac{1}{c\eta\sigma\eps})$ for universal constants $c, C >0$, 
        since it has finite fat-shattering dimension $d_{\mathrm{fat}}$ and range $[0,1]$, \ie{}, $\Ex_{(x,y)\sim\mu}[|p(x)|^4]\leq 1$, for all $p\in\cP$.
        Let $\hyP_{\nfrac{\eta\sigma\eps}{16}}$ be the cover of $\hyP$ with respect to $L_1(\mu)$-norm and $\hyD_{\nfrac{\eta\eps}{32}}$ be the total variation distance cover of $\hyD$ (\cref{def:covers}). 
        Then, we show that the product $\hyP_{\nfrac{\eta\sigma\eps}{16}}\times\hyD_{\nfrac{\eta\eps}{32}}$ is an $\nfrac{\eta\eps}{8}$-cover of $\hyP\times\hyD$, 
        \ie{}, for any $p\cP\in\hyP\times\hyD$, there exists a $\bar{p}\bar{\cP}\in\hyP_{\nfrac{\eta\sigma\eps}{16}}\times\hyD_{\nfrac{\eta\eps}{32}}$
        such that $\norm{p\cP - \bar{p}\bar{\cP}}_1 \leq\nfrac{\eta\eps}{8}$.
        Let $\bar{p}\in\bar{\hyP}_{\nfrac{\eta\sigma\eps}{16}}$ be such that $\Ex_{(x,y)\sim\mu}[|p(x)-\bar{p}(x)|]\leq\nfrac{\eta\sigma\eps}{16}$ and,
        $\bar{\cP}\in\hyD_{\nfrac{\eta\eps}{32}}$ such that $\tv{\cP}{\bar{\cP}}\leq\nfrac{\eta\eps}{32}$ (we know these exist by definition of the cover).
        Then we can bound the desired quantity using triangle inequality as follows
        \[
            \norm{p\cP - \bar{p}\bar{\cP}}_1 
                = \norm{p\cP - \bar{p}\cP + \bar{p}\cP - \bar{p}\bar{\cP}}_1
                \leq \norm{p\cP - \bar{p}\cP}_1 + \norm{\bar{p}\cP - \bar{p}\bar{\cP}}_1
            \,.
        \]
        {Where} $\norm{p\cP-\bar{p}\cP}_1 = \iint  |p(x,y)-\bar{p}(x,y)|\cP(x,y)\d y\d x = \Ex_{(x,y)\sim\cP}[|p(x,y)-\bar{p}(x,y)|]$.
        Also, $\cP$ is $\sigma$-smooth by assumption. So the previous expression implies 
        \[
            \norm{p\cP-\bar{p}\cP}_1
                \leq \frac{1}{\sigma} \Ex_{(x,y)\sim\mu}[|p(x,y)-\bar{p}(x,y)|]
            \,,
        \]
        which is at most $\eta\eps/16$ by cover's construction.
        Finally, $\snorm{\bar{p}\cP-\bar{p}\bar{\cP}}_1 = \bar{p}\snorm{\cP-\bar{\cP}}_1\leq\snorm{\cP-\bar{\cP}}_1$, since $\bar{p}\in[0,1]$.
        Also, $\tv{\cF}{\cQ} = (\nfrac{1}{2})\snorm{\cF-\cQ}_1$, for any \mbox{two distributions $\cF, \cQ$, and so as required:}
        \[
            \norm{p\cP - \bar{p}\bar{\cP}}_1 
                \leq \frac{\eta\eps}{16} + 2\cdot \frac{\eta\eps}{32} = \frac{\eta\eps}{8}
            \,.
        \]
        
        \paragraphit{Estimation of True $p\cP$.}
        We want to use \cref{thm:distrLearningFiniteSet} to get an estimate for $p\cP$.
        However, the samples $(X,Y(T),T)$ we get are not distributed according to $p\cP$, but rather $\nfrac{p\cP}{Z(p\cP)}$, where $Z(p\cP) = \iint p(x,y)\cP(x,y)\d y\d x$, and the candidate concepts we have are not probability distributions.
        However, we can turn them into probability distributions by normalizing them.
        Notice that, for any $p\cP\in\hyP\times\hyD$ and its closest element in the cover $\hat{p}\hat{\cP}\in\hyP_{\nfrac{\sigma\eps}{16}}\times\hyD_{\nfrac{\eps}{32}}$ it holds
        \[
            \abs{Z(p\cP)-Z(\hat{p}\hat{\cP})} =
            \abs{\iint p(x,y)\cP(x,y) \d y\d x - \iint \hat{p}(x,y)\hat{\cP}(x,y) \d y\d x}
                \leq \norm{p\cP - \hat{p}\hat{\cP}}_1
                \leq \frac{\eta\eps}{8}
            \,.
        \]
        So the normalization factors will be close.
        The only issue that remains to be taken care of is the possibility of dividing with a very small number, close to zero.
        We know that $\cC_{\cD}$ is such that the $\Pr[T{=}1]\in(\eta,1-\eta)$, so we can ignore any elements in the cover whose normalization constant is smaller than $\eta-\eps$.
        Then, for every $p\cP\in\hyP\times\hyD$ and its closest element in the cover $\hat{p}\hat{\cP}\in\hyP_{\nfrac{\sigma\eps}{16}}\times\hyD_{\nfrac{\eps}{32}}$, it holds
        \[
            \norm{ \frac{p\cP}{Z(p\cP)} - \frac{\hat{p}\hat{\cP}}{Z(\hat{p}\hat{\cP})} }_1 
                \leq \frac{1}{Z(p\cP)} \norm{p\cP-\hat{p}\hat{\cP}}_1 
                + \frac{1}{Z(p\cP)} \abs{Z(\hat{p}\cP)-Z(p\hat{\cP})}
                \leq \frac{\eps}{8}
            \,.
        \]        
        Now we can use \cref{thm:distrLearningFiniteSet} that, given samples from a distribution $g$ determines the best approximation for it among a finite set of candidate distributions.
        In our case, we know that $g$ belongs to the class.
        Moreover, let the distributions $f_1,\ldots,f_M$ be the distributions on the $(\nfrac{\eps}{8})$-cover of $\hyP\times\hyD$ normalized, and so, $M \leq N \inparen{\frac{1}{c\sigma\eps}}^{4CD}$.
        For $\zeta = \nfrac{\eps}{8}$, we can use the above algorithm to implement the $L_1$ approximation oracle of accuracy $\eps$ and success probability $1-\delta$ using $O((D\log(\nfrac{1}{\sigma\eps})+\log(\nfrac{N}{\delta}))/\eps^2)$ samples.
    \end{proof}
    
\end{document}